\documentclass{amsart}
\usepackage{amsmath}
\usepackage{amsfonts}
\usepackage{amssymb}
\usepackage{ascmac}
\usepackage{layout}
\usepackage{amsthm}
\usepackage{graphicx}
\usepackage[svgnames]{xcolor}
\usepackage{tikz}

\usepackage{url}

\title{Walks along a weak square sequence and the non-semiproperness of Namba forcings}

\author{Kenta Tsukuura}

\makeatletter
\@namedef{subjclassname@2020}{%
  \textup{2020} Mathematics Subject Classification}
\makeatother

\address{Department of Fisheries Distribution and Management, National Fisheries University, Shimonoseki, 759-6533, Japan}
\email{tsukuura@fish-u.ac.jp} 

\subjclass[2020]{03E35, 03E40, 03E55}
\keywords{Tree property, Minimal walks, Namba forcings, Square sequences, $(\dagger)$, Strongly compact cardinals}
\thanks{This research was supported by JSPS KAKENHI Grant Number 24K16959. The author is grateful to Toshimichi Usuba and Hiroshi Sakai for helpfull discussions. The author acknowledge the use of ChatGPT in refining the English presentation of this paper.}

%\date{\today}

\newcommand{\force}{\Vdash}

\theoremstyle{plain}
\newtheorem{thm}{Theorem}[section]

\newtheorem{rema}[thm]{Remark}
\newtheorem{lem}[thm]{Lemma}
\newtheorem{coro}[thm]{Corollary}
\newtheorem{clam}[thm]{Claim}
\newtheorem{prop}[thm]{Proposition}
\newtheorem{ques}[thm]{Question}

\begin{document}

\maketitle
\markright{WALKS ALONG A $\square$ SEQUENCE AND THE NON-SEMIPROPERNESS OF $\mathrm{Nm}$}
\begin{abstract}
In this paper, we demonstrate that if, for every $\kappa$-complete fine filter $F$ over $\mathcal{P}_{\kappa}\lambda$, the associated Namba forcing $\mathrm{Nm}(\kappa,\lambda,F)$ is semiproper, then $\square(\mu,{<}\aleph_1)$ fails for all regular $\mu \in [\lambda, 2^{\lambda}]$ under the certain cardinal arithmetic. In particular, this result establishes that the consistency strength of the semiproperness of $\mathrm{Nm}(\aleph_2,F)$ for every $\aleph_2$-complete filter $F$ over $\aleph_2$ exceeds the strength of infinitely many Woodin cardinals.

Minimal walk methods associated with a square sequece play a central role in this paper. These observations introduce two-cardinal walks with naive $C$-sequences and show that the existence of non-reflecting stationary subsets implies $\mathcal{P}_{\kappa}\lambda \not\to [I_{\kappa\lambda}^{+}]^{3}_{\lambda}$.
\end{abstract}

\section{Introduction}\label{sec:intro}

The notion of strong compactness has been widely studied in the contexts of infinitary logic, large cardinals, and infinitary combinatorics. The original definition is due to Tarski. For a regular cardinal $\kappa$, the infinitary logic $\mathcal{L}_{\kappa\kappa}$ allows conjunctions of ${<}\kappa$-many formulas and the use of ${<}\kappa$-many quantifiers to define formulas. The classical first-order logic $\mathcal{L}_{\omega\omega}$ satisfies the compactness theorem. For an uncountable cardinal $\kappa$, $\kappa$ is strongly compact if and only if $\mathcal{L}_{\kappa\kappa}$ satisfies the compactness theorem. 

As is widely known, strong compactness has several characterizations. The following are equivalent:
\begin{enumerate}
    \item $\kappa$ is strongly compact.
    \item $\mathcal{P}_\kappa \lambda$ carries a fine ultrafilter for all $\lambda \geq \kappa$.
    \item Every $\kappa$-complete filter $F$ can be extended to a $\kappa$-complete ultrafilter, i.e., there exists a $\kappa$-complete ultrafilter $U$ such that $F \subseteq U$.
    \item The two-cardinal tree property $\mathrm{TP}(\kappa,\lambda)$ holds for all $\lambda \geq \kappa$, and $\kappa$ is inaccessible.
\end{enumerate}

In~\cite{MR3959249}, Hayut analyzed these relationships:
\begin{thm}[Hayut~\cite{MR3959249}]\label{hayut}
For regular cardinals $\kappa \leq \lambda$, the following are equivalent: 
\begin{enumerate}
    \item Every $\kappa$-complete filter over $\lambda$ can be extended to a $\kappa$-complete ultrafilter. 
    \item $\mathcal{L}_{\kappa\kappa}$ satisfies the compactness theorem for a theory of size $2^{\lambda}$, meaning that for every $\mathcal{L}_{\kappa\kappa}$-theory $T$ with $2^{\lambda}$ symbols, if $T$ is ${<}\kappa$-consistent, then $T$ is consistent. 
\end{enumerate}
In particular, if every $\kappa$-complete filter over $\lambda$ can be extended to a $\kappa$-complete ultrafilter, then $\square(\mu,{<}\kappa)$ fails for all $\mu \in [\lambda,2^{\lambda}] \cap \mathrm{Reg}$. 
\end{thm}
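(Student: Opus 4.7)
My plan is to establish $(1)\Leftrightarrow(2)$ by two coding arguments and then derive the failure of $\square(\mu,{<}\kappa)$ for $\mu\in[\lambda,2^{\lambda}]\cap\mathrm{Reg}$ by encoding any putative $\square$-sequence as an $\mathcal{L}_{\kappa\kappa}$-theory of size at most $2^\lambda$. The cardinal $2^\lambda$ is natural in clause~(2) because $\mathcal P(\lambda)$ has exactly $2^\lambda$ elements and, using $\kappa\le\lambda$, one has $(2^\lambda)^{<\kappa}=2^\lambda$, so the set of $<\kappa$-sized subtheories of a size-$2^\lambda$ theory again has size $2^\lambda$.

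For $(2)\Rightarrow(1)$ I would build a theory whose models realize an ultrafilter extension. Given a $\kappa$-complete filter $F$ on $\lambda$, take a language with one constant $c$ and, for each $A\subseteq\lambda$, a unary predicate $P_A$, so the language has $2^\lambda$ symbols. Let $T_F$ consist of $P_A(c)$ for every $A\in F$, the Boolean axioms $\forall x\,(P_{\lambda\setminus A}(x)\leftrightarrow\neg P_A(x))$ and $\forall x\,(P_{A\cap B}(x)\leftrightarrow P_A(x)\wedge P_B(x))$, and the $\kappa$-completeness schema $\forall x\,(P_{\bigcap_{i<\delta}A_i}(x)\leftrightarrow\bigwedge_{i<\delta}P_{A_i}(x))$ for each $\delta<\kappa$. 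Any subtheory of size $<\kappa$ mentions only $<\kappa$ atoms $P_{A_i}(c)$, so $\kappa$-completeness of $F$ lets us interpret the predicates in $(\mathcal P(\lambda),\in)$ with $c$ chosen from $\bigcap_{i<\delta}A_i$, showing $<\kappa$-consistency. Applying (2) gives $\mathfrak M\models T_F$, and $U:=\{A\subseteq\lambda:\mathfrak M\models P_A(c)\}$ is a $\kappa$-complete ultrafilter on $\lambda$ extending $F$.

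For $(1)\Rightarrow(2)$ the strategy is a fine ultraproduct. Given a $<\kappa$-consistent $T$ with $|T|=2^\lambda$, set $I=[T]^{<\kappa}$, pick $\mathfrak M_s\models s$ for each $s\in I$, and consider the filter $F_0$ on $I$ generated by the fine family $\widehat\varphi:=\{s\in I:\varphi\in s\}$ for $\varphi\in T$; this $F_0$ is $\kappa$-complete. Any $\kappa$-complete fine ultrafilter $U\supseteq F_0$ yields, by the $\mathcal L_{\kappa\kappa}$-analogue of {\L}o\'s's theorem (valid because each $\varphi\in T$ contains $<\kappa$ subformulas and $U$ is $\kappa$-complete), a model $\prod_{s\in I}\mathfrak M_s/U$ of $T$. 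The step I expect to be the main obstacle is reducing the index set: $|I|=2^\lambda$, while (1) delivers extensions only for $\kappa$-complete filters on $\lambda$. The plan is to fix a bijection $I\leftrightarrow\mathcal P(\lambda)$, push $F_0$ forward, and argue that the pushforward is controlled by a $\kappa$-complete filter on $\lambda$ to which (1) applies; transporting the resulting ultrafilter back to $I$ with fineness intact is the technical core, and it is here that (1), stronger than mere $\lambda$-strong compactness via a fine ultrafilter on $\mathcal P_\kappa\lambda$, is invoked.

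For the ``in particular'' clause, fix regular $\mu\in[\lambda,2^\lambda]$ and suppose for contradiction that $\vec{\mathcal C}=\langle\mathcal C_\alpha:\alpha<\mu\rangle$ witnesses $\square(\mu,{<}\kappa)$. I would encode $\vec{\mathcal C}$ as an $\mathcal L_{\kappa\kappa}$-theory $T_{\vec{\mathcal C}}$ of size $\mu\le 2^\lambda$ in a language with constants $c_\alpha$ ($\alpha<\mu$), an auxiliary top constant $d$, and a unary predicate $\dot T$. The axioms state: $c_\alpha<c_\beta<d$ for $\alpha<\beta<\mu$; $\dot T$ is closed and cofinal below $d$; and for each $\alpha<\mu$ the coherence axiom $\bigvee_{C\in\mathcal C_\alpha}(\dot T\restriction c_\alpha=C)$, expressible in $\mathcal L_{\kappa\kappa}$ because $|\mathcal C_\alpha|<\kappa$ (with auxiliary constants naming the elements of each $C$ to write the inner equalities). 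This theory is $<\kappa$-consistent: any subtheory of size $<\kappa$ mentions $c_\alpha$ only for $\alpha<\gamma$ for some $\gamma<\mu$, and any $C\in\mathcal C_\gamma$, with $d:=\gamma$ and $\dot T:=C$, furnishes a model. By (2) there is $\mathfrak M\models T_{\vec{\mathcal C}}$, whence $D:=\{\alpha<\mu:\mathfrak M\models\dot T(c_\alpha)\}$ threads $\vec{\mathcal C}$, contradicting non-threadability. The delicate point is uniformly expressing the coherence axiom in $\mathcal L_{\kappa\kappa}$, possible precisely because $|\mathcal C_\alpha|<\kappa$ matches the quantifier bound of $\mathcal L_{\kappa\kappa}$.
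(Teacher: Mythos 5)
Your direction $(2)\Rightarrow(1)$ is fine and standard. The other two parts each have a genuine gap, and in both cases the missing ingredient is exactly the device this paper (following Hayut) is built around. For $(1)\Rightarrow(2)$, the entire content of the implication is that filters on a set of size $\lambda$ control theories of size $2^{\lambda}$, and your sketch defers precisely this point: a bijection of $I=[T]^{<\kappa}$ with $\mathcal{P}(\lambda)$ pushes $F_0$ forward to a filter on a set of size $2^{\lambda}$, not on $\lambda$, and there is no general way to recover a $\kappa$-complete fine ultrafilter on $I$ from a $\kappa$-complete ultrafilter on $\lambda$ by ``transporting back.'' The actual mechanism is an independent family: by Lemma~\ref{lemma:hausdorff} (Fichtenholz--Kantorovich/Hausdorff/Hayut) there is a family of $2^{\lambda}$ subsets of $\lambda$ all of whose Boolean combinations of $<\kappa$ many members are large, and one uses it to code the $2^{\lambda}$ bits of information (which sentences hold, or which node of each level a branch passes through) into a single $\kappa$-complete filter on $\lambda$, reading the answer off any $\kappa$-complete ultrafilter extension. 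This is exactly the role of $\langle A_{\alpha}\mid\alpha<2^{\lambda}\rangle$, the filter $F_{\overline{d}}$, and Lemma~\ref{lemma:branchext} in Section~\ref{sec:filter}. Without the independent family your reduction does not go through.

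For the ``in particular'' clause, the coherence axiom $\bigvee_{C\in\mathcal{C}_{\alpha}}(\dot T\upharpoonright c_{\alpha}=C)$ is not an $\mathcal{L}_{\kappa\kappa}$-formula: a single disjunct $\dot T\upharpoonright c_{\alpha}=C$ requires both the conjunction $\bigwedge_{\gamma\in C}\dot T(c_{\gamma})$ and a clause of the form $\forall x\,(x<c_{\alpha}\wedge\dot T(x)\to\bigvee_{\gamma\in C}x=c_{\gamma})$, and a club $C$ in $\alpha$ has size at least $\mathrm{cf}(\alpha)$, which is $\geq\kappa$ for unboundedly many $\alpha<\mu$ once $\mu>\kappa$; naming the elements of $C$ by constants does not shrink these conjunctions and disjunctions below the $<\kappa$ bound. (The expressible weakening --- a selection $e_{\alpha}\in\mathcal{C}_{\alpha}$ with locally compatible choices --- is useless, since the given sequence is itself such a selection and a selection does not assemble into a thread.) The paper's route avoids the encoding problem entirely: compactness for theories of size $\mu$ is equivalent to $\mathrm{TP}(\kappa,\mu)$ together with inaccessibility (Jech, Theorem~\ref{jech}, used in Theorem~\ref{maintheorem2}), and Lemma~\ref{lem:wsqandtp} shows that $\square(\mu,{<}\kappa)$ refutes $\mathrm{TP}(\kappa,\mu)$ by using the walk function $\rho_{2}$ along the square sequence to produce a thin $\mathcal{P}_{\kappa}\mu$-list with no branch. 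The point of $\rho_{2}$ is precisely to compress the square sequence into an object whose levels have size $<\kappa$ --- the compression your direct encoding is missing.
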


Hayut also pointed out the following implications concerning the compactness of $\kappa$:
\begin{center}
\begin{tabular}{cccc}
 & $2^\lambda$-strongly compact & $\Rightarrow$ & $\lambda$-compact\\
 $\Leftrightarrow$ & $\mathcal{L}_{\kappa\kappa}$ satisfies the comp. thm. for $T$ of size $2^{\lambda}$ & $\Rightarrow$ & $\lambda$-strongly compact
\end{tabular}
\end{center}
Here, $\lambda$-strong compactness asserts the existence of a fine ultrafilter over $\mathcal{P}_{\kappa}(\lambda)$. $\lambda$-compactness corresponds to (1) in Theorem~\ref{hayut}. 

The principle $(\dagger)$, which asserts that every $\omega_1$-stationary preserving poset is semiproper, was introduced in~\cite{MR924672}. Shelah proved that $(\dagger)$ is equivalent to semistationary reflection $\mathrm{SSR}$~\cite{MR1623206}. Many combinatorial consequences of $(\dagger)$ were discovered by Todor\v{c}evi\'{c}~\cite{MR0657114}, Sakai~\cite{MR2387938}, and Sakai--Veli\u{c}kovi\'{c}~\cite{MR3284479}. 

Shelah also proved that $(\dagger)$ is equivalent to the assertion that, for every $\kappa$, the Namba forcing $\mathrm{Nm}(\kappa)$ is semiproper. The author introduced two-cardinal variations of Namba forcings $\mathrm{Nm}(\kappa,\lambda,F)$ with a filter $F$ over $\mathcal{P}_{\kappa}\lambda$ in~\cite{tsukuura} to study $(\dagger)$. As illustrated in Figure~\ref{fig:hierarchy}, a hierarchy of semiproperness for Namba forcings was established. Note that the right-hand side of Figure~\ref{fig:hierarchy} remains meaningful if $\kappa$ is a successor cardinal. 

In this paper, we first reexamine Theorem~\ref{hayut} from a combinatorial perspective. We then demonstrate an analogous conclusion in the context of Namba forcing:

\begin{thm}\label{maintheorem}
For regular cardinals $\aleph_2 \leq \kappa \leq \lambda = {\lambda^{<\kappa}}$, suppose that $\mathrm{Nm}(\kappa,\lambda,F)$ is semiproper for all $\kappa$-complete filters $F$ over $\mathcal{P}_{\kappa}\lambda$. Then $\square(\mu,{<}\aleph_1)$ fails for all regular $\mu \in [\lambda,2^{\lambda}]$. In particular, both $\square(\lambda,{<}\aleph_1)$ and $\square(2^{\lambda},{<}\aleph_1)$ fail. 
\end{thm}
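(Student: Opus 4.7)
The plan is to prove the contrapositive: assume $\square(\mu,{<}\aleph_1)$ holds for some regular $\mu \in [\lambda,2^{\lambda}]$ and produce a $\kappa$-complete filter $F$ over $\mathcal{P}_{\kappa}\lambda$ such that $\mathrm{Nm}(\kappa,\lambda,F)$ is not semiproper. Since $\lambda = \lambda^{<\kappa}$ gives $|\mathcal{P}_{\kappa}\lambda| = \lambda$ and $\mu \leq 2^{\lambda}$, I expect the cases $\mu > \lambda$ to be reducible to combinatorics that actually lives on $\mathcal{P}_{\kappa}\lambda$ via a coding of subsets of $\lambda$ by ordinals below $\mu$, mirroring the step in Hayut's argument that moves from $\lambda$ up to $2^{\lambda}$. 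I would then fix a $\square(\mu,{<}\aleph_1)$ sequence $\vec{C} = \langle \mathcal{C}_{\alpha} : \alpha < \mu \rangle$ and, following the minimal walks framework advertised in the abstract, lift it to a \emph{naive} two-cardinal $C$-sequence indexed by $\mathcal{P}_{\kappa}\lambda$.

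Using walks along $\vec{C}$, I would next extract a coloring $c : [\mathcal{P}_{\kappa}\lambda]^{3} \to \lambda$ witnessing the negative partition relation $\mathcal{P}_{\kappa}\lambda \not\to [I_{\kappa\lambda}^{+}]^{3}_{\lambda}$ flagged in the abstract. The point of the \emph{countable} gap in $\square(\mu,{<}\aleph_1)$ (as opposed to the ${<}\kappa$ gap used by Hayut) is that the walks deliver non-reflecting stationary sets on $\mu$, which by the implication quoted in the abstract feed directly into a coloring of this strength. Alongside $c$ I would construct a $\kappa$-complete fine filter $F$ on $\mathcal{P}_{\kappa}\lambda$ concentrated on those $x$ for which the walk data from $\vec{C}$ behaves uniformly, arranged so that $c$ attains all $\lambda$ values on every $F$-positive set.

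Finally, with $(c,F)$ in hand, I would construct a condition $p \in \mathrm{Nm}(\kappa,\lambda,F)$ admitting no $(M,\mathrm{Nm}(\kappa,\lambda,F))$-semigeneric extension for a suitable countable $M \prec H_{\theta}$ containing $F$, $\vec{C}$, $c$, and an enumeration of the dense sets to be met. The idea is that the walk coloring lets us choose, at each splitting node of the Namba-style tree $p$, values $c(\cdots)$ outside the countable reservoir $M \cap \lambda$; the countable gap in $\square(\mu,{<}\aleph_1)$ is what guarantees that no countable trace can ever absorb enough colors to permit a semigeneric descent. The main obstacle I anticipate is the simultaneous management of two constraints on $p$: positivity in the $\kappa$-complete filter $F$ at every splitting node (so that $p$ actually belongs to $\mathrm{Nm}(\kappa,\lambda,F)$) and the sealing off of every potential $(M,\mathrm{Nm}(\kappa,\lambda,F))$-semigeneric condition below $p$. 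Reconciling these will likely require a dedicated capturing lemma that converts the partition relation $\mathcal{P}_{\kappa}\lambda \not\to [I_{\kappa\lambda}^{+}]^{3}_{\lambda}$ into a genuine obstruction for two-cardinal Namba forcing, most plausibly by iterating the color-avoidance construction along an $\omega$-sequence of dense sets enumerated in $M$ while preserving $F$-positivity at each level.
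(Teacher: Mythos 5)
Your high-level frame (prove the contrapositive; build a bad $\kappa$-complete filter over $\mathcal{P}_{\kappa}\lambda$ from a $\square(\mu,{<}\aleph_1)$-sequence) matches the paper, but the two load-bearing steps you propose do not work as stated. First, you route the argument through the partition relation $\mathcal{P}_{\kappa}\lambda\not\to[I_{\kappa\lambda}^{+}]^{3}_{\lambda}$, on the grounds that ``the walks deliver non-reflecting stationary sets on $\mu$'' from $\square(\mu,{<}\aleph_1)$. That implication is not available: Theorem~\ref{maintheorem3} is proved in the paper from the \emph{separate} hypothesis of a non-reflecting stationary subset of $E^{\lambda}_{<\kappa}$, which does not follow from $\square(\mu,{<}\aleph_1)$ (a $\square(\mu)$-sequence is consistent with stationary reflection at $\mu$), and that theorem plays no role in the proof of Theorem~\ref{maintheorem}. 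Second, your proposed obstruction to semiproperness --- choosing colors outside $M\cap\lambda$ at splitting nodes and sealing off semigeneric extensions dense set by dense set --- never engages the quantity semiproperness is actually about, namely $M\cap\omega_1$. The paper's criterion (Lemmas~\ref{lem:gal2} and~\ref{lem:gal3}) reduces non-semiproperness to the stationarity of the set $\mathrm{Gal}_{\theta}(\mathcal{P}_{\kappa}\lambda,F,S_0,\mathcal{P}_{\kappa}\lambda)$ of countable $M$ for which some $F$-set $B$ satisfies $\mathrm{Sk}(M\cup\{\tilde x\})\cap\omega_1>M\cap\omega_1$ for every $\tilde x\in B$; no tree surgery or dense-set bookkeeping occurs, and you give no mechanism converting ``$c$ attains all values on $F$-positive sets'' into such a jump of $\omega_1$-traces.

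The missing idea is the coding step. The paper turns the walk function $\rho_2(-,\sup x)$ along the square sequence into a thin $\mathcal{P}_{\kappa}\mu$-list $\overline d$ (Lemma~\ref{lem:wsqandtp}), then uses a Hausdorff-type independent family on $\mathcal{P}_{\kappa}\lambda$ (Lemma~\ref{lemma:propvaluesets}) to build the $\kappa$-complete fine filter $F_{\overline d}$ whose generators $B_x$ consist precisely of those $\tilde x$ that decode some level of the list. Consequently, from any $\tilde x\in B_x$ the hull $N=\mathrm{Sk}(M\cup\{\tilde x\})$ recovers $\rho_2(-,\tilde\beta)\upharpoonright x$ for some $\tilde\beta$, and hence, via Lemma~\ref{walk4}, sees cofinally many points of a club $C_{\beta}\cap\delta$ that $M$ itself meets only boundedly; this is exactly where membership of $M\cap\mu$ in the Sakai--Veli\v{c}kovi\'{c}-type stationary set $S_0$ and of $M$ in $S_1$ is used. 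Each such new point lies in a gap $[\sup(M\cap\alpha),\alpha)$ with $\mathrm{cf}(\alpha)=\omega_1$, so $N\cap\omega_1>M\cap\omega_1$ by Lemma~\ref{lem:defctblord}. Without this decoding of the walk through the filter, your construction cannot produce the required obstruction, so the proposal has a genuine gap at its core.
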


The consistency strength of Namba forcing $\mathrm{Nm}(\aleph_2)$ remains an open problem, specifically whether it requires the existence of a measurable cardinal. However, Theorem~\ref{maintheorem} establishes that the consistency strength of the assertion that $\mathrm{Nm}(\aleph_2,F)$ is semiproper for every $\aleph_2$-complete filter $F$ over $\aleph_2$ is greater than that of the existence of infinitely many Woodin cardinals (See Corollary \ref{coro:consistencystrength}). 
\begin{center}
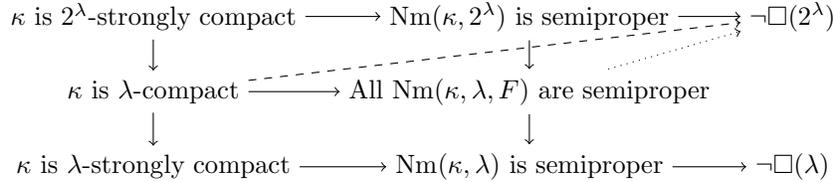
\begin{figure}[bthp]
\centering
\begin{tikzpicture}
\node (sc1) at (-5,0) {$\kappa$ is $2^{\lambda}$-strongly compact};
\node (sc2) at (-5,-1) {$\kappa$ is $\lambda$-compact};
\node (sc3) at (-5,-2) {$\kappa$ is $\lambda$-strongly compact};

\node (nm1) at (0,0) {$\mathrm{Nm}(\kappa, 2^{\lambda})$ is semiproper};
\node (nm2) at (0,-1) {All $\mathrm{Nm}(\kappa,\lambda, F)$ are semiproper};
\node (nm3) at (0,-2) {$\mathrm{Nm}(\kappa, \lambda)$ is semiproper};

\node (sq1) at (3.5,0) {$\lnot \square(2^{\lambda})$};
\node (sq2) at (3.5,-2) {$\lnot \square(\lambda)$};

\draw[->] (sc1) to (sc2);
\draw[->] (sc2) to (sc3);
\draw[->] (nm1) to (nm2);
\draw[->] (nm2) to (nm3);

\draw[->] (sc1) to (nm1);
\draw[->] (sc2) to (nm2);
\draw[->] (sc3) to (nm3);

\draw[->] (nm1) to (sq1);
\draw[->] (nm3) to (sq2);
\draw[->, dashed] (sc2) to (sq1);

\draw[->, dotted] (nm2) to (sq1);
\end{tikzpicture}
\caption{The ``dashed'' arrow and the ``dotted'' arrow are Theorems~\ref{hayut} and~\ref{maintheorem}, respectively.}
\label{fig:hierarchy}
\end{figure}
\end{center}

In~\cite{tsukuura}, we asked whether the statements ``$\mathrm{Nm}(\kappa,\lambda)$ is semiproper'' and ``All $\mathrm{Nm}(\kappa,\lambda,F)$ are semiproper'' can be separated. Theorem~\ref{maintheorem} provides a positive answer to this question.

\begin{thm}\label{maintheorem:model}
If $\kappa$ is $\lambda$-strongly compact, then for every regular cardinal $\aleph_1 \leq \mu < \kappa$, there exists a $\mu$-strategically closed poset $P$ that forces:
\begin{enumerate}
    \item $([\kappa,\lambda] \cap \mathrm{Reg})^{V} = [\kappa,\lambda] \cap \mathrm{Reg}$ and $\mu^{+} = \kappa$. 
    \item $\mathrm{Nm}(\kappa,\lambda,F)$ is semiproper for some $\kappa$-complete fine filter $F$ over $\mathcal{P}_{\kappa}\lambda$. 
    \item $\mathrm{Nm}(\kappa,\lambda,F')$ is \emph{not} semiproper for some $\kappa$-complete fine filter $F'$ over $\mathcal{P}_{\kappa}\lambda$. 
\end{enumerate}
\end{thm}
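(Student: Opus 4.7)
Fix a fine $\kappa$-complete ultrafilter $U$ on $\mathcal{P}_\kappa\lambda$ witnessing the $\lambda$-strong compactness of $\kappa$, and let $j\colon V\to M=\mathrm{Ult}(V,U)$ be the associated ultrapower embedding. I would take $P=P_0*\dot P_1$ as a two-step iteration. The first factor $P_0$ is a $\mu$-strategically closed preparation engineered so that, in $V^{P_0}$, the $\lambda$-strong compactness of $\kappa$ persists while $\square(\mu_0,{<}\aleph_1)$ holds for some regular $\mu_0\in[\lambda,2^\lambda]$; concretely, one may take $\mu_0=\lambda^+$ and use the standard $\mu_0$-strategically closed forcing that adds such a square sequence by initial segments, together with whatever Laver-style preparation is needed to keep $U$ lifting through $P_0$. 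The second factor $P_1=\mathrm{Coll}(\mu,{<}\kappa)$ is the Levy collapse; it is $\mu$-directed closed, $\kappa$-c.c., and of size $\kappa$, so $\mu^+=\kappa$ and every $V^{P_0}$-regular cardinal in $[\kappa,\lambda]$ is preserved in $V[G]$, delivering clause (1).

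For clause (2), in $V[G]$ let $F$ be the filter on $\mathcal{P}_\kappa\lambda$ generated by (the lift of) $U$. The $\kappa$-c.c.\ of $P_1$ together with the $\kappa$-completeness of $U$ gives, by a standard nice-name argument, that $F$ is $\kappa$-complete and fine. To deduce semiproperness of $\mathrm{Nm}(\kappa,\lambda,F)$, I plan to lift $j$ through all of $P$: the quotient $j(P)/P$, computed in $M$, is $\mu$-directed closed, and the closure of $M$ afforded by $\lambda$-strong compactness lets one build an $M$-generic for this quotient in $V[G]$ (or in a small $\mu$-directed closed further extension). The lifted embedding $\tilde j\colon V[G]\to M[\tilde G]$ then yields a fine $\kappa$-complete ultrafilter $W\supseteq F$ in a semiproperness-preserving extension of $V[G]$, and the ultrafilter-case analysis of Namba forcings from~\cite{tsukuura} transfers this to semiproperness of $\mathrm{Nm}(\kappa,\lambda,F)$ itself.

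For clause (3), I would first observe that $P_1$ is $\lambda^+$-c.c., so the square sequence at $\mu_0$ produced by $P_0$ survives into $V[G]$; hence $\square(\mu_0,{<}\aleph_1)$ holds in $V[G]$ with $\mu_0\in[\lambda,2^\lambda]^{V[G]}\cap\mathrm{Reg}$. Since $\lambda^{<\kappa}=\lambda$ and $\kappa\geq\aleph_2$ also hold in $V[G]$, Theorem~\ref{maintheorem} applies in $V[G]$, and its contrapositive produces a $\kappa$-complete fine filter $F'$ on $\mathcal{P}_\kappa\lambda$ for which $\mathrm{Nm}(\kappa,\lambda,F')$ is not semiproper, establishing (3).

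The main obstacle is clause (2): one must simultaneously design $P_0$ and carry out the lift of $j$ through $P$, then extract semiproperness of the filter-based Namba forcing from the resulting ultrafilter extension. Both the modest closure of $M$ (from $\lambda$-strong compactness, as opposed to $\lambda$-supercompactness) and the fact that $\kappa$ is a successor in $V[G]$ (forbidding any internal $\kappa$-complete ultrafilter extension of $F$) force the ultrafilter extension to be constructed in a generic extension of $V[G]$ rather than in $V[G]$ itself.
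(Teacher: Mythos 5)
Your clauses (1) and (3) run along the same lines as the paper's proof (which also obtains (3) by forcing a $\square(\lambda^{+})$-sequence with a ${\lambda+1}$-strategically closed poset and invoking Theorem~\ref{maintheorem}), but your treatment of clause (2) has a genuine gap, and it sits exactly where you flag ``the main obstacle.''

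The lifting step is not available. For a fine $\kappa$-complete ultrafilter $U$ over $\mathcal{P}_{\kappa}\lambda$ witnessing mere $\lambda$-strong compactness, $M=\mathrm{Ult}(V,U)$ is not closed under $\lambda$-sequences (nor, in general, under $\kappa$-sequences), so there is no master-condition or closure argument producing an $M[G]$-generic filter for $j(\mathrm{Coll}(\mu,{<}\kappa))/G$ --- a poset whose dense subsets in $M$ number on the order of $(2^{j(\kappa)})^{M}$ --- inside $V[G]$ or inside any small $\mu$-directed closed extension of it. This is precisely the gap between strong compactness and supercompactness, and ``whatever Laver-style preparation is needed'' does not exist for this purpose. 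Moreover, even granting an ultrafilter $W\supseteq F$ in some further extension $V[G][H]$, Lemma~\ref{lem:formodel} only transfers semiproperness \emph{upward} into strategically closed extensions (part (2)) or uses an ultrafilter extension living \emph{in the same model} (part (3)); neither yields semiproperness of $\mathrm{Nm}(\kappa,\lambda,F)$ back in $V[G]$ from its semiproperness in $V[G][H]$ without a separate absoluteness argument for the Galvin game.

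The paper sidesteps all of this. It forces with $\mathrm{Coll}(\mu,{<}\kappa)$ \emph{first} and lets $F$ be the filter generated by $U$ in the extension; by Foreman's duality theorem, $\mathcal{B}(F^{+},\subseteq)\simeq\mathcal{B}(\mathrm{Coll}(\mu,{<}\kappa'))$ for some $\kappa'>\kappa$, so $\langle F^{+},\subseteq\rangle$ contains an $\aleph_1$-closed dense subset and Lemma~\ref{lem:formodel}(1) gives semiproperness of $\mathrm{Nm}(\kappa,\lambda,F)$ outright, with no embedding to lift. The square-adding poset $\dot{Q}$ is applied \emph{after} the collapse, and Lemma~\ref{lem:formodel}(2) shows that its ${\lambda+1}$-strategic closure preserves the semiproperness just obtained, since no new plays of the Galvin game are added. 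To repair your argument, replace the lifting in clause (2) by this duality computation and reverse the order of your two factors so that the collapse comes first.
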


To prove Theorem~\ref{maintheorem}, we introduce a specific filter $F_{\overline{d}}$ associated with a thin $\mathcal{P}_{\kappa}\mu$-list. This filter provides a combinatorial proof of Theorem~\ref{hayut} by establishing the following result. The implication (2) $\to$ (4) is proved in Section~\ref{sec:filter}. After presenting this proof, we adapt the method to establish Theorem~\ref{maintheorem}. 

\begin{thm}\label{maintheorem2}
For regular cardinals $\kappa \leq \lambda \leq \mu \leq 2^{\lambda}$, the following are equivalent:
\begin{enumerate}
    \item Every $\kappa$-complete filter over $\lambda$, generated by $\mu$-many sets, can be extended to a $\kappa$-complete ultrafilter.
    \item Every $\kappa$-complete fine filter over $\mathcal{P}_{\kappa}\lambda$, generated by $\mu$-many sets, can be extended to a $\kappa$-complete ultrafilter.
    \item $\mathcal{L}_{\kappa\kappa}$ satisfies the compactness theorem for a theory of size $\mu$.
    \item $\mathrm{TP}(\kappa,\mu)$ holds, and $\kappa$ is inaccessible. 
\end{enumerate}
\end{thm}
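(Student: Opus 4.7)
The plan is to establish the four-way equivalence via the implications $(4) \Rightarrow (3)$, $(3) \Rightarrow (1)$, $(3) \Rightarrow (2)$, $(1) \Rightarrow (3)$, and $(2) \Rightarrow (4)$. The first four are mild adaptations of the classical arguments underlying Theorem~\ref{hayut}, with care taken to track the exact number of generators used, while the crucial new content is $(2) \Rightarrow (4)$, carried out in Section~\ref{sec:filter} via the filter $F_{\overline{d}}$. For $(4) \Rightarrow (3)$, I would enumerate a ${<}\kappa$-consistent $\mathcal{L}_{\kappa\kappa}$-theory $T$ of size $\mu$ as $\{\varphi_\alpha : \alpha < \mu\}$, select for each $x \in [\mu]^{<\kappa}$ a model $M_x$ of the corresponding fragment, and regard the resulting family of partial truth assignments as a thin $\mathcal{P}_\kappa\mu$-list; a cofinal branch supplied by $\mathrm{TP}(\kappa,\mu)$, together with inaccessibility of $\kappa$, is exactly the data needed to synthesize a model of $T$. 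For $(3) \Rightarrow (1)$ and $(3) \Rightarrow (2)$, I would apply the Keisler--Tarski translation of a $\kappa$-complete filter generated by $\{A_\alpha : \alpha < \mu\}$ into a theory with a constant $c$ and unary predicates $\dot A_\alpha$ asserting $c \in \dot A_\alpha$; $<\kappa$-consistency is exactly $\kappa$-completeness of the filter, and any model yields the ultrafilter extension, regardless of whether the base set is $\lambda$ or $\mathcal{P}_\kappa\lambda$. Finally, $(1) \Rightarrow (3)$ is Hayut's original argument restricted to theories of size $\mu$.

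The heart of the theorem is the contrapositive of $(2) \Rightarrow (4)$. Inaccessibility of $\kappa$ comes almost for free: any $\kappa$-complete ultrafilter $U$ extending a fine filter on $\mathcal{P}_\kappa\lambda$ must be non-principal (a concentrator $x_0$ would contain every $\alpha < \lambda$, contradicting $|x_0| < \kappa \leq \lambda$), and its pushforward under $x \mapsto x \cap \kappa$ is a fine $\kappa$-complete ultrafilter on $\mathcal{P}_\kappa\kappa$, forcing $\kappa$ to be measurable and hence inaccessible. Assuming then that $\kappa$ is inaccessible but $\mathrm{TP}(\kappa,\mu)$ fails, I would fix a thin $\mathcal{P}_\kappa\mu$-list $\overline{d} = \langle d_x : x \in \mathcal{P}_\kappa\mu\rangle$ admitting no cofinal branch. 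The goal is to manufacture from $\overline{d}$ a $\kappa$-complete fine filter $F_{\overline{d}}$ on $\mathcal{P}_\kappa\lambda$ generated by exactly $\mu$ sets, arranged so that any $\kappa$-complete ultrafilter $U$ extending $F_{\overline{d}}$ decodes, via $\kappa$-completeness applied coordinate-wise to the $\mu$ generators, a function $b : \mu \to 2$ that is a cofinal branch through $\overline{d}$ --- contradicting the hypothesis.

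The main obstacle is the design of $F_{\overline{d}}$. It must be simultaneously fine, $\kappa$-complete, generated by \emph{exactly} $\mu$ sets (rather than the naive $\mu^{<\kappa}$ or $|\mathcal{P}_\kappa\mu|$), and enjoy the branch-decoding property. Holding the generator count at $\mu$ forces one to exploit the thinness of $\overline{d}$ in an essential way, while the branch-decoding property requires that the generators be chosen so that the Boolean values assigned by any extending ultrafilter reassemble into a coherent branch through the list --- precisely the combinatorial task for which the two-cardinal walks along naive $C$-sequences developed later in the paper are calibrated.
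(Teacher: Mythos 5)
Your architecture for the key implication $(2)\Rightarrow(4)$ has the same shape as the paper's: from a thin $\mathcal{P}_{\kappa}\mu$-list $\overline{d}$ with no branch, build a $\kappa$-complete fine filter $F_{\overline{d}}$ over $\mathcal{P}_{\kappa}\lambda$ with $\mu$ generators so that any $\kappa$-complete ultrafilter extending it decodes a branch (this is Lemma~\ref{lemma:branchext}), and your derivation of inaccessibility from a $\kappa$-complete fine ultrafilter on $\mathcal{P}_{\kappa}\lambda$ is fine. But there is a genuine gap at the heart of the argument: you leave the construction of $F_{\overline{d}}$ as an unresolved ``obstacle'' and point to the wrong tool for it. The two-cardinal walks along naive $C$-sequences of Section~\ref{sec:twocardinalwalks} play no role here (they serve the partition relation of Theorem~\ref{maintheorem3}); what the paper uses is a two-cardinal \emph{independent family}. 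Lemma~\ref{lemma:propvaluesets} supplies $\langle A_{\alpha}\mid\alpha<2^{\lambda}\rangle$ of subsets of $\mathcal{P}_{\kappa}\lambda$ such that every Boolean combination $\bigcap_{\alpha\in x}A_{\alpha}\setminus\bigcup_{\alpha\in y}A_{\alpha}$ with $x,y$ disjoint of size ${<}\kappa$ is unbounded; the generator attached to $x\in\mathcal{P}_{\kappa}\mu$ is $B_{x}=\bigcup_{d\in\mathrm{Lev}_{c_x}(\overline{d})}\bigl(\bigcap_{\alpha\in d\cap x}A_{\alpha}\setminus\bigcup_{\alpha\in x\setminus d}A_{\alpha}\bigr)$; thinness yields the $\kappa$-intersection property (Lemma~\ref{lemma:completeness}); and the branch is read off as $d=\{\alpha\mid A_{\alpha}\in U\}$. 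Your concern about keeping the generator count at $\mu$ rather than $\mu^{<\kappa}$ is moot, since $\mu^{<\kappa}=\mu$ follows from any of (1)--(4) by Solovay.

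There is a second gap in your treatment of $(3)\Rightarrow(1)$ and $(3)\Rightarrow(2)$. The naive Keisler--Tarski translation needs a predicate for \emph{every} subset of the base set in order for a model to determine a $\kappa$-complete ultrafilter on the full power set, so the resulting theory has size $2^{\lambda}$, not $\mu$; with only $\mu$ predicates a model decides membership of $c$ merely in the subalgebra generated by the generators, and extending that decision to all of $\mathcal{P}(\lambda)$ is precisely the nontrivial content. Since $(3)\Rightarrow(1)$ is the only arrow into (1) in your diagram, this breaks the equivalence for (1) as you have arranged it. The paper avoids the issue by citing Hayut for $(3)\Rightarrow(1)$ and closing the cycle as $(1)\Rightarrow(2)\Rightarrow(4)\Leftrightarrow(3)\Rightarrow(1)$, where $(1)\Rightarrow(2)$ is a direct transfer along a bijection between $\mathcal{P}_{\kappa}\lambda$ and $\lambda$ using $\lambda^{<\kappa}=\lambda$; your route $(1)\Rightarrow(3)\Rightarrow(2)$ instead leans on both of Hayut's nontrivial directions and on the flawed translation.
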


Throughout this paper, walks along a $\square(\mu,{<}\kappa)$-sequence play a central role. However, in Sections~\ref{sec:reflections} and~\ref{sec:twocardinalwalks}, we also utilize other $C$-sequences. Beyond establishing non-semiproperness, we also study other principles. 

One of the strongest negations of the Ramsey property arises in the context of large cardinals. It is well known that the weak compactness of an uncountable cardinal $\kappa$ is characterized by the Ramsey property $\kappa \to (\kappa)_2$. However, an open problem remains as to whether the strong compactness of such a $\kappa$ can be characterized by the two-cardinal Ramsey property $\forall \lambda \geq \kappa (\mathrm{Part}(\kappa,\lambda))$.

Here, $\mathrm{Part}(\kappa,\lambda)$ asserts that for every coloring $ c: \{\{x,y\} \subseteq \mathcal{P}_{\kappa} \lambda \mid x \subset y\} \to 2 $, there exists an unbounded subset $ H \subseteq \mathcal{P}_{\kappa} \lambda $ such that $ c $ is constant on all pairs $\{x, y\}$ with $x \subset y$ from $H$.

Todor\v{c}evi\'{c}~\cite{partitionctblesets} introduced two-cardinal walks and used them to construct a coloring that strongly negates $\mathrm{Part}(\kappa, \lambda)$ for $\kappa = \aleph_1$. Later, Velleman~\cite{velleman} obtained a similar result without relying on walks. Todor\v{c}evi\'{c}~\cite{MR2355670} further extended this approach by providing such a coloring for successors of regular cardinals $\kappa$. Additionally, Shioya~\cite{shioya2005partitioning} established analogous results for uncountable $\kappa$ satisfying $\kappa^{\mathrm{cf}(\lambda)} = \kappa$.

In this paper, we introduce a new type of two-cardinal walk and use it to prove a strong negation of the two-cardinal Ramsey property.

\begin{thm}\label{maintheorem3}
For regular cardinals $\kappa \leq \lambda$, if there exists a stationary subset $S \subseteq E_{<\kappa}^{\lambda}$ such that $S \cap \alpha$ is non-stationary for all $\alpha \in E_{<\kappa}^{\lambda}$, then $\mathcal{P}_{\kappa}\lambda \not\to [I_{\kappa\lambda}^{+}]^{3}_{\lambda}$. 
\end{thm}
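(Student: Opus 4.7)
The approach is to extract a naive $C$-sequence from $S$ and to use two-cardinal walks along it to construct a coloring witnessing $\mathcal{P}_{\kappa}\lambda \not\to [I_{\kappa\lambda}^{+}]^{3}_{\lambda}$. First, for each $\alpha \in E_{<\kappa}^{\lambda}$, non-reflection of $S$ at $\alpha$ supplies a club $C_{\alpha} \subseteq \alpha$ with $C_{\alpha} \cap S = \emptyset$; for every other limit $\alpha < \lambda$ I would fix an arbitrary cofinal $C_{\alpha} \subseteq \alpha$. Writing $\overline{C} = \langle C_{\alpha} : \alpha < \lambda \rangle$, the standard minimal walk $\gamma = \beta_{0} > \beta_{1} > \dots > \beta_{n} = \alpha$ from $\gamma$ down to $\alpha$ is given by $\beta_{i+1} = \min(C_{\beta_{i}} \setminus \alpha)$. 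The crucial feature is that whenever $\alpha \in S$, the penultimate ordinal $\beta_{n-1}$ cannot lie in $E_{<\kappa}^{\lambda}$: otherwise $\alpha \in C_{\beta_{n-1}}$ would contradict $C_{\beta_{n-1}} \cap S = \emptyset$. This rigidity ultimately grants control over which ordinals can appear in the walk.

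Next I would define $c : [\mathcal{P}_{\kappa}\lambda]^{3} \to \lambda$ by reducing to the ordinal walk through the supremum map. For a triple $\{x, y, z\}$ with $\sup x < \sup y < \sup z$, walk from $\sup z$ down to $\sup x$ as above, set $i^{*} = \max\{i < n : \beta_{i} \geq \sup y\}$, and declare $c(\{x,y,z\}) = \beta_{i^{*}+1}$; for degenerate triples (suprema not strictly increasing) assign an arbitrary default color. Here the middle element $y$ serves as a cursor that selects one step of the walk between $\sup x$ and $\sup z$, giving access to a genuinely $\lambda$-wide range of possible colors.

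For unboundedness, suppose $H \in I_{\kappa\lambda}^{+}$ and $\delta < \lambda$ is a prescribed color. I would first use stationarity of $H$ together with stationarity of $S$ to produce $x \in H$ with $\sup x \in S$ and $\sup x > \delta$; this is possible because $\{x \in \mathcal{P}_{\kappa}\lambda : \sup x \in S\}$ is stationary in $\mathcal{P}_{\kappa}\lambda$ (since every $\alpha \in S$ has cofinality $<\kappa$ and therefore arises as $\sup x$ for some $x \in \mathcal{P}_{\kappa}\lambda$). Next, I would select $z \in H$ with $x \subsetneq z$ and $\sup z$ chosen so that the walk from $\sup z$ to $\sup x$ contains $\delta$ as one of its terms $\beta_{i^{*}+1}$, relying on the flexibility $\beta_{n-1} \notin E_{<\kappa}^{\lambda}$ provided by the non-reflecting set. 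Finally, I would locate $y \in H$ with $x \subsetneq y \subsetneq z$ and $\sup y \in [\beta_{i^{*}+1}, \beta_{i^{*}})$ so that the extraction picks out $\delta$ exactly. Each selection exploits that $H$ is stationary in the strong combinatorial sense, and the compatibility constraints $x \subsetneq y \subsetneq z$ follow from choosing $y, z$ with $\sup$ large enough above $x$.

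The main obstacle will be engineering the walk to pass through an arbitrary prescribed $\delta$ while simultaneously keeping the three selected sets inside the stationary $H$ with the required strict containment. Non-reflection of $S$ is what buys the walk-control, forcing the penultimate step out of $E_{<\kappa}^{\lambda}$ and leaving room to target any $\delta$, but translating this ordinal-level flexibility back into genuine triples from $\mathcal{P}_{\kappa}\lambda$ (rather than merely ordinals with prescribed suprema) will likely require an auxiliary lemma about two-cardinal walks, combining projection stationarity of $H$ with the structural properties of $\overline{C}$.
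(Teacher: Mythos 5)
Your plan has the right flavor (a $C$-sequence avoiding $S$, walks, the middle set $y$ as a cursor, an elementary-submodel argument for positivity), but as written it has two gaps that I do not think can be closed without changing the construction. First, the coloring $c(\{x,y,z\})=\beta_{i^*+1}$ takes as its value a literal step of the walk, and you then need, for an \emph{arbitrary} prescribed $\delta<\lambda$, a triple whose walk actually visits $\delta$. That is impossible: for fixed $\overline{C}$ the ordinals that can occur as walk steps are severely constrained (each $\beta_{i+1}$ is $\min(C_{\beta_i}\setminus\sup x)$ for some earlier step $\beta_i$), so most $\delta$ are never realized and the range of $c$ on any $X$ misses almost all colors. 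You flag this as ``the main obstacle,'' but it is not an engineering problem — it is why the paper does \emph{not} color by the walk step itself. The fix is to compose with a stationary partition $\langle A_\xi\mid\xi<\lambda\rangle$ of $S$: one shows that for every positive $X$ the set of \emph{characteristic ordinals} $\delta=\sup(M\cap\lambda)\in S$ realized by triples from $X$ contains $C\cap S$ for a club $C$, and then sets $c(\{x,y,z\})=\xi$ iff the captured ordinal lies in $A_\xi$. Second, running the walk on the suprema $\sup z>\sup y>\sup x$ along an ordinal-indexed $C$-sequence breaks down at intermediate steps $\beta_i$ with $\mathrm{cf}(\beta_i)\ge\kappa$: there the hypothesis gives no non-reflection (only $S\cap\alpha$ for $\alpha\in E^{\lambda}_{<\kappa}$ is assumed non-stationary), so you may have $\delta\in\mathrm{Lim}(C_{\beta_i})$ for your target $\delta\in S$, in which case $\lambda(\delta,\sup z)=\delta$ and the walk from $\sup z$ to $\sup x$ overshoots $\delta$ entirely. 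Your observation that the \emph{penultimate} step of a walk ending at $\alpha\in S$ avoids $E^{\lambda}_{<\kappa}$ is correct but irrelevant here: what is needed is that walks \emph{pass through} $\delta\in S$, which requires every club consulted above $\delta$ to have supremum-below-$\delta$ intersection with $\delta$, and that is exactly what fails at steps of large cofinality.

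This second gap is precisely why the paper introduces genuine two-cardinal walks: the $C$-sequence is indexed by sets $x\in\mathcal{P}_\kappa\lambda$, with $C_x=C_{\sup x}\cap\mathrm{Lim}(x)$, and since $\mathrm{cf}(\sup(z\cap\beta_i))\le|z|<\kappa$, \emph{every} club encountered during a walk lives on an ordinal of $E^{\lambda}_{<\kappa}$ and hence avoids $S$; this yields $\lambda(\delta,z)<\delta$ for all $\delta\in S$ and makes $\delta$ visible to the walk. The paper then defines $[xyz]$ as the least common step of the walks from $z$ to $\sup x$ and from $z$ to $\sup y$ (your ``cursor'' idea, in a form that pins the value to exactly $\delta=\sup(M\cap\lambda)$), and colors by the index of the stationary partition. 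If you replace your ordinal walks by these set-indexed walks and insert the partition, your outline becomes essentially the paper's proof; without both changes the argument does not go through.
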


The paper is structured as follows:

\begin{itemize}
    \item \textbf{Section~\ref{sec:pre}:} We review Namba forcings, minimal walk methods, and two-cardinal combinatorics. In particular, we describe how to destroy the semiproperness of Namba forcings using a set related to Galvin games. 
    \item \textbf{Section~\ref{sec:filter}:} We introduce a $\kappa$-complete fine filter $F_{\overline{d}}$ over $\lambda$ using a thin list $\overline{d}$ and study its properties. This section includes the proof of Theorem~\ref{maintheorem2}.
    \item \textbf{Section~\ref{sec:walksquare}:} We develop minimal walk methods along a square sequence, focusing on the \emph{non}-semiproperness of Namba forcings as preparation for Theorem~\ref{maintheorem}.
    \item \textbf{Section~\ref{sec:maintheorem}:} This section is devoted to proving Theorem~\ref{maintheorem} and its applications. One application concerns consistency strength, while another establishes Theorem~\ref{maintheorem:model}.
    \item \textbf{Section~\ref{sec:reflections}:} We discuss reflection principles, including stationary reflection and the singular cardinal hypothesis. 
    \item \textbf{Section~\ref{sec:twocardinalwalks}:} We introduce a ``naive'' $C$-sequence (see Remark~\ref{rema:twocardinalwalks}) and explore its applications. Theorem~\ref{maintheorem3} is proven in this section.
    \item \textbf{Section~\ref{sec:ques}:} We analyze the dissection of $(\dagger)$ from the perspective of Namba forcings. The paper concludes with several open questions.
\end{itemize}

\section{Preliminaries}\label{sec:pre}

In this section, we recall basic facts about Namba forcing with filters, minimal walk methods, and two-cardinal combinatorics. We use~\cite{MR1994835} as a general reference for set theory. For topics related to Namba forcing, we refer to~\cite[Section XII]{MR1623206} and~\cite{tsukuura}. The latter is a previous work by the author. For minimal walk methods, we refer to~\cite{MR2355670}. 

Our notation is standard. In this paper, $\kappa$ and $\lambda$ denote regular cardinals greater than $\aleph_2$ unless otherwise stated. $\mathcal{P}_{\kappa}\lambda$ is the set of all $x \subseteq \lambda$ with $|x| < \kappa$. We use $\mu$ to denote an infinite cardinal. For $\kappa < \lambda$, $E^{\lambda}_\kappa$ and $E^{\lambda}_{<\kappa}$ denote the set of all ordinals below $\lambda$ with cofinality $\kappa$ and $<\kappa$, respectively. We use $\xi, \zeta, \eta, \dots$ and $\alpha, \beta, \gamma, \dots$ to denote ordinals. We also write $[\kappa,\lambda]$ for the interval of ordinals $\{\xi \mid \kappa \leq \xi \leq \lambda\}$. By $\mathrm{Lim}$ and $\mathrm{Reg}$, we mean the classes of limit ordinals and regular cardinals, respectively. For a set $X$ of ordinals, $\mathrm{Lim}(X)$ is the set of all ordinals $\alpha < \sup X$ with $\sup(X \cap \alpha) = \alpha$. For $\subseteq$-increasing finite sequences $s,t \in [\mathcal{P}(X)]^{<\omega}$, we write $s \sqsubseteq t$ to mean that $t$ end-extends $s$, that is, $\forall a \in t \setminus s \forall b \in s (b \subseteq a)$. 

Our notation for filters and ideals follows~\cite{MR2768692}. We consider a filter $F$ over a set $Z$ such that $Z \subseteq \mathcal{P}(X)$ for some $X$. If $X = \lambda$, then $\lambda$, $\mathcal{P}_{\kappa}\lambda$, and $[\lambda]^{\kappa}$ are typical examples of $Z$. The set of $F$-positive sets is denoted by $F^{+}$. We say that an ideal $I$ is fine if $\{z \in Z \mid x \in z\} \in F$ for all $x \in X$. For an ideal $I$ over $Z$, $I^{\ast}$ is the dual filter of $I$. The completeness number $\mathrm{comp}(F)$ of $F$ is the least $\delta$ such that $F$ is not $\delta^{+}$-complete. 

A cardinal $\kappa$ is $\lambda$-strongly compact if it is a regular cardinal such that $\mathcal{P}_{\kappa}\lambda$ carries a fine ultrafilter. $\kappa$ is $\lambda$-compact if, for every $\kappa$-complete fine filter $F$ over $\lambda$, there exists a $\kappa$-complete fine ultrafilter $U$ over $\lambda$ such that $F \subseteq U$. 

In this paper, $\theta$ always denotes a sufficiently large regular cardinal. Given an expansion $\mathcal{A} = \langle \mathcal{H}_{\theta},\in,\dots\rangle$ and a set $X \subseteq \mathcal{H}_\theta$, we denote by $\mathrm{Sk}_{\mathcal{A}}(X)$ the least elementary substructure of $\mathcal{A}$ containing $X$ as a subset. If $\mathcal{A}$ is clear from the context, we simply write $\mathrm{Sk}(X)$ for $\mathrm{Sk}_{\mathcal{A}}(X)$. The following lemma is frequently used without explicit reference in this paper. 

\begin{lem}
For an expansion $\mathcal{A} = \langle \mathcal{H}_{\theta},\in,\dots\rangle$ and an $M \prec \mathcal{A}$, for every pair of sets $a$ and $X$ with $a \in X \in M$, we have
\[
\mathrm{Sk}(M \cup \{a\}) = \{f(a) \mid f:X \to \mathcal{H}_{\theta}, f \in M\}.
\]
In particular,
\[
\mathrm{Sk}(M \cup \{a\}) \cap \omega_1 = \{f(a) \mid f:X \to \omega_1, f \in M\}.
\]
\end{lem}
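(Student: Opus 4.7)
The plan is to show that the right-hand set $N := \{f(a) \mid f \colon X \to \mathcal{H}_\theta,\ f \in M\}$ is itself an elementary submodel of $\mathcal{A}$ containing $M \cup \{a\}$; minimality of $\mathrm{Sk}(M \cup \{a\})$ then yields the nontrivial inclusion. The reverse inclusion is immediate: any $f(a)$ with $f \in M$ already lies in $\mathrm{Sk}(M \cup \{a\})$, because this hull contains both $f$ and $a$ and is closed under function application.

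To see $M \cup \{a\} \subseteq N$, I would exhibit explicit witnesses. For $m \in M$, the constant map $c_m \colon X \to \mathcal{H}_\theta$ with value $m$ belongs to $M$ (since $m, X \in M$) and satisfies $c_m(a) = m$. For $a$ itself, the identity map $\mathrm{id}_X \in M$ (since $X \in M$) satisfies $\mathrm{id}_X(a) = a$.

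The main step is to verify $N \prec \mathcal{A}$ via the Tarski--Vaught test. Given $f_1(a), \dots, f_n(a) \in N$ with each $f_i \in M$ and a formula $\varphi$ with $\mathcal{A} \models \exists y\, \varphi(y, f_1(a), \dots, f_n(a))$, I would form inside $M$ the function $h \colon X \to \mathcal{H}_\theta$ assigning to each $x \in X$ a canonically chosen witness to $\varphi(y, f_1(x), \dots, f_n(x))$ when one exists, and $\emptyset$ otherwise. Then $h \in M$, and elementarity of $M$ transfers the statement ``for every $x \in X$, if $\exists y\, \varphi(y, f_1(x), \dots, f_n(x))$ holds, then $h(x)$ witnesses it'' from $M$ to $\mathcal{A}$. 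Instantiating at $x = a$ produces the desired witness $h(a) \in N$.

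The ``in particular'' clause is a trivial truncation: if $f(a) \in \omega_1$ for some $f \colon X \to \mathcal{H}_\theta$ in $M$, replace $f$ inside $M$ by $f'$ defined so that $f'(x) = f(x)$ when $f(x) \in \omega_1$ and $f'(x) = 0$ otherwise; then $f' \colon X \to \omega_1$ lies in $M$, and $f'(a) = f(a)$. The only delicate point is the choice of canonical witnesses in the Tarski--Vaught step, which I expect to be the main obstacle; it is handled by the tacit assumption that $\mathcal{A}$ is equipped with definable Skolem functions, which is already built into the notation $\mathrm{Sk}_\mathcal{A}$.
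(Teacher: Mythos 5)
Your proof is correct and is exactly the standard argument; the paper states this lemma without proof, so there is nothing to diverge from. One small simplification for your Tarski--Vaught step: you do not actually need canonical witnesses or definable Skolem functions there, because the assertion ``there exists a function $h$ with domain $X$ such that for every $x \in X$, if $\exists y\,\varphi(y,f_1(x),\dots,f_n(x))$ then $\varphi(h(x),f_1(x),\dots,f_n(x))$'' is a first-order statement about $\mathcal{A}$ with parameters $X,f_1,\dots,f_n \in M$ that holds by the axiom of choice in $V$, so elementarity of $M$ already hands you some such $h \in M$ (a definable well-ordering in the expansion is still needed, as you note, to make sense of ``the least'' elementary substructure in the definition of $\mathrm{Sk}_{\mathcal{A}}$).
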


\subsection{Namba forcings}\label{subsec:namba}
Namba forcing, introduced by Namba~\cite{MR0297548}, is a poset that forces $\mathrm{cf}(\aleph_2) = \omega$ while preserving $\aleph_1$. We begin by introducing a class, which includes Namba's original poset, denoted by $\mathrm{Nm}(Z,F)$, indexed by filters $F$ and its base set $Z$. For an $\aleph_2$-complete fine filter $F$ over $Z \subseteq \mathcal{P}(X)$, an $F$-Namba tree $p$ is a set $p \subseteq [Z]^{<\omega}$ satisfying the following conditions:
\begin{itemize}
 \item $p$ is a tree, i.e., each $s \in p$ is $\subseteq$-increasing, and $p$ is closed under initial segments.
 \item There exists a maximal $s \in p$ such that $\forall t \in p(s \subseteq t \lor t \subseteq s)$. This $s$ is called the trunk, denoted by $\mathrm{tr}(p)$.
 \item For each $s \in p$, if $s \sqsupseteq \mathrm{tr}(p)$, then $\mathrm{Suc}(s) = \{a \in Z \mid s{^{\frown}}\langle a\rangle \in p\} \in F^{+}$.
\end{itemize}

Let $\mathrm{Nm}(Z, F)$ denote the set of all $F$-Namba trees. If $Z = \mathcal{P}_{\kappa}\lambda$, we write $\mathrm{Nm}(\kappa,\lambda,F)$ to refer to $\mathrm{Nm}(Z,F)$ (to emphasize the context of two-cardinal combinatorics). For a filter $F$, $\mathrm{Nm}(Z, F)$ is defined as $\mathrm{Nm}(Z, F^{\ast})$. For an ideal $I$, we consider its dual filter, i.e., $\mathrm{Nm}(Z,I)$ is $\mathrm{Nm}(Z,I^{\ast})$. We write $\mathrm{Nm}(\kappa)$ and $\mathrm{Nm}(\kappa, \lambda)$ to refer to $\mathrm{Nm}(\kappa,I_{\kappa})$ and $\mathrm{Nm}(\mathcal{P}_{\kappa}\lambda, I_{\kappa\lambda})$, respectively. Here, $I_\kappa$ and $I_{\kappa\lambda}$ are bounded ideals over $\kappa$ and $\mathcal{P}_{\kappa}\lambda$, respectively. $\mathrm{Nm}(\aleph_2)$ is the original Namba forcing. We now examine the basic properties of $\mathrm{Nm}(Z,F)$. 

\begin{lem}\label{lem:namba1}
 If $F$ is an $\aleph_2$-complete fine filter over $Z\subseteq \mathcal{P}(X)$ and $\kappa = \mathrm{comp}(F)$, then the following holds:
\begin{enumerate}
 \item $\mathrm{Nm}(Z,F) \force \dot{\mathrm{cf}}(\kappa) = \omega$. 
 \item For all $\delta \in [\kappa,|X|^{V}]\cap \mathrm{Reg}$, if $\{z \in Z \mid |X| < z\} \in F$, then $\mathrm{Nm}(Z,F) \force \dot{\mathrm{cf}}(\delta) = \omega$. 
 \item For all $\delta \in [\aleph_1,\kappa) \cap \mathrm{Reg}$, if $\delta^{\omega} < \kappa$ or $\delta = \aleph_1$, then for every semistationary (or stationary, respectively) subset $S \subseteq [\delta]^{\omega}$, $\mathrm{Nm}(Z,F) \force S$ is semistationary (or stationary, respectively). In particular, $\mathrm{Nm}(Z,F) \force \dot{\mathrm{cf}}(\delta) > \omega$. 
 \item $\mathrm{Nm}(Z,F)$ is $\omega_1$-stationary preserving. 
\end{enumerate}
\end{lem}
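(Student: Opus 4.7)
The plan is to split the four claims into two groups: (1) and (2) force cofinalities to $\omega$ via density, while (3) and (4) preserve (semi)stationarity via fusion. Each argument uses the generic $\omega$-sequence $\vec z = \langle z_n : n<\omega\rangle$ obtained from the trunks along the generic filter, together with completeness properties of $F$.

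For (1), since $\kappa = \mathrm{comp}(F)$, I extract a $\subseteq$-decreasing sequence $\langle X_\alpha : \alpha<\kappa\rangle$ in $F$ with $\bigcap_{\alpha<\kappa} X_\alpha \notin F$ (available because $F$ is $\kappa$-complete but not $\kappa^+$-complete). Defining $f : Z \to \kappa+1$ by $f(z) = \min\{\alpha : z \notin X_\alpha\}$ gives $\{z : f(z) \geq \alpha\} = X_\alpha \in F$ for each $\alpha < \kappa$. Given any $p$ with trunk $t$ and any $\alpha < \kappa$, the refinement $\mathrm{Suc}(t) \cap X_\alpha$ is $F$-positive, so the conditions forcing $f(z_n) \geq \alpha$ for some $n$ are dense. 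Hence in the extension $\sup_n f(z_n) = \kappa$, and since $\kappa \geq \aleph_2$ we conclude $\mathrm{cf}(\kappa) = \omega$. Part (2) is the same density argument, with $X_\alpha$ replaced by the fineness-type members of $F$ supplied by the hypothesis, so that cofinally many ordinals below $\delta$ appear in the $z_n$'s.

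Part (4) is the classical fusion for Namba-type forcings adapted to the filter setting. Given a stationary $S \subseteq \omega_1$, a name $\dot C$ for a club in $\omega_1$, and $p \in \mathrm{Nm}(Z,F)$, choose a countable $M \prec \mathcal{H}_\theta$ containing the relevant data with $\alpha := M \cap \omega_1 \in S$. Enumerate $\langle D_n : n<\omega\rangle \in M$ dense open sets deciding successive initial segments of $\dot C$ below $\alpha$. Construct $q \leq p$ with $\mathrm{tr}(q) = \mathrm{tr}(p)$ in $\omega$ fusion steps: at stage $n$, shrink each $\mathrm{Suc}(s)$ (for $s$ at level $n$ above the trunk) to a subset of $F^+$ whose one-step extensions all lie below some element of $D_n \cap M$. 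Each shrinking preserves $F$-positivity because $D_n \in M$ is dense and the $\aleph_2$-completeness of $F$ ensures that countable intersections of $F$-members remain in $F$. The resulting $q$ forces $\alpha \in \dot C$, so $S$ remains stationary.

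Part (3) runs the same scheme but with $M$ enlarged to a structure of size $\delta^\omega$ (when $\delta^\omega < \kappa$) closed under $\omega$-sequences, now using the full $\kappa$-completeness of $F$ to keep refinements $F$-positive across the $\delta^\omega$ many dense-set requirements encoded in $M$. The $\delta = \aleph_1$ case of the stationary version reduces directly to (4). The main obstacle throughout (3) and (4) is guaranteeing that, at every fusion stage, the refined successor sets remain in $F^+$ while absorbing all dense-set constraints accumulated up to that stage; this is precisely where the completeness hypothesis on $F$ enters in an essential way, and where the care relative to the classical $\mathrm{Nm}(\aleph_2)$ argument lies in handling the general base set $Z \subseteq \mathcal{P}(X)$ and the abstract filter $F$.
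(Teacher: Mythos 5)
Your (1) and (2) are essentially the paper's arguments: density using $\mathrm{comp}(F)=\kappa$ and fineness, respectively. (The paper phrases (1) via a partition of $Z$ into $\kappa$ many $F$-null pieces rather than your decreasing sequence; with your formulation you should note that $\bigcap_{\alpha<\kappa}X_\alpha$ could still be $F$-positive, in which case a condition whose successor sets all live inside that intersection has $f(z)\equiv\kappa$ on its branches and the density argument yields nothing, so the witnessing family must be chosen with null, not merely non-$F$, intersection.)

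The serious problem is (3) and (4), which the paper does not prove at all (it cites the author's earlier work for (3) and deduces (4) from it), and for which your fusion argument has a genuine gap. The step ``shrink each $\mathrm{Suc}(s)$ to an $F$-positive set whose one-step extensions all lie below some element of $D_n\cap M$'' is unjustified: for $q_{s^\frown a}\leq d$ with $d\in D_n\cap M$ you need in particular $a\in\mathrm{Suc}_d(s)$ for some $d$ in the \emph{countable} set $D_n\cap M$, and $\bigcup\{\mathrm{Suc}_d(s)\mid d\in D_n\cap M,\ s\in d\}$ is a countable union of sets that are each merely $F$-positive; there is no reason its intersection with your current $\mathrm{Suc}(s)$ is $F$-positive, since $F$-positive sets can be pairwise disjoint. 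The $\aleph_2$-completeness you invoke gives a pigeonhole for countable \emph{partitions} of a positive set, which points the wrong way here. As a sanity check, the same mechanism run over an enumeration of all dense sets of $M$ would produce $(M,\mathrm{Nm}(Z,F))$-generic conditions for club many $M$, i.e.\ properness; but by your own part (1) the forcing makes $\mathrm{cf}(\kappa)=\omega$ with $\kappa\geq\aleph_2$ and hence is not proper, so the fusion step must fail somewhere, and it fails exactly at the point above. The genuine proofs of (3) and (4) for Namba-type forcings require a rank or game analysis (in the spirit of the Galvin-game machinery of Section 2.1 and Shelah's Chapter XII): one shows that for each node and each $\beta<M\cap\omega_1$ there is a shrinking forcing $\dot C\cap[\beta,\gamma)\neq\emptyset$ for some $\gamma<\omega_1$ computable in $M$, and establishing the existence of such a $\gamma$ is the heart of the matter, which your sketch omits. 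The hypothesis $\delta^\omega<\kappa$ in (3) enters through a counting of plays of such games, not through closing $M$ under $\omega$-sequences, and the semistationary version is not addressed by your sketch at all. As written, (3) and (4) are not established.
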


\begin{proof}
 We verify (1). By $\kappa = \mathrm{comp}(F)$, there exists a partition $Z = \bigcup_{\alpha < \kappa}Z_{\alpha}$ such that each $Z_{\alpha}$ belongs to the dual ideal $F^\ast$. Let $\dot{g}$ be an $\mathrm{Nm}(Z,F)$-name for the set $\bigcup \{\mathrm{tr}(t) \mid t \in \dot{G}\}$. It is easy to see that $\mathrm{Nm}(Z,F) \force \sup_{n}\{\sup \{\alpha \mid \forall \beta < \alpha(\dot{g}_{n} \not\in Z_{\beta})\}\} = \kappa$. 

 Next, we prove (2). We may assume that $X$ is a cardinal $\lambda$. $\dot{g}$ is forced to be a countable cofinal subset such that $\bigcup \dot{g} = \lambda$. For each such $\delta$, $\mathrm{Nm}(Z,F) \force \sup_{n} (\sup(\dot{g}_{n} \cap \delta)) = \delta$, as desired. 

 For (3), we refer to~\cite{tsukuura}. Finally, (4) follows as a corollary of (3). 
\end{proof}

By (3) of Lemma~\ref{lem:namba1}, every $\mathrm{Nm}(Z,F)$ can be semiproper. We now recall the notions of semiproperness and semistationary subsets. For a set $W \supseteq \omega_1$, a semistationary subset is a set $S \subseteq [W]^{\omega}$ such that $S^{\mathbf{cl}} = \{x \in [W]^{\omega} \mid \exists y \in S(y \subseteq x \land y \cap \omega_1 = x \cap \omega_1)$ is stationary in $[W]^{\omega}$. For a poset $P$, we say that $P$ is semiproper if and only if the set $\{M \in [\mathcal{H}_{\theta}]^{\omega} \mid \forall p\in M \cap P\exists q \leq p(q \force M[\dot{G}] \cap \omega_1 = M \cap \omega_1)\}$ contains a club.

\begin{lem}[Shelah~\cite{MR1623206}]
 The following are equivalent:
\begin{enumerate}
 \item $P$ is semiproper.
 \item $P$ preserves the semistationarity of any set. 
\end{enumerate}
\end{lem}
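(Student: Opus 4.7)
The proof splits into the two directions, and both are handled via elementary substructures of a large $\mathcal{H}_{\theta}$.

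For $(1) \Rightarrow (2)$, fix a semistationary $S \subseteq [W]^{\omega}$, a condition $p \in P$, and a $P$-name $\dot F$ for a function $[W]^{<\omega} \to W$; it suffices to produce $q \leq p$ forcing some $x \in (S^{\mathbf{cl}})^{V^{P}}$ closed under $\dot F$. Pick a large $\theta$ with $S, W, p, \dot F \in \mathcal{H}_{\theta}$. Since $S^{\mathbf{cl}}$ is stationary in $V$, choose a countable $M \prec \mathcal{H}_{\theta}$ containing these parameters with $M \cap W \in S^{\mathbf{cl}}$, witnessed by some $y \in S$ with $y \subseteq M \cap W$ and $y \cap \omega_{1} = (M \cap W) \cap \omega_{1}$. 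Semi-properness supplies $q \leq p$ with $q \force M \cap \omega_{1} = M[\dot G] \cap \omega_{1}$, and elementarity of $M[\dot G]$ forces $M[\dot G] \cap W$ to be closed under $\dot F$. Then $y$ still witnesses $M[\dot G] \cap W \in S^{\mathbf{cl}}$ in $V^{P}$, since $y \subseteq M \subseteq M[\dot G]$ and $y \cap \omega_{1} = M \cap \omega_{1} = M[\dot G] \cap \omega_{1}$.

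For $(2) \Rightarrow (1)$, I argue contrapositively. If $P$ is not semi-proper, the set $T_{0}$ of countable $M \prec \mathcal{H}_{\theta}$ admitting some $p_{M} \in M \cap P$ without any semi-generic extension is stationary. The map $M \mapsto p_{M}$ is regressive on $[\mathcal{H}_{\theta}]^{\omega}$, so Fodor's lemma produces a stationary $T \subseteq T_{0}$ with $p_{M} = p$ constant. By density, $p$ forces $M \cap \omega_{1} \subsetneq M[\dot G] \cap \omega_{1}$ for every $M \in T$. Set $W = \mathcal{H}_{\theta}$ and $S = T$, a stationary (hence semistationary) subset of $[W]^{\omega}$; I show $p \force S$ is not semistationary. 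Work in $V^{P}$ below $p$ and define $\dot F : \mathcal{H}_{\theta} \to \mathcal{H}_{\theta}$ by $\dot F(\dot \nu) = \dot \nu^{\dot G}$ whenever $\dot \nu$ is a $P$-name for an ordinal below $\omega_{1}$, and $\dot F(\dot \nu) = 0$ otherwise. If some $x$ closed under $\dot F$ lay in $(S^{\mathbf{cl}})^{V^{P}}$, witnessed by $M \in T$ with $M \subseteq x$ and $x \cap \omega_{1} = M \cap \omega_{1}$, pick a name $\dot \nu \in M$ for a countable ordinal with $\dot \nu^{\dot G} \geq M \cap \omega_{1}$; then $\dot \nu \in x$, and closure under $\dot F$ forces $\dot \nu^{\dot G} \in x \cap \omega_{1} = M \cap \omega_{1}$, a contradiction.

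The main obstacle is the backward direction: the clever step is to define, in $V^{P}$, a function that reveals the generic values of $M$'s names for countable ordinals, so that any candidate witness $M \in T$ inside a sufficiently closed $x$ must contribute an ordinal beyond $M \cap \omega_{1}$ that nonetheless is forced to lie in $x \cap \omega_{1} = M \cap \omega_{1}$. The bookkeeping (Fodor's lemma to fix a single $p$) is routine, relying on the normality of the club filter on $\mathcal{P}_{\omega_{1}}\mathcal{H}_{\theta}$.
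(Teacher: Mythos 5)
Your proof is correct, and it is the standard argument for Shelah's theorem; the paper itself only cites \cite{MR1623206} and gives no proof, so there is nothing internal to compare against. Both directions are sound: the forward direction via a countable $M \prec \mathcal{H}_{\theta}$ with $M \cap W \in S^{\mathbf{cl}}$ and a semigeneric extension, and the backward direction via Fodor's lemma to fix a single condition $p$ and the function sending each name in $M$ for a countable ordinal to its generic value, which is exactly the mechanism the paper itself exploits elsewhere (e.g.\ in Lemma~\ref{lem:gal2}).
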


The significance of Namba forcing is as follows. (1) of Theorem~\ref{namba:shelah} is not used in this paper, but we introduce it for completeness.

\begin{thm}\label{namba:shelah}\mbox{}
 \begin{enumerate}
  \item (Shelah~\cite{MR1623206} for (a) $\leftrightarrow$ (b); Doebler--Schindler~\cite{MR2576698} for (b) $\rightarrow$ (c)) The following are equivalent:
	\begin{enumerate}
	 \item $(\dagger)$ holds, that is, every $\omega_1$-stationary preserving poset is semiproper. 
	 \item $\mathrm{Nm}(\kappa)$ is semiproper for all regular $\kappa \geq \aleph_2$. 
	 \item $\mathrm{SSR}$ holds, that is, for every $\lambda\geq \aleph_2$ and a semistationary subset $S \subseteq [\lambda]^{\omega}$, there exists an $x \in \mathcal{P}_{\aleph_2}\lambda$ such that $S \cap [x]^{\omega}$ is semistationary.
	\end{enumerate}
  \item (Shelah~\cite{MR1623206}) For all regular $\kappa$, the following are equivalent:
	\begin{enumerate}
	 \item $\mathrm{Nm}(\kappa)$ is semiproper. 
	 \item There exists a semiproper poset that forces $\dot{\mathrm{cf}}(\kappa) = \omega$. 
	\end{enumerate}
 \end{enumerate}
\end{thm}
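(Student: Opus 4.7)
The plan is to verify the two parts of Theorem~\ref{namba:shelah} separately, exploiting the bootstrapping power of Namba forcing.

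For part (1), the implication (a) $\to$ (b) is immediate from Lemma~\ref{lem:namba1}(4): since $\mathrm{Nm}(\kappa)$ is $\omega_1$-stationary preserving for every regular $\kappa \geq \aleph_2$, the hypothesis $(\dagger)$ yields its semiproperness. For (a) $\leftrightarrow$ (c), I would follow Shelah's original route: $\mathrm{SSR}$ is precisely the reflection principle needed to find, for any $\omega_1$-stationary preserving $P$ and countable $M \prec \mathcal{H}_\theta$ with $p \in P \cap M$, a condition $q \leq p$ forcing $M \cap \omega_1$ to be preserved. The forward direction applies $\mathrm{SSR}$ to the semistationary set of countable elementary substructures of $\mathcal{H}_\theta$ end-extending some fixed $M_0$; the reverse direction manufactures a counterexample to $(\dagger)$ from any semistationary $S \subseteq [\lambda]^\omega$ failing to reflect. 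The subtle implication is (b) $\to$ (c), due to Doebler--Schindler: assuming $\mathrm{SSR}$ fails with witness $S \subseteq [\lambda]^\omega$, I would choose $\kappa$ a sufficiently large regular cardinal above $\lambda$ and use the clubs witnessing the failure of reflection of $S^{\mathbf{cl}}$ on every $[x]^\omega$ with $x \in \mathcal{P}_{\aleph_2}\lambda$ to construct a countable $M$ and a condition $p \in \mathrm{Nm}(\kappa) \cap M$ such that no semigeneric extension of $p$ exists; this contradicts the assumed semiproperness of $\mathrm{Nm}(\kappa)$.

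For part (2), (a) $\to$ (b) is trivial: $\mathrm{Nm}(\kappa)$ itself is the witness, since Lemma~\ref{lem:namba1}(1) gives $\mathrm{Nm}(\kappa) \force \dot{\mathrm{cf}}(\kappa) = \omega$. For the converse, given a semiproper $P$ with a $P$-name $\dot g$ for a cofinal $\omega$-sequence in $\kappa$, I would fix a countable $M \prec \mathcal{H}_\theta$ containing $P, \dot g, \mathrm{Nm}(\kappa), p$ and, using the semiproperness of $P$, choose $r \in P$ below some element of $P \cap M$ that is $M$-semigeneric. Any such $r$ forces $\dot g$ to take values cofinal in $\sup(M \cap \kappa)$, and these values can be used to drive a fusion-like descent $p = p_0 \geq p_1 \geq \cdots$ inside $\mathrm{Nm}(\kappa) \cap M$: at stage $n$, one extends the trunk of $p_n$ through an $I_\kappa^{\ast}$-positive (i.e., unbounded) successor set, choosing an element above the $n$-th value of $\dot g$. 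The limit $q$ lies below $p$, and a density argument using the $M$-semigenericity of $r$ shows that $q$ forces every ordinal in $M[\dot H] \cap \omega_1$ to already lie in $M \cap \omega_1$.

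The hardest step is (b) $\to$ (c) of part (1): one must extract a concrete destruction of semistationarity for some $\mathrm{Nm}(\kappa)$ from only the abstract assumption that $\mathrm{SSR}$ fails. Without the machinery of walks along square sequences or the naive $C$-sequences developed in Sections~\ref{sec:walksquare} and~\ref{sec:twocardinalwalks}, the argument must rely on more delicate absorption and reflection techniques, which is exactly why the Doebler--Schindler paper is invoked. The remaining directions, while not automatic, follow a standard pattern of semigenericity arguments once the correct ``$\dot g$-guided'' fusion inside $\mathrm{Nm}(\kappa)$ is identified.
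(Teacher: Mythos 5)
The paper does not actually prove Theorem~\ref{namba:shelah}: it is quoted from Shelah and from Doebler--Schindler, and the surrounding text says part (1) is included only for completeness, so there is no in-paper argument to compare yours against. Judged on its own, your outline of part (1) is acceptable at the level of a citation: (a)~$\to$~(b) is indeed immediate from Lemma~\ref{lem:namba1}(4), and for (a)~$\leftrightarrow$~(c) and (b)~$\to$~(c) you defer the real work to Shelah and to Doebler--Schindler, exactly as the paper does. (One small slip: in (a)~$\leftrightarrow$~(c), the direction in which one \emph{applies} $\mathrm{SSR}$ to a semistationary set of substructures is (c)~$\to$~(a), not the forward direction.)

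The genuine gap is in your argument for (b)~$\to$~(a) of part (2). First, the claim that an $M$-semigeneric $r \in P$ ``forces $\dot g$ to take values cofinal in $\sup(M\cap\kappa)$'' is false: $\dot g$ is forced to be cofinal in $\kappa$ itself, far above $\sup(M\cap\kappa)$. What semigenericity actually gives is that each value of $\dot g$ lands in $M[\dot G]$, and since $M[\dot G]\cap\omega_1 = M\cap\omega_1$, any ordinal $\alpha$ that some $r' \leq r$ decides to be a value of $\dot g$ satisfies $f(\alpha) \in M\cap\omega_1$ for every $f:\kappa\to\omega_1$ in $M$, i.e.\ $\mathrm{Sk}(M\cup\{\alpha\})\cap\omega_1 = M\cap\omega_1$. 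Running this with a name $\dot g_A \in M$ for a cofinal sequence through an arbitrary unbounded $A \in M$ shows that $\bigcap\{f^{-1}(M\cap\omega_1) \mid f \in M\}$ meets $A$ unboundedly for club many $M$, which is precisely the non-stationarity criterion of Lemma~\ref{lem:semiproperchar}; that is the correct route, and it bypasses any fusion inside $\mathrm{Nm}(\kappa)$. Second, the fusion you do describe cannot work as stated: if at every stage you properly extend the trunk of $p_n$, the ``limit'' $q = \bigcap_n p_n$ has an infinite trunk and is not a condition of $\mathrm{Nm}(\kappa)$. A direct tree construction must instead keep unbounded splitting at every node above a fixed finite trunk, with each successor set contained in the set of ``good'' ordinals just described; as written, your descent neither produces a condition nor explains how the semigenericity of $r$ for $P$ transfers to semigenericity of $q$ for $\mathrm{Nm}(\kappa)$.
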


\begin{thm}[Tsukuura~\cite{tsukuura}]\label{thm:locsp}
Suppose that $\mu^{\omega} < \kappa$ for all $\mu < \kappa$ or that $\kappa = \aleph_2$ and $\aleph_2 \leq \kappa \leq \lambda = \lambda^{<\kappa}$ are regular cardinals. For a semistationary subset $S \subseteq [\lambda]^{\omega}$, the following are equivalent:
\begin{enumerate}
 \item $\mathrm{Nm}(\kappa,\lambda)$ forces $S$ to be semistationary.
 \item There exists an $a \in \mathcal{P}_{\kappa}\lambda$ such that $S \cap [a]^{\omega}$ is semistationary.
 \item For every $\kappa$-complete fine filter $F$ over $\mathcal{P}_{\kappa}\lambda$, $\mathrm{Nm}(\kappa,\lambda,F)$ forces $S$ to be semistationary.
\end{enumerate}
Note that the implication from (1) to (2) holds even if $\mu^{\omega} \geq \kappa$ for some $\mu < \kappa$. 
\end{thm}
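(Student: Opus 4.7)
The plan is to close the cycle $(3) \Rightarrow (1) \Rightarrow (2) \Rightarrow (3)$. The implication $(3) \Rightarrow (1)$ is immediate, since $I_{\kappa\lambda}^{*}$ itself is a $\kappa$-complete fine filter on $\mathcal{P}_{\kappa}\lambda$ and $\mathrm{Nm}(\kappa,\lambda) = \mathrm{Nm}(\kappa,\lambda,I_{\kappa\lambda}^{*})$, so (3) applied to this filter is exactly (1).

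For $(1) \Rightarrow (2)$ I would argue by contraposition, and it is this direction that does not require the cardinal arithmetic hypothesis (matching the last sentence of the statement). Assuming (2) fails, use $\mathrm{AC}$ in $V$ to pick, for each $a \in \mathcal{P}_{\kappa}\lambda$, an algebra $F_{a} : [a]^{<\omega} \to a$ witnessing that $S \cap [a]^{\omega}$ is not semistationary. Let $G$ be $\mathrm{Nm}(\kappa,\lambda)$-generic with branch $\langle g_{k} : k < \omega\rangle \subseteq \mathcal{P}_{\kappa}\lambda \cap V$, and in $V[G]$ consider the countable algebra whose operations are $F_{g_{k}} : [g_{k}]^{<\omega} \to g_{k}$ (extended by $0$ outside $[g_{k}]^{<\omega}$), one for each $k$. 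A standard density argument, based on the observation that $\{a \in \mathcal{P}_{\kappa}\lambda : y \subseteq a\}$ lies in $I_{\kappa\lambda}^{*}$ for every ground model $y \in \mathcal{P}_{\kappa}\lambda$ (so meets $\mathrm{Suc}(\mathrm{tr}(p))$ for every condition $p$), shows that every such $y$ is contained in some $g_{k}$. Now suppose toward contradiction that $x \in [\lambda]^{\omega} \cap V[G]$ is closed under this algebra and some $y \in S$ satisfies $y \subseteq x$ and $y \cap \omega_{1} = x \cap \omega_{1}$; fix $k$ with $y \subseteq g_{k}$ and put $x' := x \cap g_{k}$. Then $y \subseteq x' \in [g_{k}]^{\omega}$; closure of $x$ under $F_{g_{k}}$ gives $x'$ closed under $F_{g_{k}}$; and since $x \cap \omega_{1} = y \cap \omega_{1} \subseteq y \subseteq g_{k}$ we obtain $x' \cap \omega_{1} = x \cap \omega_{1} = y \cap \omega_{1}$. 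This contradicts the choice of $F_{g_{k}}$, so in $V[G]$ the set $S$ fails to be semistationary.

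For $(2) \Rightarrow (3)$, fix $a$ with $S \cap [a]^{\omega}$ semistationary and, if necessary, harmlessly enlarge $a$ so that $\omega_{1} \subseteq a$. After transferring along a bijection $a \to |a|$, the hypothesis that either $\mu^{\omega} < \kappa$ for all $\mu < \kappa$ or $\kappa = \aleph_{2}$ places the relevant cardinal in the range covered by Lemma~\ref{lem:namba1}(3); hence $\mathrm{Nm}(\kappa,\lambda,F)$ preserves the semistationarity of $S \cap [a]^{\omega}$ for every $\kappa$-complete fine filter $F$. A routine elementary-submodel lift (take $N \prec \mathcal{H}_{\theta}^{V[G]}$ countable containing $S$, $a$, and a given algebra on $\lambda$, and observe $N \cap a$ lands in $(S \cap [a]^{\omega})^{\mathbf{cl}}$) then upgrades this to semistationarity of $S$ in $[\lambda]^{\omega}$.

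The chief obstacle will be the $\omega_{1}$-matching step in the contrapositive of $(1) \Rightarrow (2)$: upon restricting $x$ to $x' = x \cap g_{k}$ one must inherit both closure under $F_{g_{k}}$ \emph{and} the exact trace $x' \cap \omega_{1} = y \cap \omega_{1}$, without having $\omega_{1} \subseteq g_{k}$ (which typically fails, e.g., when $\kappa = \aleph_{2}$ and $|g_{k}| \leq \aleph_{1}$). The saving observation is that the semistationary closure condition $y \cap \omega_{1} = x \cap \omega_{1}$ already forces $x \cap \omega_{1} \subseteq y$, so the trace passes through into $g_{k}$ automatically once $y \subseteq g_{k}$ has been arranged by density.
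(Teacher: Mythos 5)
The paper itself gives no proof of this theorem --- it is imported verbatim from the cited earlier work --- so there is nothing internal to compare against; I can only assess your reconstruction. Your cycle $(3)\Rightarrow(1)\Rightarrow(2)\Rightarrow(3)$ is the natural decomposition, and both $(3)\Rightarrow(1)$ (the dual of $I_{\kappa\lambda}$ is itself a $\kappa$-complete fine filter) and $(2)\Rightarrow(3)$ (Lemma~\ref{lem:namba1}(3) plus the Menas transfer of Lemma~\ref{menas}) are sound. The contrapositive of $(1)\Rightarrow(2)$, however, has two concrete gaps. First, the witnessing algebras must live on $a\cup\omega_1$, not on $a$. Non-semistationarity of $T_a=S\cap[a]^{\omega}$ says that $T_a^{\mathbf{cl}}$, computed in $[a\cup\omega_1]^{\omega}$, is non-stationary; it does \emph{not} imply that $\{x'\in[a]^{\omega}\mid\exists y\in T_a\,(y\subseteq x'\wedge y\cap\omega_1=x'\cap\omega_1)\}$ is non-stationary in $[a]^{\omega}$. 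For instance, if $a\cap\omega_1=\emptyset$ and $T_a\neq\emptyset$, that set contains the club of all supersets of a fixed $y_0\in T_a$, yet $T_a$ is certainly not semistationary. So an $F_a$ with domain $[a]^{<\omega}$ as you demand need not exist; your closing ``saving observation'' repairs the trace computation $x'\cap\omega_1=y\cap\omega_1$ but not this existence problem. The fix is to take $F_a:[a\cup\omega_1]^{<\omega}\to a\cup\omega_1$ and put $x'=x\cap(g_k\cup\omega_1)$.

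Second, and more substantively: your final ``this contradicts the choice of $F_{g_k}$'' compares a set $x'\in V[G]$ against a witness chosen in $V$. The choice of $F_{g_k}$ only guarantees that \emph{ground-model} $F_{g_k}$-closed sets avoid $T_{g_k}^{\mathbf{cl}}$, and $\mathrm{Nm}(\kappa,\lambda)$ does add new countable subsets of $g_k\cup\omega_1$, so as written there is no contradiction. The standard repair is to replace $x'$ by $x''=$ the $F_{g_k}$-closure of $y$: since $y\in S\subseteq V$ and $F_{g_k}\in V$, we get $x''\in V$; since $x'$ is closed and contains $y$ we get $x''\subseteq x'$, hence $y\cap\omega_1\subseteq x''\cap\omega_1\subseteq x'\cap\omega_1=y\cap\omega_1$; and $x''$ is $F_{g_k}$-closed. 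Thus $x''$ is a ground-model counterexample to the choice of $F_{g_k}$, which is the actual contradiction. With these two local repairs your argument goes through, and it correctly isolates that no cardinal arithmetic is used in $(1)\Rightarrow(2)$.
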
 

We say that $\mathrm{Nm}(\kappa,\lambda,F)$ is \emph{locally semiproper} if, for every semistationary subset $S \subseteq [\lambda]^{\omega}$, $\mathrm{Nm}(\kappa,\lambda,F) \force S$ is semistationary. The local semiproperness of $\mathrm{Nm}(\lambda,F)$ is defined analogously. The following lemma is used at times.

\begin{lem}[Menas~\cite{MR0357121}]\label{menas}
Let $W$ and $\overline{W}$ be sets such that $\omega_1 \subseteq W \subseteq \overline{W}$. 
\begin{enumerate}
 \item If $C \subseteq [W]^{\omega}$ is a club, then $C^{\downarrow \overline{W}} = \{\overline{x} \in [\overline{W}]^{\omega} \mid \overline{x} \cap W \in C\}$ is a club in $[\overline{W}]^{\omega}$.
 \item If $\overline{C} \subseteq [\overline{W}]^{\omega}$ is a club, then $\overline{C}^{\uparrow W} = \{\overline{x} \cap W \mid \overline{x} \in \overline{C}\}$ contains a club in $[W]^{\omega}$. 
\end{enumerate}
\end{lem}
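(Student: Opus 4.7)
The plan is to use the standard characterization of clubs in $[W]^{\omega}$: a set $D \subseteq [W]^{\omega}$ contains a club if and only if there exists a function $f \colon [W]^{<\omega} \to W$ such that every countable $y \subseteq W$ with $\omega \subseteq y$ closed under $f$ belongs to $D$. The analogous characterization holds for $[\overline{W}]^{\omega}$. Both parts will be reduced to transferring such closure functions between $W$ and $\overline{W}$.

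For (1), I would first verify closedness of $C^{\downarrow \overline{W}}$ directly: given a $\subseteq$-increasing sequence $\overline{x}_n \in C^{\downarrow \overline{W}}$ with union $\overline{x}$, the set $\overline{x} \cap W = \bigcup_n (\overline{x}_n \cap W)$ is an increasing union of elements of $C$, hence lies in $C$ by closedness of $C$. For containment of a club, pick $f \colon [W]^{<\omega} \to W$ witnessing that $C$ contains a club, and lift it to $\overline{f} \colon [\overline{W}]^{<\omega} \to \overline{W}$ by $\overline{f}(s) = f(s \cap W)$, using an arbitrary default value when $s \cap W = \emptyset$. Any countable $\overline{y} \subseteq \overline{W}$ with $\omega \subseteq \overline{y}$ closed under $\overline{f}$ satisfies $\omega \subseteq \overline{y} \cap W$, is closed under $f$, and therefore $\overline{y} \cap W \in C$; that is, $\overline{y} \in C^{\downarrow \overline{W}}$.

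For (2), I would take $\overline{f} \colon [\overline{W}]^{<\omega} \to \overline{W}$ witnessing a club contained in $\overline{C}$, and for each $s \in [W]^{<\omega}$ define $\overline{y}_s$ to be the (countable) set obtained by iteratively closing $s$ under $\overline{f}$. Enumerate each $\overline{y}_s \cap W$ as $\{a_k^s : k < \omega\}$ (with padding if finite), and encode these into a single function $f \colon [W]^{<\omega} \to W$, for instance by setting $f(s \cup \{k\}) = a_k^s$ for $k < \omega$. The key claim is that for any countable $y \subseteq W$ with $\omega \subseteq y$ closed under $f$, the set $\overline{y} = \bigcup \{\overline{y}_s : s \in [y]^{<\omega}\}$ is a countable $\overline{f}$-closed subset of $\overline{W}$ with $\overline{y} \cap W = y$. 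Hence $\overline{y} \in \overline{C}$ and $y = \overline{y} \cap W \in \overline{C}^{\uparrow W}$, so the club of $y$'s closed under $f$ is contained in $\overline{C}^{\uparrow W}$.

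The main obstacle is verifying $\overline{y} \cap W = y$ in (2). The inclusion $y \subseteq \overline{y} \cap W$ is immediate. For the reverse, any element of $\overline{y} \cap W$ lies in some $\overline{y}_s$ with $s \in [y]^{<\omega}$, and is therefore of the form $a_k^s$ for some $k$; the closure of $y$ under $f$ then forces it into $y$. Closedness of $\overline{y}$ under $\overline{f}$ uses the monotonicity $s' \subseteq s \Rightarrow \overline{y}_{s'} \subseteq \overline{y}_s$: any finite $t \subseteq \overline{y}$ is contained in $\overline{y}_{s}$ for $s \in [y]^{<\omega}$ the union of the finitely many indices involved, so $\overline{f}(t) \in \overline{y}_s \subseteq \overline{y}$. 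These two bookkeeping points are the only delicate ingredients; everything else is direct from the function-closure characterization of clubs.
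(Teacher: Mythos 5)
The paper does not prove this lemma at all; it is stated with an attribution to Menas and used as a known fact, so there is no ``paper proof'' to compare against. Your argument is the standard one via the characterization of clubs in $[W]^{\omega}$ by closure under finitary functions, and both parts are essentially correct: in (1) the direct verification of closedness plus the lift $\overline{f}(s)=f(s\cap W)$ (noting $f(t)\in W$, so $f$-closure of $\overline{y}\cap W$ really does follow, and $\omega\subseteq\omega_1\subseteq W$ guarantees $\overline{y}\cap W$ is infinite) gives that $C^{\downarrow\overline{W}}$ is closed and contains a club, hence is a club; in (2) the directed union $\overline{y}=\bigcup\{\overline{y}_s : s\in[y]^{<\omega}\}$ together with the monotonicity $s'\subseteq s\Rightarrow\overline{y}_{s'}\subseteq\overline{y}_s$ and the identity $\overline{y}\cap W=y$ is exactly the right skeleton.

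The one point I would push back on is the encoding $f(s\cup\{k\})=a_k^s$ in part (2): this is not well defined, since a finite set $u\subseteq W$ can be written as $s\cup\{k\}$ in several ways (e.g.\ $\{0,3,5\}=\{0,5\}\cup\{3\}=\{3,5\}\cup\{0\}$), and closure of $y$ under one arbitrarily chosen decomposition only yields $a_{k'}^{s'}\in y$ for that decomposition, not $a_k^s\in y$ for every pair $(s,k)$ with $s\in[y]^{<\omega}$, which is what your verification of $\overline{y}\cap W\subseteq y$ and of $\overline{f}$-closure actually uses. This is a bookkeeping slip rather than a conceptual gap, and any of the standard equivalent characterizations repairs it: use a single function $f\colon[W]^{<\omega}\to[W]^{\leq\omega}$ with countable values and set $f(s)=\overline{y}_s\cap W$ (the collection of countable $y$ closed under such an $f$ is still a club, and every club contains one), or use countably many functions $f_k(s)=a_k^s$. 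With that adjustment the proof is complete.
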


In particular, for a set $S \subseteq [W]^{\omega} \subseteq [\overline{W}]^{\omega}$, $S$ is semistationary in $[W]^{\omega}$ if and only if it is semistationary in $[\overline{W}]^{\omega}$. Therefore, when considering the semistationarity of a set $S$, we do not emphasize its base set. 

For a set $S \subseteq [W]^{\omega}$, we introduce the specific set
$\mathrm{Gal}_{\theta}(Z,F,S,A)$, the set of all $M \in [\mathcal{H}_{\theta}]^{\omega}$ such that $M \cap W \in S$ and $\bigcap\{f^{-1} M \cap \omega_1 \mid f:A \to \omega_1 \in M\} \in F^\ast$. The idea behind this definition comes from~\cite{MR1261218}. Note that the \emph{non}-stationarity of $\mathrm{Gal}_{\theta}(\aleph_2,I_{\aleph_2}, [\aleph_2]^{\omega},\aleph_2)$ is equivalent to the strong Chang's conjecture at $\aleph_2$. The following lemma is straightforward.

\begin{lem}\label{lem:gal1}
Suppose that $F$ is an $\aleph_2$-complete fine filter over $Z$ and $A \in F^{+}$. Then the following hold:
\begin{enumerate}
 \item If $S \subseteq T$ are subsets of $[W]^{\omega}$, then $\mathrm{Gal}_{\theta}(Z,F,S,A) \subseteq \mathrm{Gal}_{\theta}(Z,F,T,A)$. 
 \item For any set $S \subseteq [W]^{\omega}$, we have $(\mathrm{Gal}_{\theta}(Z,F,S,A))^{\downarrow W} \subseteq S$. 
\end{enumerate}
\end{lem}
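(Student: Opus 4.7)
The plan is to establish both claims by unpacking the definition of $\mathrm{Gal}_\theta(Z,F,S,A)$ and observing that the parameter $S$ enters the definition only through the clause $M\cap W\in S$; the second, filter-theoretic clause
\[
\bigcap\{f^{-1}M\cap\omega_1 \mid f:A\to\omega_1,\ f\in M\}\in F^\ast
\]
depends only on $M$, $F$, and $A$, and is therefore inert under changes of the $S$-parameter. So both parts reduce, essentially, to reading off what membership in $\mathrm{Gal}_\theta$ means.

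For part (1), I will take an arbitrary $M\in\mathrm{Gal}_\theta(Z,F,S,A)$. From the definition, $M\cap W\in S\subseteq T$ and the filter-theoretic clause already holds for $M$. Both conditions required for membership in $\mathrm{Gal}_\theta(Z,F,T,A)$ are thereby satisfied, giving the desired inclusion.

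For part (2), I will fix an arbitrary $x\in (\mathrm{Gal}_\theta(Z,F,S,A))^{\downarrow W}$. Following the projection/lifting convention of Lemma~\ref{menas}, this means $x=M\cap W$ for some $M\in\mathrm{Gal}_\theta(Z,F,S,A)$, and the first clause of the definition of $\mathrm{Gal}_\theta$ then yields $x=M\cap W\in S$ immediately.

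The only delicate point is notational: one has to confirm that $(\cdot)^{\downarrow W}$ is being used in the projection sense, so that $(\mathrm{Gal}_\theta(Z,F,S,A))^{\downarrow W}$ is interpreted as $\{M\cap W \mid M\in \mathrm{Gal}_\theta(Z,F,S,A)\}$ (this is the only reading that makes the statement typecheck, since $S\subseteq [W]^\omega$). Once this is settled, each item collapses to a single application of the definition; no use is made of elementarity, of the $\aleph_2$-completeness or fineness of $F$, or of the hypothesis $A\in F^+$. The real content of $\mathrm{Gal}_\theta$ is reserved for later sections; here only the bookkeeping matters.
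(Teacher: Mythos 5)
Your proof is correct and matches the paper, which offers no argument at all for this lemma beyond calling it straightforward: both parts are immediate from the definition of $\mathrm{Gal}_{\theta}(Z,F,S,A)$ once one notes that $S$ enters only through the clause $M\cap W\in S$. Your remark that $(\cdot)^{\downarrow W}$ must here be read as the projection $\{M\cap W\mid M\in\mathrm{Gal}_{\theta}(Z,F,S,A)\}$ is the right call (and worth making, since under the conventions of Lemma~\ref{menas} that operation is literally the one written $\uparrow W$).
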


Namba forcing adds a club subset of $[\mathcal{H}_{\theta}]^{\omega}$ that is disjoint from $\mathrm{Gal}_{\theta}(Z,F,S,A)$.

\begin{lem}\label{lem:gal2}
For a set $\omega_1 \subseteq W$, a stationary subset $S \subseteq [W]^{\omega}$, a $\kappa$-complete fine filter over $Z \subseteq \mathcal{P}(X)$, and $A \in F^{+}$, there exist $p_{A} \in \mathrm{Nm}(Z,F)$ and a $\mathrm{Nm}(Z,F)$-name $\dot{C}$ for a club subset of $[\dot{\mathcal{H}}_{\theta}]^{\omega}$ such that
\[
p_{A} \force \dot{C} \cap (\mathrm{Gal}_{\theta}(Z,F,S,A))^{\mathbf{cl}} = \emptyset.
\]
In particular, $\mathrm{Gal}_{\theta}(Z,F,S,A)$ is forced to be a \emph{non}-semistationary subset of $S$ by $p_A$, even if $\mathrm{Gal}_{\theta}(Z,F,S,A)$ is stationary. 
\end{lem}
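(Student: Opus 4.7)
My plan is to take $p_A$ to be the $F$-Namba tree with trunk $\emptyset$ and successor set $\mathrm{Suc}_{p_A}(\emptyset) = A$, which is a valid condition since $A \in F^+$; below $p_A$ the first coordinate $\dot{a}_0$ of the generic sequence is forced to lie in $A$. For $\dot{C}$ I will work in $V[G]$: fix in $V$ a Skolem expansion $\mathcal{A}^V$ of $\mathcal{H}_{\theta}^V$, and let $\dot{C}$ name the club of countable $N \prec \mathcal{H}_{\theta}^{V[G]}$ that carry all relevant parameters, notably $\mathcal{A}^V$, $W$, $Z$, $F$, $S$, $A$, and the generic sequence $\dot{g}$. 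Any such $N$ then contains every term $\dot{a}_n$ of the generic sequence, and because $\mathcal{A}^V \in N$ it absorbs ground-model Skolem hulls of its subsets, i.e.\ $\mathrm{Sk}^{\mathcal{A}^V}(X) \subseteq N$ for each finite $X \subseteq N$.

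The combinatorial heart of the proof will be a density fact. For each $M \in V$ with $M \in \mathrm{Gal}_{\theta}(Z,F,S,A)$, set
\[
B_M = \{a \in A : \mathrm{Sk}(M \cup \{a\}) \cap \omega_1 = M \cap \omega_1\}.
\]
Using the Skolem-hull identity $\mathrm{Sk}(M \cup \{a\}) \cap \omega_1 = \{f(a) : f \in M,\, f : A \to \omega_1\}$ from the lemma stated just before Subsection~\ref{subsec:namba}, one has $B_M = \bigcap\{f^{-1}[M \cap \omega_1] : f \in M,\, f : A \to \omega_1\}$, so the definition of $\mathrm{Gal}$ translates precisely to $B_M \in F^{\ast}$. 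Given any $r \leq p_A$ with trunk $t$, the successor set $\mathrm{Suc}_r(t) \in F^{+}$ and $B_M \in F^\ast$ together give $\mathrm{Suc}_r(t) \setminus B_M \in F^{+}$; appending any element of this set to the trunk yields $r' \leq r$ with $r' \force \dot{a}_{|t|} \in A \setminus B_M$. So the set of $r$ forcing ``$\dot{a}_n \notin B_M$ for some $n$'' is dense below $p_A$.

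To close the argument I would argue by contradiction: suppose some $q \leq p_A$ forces $\dot{N} \in \dot{C} \cap (\mathrm{Gal}_{\theta}(Z,F,S,A))^{\mathbf{cl}}$. Work in $V[G]$ for a generic $G \ni q$, let $N = \dot{N}^G$, and fix a ground-model witness $M \in \mathrm{Gal}_{\theta}(Z,F,S,A)$ with $M \subseteq N$ and $M \cap \omega_1 = N \cap \omega_1$. The density fact together with the genericity of $G$ yields some $n$ with $\dot{a}_n^G \notin B_M$, so $\mathrm{Sk}^{\mathcal{A}^V}(M \cup \{\dot{a}_n^G\}) \cap \omega_1 > M \cap \omega_1$. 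On the other hand $M \cup \{\dot{a}_n^G\} \subseteq N$, and the absorption property of $N$ forces $\mathrm{Sk}^{\mathcal{A}^V}(M \cup \{\dot{a}_n^G\}) \cap \omega_1 \subseteq N \cap \omega_1 = M \cap \omega_1$, a contradiction. The ``in particular'' clause then falls out immediately: if $\mathrm{Gal}_{\theta}(Z,F,S,A)$ were semistationary in $V[G]$, its $\mathbf{cl}$-closure would be stationary and would meet the club $\dot{C}$, contradicting the disjointness just forced.

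The main obstacle I anticipate is precisely the interface between ground-model Skolem closures and countable submodels living in $V[G]$: one must ensure that $M \cup \{\dot{a}_n^G\} \subseteq N$ really forces $\mathrm{Sk}^{V}(M \cup \{\dot{a}_n^G\}) \subseteq N$. Baking a ground-model Skolem expansion $\mathcal{A}^V$ into the parameter list of $\dot{C}$ is the standard remedy, and once it is in place the rest of the proof reduces to the density calculation above, which is a short application of the $\aleph_2$-completeness of $F$.
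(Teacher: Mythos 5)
Your proposal is correct and takes essentially the same route as the paper: both arguments derive a contradiction from the fact that for a ground-model witness $M \in \mathrm{Gal}_{\theta}(Z,F,S,A)$ the set $B_M=\bigcap\{f^{-1}[M\cap\omega_1]\mid f\in M\}$ lies in the dual ideal while successor sets of conditions are $F$-positive, so some generic coordinate escapes $B_M$ and pushes $\mathrm{Sk}(M\cup\{a\})\cap\omega_1$ above $M\cap\omega_1=N\cap\omega_1$ (the paper runs this directly at the trunk of the offending condition $q$, you phrase it as a density fact, which is equivalent). The one adjustment needed is to define $p_A$ with $\mathrm{Suc}_{p_A}(s)\subseteq A$ at \emph{every} node, as the paper does, rather than only at the root; otherwise at deeper levels you cannot conclude $\dot{a}_{|t|}\in A$, which your argument uses when applying the functions $f\colon A\to\omega_1$ from $M$.
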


\begin{proof}
Fix a condition $p_{A} \in \mathrm{Nm}(Z,F)$ such that $\mathrm{tr}(p_A) = \emptyset$ and $\mathrm{Suc}_{p_{A}}(s) \subseteq A$ for all $s \in p_{A}$. Let $\dot{\mathcal{A}}$ be a $\mathrm{Nm}(Z,F)$-name for an expansion $\langle \dot{\mathcal{H}}_{\theta},\in,Z,F,\dot{G}\rangle$. Suppose, for contradiction, that $p_{A}$ does not force 
\[
\{M \prec \dot{\mathcal{A}} \mid |M| = \aleph_0\} \cap (\mathrm{Gal}_{\theta}(Z,F,S,A))^{\mathbf{cl}} = \emptyset.
\]
Then there exist $q \leq p_{A}$, $M \in [\mathcal{H}_{\theta}]^{\omega}$, and a $\mathrm{Nm}(Z,F)$-name $\dot{M}$ such that:
\begin{itemize}
 \item $q \force M \subseteq \dot{M} \prec \dot{\mathcal{A}}$.
 \item $M \cap W \in S$. 
 \item $M \cap \omega_1 = \dot{M} \cap \omega_1$. 
 \item $\bigcap\{F^{-1} M \cap \omega_1 \mid F:Z \to \omega_1 \in M\} \cap A \in F^\ast$. 
\end{itemize}

For each function $F:Z \to \omega_1 \in M$, we have
\[
q \force F(\dot{x}) \in M[\dot{G}] \cap \omega_1 = M \cap \omega_1,
\]
where $\dot{x}$ is the $|\mathrm{tr}(q)|$-th element of $\bigcup\{\mathrm{tr}(p) \mid p \in \dot{G}\}$. By a similar argument, we obtain
\[
\mathrm{Suc}_{q}(\mathrm{tr}(q)) \subseteq \bigcap\{F^{-1} M \cap \omega_1 \mid F:Z \to \omega_1 \in M\} \cap A \in F^{+}.
\]
This contradicts our assumption. 
\end{proof}

Note that we can also show the inverse direction. 

\begin{lem}\label{lem:gal3}
 For a stationary subset $S \subseteq [W]^{\omega}$, the following are equivalent:
 \begin{enumerate}
  \item $\mathrm{Gal}_{\theta}(Z,F,S,A)$ is stationary.
  \item There exist a stationary subset $T \subseteq S$ and $p_{A} \in \mathrm{Nm}(Z,F)$ such that $p_A \force T$ is \emph{non}-semistationary. 
 \end{enumerate}
\end{lem}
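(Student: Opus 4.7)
The plan is to treat the two implications separately, with $(2)\Rightarrow(1)$ being the substantive content; the other direction is essentially Lemma~\ref{lem:gal2} phrased in the $[W]^{\omega}$ picture.

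For $(1)\Rightarrow(2)$ I would refine the projection. Let $G:=\mathrm{Gal}_{\theta}(Z,F,S,A)$ and fix an expansion $\mathcal{A}^{*}$ of $\mathcal{H}_{\theta}$ rich enough that $\mathrm{Sk}_{\mathcal{A}^{*}}(W)=\mathcal{H}_{\theta}$. Define
\[
T:=\{y\in[W]^{\omega}\mid \mathrm{Sk}_{\mathcal{A}^{*}}(y)\in G\text{ and }\mathrm{Sk}_{\mathcal{A}^{*}}(y)\cap W=y\}.
\]
The set $\{M\in[\mathcal{H}_{\theta}]^{\omega}\mid M=\mathrm{Sk}_{\mathcal{A}^{*}}(M\cap W)\}$ is a club (closed under increasing unions by a standard computation, and unbounded since $\mathrm{Sk}_{\mathcal{A}^{*}}(W)=\mathcal{H}_{\theta}$). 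Intersecting with the stationary $G$ and projecting to $[W]^{\omega}$ gives exactly $T$, so $T$ is stationary; Lemma~\ref{lem:gal1}(2) yields $T\subseteq S$. Apply Lemma~\ref{lem:gal2} with the expansion $\dot{\mathcal{A}}$ chosen to contain $\mathcal{A}^{*}$: the forced club $\dot{C}$ is disjoint from $G^{\mathbf{cl}}$. If some $M\in\dot{C}$ had $M\cap W\in T^{\mathbf{cl}}$, witnessed by $y\in T$, then $N:=\mathrm{Sk}_{\mathcal{A}^{*}}(y)\in G$ would lie in $M$ (because $\mathcal{A}^{*}\in M$ and $y\subseteq M$) with $N\cap\omega_{1}=y\cap\omega_{1}=M\cap\omega_{1}$, placing $M$ in $G^{\mathbf{cl}}$---a contradiction. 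Hence $p_{A}$ forces $\dot{C}^{\uparrow W}$ to contain a club disjoint from $T^{\mathbf{cl}}$, so $T$ is forced non-semistationary.

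For $(2)\Rightarrow(1)$ I argue by contrapositive. Suppose $G$ is non-stationary and fix stationary $T\subseteq S$, $p\in\mathrm{Nm}(Z,F)$, and a $\mathrm{Nm}(Z,F)$-name $\dot{D}$ for a club in $[W]^{\omega}$; the goal is to produce $q\leq p$ with $q\force T^{\mathbf{cl}}\cap\dot{D}\neq\emptyset$. Choose an expansion $\mathcal{A}=\langle\mathcal{H}_{\theta},\in,Z,F,S,A,T,p,\dot{D},\ldots\rangle$ such that every countable $M\prec\mathcal{A}$ lies outside $G$. Stationarily many such $M$ satisfy $M\cap W\in T$ (by the Menas lift); fix one. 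Since $M\notin G$ but $M\cap W\in T\subseteq S$, the filter condition defining $G$ must fail:
\[
B_{M}:=\bigcap\{f^{-1}[M\cap\omega_{1}]\mid f\in M,\,f:A\to\omega_{1}\}\in F^{+}.
\]
Now build $q\leq p$ by a fusion-style thinning. Enumerate the $\mathrm{Nm}(Z,F)$-dense sets in $M$ as $\langle D_{n}\mid n<\omega\rangle$ and recursively refine $p$ so that (i) $\mathrm{Suc}_{q}(t)\subseteq B_{M}\cap\mathrm{Suc}_{p}(t)$ and this remains in $F^{+}$ at every $t\sqsupseteq\mathrm{tr}(q)$, and (ii) every branch of $q$ eventually meets every $D_{n}$. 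Such $q$ is $(M,\mathrm{Nm}(Z,F))$-generic; because each branch element $\dot{g}_{n}$ lies in $B_{M}$, we have $f(\dot{g}_{n})\in M\cap\omega_{1}$ for every $f:A\to\omega_{1}\in M$, forcing $M[\dot{G}]\cap\omega_{1}=M\cap\omega_{1}$. Combined with $(M,P)$-genericity and $\dot{D}\in M$, this forces $M[\dot{G}]\cap W\in\dot{D}^{\dot{G}}$. Since $M\cap W\in T$ with $M\cap W\subseteq M[\dot{G}]\cap W$ and both sharing the same $\omega_{1}$-trace, we obtain $M[\dot{G}]\cap W\in T^{\mathbf{cl}}\cap\dot{D}^{\dot{G}}$, contradicting $p\force T^{\mathbf{cl}}\cap\dot{D}=\emptyset$.

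The chief technical obstacle is executing the simultaneous thinning that achieves $(M,\mathrm{Nm}(Z,F))$-genericity while keeping every successor set inside $B_{M}$ and positive, and in particular verifying that $B_{M}\cap\mathrm{Suc}_{p}(t)\in F^{+}$ along the recursion. This is the Shelah-style fusion for Namba forcings and is the symmetric counterpart of the proof of Lemma~\ref{lem:gal2}: the $\aleph_{2}$-completeness of $F$ together with countability of $M$ are exactly what make the countably many constraints coming from $M$ compatible with staying in $F^{+}$ at every node.
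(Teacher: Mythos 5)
Your overall architecture matches the paper's (the forward direction from Lemmas~\ref{menas}, \ref{lem:gal1} and~\ref{lem:gal2} via a Skolem-hull refinement of the projection of $G$; the inverse via a fusion below a single bad model), but both halves contain genuine gaps. In $(1)\Rightarrow(2)$ the stationarity of $T$ is not established. You cannot choose an expansion with $\mathrm{Sk}_{\mathcal{A}^{*}}(W)=\mathcal{H}_{\theta}$: an expansion admitting a club of \emph{countable} elementary submodels has only countably many symbols, so $|\mathrm{Sk}_{\mathcal{A}^{*}}(W)|\leq |W|+\aleph_{0}<|\mathcal{H}_{\theta}|$ for the sufficiently large $\theta$ in play. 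Hence $\{M\mid M=\mathrm{Sk}_{\mathcal{A}^{*}}(M\cap W)\}$ is not unbounded and the ``intersect with $G$ and project'' step collapses. The obvious repair --- take $M\in G$ with $M\prec\mathcal{A}^{*}$ and pass to $N=\mathrm{Sk}_{\mathcal{A}^{*}}(M\cap W)\subseteq M$ --- is not automatic either, because membership in $\mathrm{Gal}_{\theta}(Z,F,S,A)$ is not inherited by sub-hulls: $N$ carries fewer functions $f:A\to\omega_{1}$ than $M$, so $\bigcap\{f^{-1}[N\cap\omega_{1}]\mid f\in N\}\supseteq\bigcap\{f^{-1}[M\cap\omega_{1}]\mid f\in M\}$, and since $F^{\ast}$ is the dual \emph{ideal}, smallness of the latter does not transfer to the former. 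Some further input is needed here (for instance working over $W\supseteq {}^{A}\omega_{1}$ as in Lemma~\ref{lem:semiproperchar}, or arranging that the functions witnessing $M\in G$ are $\mathcal{A}^{*}$-definable from parameters in $M\cap W$).

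In $(2)\Rightarrow(1)$ the fusion is asserted rather than performed, and the asserted resolution of the obstacle you yourself flag is false as stated: $F^{+}$ is not closed under intersections, even with positive sets lying in $M$, and an $\aleph_{2}$-complete filter can have a decreasing $\omega$-sequence of positive sets with empty intersection (partition $Z$ into countably many positive pieces $A_{n}$ and take $X_{n}=\bigcup_{m\geq n}A_{m}$), so neither ``$\aleph_{2}$-completeness'' nor ``countability of $M$'' yields $B_{M}\cap\mathrm{Suc}_{p}(t)\in F^{+}$ along the recursion. More fundamentally, thinning every successor set into the single set $B_{M}$ cannot force $M[\dot{G}]\cap\omega_{1}=M\cap\omega_{1}$: the new ordinals of $M[\dot{G}]$ come from functions of finite tuples of generic elements, so one must keep $\dot{g}_{n}$ inside $B_{N}$ for the extended hulls $N=\mathrm{Sk}(M\cup\{\dot{g}_{0},\dots,\dot{g}_{n-1}\})$, and the positivity of \emph{those} sets follows neither from $B_{M}\in F^{+}$ nor from the non-stationarity of $\mathrm{Gal}_{\theta}(Z,F,S,A)$, which only constrains models whose $W$-trace lies in $S$ --- a condition the hulls $N$ need not satisfy. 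This is exactly the content of the Galvin-game equivalence that the paper defers to its earlier reference for Lemma~\ref{lem:semiproperchar}(1)$\leftrightarrow$(4), and it cannot be waved through as a routine thinning.
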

\begin{proof}
 The forward direction follows from Lemmas~\ref{menas},~\ref{lem:gal1}, and~\ref{lem:gal2}. The inverse direction is straightforward. 
\end{proof}

We now recall Galvin games. For $A \in F^{+}$, the Galvin game $\Game_{\mathrm{Gal}}(F,A)$ is a game of length $\omega$ with the following rules:
\begin{center}
 \begin{tabular}{|c||c|c|c|c|c|c|}
  \hline
  Player I & $F_{0}:A \to \omega_1$ & & $\cdots$ & $F_{i}:A \to \omega_1$ & & $\cdots$ \\ \hline
  Player II &  & $\xi_0 < \omega_1$ & $\cdots$ &  & $\xi_i < \omega_1$ & $\cdots$ 
			  \\ \hline
 \end{tabular}
\end{center}
 Player II wins if $\bigcap_{i < \omega} F_{i}^{-1}\sup_{i}\xi_i \in F^{+}$. The connection between Galvin games and sets $\mathrm{Gal}_{\theta}(Z,F,S,A)$ is as follows. 

\begin{lem}\label{lem:semiproperchar}
 For an $\aleph_2$-complete fine filter $F$ over $Z$, if $\omega_1 \subseteq Z$, then the following are equivalent:
 \begin{enumerate}
  \item $\mathrm{Nm}(Z,F)$ is semiproper.
  \item $\mathrm{Gal}_{\theta}(Z,F,[Z \cup {^{A}}\omega_1]^{\omega},A)$ is \emph{non}-stationary for all $A \in F^{+}$ and sufficiently large regular $\theta$.
  \item $\{M \in [Z\cup {^{A}}\omega_1]^{\omega} \mid \bigcap\{f^{-1}M \cap \omega_1 \mid f:A \to \omega_1 \in M\} \in F^\ast\}$ is \emph{non}-stationary for all $A \in F^{+}$. 
  \item Player II has a winning strategy for $\Game_{\mathrm{Gal}}(F,A)$ for all $A \in F^{+}$. 
\end{enumerate}
\end{lem}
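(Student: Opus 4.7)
The plan is to establish the chain of equivalences (1)$\Leftrightarrow$(2)$\Leftrightarrow$(3)$\Leftrightarrow$(4), each invoking a different one of the already-developed tools.

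The equivalence (1)$\Leftrightarrow$(2) is essentially a direct reformulation via Lemmas~\ref{lem:gal2} and~\ref{lem:gal3}. For (1)$\Rightarrow$(2): if $\mathrm{Gal}_\theta(Z,F,[Z\cup{^{A}}\omega_1]^{\omega},A)$ is stationary for some $A\in F^{+}$, then this stationary set is in particular semistationary, and Lemma~\ref{lem:gal2} applied with $S=[Z\cup{^{A}}\omega_1]^{\omega}$ yields a condition $p_A$ forcing it to be non-semistationary, contradicting~(1). For (2)$\Rightarrow$(1): if (1) fails then some stationary $T$ is forced non-semistationary by some $p\in\mathrm{Nm}(Z,F)$; passing below $p$ to a condition of the canonical $p_A$-shape supported on $A:=\mathrm{Suc}_p(\mathrm{tr}(p))$, the reverse direction of Lemma~\ref{lem:gal3} furnishes $\mathrm{Gal}_\theta(Z,F,T,A)$ stationary, and Lemma~\ref{lem:gal1}(1) then lifts this to stationarity of $\mathrm{Gal}_\theta(Z,F,[Z\cup{^{A}}\omega_1]^{\omega},A)$, negating~(2). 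For (2)$\Leftrightarrow$(3) I will invoke Menas' lemma (Lemma~\ref{menas}): the defining predicate $\bigcap\{f^{-1}M\cap\omega_1\mid f:A\to\omega_1\in M\}\in F^{\ast}$ depends only on $M\cap W$ where $W:=Z\cup{^{A}}\omega_1\supseteq\omega_1$, so Lemma~\ref{menas} transfers (non-)stationarity between $[\mathcal{H}_\theta]^{\omega}$ and $[W]^{\omega}$.

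For (3)$\Leftrightarrow$(4) I plan a routine Galvin-game argument. For (4)$\Rightarrow$(3), fix a winning strategy $\sigma$ for Player~II and any countable $M\prec\langle\mathcal{H}_\theta,\in,\sigma,F,A\rangle$; enumerate $\{f:A\to\omega_1\mid f\in M\}$ as $\langle F_i\mid i<\omega\rangle$ and play them against $\sigma$. Each response $\xi_i$ lies in $M\cap\omega_1$, so $\sup_i\xi_i\leq M\cap\omega_1$, and II's victory gives $\bigcap_iF_i^{-1}(\sup_i\xi_i)\in F^{+}$, which is contained in $\bigcap\{f^{-1}(M\cap\omega_1)\mid f:A\to\omega_1\in M\}$, forcing the latter to be $F$-positive; hence a club of such $M$ avoids the set in~(3). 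For (3)$\Rightarrow$(4), fix a club $C$ of ``good'' $M$ and define II's strategy by constructing a Skolem chain $M_0\prec M_1\prec\cdots$ in $C$ with $F_0,\ldots,F_i\in M_i$, answering $\xi_i:=M_i\cap\omega_1$; the union $M:=\bigcup_iM_i$ again lies in $C$ and satisfies $\sup_i\xi_i=M\cap\omega_1$, so its good property yields $\bigcap_iF_i^{-1}(\sup_i\xi_i)\in F^{+}$ and II wins.

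The principal obstacle I anticipate lies in (2)$\Rightarrow$(1): an arbitrary condition witnessing non-semiproperness must be massaged into the canonical $p_A$-shape required by the reverse direction of Lemma~\ref{lem:gal3} while preserving the forced non-semistationarity of the target set. The remaining implications are essentially bookkeeping, leveraging Menas-style transfer and a standard closing-off construction for the Galvin game.
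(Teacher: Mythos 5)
Your decomposition differs from the paper's in a useful way: the paper proves $(1)\leftrightarrow(2)\to(3)$ and $(3)\to(2)$ via Lemmas~\ref{lem:gal2}, \ref{lem:gal3} and~\ref{menas}, and then simply cites the author's earlier work \cite{tsukuura} for $(1)\leftrightarrow(4)$, whereas you replace that citation by a direct proof of $(3)\leftrightarrow(4)$. Your two game arguments are correct: in $(4)\Rightarrow(3)$, closure of $M$ under the winning strategy $\sigma$ gives $\xi_i\in M\cap\omega_1$ by induction, so $\sup_i\xi_i\le M\cap\omega_1$ and the winning condition places an $F$-positive set inside $\bigcap\{f^{-1}M\cap\omega_1\mid f\in M\}$; in $(3)\Rightarrow(4)$, the closing-off chain with $\xi_i=M_i\cap\omega_1$ and $M=\bigcup_iM_i$ in the club works because $M\cap\omega_1=\sup_i\xi_i$. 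This is a genuine self-containedness gain over the paper, and the transfer $(2)\leftrightarrow(3)$ via Menas is exactly what the paper does. Your $(1)\Rightarrow(2)$ via Lemma~\ref{lem:gal2} is also fine (a stationary $\mathrm{Gal}$ set is semistationary and is killed by $p_A$).

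The genuine gap is the one you yourself flagged: $(2)\Rightarrow(1)$. The ``massaging'' of an arbitrary witness $p$ to non-semiproperness into a condition of the canonical $p_A$-shape does not work as stated. First, $p_A$ as used in Lemmas~\ref{lem:gal2} and~\ref{lem:gal3} has empty trunk, and no extension of a condition with nonempty trunk has empty trunk. Second, and more seriously, even ignoring the trunk, taking $A=\mathrm{Suc}_p(\mathrm{tr}(p))$ does not control the successor sets at deeper nodes of $p$, which need not be contained in $A$; and the real content of ``not semiproper $\Rightarrow$ some $\mathrm{Gal}_\theta(Z,F,\cdot,A)$ is stationary'' is that the absence of a semigeneric extension of $p$ below $M$ forces the Galvin intersection to be null on some positive set --- this conversion is a fusion argument over the Namba tree (one must shrink $\mathrm{Suc}(s)$ relative to the \emph{extended} models $\mathrm{Sk}(M\cup\mathrm{ran}(s))$ at every node, which is exactly what a winning strategy for Player~II makes possible). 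That fusion argument is precisely the content of $(4)\Rightarrow(1)$, which the paper outsources to \cite{tsukuura}. So to close your cycle you should either prove $(4)\Rightarrow(1)$ directly (build a semigeneric $q\le p$ by fusing along the tree using the strategies $\tau_s$ for the extended models, as in the proof sketched in Proposition~\ref{prop:schfailmodel}), or accept the citation as the paper does; the reverse direction of Lemma~\ref{lem:gal3} alone, applied to an arbitrary killing condition, does not deliver $(2)\Rightarrow(1)$.
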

\begin{proof}
 By Lemmas~\ref{lem:gal3} and~\ref{menas}, (1) $\leftrightarrow$ (2) $\to$ (3) follows easily. The implication (3) $\to$ (2) follows from Lemma~\ref{menas}. For (1) $\leftrightarrow$ (4), we refer to~\cite{tsukuura}.
\end{proof}

We now present two applications of this lemma. The first is used to prove Theorem~\ref{maintheorem:model}. Recall that a poset $P$ is $\mu$-strategically closed if and only if Player II has a winning strategy for the game $\Game_{\mu}(P)$. The game $\Game_{\mu}(P)$ is a two-player game designed to construct a descending sequence $\langle p_{\alpha} \mid \alpha < \mu \rangle$. Player I chooses $p_\alpha$ if $\alpha$ is an odd ordinal, while Player II chooses $p_\alpha$ if $\alpha$ is an even ordinal (including limit ordinals). If, after the game, the sequence $\langle p_{\alpha} \mid \alpha < \mu \rangle$ is successfully constructed, Player II wins.

\begin{lem}\label{lem:formodel}
For an $\aleph_2$-complete fine filter $F$ over $Z$, 
\begin{enumerate}
 \item If $\langle F^{+},\subseteq \rangle$ contains an $\aleph_1$-closed dense subset, then $\mathrm{Nm}(Z,F)$ is semiproper. 
 \item If $\mathrm{Nm}(Z,F)$ is semiproper, then for every $|Z|+1$-strategically closed poset $P$, $P$ forces $\dot{\mathrm{Nm}}(Z,F)$ to be semiproper. 
 \item Suppose that for all $A \in F^{+}$, there exists an $\aleph_2$-complete fine ultrafilter $U$ over $Z$ such that $F\cup \{A\} \subseteq U$. Then $\mathrm{Nm}(Z,F)$ is semiproper. 
\end{enumerate}
\end{lem}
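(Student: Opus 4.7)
The plan is to use the Galvin-game characterization of semiproperness from Lemma~\ref{lem:semiproperchar}, (1)$\leftrightarrow$(4): it suffices to exhibit, for every $A \in F^{+}$, a winning strategy for Player II in $\Game_{\mathrm{Gal}}(F,A)$. Each of the three clauses will be proved by an explicit such strategy.

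For (1), fix $A \in F^{+}$ and, by density, pick $A_{0} \in D$ with $A_{0} \subseteq A$. Player II maintains a descending sequence $A_{0} \supseteq A_{1} \supseteq \cdots$ inside the $\aleph_{1}$-closed dense set $D$. At round $i$, given $F_{i}:A \to \omega_{1}$, write $A_{i} = \bigsqcup_{\xi < \omega_{1}}(A_{i} \cap F_{i}^{-1}\{\xi\})$; since $F$ is $\aleph_{2}$-complete and $A_{i} \in F^{+}$, some piece is $F$-positive. Pick such $\xi_{i}$, refine by density to $A_{i+1} \in D$ below $A_{i} \cap F_{i}^{-1}\{\xi_{i}\}$, and have Player II play $\xi_{i}+1$. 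After $\omega$ rounds, $\aleph_{1}$-closedness of $D$ produces $A^{*} \in D$ below all $A_{i}$, and then $A^{*} \subseteq \bigcap_{i}F_{i}^{-1}\{\xi_{i}\} \subseteq \bigcap_{i}F_{i}^{-1}\sup_{j}(\xi_{j}+1)$ witnesses the win.

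For (3), given $A \in F^{+}$, fix an $\aleph_{2}$-complete fine ultrafilter $U$ with $F \cup \{A\} \subseteq U$. At round $i$, given $F_{i}:A\to\omega_{1}$, the $\aleph_{2}$-completeness of $U$ applied to the $\aleph_{1}$-partition $\{F_{i}^{-1}\{\alpha\} \mid \alpha < \omega_{1}\}$ of $A$ yields the unique $\alpha_{i} < \omega_{1}$ with $F_{i}^{-1}\{\alpha_{i}\} \in U$; Player II plays $\xi_{i} = \alpha_{i}+1$. Then $\aleph_{1}$-completeness of $U$ gives $\bigcap_{i < \omega}F_{i}^{-1}\{\alpha_{i}\} \in U \subseteq F^{+}$, and this set is contained in $\bigcap_{i}F_{i}^{-1}\sup_{j}\xi_{j}$, so Player II wins.

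For (2), let $P$ be $|Z|+1$-strategically closed. The associated strategic closure is strong enough to guarantee that $P$ adds no new subset of $Z$ (so $F$ remains a $\kappa$-complete fine filter in $V[G]$ and $F^{+}$ is unchanged) and no new function $f:A \to \omega_{1}$ for $A \in F^{+}$, since any such object has size at most $|Z|$; in particular $\omega_{1}$ is preserved. In $V$, for each $A \in F^{+}$, choose by Lemma~\ref{lem:semiproperchar}(4) a winning strategy $\sigma_{A}$ for Player II in $\Game_{\mathrm{Gal}}(F,A)$. In $V[G]$, any play by Player I consists of ground-model functions $F_{i}$, so $\sigma_{A} \in V$ applies verbatim to produce Player II's responses; the winning condition is computed only from the $F_{i}$ and $\xi_{i}$, all of which lie in $V$, so $\sigma_{A}$ remains winning in $V[G]$. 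Applying Lemma~\ref{lem:semiproperchar} in $V[G]$ yields the semiproperness of $\mathrm{Nm}(Z,F)$ there. The main subtlety across all three parts is the convention for $\sup_{i}\xi_{i}$ in the winning condition, which is the reason for Player II's ``$+1$ bump'' in (1) and (3), ensuring $F_{i}(a) < \sup_{j}\xi_{j}$ strictly.
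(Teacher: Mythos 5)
Your proof is correct and follows essentially the same route as the paper: all three parts reduce to the Galvin-game characterization of Lemma~\ref{lem:semiproperchar}, with the descending-sequence strategy through the dense set for (1), absoluteness of plays under strategic closure for (2), and the ultrafilter strategy for (3). The only cosmetic difference is that you argue (3) directly, whereas the paper derives it from (1) by noting that $U$ is itself an $\aleph_1$-closed dense subset of $\langle U,\subseteq\rangle$ and that a winning strategy for $\Game_{\mathrm{Gal}}(U,A)$ is one for $\Game_{\mathrm{Gal}}(F,A)$ since $U \subseteq F^{+}$.
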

\begin{proof}
 For (1), let $D$ be an $\aleph_1$-closed dense subset. We describe Player II's strategy: inductively, after Player I chooses $F_i$, Player II selects $\xi_i$ and $A_i \in D$ such that $A_i \subseteq (\bigcap_{j < i}F_{j}^{-1}\xi_j) \cap F_{i}^{-1}\xi_i$. This is possible since $F$ is $\aleph_2$-complete. After the game, since $D$ is $\aleph_1$-closed, $\bigcap_{i}A_i \in F^{+}$ and thus $\bigcap_{i}F_i^{-1}\sup_{i}\xi_i \in F^{+}$. 

 For (2), note that $P$ does not change $\mathcal{P}(Z)$ or $Z$. Since every play of the Galvin game is in the ground model, the semistationarity of $\mathrm{Nm}(Z,F)$ is preserved by $P$. 

 For (3), fix $A \in F^{+}$. By assumption, we can choose an $\aleph_1$-closed fine ultrafilter $U$ such that $F \cup \{A\} \subseteq U$. By (1), Player II has a winning strategy for $\Game_{\mathrm{Gal}}(U,A)$, which is also a winning strategy for $\Game_{\mathrm{Gal}}(F,A)$.
\end{proof}

The following result is used in Figure~\ref{fig:hierarchy}. 

\begin{thm}
Suppose that $\mu^{\omega} < \kappa$ for all $\mu < \kappa$ or that $\kappa = \aleph_2$. If $\mathrm{Nm}(\kappa,2^{|Z|})$ is locally semiproper, then $\mathrm{Nm}(Z,F)$ is semiproper for all $\kappa$-complete fine filters over $Z$.
\end{thm}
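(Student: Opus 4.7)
The plan is to argue by contradiction via Lemma~\ref{lem:semiproperchar}(3) combined with Theorem~\ref{thm:locsp}. Suppose that $\mathrm{Nm}(Z,F)$ is not semiproper for some $\kappa$-complete fine filter $F$ over $Z$. By Lemma~\ref{lem:semiproperchar}(3) there is $A \in F^{+}$ such that
\[
T = \bigl\{ M \in [Z \cup {}^A \omega_1]^{\omega} : \bigcap\{f^{-1}M \cap \omega_1 : f : A \to \omega_1, f \in M\} \in F^{*} \bigr\}
\]
is stationary. Set $\lambda = 2^{|Z|}$. Since $|Z \cup {}^A \omega_1 \cup \omega_1| \leq |Z|^{\omega} \leq \lambda$, I first fix an injection $\pi$ of this set into $\lambda$ that is the identity on $\omega_1$, and view $T$ (via $\pi$) as a stationary subset of $[\lambda]^{\omega}$; this is preserved by Menas' Lemma~\ref{menas}.

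The central construction is a $\kappa$-complete fine filter $G$ on $\mathcal{P}_\kappa\lambda$ such that forcing with $\mathrm{Nm}(\kappa,\lambda,G)$ projects onto $\mathrm{Nm}(Z,F)$. I intend to fix an encoding $\iota : Z \to \mathcal{P}_\kappa\lambda$ for which $\iota(z)$ carries $\pi(z)$ together with sufficient auxiliary ordinal data to guarantee fineness (for instance, by attaching an extra parameter $\xi < \lambda$ that varies freely), and to take $G$ to be the filter generated by the standard fineness sets together with the pushforwards $\{\iota[B'] : B' \in F\}$, with the routine adjustments to ensure $G$ is a proper filter on $\mathcal{P}_\kappa\lambda$. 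The intended behavior is that any $\mathrm{Nm}(\kappa,\lambda,G)$-generic sequence $\langle y_n : n < \omega\rangle$ computes, via $\iota^{-1}$ applied to an appropriately canonical projection of each $y_n$, an $\mathrm{Nm}(Z,F)$-generic sequence $\langle z_n\rangle$; consequently anything forced by $\mathrm{Nm}(Z,F)$ is forced by $\mathrm{Nm}(\kappa,\lambda,G)$, and in particular $\mathrm{Nm}(\kappa,\lambda,G)$ forces $T$ to be non-semistationary in $[\lambda]^{\omega}$.

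On the other hand, the hypothesis and Theorem~\ref{thm:locsp} imply that $\mathrm{Nm}(\kappa,\lambda,G)$ is locally semiproper, and hence preserves semistationarity of every semistationary subset of $[\lambda]^{\omega}$. This contradicts the conclusion of the previous paragraph and thereby establishes that $\mathrm{Nm}(Z,F)$ is semiproper.

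The technical core will be the construction of $\iota$ and $G$ together with verifying the projection $\mathrm{Nm}(\kappa,\lambda,G) \to \mathrm{Nm}(Z,F)$. The naive pushforward $\iota_{*}(F)$ is typically not fine on $\mathcal{P}_\kappa\lambda$, so $\iota$ must be thickened by auxiliary ordinal parameters in a way that simultaneously preserves fineness, $\kappa$-completeness, and the faithful transfer of the $F$-Namba-tree structure into the $G$-Namba-tree structure. Showing that each $G$-Namba tree induces an $F$-Namba tree in a manner yielding a regular projection---so that the $F$-Galvin condition on $A$ translates into a usable $G$-Galvin condition on $\iota[A]$---is where the main difficulty will lie.
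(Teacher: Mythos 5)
Your reduction to Lemma~\ref{lem:semiproperchar} and the decision to regard the stationary Galvin set $T$ as a subset of $[2^{|Z|}]^{\omega}$ match the paper's opening moves. But the heart of your argument --- a $\kappa$-complete fine filter $G$ on $\mathcal{P}_{\kappa}\lambda$ such that $\mathrm{Nm}(\kappa,\lambda,G)$ projects onto $\mathrm{Nm}(Z,F)$ --- is exactly the part you defer (``the routine adjustments,'' ``where the main difficulty will lie''), and it is not routine. A pushforward $\iota_{*}F$ concentrates on the range of $\iota$, and once you intersect with the fineness sets $\{y \mid \xi \in y\}$ for all $\xi < \lambda$ you must verify that every finite (indeed ${<}\kappa$-sized) intersection with sets $\iota[B]$, $B \in F$, stays positive; ``attaching a freely varying parameter'' destroys the functionality of $\iota$ and with it the claim that a $G$-Namba tree canonically induces an $F$-Namba tree. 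No construction along these lines appears in the paper, and I see no reason such a projection exists for an arbitrary fine filter $F$ on an arbitrary $Z \subseteq \mathcal{P}(X)$. There is also a smaller unaddressed point: $\mathrm{Nm}(Z,F)$ forces $T$ non-semistationary only below the condition $p_A$ of Lemma~\ref{lem:gal2}, so your projection would additionally have to be arranged to land below $p_A$. As written, the proposal has a genuine gap.

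The paper avoids all of this by using the \emph{other} direction of local semiproperness. By Theorem~\ref{thm:locsp} ((1) $\to$ (2)), local semiproperness of $\mathrm{Nm}(\kappa,2^{|Z|})$ yields an $x \in \mathcal{P}_{\kappa}(Z \cup {}^{A}\omega_1)$ such that $S \cap [x]^{\omega}$ is semistationary --- i.e., the stationary Galvin set \emph{reflects} to a set of size ${<}\kappa$. Then Lemma~\ref{lem:namba1}(3) (this is where the hypothesis ``$\mu^{\omega} < \kappa$ for all $\mu < \kappa$ or $\kappa = \aleph_2$'' is used) says that $\mathrm{Nm}(Z,F)$ itself preserves the semistationarity of any semistationary subset of $[x]^{\omega}$ for $|x| < \kappa$; by Menas' lemma $S$ is then forced to be semistationary, contradicting Lemma~\ref{lem:gal2}. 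So no second forcing and no projection are needed: the two ingredients you are missing are the reflection consequence of local semiproperness and the fact that Namba forcings cannot kill semistationary sets concentrated on a set of size ${<}\kappa$.
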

\begin{proof}
 Suppose otherwise. Then there exists a $\kappa$-complete fine filter $F$ over $Z$ such that $\mathrm{Nm}(Z,F)$ is \emph{not} semiproper. Then there exists an $A \in F^{+}$ such that the set in Lemma~\ref{lem:semiproperchar} is stationary. Let $S\subseteq [Z\cup {^{A}}\omega_1]^{\omega}$ be such a stationary set. 

Since $\mathrm{Nm}(\kappa,2^{|Z|})$ is semiproper, by Theorem~\ref{thm:locsp}, there exists an $x \in \mathcal{P}_{\kappa}(Z\cup {^{A}}\omega_1)$ such that $S \cap [x]^{\omega}$ is semistationary. By (3) of Lemma~\ref{lem:namba1}, $\mathrm{Nm}(Z,F)$ forces $S \cap [x]^{\omega}$ to be semistationary. By Lemma~\ref{menas}, so is $S$, which contradicts Lemma~\ref{lem:gal2}. 
\end{proof}

Finally, for later purpose, we introduce the following result. 

\begin{thm}[Todor\v{c}evi\'{c}~\cite{MR0657114}]\label{thm:todch}
 If $\mathrm{Nm}(\aleph_2)$ is semiproper, then $2^{\aleph_0} \leq \aleph_2$. 
\end{thm}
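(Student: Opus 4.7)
The plan is to argue the contrapositive: assume $2^{\aleph_0} \geq \aleph_3$ and derive that $\mathrm{Nm}(\aleph_2)$ is not semiproper. By Lemma~\ref{lem:semiproperchar}, it suffices to exhibit a stationary $A \in I_{\aleph_2}^{+}$ for which Player II has no winning strategy in the Galvin game $\Game_{\mathrm{Gal}}(I_{\aleph_2}^{\ast}, A)$, or equivalently to produce stationarily many $M \in [\mathcal{H}_{\theta}]^{\omega}$ lying in $\mathrm{Gal}_{\theta}(\aleph_2, I_{\aleph_2}, [\aleph_2]^{\omega}, A)$.

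First I would fix a $C$-sequence $\vec{C} = \langle C_{\alpha} : \alpha < \aleph_2\rangle$ with $\mathrm{otp}(C_{\alpha}) \leq \omega_1$, set $A = E_{\omega_1}^{\aleph_2}$, and extract the usual minimal-walk characteristics, in particular the upper trace and $\rho_{2}, \rho_{1} : [\aleph_2]^{2} \to \omega_{1}$. Using $2^{\aleph_0} \geq \aleph_3$, fix an injection $\pi : \aleph_3 \hookrightarrow {}^{\omega}\omega_1$. For each $\xi < \aleph_3$, combine $\pi(\xi)$ with the walk trace to define a function $f_{\xi} : A \to \omega_1$, in such a way that the family $\{f_{\xi} : \xi < \aleph_3\}$ has the following independence property: whenever $M \prec \mathcal{H}_{\theta}$ is countable and contains $\vec{C}$ and $\pi$, most $\xi \in \aleph_3 \setminus M$ yield an $f_{\xi}$ whose values on $A \setminus \sup(M \cap \aleph_2)$ escape $M \cap \omega_1$.

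Next, for stationarily many $M \in [\mathcal{H}_{\theta}]^{\omega}$, I would show that any $F : A \to \omega_1$ with $F \in M$ can be ``absorbed'' by the walk trace from inside $M$, so that $F^{-1}(M \cap \omega_1) \cap A$ is covered, modulo $I_{\aleph_2}^{\ast}$, by the set of $\alpha \in A$ whose walks pass through $M$. The diagonal behavior of some $f_{\xi}$ with $\xi \in \aleph_3 \setminus M$ then shows that $\bigcap\{F^{-1} M \cap \omega_1 : F : A \to \omega_1, F \in M\} \cap A$ is bounded in $\aleph_2$ and hence belongs to $I_{\aleph_2}$. This places $M$ in $\mathrm{Gal}_{\theta}(\aleph_2, I_{\aleph_2}, [\aleph_2]^{\omega}, A)$, giving the desired stationary witness and contradicting semiproperness via Lemma~\ref{lem:semiproperchar}(3).

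The hard part will be the construction of $\{f_{\xi}\}$: I need the walks along $\vec{C}$ to genuinely inject the parameter $\pi(\xi)$ into the values of $f_{\xi}$ on $A$, so that the family is rich enough to diagonalize against every countable $M$. This is where the combinatorics of $\rho$-functions on a $C$-sequence of $\aleph_2$ and the cardinality assumption $2^{\aleph_0} \geq \aleph_3$ both enter: the latter supplies strictly more parameters $\xi$ than any countable $M$ can absorb, and the former ensures those excess parameters translate into genuine escape of $M \cap \omega_1$ on a set of $I_{\aleph_2}^{+}$-measure.
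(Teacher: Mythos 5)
The paper does not actually prove Theorem~\ref{thm:todch}: it is imported from Todor\v{c}evi\'{c} and used as a black box (e.g.\ in Corollary~\ref{coro:consistencystrength}), so there is no in-paper argument to compare yours against. Judged on its own, your sketch has a genuine gap at its central step. In the characterization of Lemma~\ref{lem:semiproperchar}, to place $M$ in $\mathrm{Gal}_{\theta}(\aleph_2,I_{\aleph_2},[\aleph_2]^{\omega},A)$ you must show that $\bigcap\{F^{-1}(M\cap\omega_1)\mid F:A\to\omega_1,\ F\in M\}$ is small, and only functions \emph{belonging to} $M$ enter this intersection. Your diagonalizing witnesses are the $f_{\xi}$ with $\xi\in\aleph_3\setminus M$, which in general are not elements of $M$, so the fact that such an $f_{\xi}$ escapes $M\cap\omega_1$ says nothing about that intersection. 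You would need to manufacture, from the family $\langle f_{\xi}\rangle$ (which may be assumed to lie in $M$ as a sequence), actual functions \emph{in} $M$ that exceed $M\cap\omega_1$ on cofinally many $\alpha\in A$; the sketch supplies no mechanism for this. In the paper's own non-semiproperness arguments (Theorem~\ref{thm:squarefails}, Theorem~\ref{maintheorem}) this step is carried by the functions $\rho_{\varphi}(\alpha,-)$ for $\alpha\in M$, and the escape there is driven by the \emph{non-triviality} of a $\square(\mu,{<}\aleph_1)$-sequence — an ingredient you do not have on $\omega_2$, and which a bare $C$-sequence with $\mathrm{ot}(C_{\alpha})\leq\omega_1$ does not replace.

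A second, more structural problem is that the sketch never isolates where $2^{\aleph_0}\geq\aleph_3$ is genuinely used. ``More parameters $\xi$ than a countable $M$ can absorb'' is free: a countable $M$ already omits almost every ordinal below $\omega_1$, so if that were the engine, the same argument would show $\mathrm{Nm}(\aleph_2)$ is never semiproper, contradicting its consistency under Martin's Maximum (where $2^{\aleph_0}=\aleph_2$). A correct proof must pit the $\aleph_3$-many reals against a genuinely $\aleph_2$-sized collection of slots — for instance the possible pairs $(M\cap\omega_1,\sup(M\cap\omega_2))$, or an $\omega_2$-chain of $\aleph_1$-sized submodels — and this pigeonhole step is absent. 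The standard route is in fact different from yours: semiproperness of $\mathrm{Nm}(\aleph_2)$ yields a semistationary-reflection / strong-Chang-type consequence at $\omega_2$ (compare Theorem~\ref{thm:locsp} and Theorem~\ref{namba:shelah}), and one then shows that $2^{\aleph_0}\geq\aleph_3$ produces a (semi)stationary subset of $[\omega_2]^{\omega}$ that cannot reflect to any set of size $\aleph_1$. I would rebuild the argument around that reflection consequence together with Lemma~\ref{menas}, rather than around a walk-based Galvin-game diagonalization.
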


\subsection{Minimal walk methods}\label{subsec:walks}

In this section, we review the \emph{minimal walk methods} needed for our proof. 
For a cardinal $\mu$, a $C$-sequence is a sequence $\langle C_{\alpha} \mid \alpha < \mu \rangle$ with the following properties:
\begin{itemize}
 \item $C_{\alpha+1} = \{\alpha\}$ for all $\alpha < \mu$.
 \item $C_{\alpha} \subseteq \alpha$ is a club for all $\alpha \in \mathrm{Lim} \cap \mu$. 
\end{itemize} 
For a fixed $C$-sequence $\overline{C} = \langle C_{\alpha} \mid \alpha < \mu \rangle$ and ordinals $\alpha \leq \beta < \mu$, a walk from $\beta$ to $\alpha$ is a finite sequence $\beta_0 > \cdots > \beta_{n}$ defined as follows:
\begin{itemize}
 \item $\beta_0 = \beta$.
 \item $\beta_{n+1} = \min (C_{\beta_{n}} \setminus \alpha)$.
 \item $\beta_n = \alpha$. 
\end{itemize}
It is easy to see that a walk always exists and is unique. Let $\rho_2(\alpha,\beta)$ denote the length of the walk. That is, the function $\rho_2:[\mu]^{2} \to \omega$ is defined by $\rho_{2}(\alpha,\beta) = \rho_2(\alpha,\beta_{1}) + 1$ and $\rho_2(\alpha,\alpha) = 0$. 

In the study of walks, the structure of the lower part is important. The \emph{full code} $\rho_0:[\mu]^{2} \to \mu$ is defined by $\rho_0(\alpha,\beta) = \rho_0(\alpha,\beta_1) {^\frown} \langle \mathrm{ot}(C_{\beta_{0}} \cap \alpha)\rangle$ and $\rho_0(\alpha,\alpha) = \langle \rangle$. The function $\rho_0$ \emph{encodes} the behavior of walks as follows:

\begin{lem}
For ordinals $\gamma < \alpha < \beta < \mu$, the following are equivalent:
\begin{enumerate}
 \item $\rho_0(\gamma,\beta) = \rho_0(\gamma,\alpha) {^{\frown}} \rho_0(\alpha,\beta)$.
 \item If a finite sequence $\beta = \beta_0 > \cdots > \beta_{n} = \gamma$ is a walk from $\beta$ to $\gamma$, then $\beta_{i} = \alpha$ for some $i$. Moreover, we can compute such an $i$ as $|\rho_0(\alpha,\beta)|$. 
\end{enumerate}
\end{lem}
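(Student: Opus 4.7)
The plan is to prove both directions in parallel via a single key identity: for $\gamma < \alpha$, the equality $\mathrm{ot}(C_{\beta_j} \cap \gamma) = \mathrm{ot}(C_{\beta_j} \cap \alpha)$ is equivalent to $C_{\beta_j} \cap [\gamma,\alpha) = \emptyset$, which in turn is equivalent to $\min(C_{\beta_j} \setminus \gamma) \geq \alpha$; in that case $\min(C_{\beta_j} \setminus \gamma) = \min(C_{\beta_j} \setminus \alpha)$. This lets me translate every $\rho_0$-entry into a statement about the next walk step, and conversely.

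For $(2) \Rightarrow (1)$, I would write the walk from $\beta$ to $\gamma$ as $\beta = \beta_0 > \cdots > \beta_n = \gamma$ and assume $\beta_i = \alpha$. Since the walk is strictly decreasing, $\beta_j > \alpha$ for $j < i$, so $\beta_{j+1} = \min(C_{\beta_j} \setminus \gamma) \geq \alpha$ and hence $C_{\beta_j} \cap [\gamma,\alpha) = \emptyset$ for each $j < i$. By the key identity this tells me that the prefix $\beta_0,\ldots,\beta_i$ is the walk from $\beta$ to $\alpha$, the suffix $\beta_i,\ldots,\beta_n$ is the walk from $\alpha$ to $\gamma$, and further that the last $i$ entries of $\rho_0(\gamma,\beta)$ match $\rho_0(\alpha,\beta)$ entry-by-entry while the first $n-i$ entries agree with $\rho_0(\gamma,\alpha)$. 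Unpacking the recursive definition of $\rho_0$ then yields the concatenation formula.

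For $(1) \Rightarrow (2)$, set $i := |\rho_0(\alpha,\beta)|$ and let $\beta = \beta_0' > \cdots > \beta_i' = \alpha$ be the walk from $\beta$ to $\alpha$. I would prove $\beta_j = \beta_j'$ by induction on $j \leq i$. The base case $j=0$ is immediate. For the step with $j < i$, the concatenation hypothesis aligns the $(j+1)$-st entry from the right of $\rho_0(\gamma,\beta)$ with that of $\rho_0(\alpha,\beta)$; the inductive hypothesis $\beta_j = \beta_j'$ makes these entries read $\mathrm{ot}(C_{\beta_j} \cap \gamma) = \mathrm{ot}(C_{\beta_j} \cap \alpha)$, and the key identity then forces $\beta_{j+1} = \min(C_{\beta_j} \setminus \gamma) = \min(C_{\beta_j'} \setminus \alpha) = \beta_{j+1}'$. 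At $j = i$ this delivers $\beta_i = \beta_i' = \alpha$, and the ``moreover'' clause is exactly the definition of $i$.

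The main obstacle is purely notational: $\rho_0$ is built by a recursion that appends on the right, so the rightmost block of $\rho_0(\gamma,\beta)$ records the \emph{earliest} walk steps (those near $\beta$), which is precisely where the assumed decomposition places the copy of $\rho_0(\alpha,\beta)$. Keeping this orientation consistent between the two sides of the equivalence is the only delicate point; once the alignment is fixed, every remaining step in both directions is a one-line consequence of the key identity.
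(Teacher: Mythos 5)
Your proposal is correct and is precisely the ``straightforward'' argument the paper omits (its proof of this lemma is a single word): the key identity that $\mathrm{ot}(C_{\beta_j}\cap\gamma)=\mathrm{ot}(C_{\beta_j}\cap\alpha)$ iff $C_{\beta_j}\cap[\gamma,\alpha)=\emptyset$ iff $\min(C_{\beta_j}\setminus\gamma)=\min(C_{\beta_j}\setminus\alpha)$ is exactly what makes both directions go through, and your handling of the right-to-left orientation of the $\rho_0$ recursion is the one genuinely delicate point and you get it right.
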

\begin{proof}
Straightforward.
\end{proof}

Define $\lambda(\alpha,\beta) = \max\{\sup(C_{\beta_{i}} \cap \alpha) \mid i < \rho_2(\alpha,\beta)\}$. For later purposes, we introduce basic properties of $\lambda(\alpha,\beta)$.

\begin{lem}\label{walk2}
For $\alpha \leq \beta < \mu$, the following hold:
\begin{enumerate}
 \item $\lambda(\alpha,\beta) \leq \alpha$. 
 \item If $0 < \rho_2(\alpha,\beta)$, then $\lambda(\beta_{\rho_2(\alpha,\beta)-1},\beta) < \alpha$. Here, $\beta_{0} > \cdots > \beta_{n}$ is a walk from $\beta$ to $\alpha$. 
 \item For all ordinals $\lambda(\alpha,\beta) < \gamma < \alpha \leq \beta$, we have $\rho_0(\gamma,\beta) = \rho_0(\alpha,\beta){^\frown} \rho_0(\gamma,\alpha)$. 
\end{enumerate}
\end{lem}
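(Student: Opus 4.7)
The plan is to prove the three parts in order, each time unwinding the definitions and repeatedly exploiting the fact that $\beta_{i+1}$ is the least element of $C_{\beta_i}$ lying at or above $\alpha$.

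Part (1) is immediate from the definition of $\lambda(\alpha,\beta)$: for every $i<\rho_2(\alpha,\beta)$, the ordinal $\sup(C_{\beta_i}\cap\alpha)$ is at most $\alpha$, so the maximum is as well.

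For part (2), I first verify that the walk from $\beta$ to $\beta_{n-1}$ (writing $n=\rho_2(\alpha,\beta)$) reuses the initial segment $\beta_0>\cdots>\beta_{n-1}$ of the walk to $\alpha$. Indeed, for each $i<n-1$ one has $\beta_{i+1}\geq\beta_{n-1}>\alpha$, and any element of $C_{\beta_i}$ in $[\alpha,\beta_{n-1})$ would contradict the minimality of $\beta_{i+1}=\min(C_{\beta_i}\setminus\alpha)$; therefore $C_{\beta_i}\cap\beta_{n-1}=C_{\beta_i}\cap\alpha$ and $\min(C_{\beta_i}\setminus\beta_{n-1})=\beta_{i+1}$. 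To upgrade the bound $\sup(C_{\beta_i}\cap\alpha)\leq\alpha$ to a strict one, note that $\alpha\notin C_{\beta_i}$ (otherwise $\beta_{i+1}=\alpha$, forcing $i=n-1$), and closedness of $C_{\beta_i}$ in $\beta_i$ forbids $\alpha$ from being a limit point of $C_{\beta_i}$. Taking the maximum over $i<n-1$ gives $\lambda(\beta_{n-1},\beta)<\alpha$.

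For part (3), I would induct on $\rho_2(\alpha,\beta)$ using the recursion $\rho_0(\alpha,\beta)=\rho_0(\alpha,\beta_1){^\frown}\langle\mathrm{ot}(C_{\beta}\cap\alpha)\rangle$. The base case $\alpha=\beta$ is trivial. For the inductive step, the hypothesis $\lambda(\alpha,\beta)<\gamma<\alpha$ yields $\sup(C_\beta\cap\alpha)<\gamma$, and hence $C_\beta\cap\gamma=C_\beta\cap\alpha$ and $\min(C_\beta\setminus\gamma)=\min(C_\beta\setminus\alpha)=\beta_1$. Thus the first step of the walk from $\beta$ toward $\gamma$ coincides with the first step toward $\alpha$, and $\mathrm{ot}(C_\beta\cap\gamma)=\mathrm{ot}(C_\beta\cap\alpha)$. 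Since $\lambda(\alpha,\beta_1)\leq\lambda(\alpha,\beta)<\gamma$, the inductive hypothesis applies at $\beta_1$, and assembling the resulting identity with the trailing term $\langle\mathrm{ot}(C_\beta\cap\alpha)\rangle$ delivers the stated concatenation identity for $\rho_0(\gamma,\beta)$.

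The main obstacle I anticipate is the strict inequality in part (2): both the case distinction between $i=n-2$ (where $\beta_{i+1}=\beta_{n-1}$ directly) and $i<n-2$, and the appeal to the closedness of each $C_{\beta_i}$ to rule out $\alpha$ being a limit point, must be carried out carefully in order to promote $\leq\alpha$ to $<\alpha$.
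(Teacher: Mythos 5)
Your proof is correct and takes essentially the same route as the paper, which likewise treats (1) and (2) as straightforward and proves (3) by showing that $\sup(C_{\beta_i}\cap\alpha)\le\lambda(\alpha,\beta)<\gamma$ forces the walk from $\beta$ to $\gamma$ to reproduce the walk from $\beta$ to $\alpha$ step by step; your induction on $\rho_2(\alpha,\beta)$ is the same argument organized around the recursion defining $\rho_0$. (One cosmetic caveat: the paper's literal definition $\rho_0(\alpha,\beta)=\rho_0(\alpha,\beta_1){}^{\frown}\langle\mathrm{ot}(C_{\beta_0}\cap\alpha)\rangle$ places the new term at the tail, so your ``trailing term'' assembly produces the two blocks in the opposite order from the displayed identity $\rho_0(\gamma,\beta)=\rho_0(\alpha,\beta){}^{\frown}\rho_0(\gamma,\alpha)$ --- this mismatch is an inconsistency in the paper's own conventions, not a defect of your argument.)
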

\begin{proof}
Statements (1) and (2) are straightforward. We now prove (3). 

Let $\beta_{0} > \cdots > \beta_{n}$ and $\beta_{0}' > \cdots > \beta_{m}$ be walks from $\beta$ to $\alpha$ and $\gamma$, respectively. We show by induction on $i$ that $\beta_{i} = \beta_{i}'$ for all $i \leq n$. Clearly, we have $\beta_{0} = \beta_{0}'$. Suppose that $\beta_{i} = \beta_{i}'$ for all $i < n$. Since $\sup (C_{\beta_i} \cap \alpha) \leq \lambda(\alpha,\beta) < \gamma$, 
it follows that
\[
\beta'_{i+1} = \min (C_{\beta'_i} \setminus \gamma) = \min (C_{\beta_i} \setminus \gamma) = \min (C_{\beta_i} \setminus \alpha) = \beta_{i+1},
\]
as desired.
\end{proof}

If a $C$-sequence $\overline{C}$ satisfies the following properties, we call $\overline{C}$ a $\square(\mu,{<}\kappa)$-sequence. 
\begin{enumerate}
 \item For all $\alpha \in \mathrm{Lim}\cap \mu$, 
 \[
 |\{C_\beta \cap \alpha \mid \alpha \leq \beta \land \alpha \in \mathrm{Lim}(C_{\beta}) \}| < \kappa.
 \]
 \item $\overline{C}$ is \emph{non-trivial}, meaning that there is no club $C \subseteq \mu$ such that, for every $\alpha < \mu$, there exists $\beta \ge \alpha$ with $C \cap \alpha \subseteq C_{\beta}$. 
\end{enumerate}
We write $\square(\mu,{<}\kappa)$ to denote the existence of a $\square(\mu,{<}\kappa)$-sequence. We write $\square(\mu)$ for $\square(\mu,{<}2)$. 

The principle $\square(\mu)$ may fail, and if it fails at two consecutive cardinals, then there exists an inner model with a large cardinal property. The following theorems will be referenced in Section~\ref{sec:maintheorem}. 

\begin{thm}[Steel]\label{sqinnermodel1}
If $\kappa \geq 2^{\aleph_0}+\aleph_2$ is a regular cardinal and both $\square(\kappa)$ and $\square_{\kappa}$\footnote{$\square_{\kappa}$ asserts the existence of a $\square(\kappa^{+})$-sequence $\overline{C}$ such that $\mathrm{ot}(C_{\alpha}) \leq \kappa$ for all $\alpha \in \mathrm{Lim} \cap \kappa^{+}$.} fail, then there exists an inner model with infinitely many Woodin cardinals. 
\end{thm}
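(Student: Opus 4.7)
The plan is to argue the contrapositive: assuming there is no inner model with infinitely many Woodin cardinals, I will construct a fine-structural core model $K$ in which both $\square_\kappa$ and $\square(\kappa)$ hold, and then transfer one of these sequences from $K$ to $V$ using weak covering. The contradiction establishes the theorem.

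First, under the assumption that no inner model has infinitely many Woodin cardinals, I would invoke the core model induction machinery (Steel, Jensen--Steel, Andretta--Neeman--Steel) to build the core model $K$ below ``infinitely many Woodin cardinals''. The construction proceeds stage by stage through finite levels: at each stage $n$ one has a $K^c$-construction yielding a universal weasel below ``$n$ Woodins''; since by hypothesis one never reaches the ``infinitely many Woodin'' stage, the induction terminates with a well-defined, sufficiently iterable premouse $K$ satisfying $\mathrm{GCH}$ and the usual fine-structural condensation.

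Second, inside $K$ I would apply the fine-structural square theorems of Jensen, extended to the modern setting by Schimmerling--Zeman, to conclude that $\square_\mu^K$ holds for every uncountable cardinal $\mu$ of $K$ that is not subcompact in $K$; in particular, both $\square_\kappa^K$ and $\square(\kappa)^K$ hold, because $\kappa$ is not the critical point of any extender sequence in $K$ witnessing subcompactness under our anti-large-cardinal hypothesis.

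The main obstacle, and where the hypothesis $\kappa \geq 2^{\aleph_0} + \aleph_2$ is decisive, is the transfer of these sequences from $K$ to $V$. Here I would use the weak covering lemma for $K$: in the setting of core models below infinitely many Woodins, one has $(\mu^+)^K = \mu^+$ for every $V$-cardinal $\mu$ with $\mathrm{cf}(\mu) \geq \aleph_2$, together with the analogous statement for $\kappa$ itself when $\kappa$ is regular and sufficiently large relative to $2^{\aleph_0}$. The bound $\kappa \geq 2^{\aleph_0} + \aleph_2$ is exactly what ensures that the hulls used to run the covering argument have the right size, so that the $K$-witness to $\square(\kappa)$ remains nontrivial in $V$ (any threading club in $V$ would yield, via hulls, a threading club in $K$, contradicting $\square(\kappa)^K$), and analogously for $\square_\kappa$. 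Obtaining this transfer is the technically hardest point; in the paper it is used as a black box, and any detailed proof would rest on the inner model-theoretic machinery cited above rather than on combinatorics internal to $V$.
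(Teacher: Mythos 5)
The paper offers no proof of this statement: it is quoted as a theorem of Steel (in the line of work culminating in Jensen--Schimmerling--Schindler--Steel, ``Stacking mice'') and is used purely as a black box in Section~\ref{sec:maintheorem}, so there is nothing internal to the paper to compare your argument against. Judged on its own terms, your sketch assembles the right ingredients --- the core model $K$ under the anti-large-cardinal hypothesis, the Schimmerling--Zeman fine-structural square theorems inside $K$, and covering-type arguments --- but the transfer step, which you yourself identify as the crux, is wrong as stated, in two places.

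First, non-triviality of a $\square(\kappa)$-sequence does not pass upward from $K$ to $V$: a thread is a single club $C \subseteq \kappa$ each of whose initial segments is captured by some $C_\beta$, and $V$ has many more clubs than $K$, so there is no hull argument forcing a $V$-thread into $K$. (Coherence is upward absolute; non-triviality is not.) Second, weak covering at a \emph{regular} $\kappa$, i.e.\ $(\kappa^{+})^{K} = \kappa^{+}$, is not available as a general lemma --- it can genuinely fail --- and the actual proof is organized around exactly this dichotomy. If $(\kappa^{+})^{K} = \kappa^{+}$, the Schimmerling--Zeman $\square_{\kappa}$-sequence of $K$ is literally a $\square_{\kappa}$-sequence in $V$ (the order-type bound and coherence are upward absolute and the length is correct), contradicting the failure of $\square_{\kappa}$; here no threading issue arises because non-triviality is automatic from the order-type bound. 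If $(\kappa^{+})^{K} < \kappa^{+}$, one transfers nothing from $K$; instead one \emph{stacks mice} over $K|\kappa$ up to height $\kappa^{+}$ of $V$ and reads off a $\square(\kappa)$-sequence in $V$ from the resulting structure, contradicting the failure of $\square(\kappa)$. The hypothesis $\kappa \geq 2^{\aleph_0}+\aleph_2$ is used to make the stack well-behaved (condensation and iterability of the stacked mice, and the covering computations), not to reflect threads into $K$. So the step ``any threading club in $V$ would yield, via hulls, a threading club in $K$'' is the gap, and the outline needs to be restructured around the case split on $(\kappa^{+})^{K}$. Since the paper treats the theorem as an attributed citation, the appropriate move here is likewise to cite it rather than reprove it.
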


\begin{thm}[Jensen--Schimmerling--Schindler--Steel~\cite{mice}]\label{sqinnermodel2}
If $\kappa \geq \aleph_3$ is a regular cardinal such that $\kappa^{\aleph_0} = \kappa$ for all $\mu < \kappa$, and both $\square(\kappa)$ and $\square_{\kappa}$ fail, then there exists an inner model with a proper class of strong cardinals and a proper class of Woodin cardinals. 
\end{thm}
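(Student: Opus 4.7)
The plan is to argue by contrapositive via the core model induction machinery of~\cite{mice}. Assume that $V$ contains no inner model with a proper class of strong cardinals and a proper class of Woodin cardinals. Under this anti-large-cardinal hypothesis, the Jensen--Steel core model $K$ can be constructed up to the appropriate mouse level, and it enjoys the usual absoluteness and covering properties inherited from having no subcompact extender.

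First I would establish weak covering for $K$ at $\kappa$, i.e.\ $(\kappa^{+})^{K} = \kappa^{+}$, together with $\mathrm{cf}^{V}((\nu^{+})^{K}) \geq |\nu|^{V}$ for singular $\nu < \kappa$. This is where the hypothesis $\mu^{\aleph_0} < \kappa$ for all $\mu < \kappa$ is essential: countable closure is precisely what allows the standard covering argument to pass through cofinality-$\omega$ singulars below $\kappa$, replacing and strengthening Steel's older hypothesis $\kappa \geq 2^{\aleph_0}+\aleph_2$ used in Theorem~\ref{sqinnermodel1}. Without some such strengthening, the induction stalls at countable cofinalities before reaching the Woodin-class regime.

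Next I would invoke the Schimmerling--Zeman fine-structural construction of $\square$-sequences in extender models: under the anti-large-cardinal hypothesis, $K$ has no subcompact $K$-cardinals, so $\square_{\mu}$ holds in $K$ for every uncountable $K$-cardinal $\mu$, and $\square(\mu)$ holds in $K$ for every $K$-regular $\mu$. Combined with the weak covering established in the previous step, a $K$-internal $\square_{\kappa}$-sequence computes correctly in $V$ (since $(\kappa^{+})^{K} = \kappa^{+}$ and the coherence is absolute), yielding $\square_{\kappa}$ in $V$; alternatively, a threading argument on the $K$-club sequence indexed by $\kappa$ produces a $\square(\kappa)$-sequence that remains non-trivial in $V$, by absoluteness of clubs of $\kappa$ (again using weak covering). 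Either outcome contradicts the assumed simultaneous failure of $\square(\kappa)$ and $\square_{\kappa}$.

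The hard part, of course, is the core model induction itself: one must verify that, absent an inner model with a proper class of strong cardinals and of Woodin cardinals, the construction of $K$ genuinely propagates weak covering all the way up to $\kappa$ and $\kappa^{+}$, passing through every intermediate mouse level. The step from Theorem~\ref{sqinnermodel1} to Theorem~\ref{sqinnermodel2} is essentially this propagation, and trading $\kappa \geq 2^{\aleph_0}+\aleph_2$ for the countable-closure hypothesis $\mu^{\aleph_0} < \kappa$ (together with $\kappa \geq \aleph_3$) is the combinatorial input that makes this propagation succeed in the Woodin-class setting.
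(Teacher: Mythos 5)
This theorem is not proved in the paper at all: it is quoted as a black-box result from the cited reference \cite{mice} (``Stacking mice''), so there is no internal proof to compare your attempt against. Judged on its own terms, your outline correctly identifies the overall strategy of the cited work --- argue by contrapositive, build a core-model-like structure under the anti-large-cardinal hypothesis, use countable closure ($\mu^{\aleph_0}<\kappa$ for all $\mu<\kappa$, which is evidently what the paper's misprinted hypothesis ``$\kappa^{\aleph_0}=\kappa$ for all $\mu<\kappa$'' intends) to get weak covering at $\kappa$, and then pull fine-structural square sequences up to $V$ via Schimmerling--Zeman. You are also right that this is where the hypothesis differs from Steel's $\kappa\ge 2^{\aleph_0}+\aleph_2$ in Theorem~\ref{sqinnermodel1}.

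However, two points keep this from being a proof rather than a roadmap. First, you explicitly defer the entire core model induction and the weak covering argument, which is where all the work lives; that is acceptable for a citation but not for a proof. Second, and more substantively, your treatment of the $\square(\kappa)$ branch is wrong as stated: non-triviality of a coherent sequence is \emph{not} preserved ``by absoluteness of clubs of $\kappa$'' --- a sequence unthreaded in an inner model can perfectly well acquire a thread in $V$. The actual argument in \cite{mice} works with the stack $\mathcal{S}$ of mice over $K|\kappa$ and splits into cases on whether $\mathcal{S}$ reaches $(\kappa^{+})^{V}$: if it does, weak covering transfers $\square_{\kappa}$; if it does not, one builds a coherent sequence from $\mathcal{S}$ whose hypothetical thread in $V$ would generate a mouse properly extending the stack, contradicting its maximality. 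That maximality argument, not club absoluteness, is what rules out a thread in $V$, and it is the missing idea in your sketch.
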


For ordinals $\alpha \leq \beta < \mu$, define $\overline{\lambda}(\alpha,\beta) = \max (\{\lambda(\alpha,\beta), \lambda(\beta_{\rho_{2}(\alpha,\beta) - 1}, \beta) \} \cap \alpha)$.
By (2) of Lemma~\ref{walk2}, the inequality $\overline{\lambda}(\alpha,\beta) < \alpha$ always holds, even if $\lambda(\alpha,\beta) = \alpha$. The ordinal $\overline{\lambda}(\alpha,\beta)$ is one of our main tools for studying $C$-sequences.

\begin{lem}\label{walk4}
 For ordinals $\alpha < \beta < \mu$, let $m = \rho_2(\alpha,\beta)$ and $\overline{\lambda} = \overline{\lambda}(\alpha,\beta)$. The following hold:
 \begin{enumerate}
  \item If $\overline{\lambda} = \lambda(\alpha,\beta)$, then 
\[
C_{\alpha}\setminus (\overline{\lambda} + 1) = \{\gamma < \alpha \mid \rho_2(\gamma,\beta) = m + 1\}\setminus (\overline{\lambda} + 1).  
\]
  \item If $\overline{\lambda} < \lambda(\alpha,\beta)$, then 
\[
(C_{\beta_{m-1}}\cap \alpha) \setminus (\overline{\lambda}+1) = \{\gamma < \alpha \mid \rho_2(\gamma,\beta) = m \} \setminus (\overline{\lambda}+1).
\]
 \end{enumerate}
Here, $\beta_0 > \cdots > \beta_{m}$ is a walk from $\beta$ to $\alpha$. 
\end{lem}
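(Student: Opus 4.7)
The plan is to derive both statements as consequences of the ``stability of the lower trace'' from Lemma~\ref{walk2}(3), after a short case analysis explaining what the two cases mean. First, I would observe that $\overline{\lambda}(\alpha,\beta)$ is always well defined: since $\beta_{m-1}>\alpha$ and $\rho_2(\beta_{m-1},\beta)=m-1>0$ when $m\geq 1$, Lemma~\ref{walk2}(2) gives $\lambda(\beta_{m-1},\beta)<\alpha$, so the set $\{\lambda(\alpha,\beta),\lambda(\beta_{m-1},\beta)\}\cap\alpha$ is nonempty. I would also note (for later use in case~(2)) that $\lambda(\alpha,\beta)=\alpha$ exactly when $\alpha\in\mathrm{Lim}(C_{\beta_{m-1}})$; otherwise $\lambda(\alpha,\beta)<\alpha$. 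Consequently $\overline{\lambda}<\lambda(\alpha,\beta)$ forces $\lambda(\alpha,\beta)=\alpha$, and then $\overline{\lambda}=\lambda(\beta_{m-1},\beta)$.

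For case~(1), assume $\overline{\lambda}=\lambda(\alpha,\beta)$, so in particular $\lambda(\alpha,\beta)<\alpha$. Fix $\gamma$ with $\overline{\lambda}<\gamma<\alpha$. Since $\gamma>\lambda(\alpha,\beta)$, Lemma~\ref{walk2}(3) yields
\[
\rho_0(\gamma,\beta)=\rho_0(\alpha,\beta){}^{\frown}\rho_0(\gamma,\alpha),\qquad \text{so}\qquad \rho_2(\gamma,\beta)=m+\rho_2(\gamma,\alpha).
\]
Now $\rho_2(\gamma,\alpha)=1$ iff the walk from $\alpha$ to $\gamma$ has the form $\alpha>\min(C_\alpha\setminus\gamma)=\gamma$, iff $\gamma\in C_\alpha$. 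Hence $\rho_2(\gamma,\beta)=m+1$ iff $\gamma\in C_\alpha$, which gives the desired equality on $(\overline{\lambda},\alpha)$.

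For case~(2), with $\overline{\lambda}<\lambda(\alpha,\beta)$ we have $\overline{\lambda}=\lambda(\beta_{m-1},\beta)$ by the opening observation. Fix $\gamma$ with $\overline{\lambda}<\gamma<\alpha<\beta_{m-1}$. Applying Lemma~\ref{walk2}(3) to the pair $(\beta_{m-1},\beta)$ gives
\[
\rho_0(\gamma,\beta)=\rho_0(\beta_{m-1},\beta){}^{\frown}\rho_0(\gamma,\beta_{m-1}),\qquad \text{so}\qquad \rho_2(\gamma,\beta)=(m-1)+\rho_2(\gamma,\beta_{m-1}).
\]
As before, $\rho_2(\gamma,\beta_{m-1})=1$ iff $\min(C_{\beta_{m-1}}\setminus\gamma)=\gamma$ iff $\gamma\in C_{\beta_{m-1}}$. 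Thus $\rho_2(\gamma,\beta)=m$ iff $\gamma\in C_{\beta_{m-1}}\cap\alpha$, which is the required equality.

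I do not anticipate a serious obstacle: the whole argument is an application of Lemma~\ref{walk2} once one pins down the dichotomy ``$\alpha$ is/is not in $\mathrm{Lim}(C_{\beta_{m-1}})$'' hidden in the definition of $\overline{\lambda}$. The only mildly delicate point is correctly identifying which of $\lambda(\alpha,\beta)$ and $\lambda(\beta_{m-1},\beta)$ equals $\overline{\lambda}$ in each branch, and then choosing the correct pair to feed into Lemma~\ref{walk2}(3) so that the resulting factorization of $\rho_0(\gamma,\beta)$ adds exactly one extra step.
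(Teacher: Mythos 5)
Your proposal is correct and follows essentially the same route as the paper: both cases reduce to the factorization $\rho_0(\gamma,\beta)=\rho_0(\delta,\beta){}^{\frown}\rho_0(\gamma,\delta)$ from Lemma~\ref{walk2}(3) (with $\delta=\alpha$ in case~(1) and $\delta=\beta_{m-1}$ in case~(2)), together with the observation that the appended walk has length $1$ exactly when $\gamma$ lies in the relevant club. Your preliminary identification of which of $\lambda(\alpha,\beta)$ and $\lambda(\beta_{m-1},\beta)$ realizes $\overline{\lambda}$ in each branch is the same use of Lemma~\ref{walk2}(2) that the paper makes, just spelled out in slightly more detail.
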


\begin{proof}
 First, we verify (1). Since $\overline{\lambda}(\alpha,\beta) = \lambda(\alpha,\beta)$, we have $\lambda(\alpha,\beta) < \alpha$. For every $\gamma \in [\lambda+1,\alpha)$, by (3) of Lemma~\ref{walk2}, we obtain $\rho_0(\gamma,\beta) = \rho_0(\alpha,\beta){^{\frown}}\rho_0(\alpha,\alpha)$.
It is easy to see that $|\rho_0(\alpha,\alpha)| = 1$ if and only if $\alpha \in C_{\alpha}$, by the definition of walks. Note that $\rho_2(\gamma,\beta) = |\rho_0(\gamma,\beta)| = m + \rho_{2}(\gamma,\alpha)$.
Thus, $\rho_2(\gamma,\beta) = m + 1$ is equivalent to $\gamma \in C_{\alpha}$, as desired.

Next, we prove (2). By (2) of Lemma~\ref{walk2}, we have $\overline{\lambda}(\alpha,\beta) = {\lambda}(\beta_{m-1},\beta)$. For every $\gamma \in [\overline{\lambda}+1,\alpha)$, by (3) of Lemma~\ref{walk2},
$\rho_0(\gamma,\beta) = \rho_0(\beta_{m-1},\beta){^{\frown}}\rho_0(\gamma,\beta_{m-1})$.
Similarly, $\gamma \in C_{\beta_{m-1}}$ if and only if $\rho_2(\gamma,\beta) = (m-1)+1 = m$, as desired.
\end{proof}
The proof of Lemma~\ref{walk4} includes the following lemma.
\begin{lem}\label{walk5}For ordinals $\alpha < \beta < \mu$, let $m = \rho_2(\alpha,\beta)$ and $\overline{\lambda} = \overline{\lambda}(\alpha,\beta)$. The following holds:
\begin{enumerate}
 \item If $\overline{\lambda} = \lambda(\alpha,\beta)$ then 
\[
 \rho_2(\gamma,\beta) = \rho_2(\gamma,\alpha) + m\text{ for all }\gamma \in [\overline{\lambda},\alpha).
\]
 \item If $\overline{\lambda} < \lambda(\alpha,\beta)$ then 
\[
 \rho_2(\gamma,\beta) = \rho_2(\gamma,\beta_{m-1}) + m - 1\text{ for all }\gamma \in [\overline{\lambda},\alpha).
\]
\end{enumerate}
Here, $\beta_0 > \cdots > \beta_{m}$ is a walk from $\beta$ to $\alpha$. 
\end{lem}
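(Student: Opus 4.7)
The plan is to mimic the computation already performed inside the proof of Lemma~\ref{walk4}, reading off $\rho_2$-identities directly from the $\rho_0$-concatenation identities supplied by Lemma~\ref{walk2}(3). The same case split ($\overline{\lambda}=\lambda(\alpha,\beta)$ versus $\overline{\lambda}<\lambda(\alpha,\beta)$) is used, and in each case taking lengths of a concatenation of walks produces the desired equality.

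For part (1), assume $\overline{\lambda}=\lambda(\alpha,\beta)$, so in particular $\lambda(\alpha,\beta)<\alpha$. For every $\gamma\in(\overline{\lambda},\alpha)$, Lemma~\ref{walk2}(3) applied to the pair $\alpha\le\beta$ yields
$$\rho_0(\gamma,\beta)=\rho_0(\alpha,\beta){^{\frown}}\rho_0(\gamma,\alpha).$$
Using $\rho_2(\cdot,\cdot)=|\rho_0(\cdot,\cdot)|$ together with $|\rho_0(\alpha,\beta)|=m$, I obtain $\rho_2(\gamma,\beta)=\rho_2(\gamma,\alpha)+m$, which is the identity in (1). For part (2), assume $\overline{\lambda}<\lambda(\alpha,\beta)$; by Lemma~\ref{walk2}(2) we then have $\overline{\lambda}=\lambda(\beta_{m-1},\beta)$. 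Since $\alpha=\beta_m<\beta_{m-1}$, any $\gamma\in(\overline{\lambda},\alpha)$ in particular satisfies $\overline{\lambda}<\gamma<\beta_{m-1}$, and Lemma~\ref{walk2}(3) applied now to the pair $\beta_{m-1}\le\beta$ gives
$$\rho_0(\gamma,\beta)=\rho_0(\beta_{m-1},\beta){^{\frown}}\rho_0(\gamma,\beta_{m-1}).$$
Since the walk from $\beta$ to $\beta_{m-1}$ has length $m-1$, taking lengths yields $\rho_2(\gamma,\beta)=\rho_2(\gamma,\beta_{m-1})+m-1$, which is the identity in (2).

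I do not expect any real obstacle: each identity is a one-line length count applied to Lemma~\ref{walk2}(3), and both equalities are implicit in the computation already carried out for Lemma~\ref{walk4}. The only delicate point is the boundary value $\gamma=\overline{\lambda}$, at which Lemma~\ref{walk2}(3) does not literally apply (it requires strict inequality). This is handled either by restricting the interval to $(\overline{\lambda},\alpha)$, which suffices for every use of Lemma~\ref{walk5} elsewhere in the paper, or by a short direct inspection of the walk from $\beta$ to $\overline{\lambda}$ to verify that it passes through $\alpha$ (respectively $\beta_{m-1}$) whenever $\overline{\lambda}\notin\bigcup_{i<m}C_{\beta_i}$. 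Beyond this bookkeeping at the endpoint, the proof is completely routine.
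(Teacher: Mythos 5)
Your proposal is correct and is essentially the paper's own argument: the paper gives no separate proof, remarking only that "the proof of Lemma~\ref{walk4} includes the following lemma," and that proof is exactly your computation --- apply Lemma~\ref{walk2}(3) to get the $\rho_0$-concatenation identity in each case and take lengths. Your observation about the boundary point $\gamma=\overline{\lambda}$ is a fair one (the paper's own proof of Lemma~\ref{walk4} likewise only treats $\gamma\in[\overline{\lambda}+1,\alpha)$), and your handling of it is adequate.
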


Lastly, we conclude this section with the following lemma.

\begin{lem}[Sakai--Veli\u{c}kovi\'{c}~\cite{MR3284479} for $\nu = 2$; Torres-P\'erez--Wu~\cite{torresperezandwu} for $\nu = \aleph_1$]\label{lem:semistationarysubset}
Suppose that $\aleph_2 \leq \mu$ is a regular cardinal and that $\overline{C}$ is a $\square(\mu,{<}\nu)$-sequence for some (possibly finite) cardinal $\nu \leq \aleph_1$. 

Let $S_0 \subseteq [\mu]^{\omega}$ be the set of all countable $x \subseteq \mu$ such that, for all $\beta \geq \delta = \sup (x \cap \mu)$ with $\delta \in \mathrm{Lim}(C_{\beta})$, the following conditions hold:
\begin{itemize}
	\item $x \cap C_{\beta}$ is bounded in $\delta$.
	\item For all sufficiently large $\gamma \in C_{\beta} \cap \delta$, we have $\mathrm{cf}(\min ((x \cap \mu) \setminus \gamma)) = \omega_1$. 
\end{itemize}
Then $S_0$ is a stationary subset. 
\end{lem}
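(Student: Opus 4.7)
The plan is to establish stationarity by producing, for an arbitrary function $F:[\mu]^{<\omega}\to\mu$, an $F$-closed $x\in S_0$. I would fix a sufficiently large regular $\theta$ and the expansion $\mathcal{A}=\langle\mathcal{H}_\theta,\in,<_\theta,\overline{C},F\rangle$, and build an $\omega$-chain $M_0\prec M_1\prec\cdots$ of countable elementary substructures of $\mathcal{A}$. Setting $M=\bigcup_n M_n$ and $x=M\cap\mu$, the set $x$ is automatically $F$-closed with $\delta:=\sup x$ of cofinality $\omega$ (if $\delta$ were a successor the $S_0$-conditions become vacuous, since then $\delta\notin\mathrm{Lim}(C_\beta)$ for any $\beta$).

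The key combinatorial lever is that, by the $\square(\mu,{<}\nu)$-hypothesis with $\nu\le\aleph_1$, the collection $\mathcal{C}_\alpha=\{C_\beta\cap\alpha:\alpha\le\beta,\ \alpha\in\mathrm{Lim}(C_\beta)\}$ has cardinality $<\aleph_1$ for each $\alpha<\mu$. Whenever $\alpha\in M_n$, elementarity places $\mathcal{C}_\alpha\in M_n$, and since $\mathcal{H}_\theta$ correctly sees $\mathcal{C}_\alpha$ as countable, a surjection $\omega\to\mathcal{C}_\alpha$ lives inside $M_n$; applied pointwise this forces $\mathcal{C}_\alpha\subseteq M_n$. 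In particular $C_\beta\cap\alpha\in M$ whenever $\alpha\in M$ and $\alpha\in\mathrm{Lim}(C_\beta)$.

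To verify the boundedness clause, fix $\beta>\delta$ with $\delta\in\mathrm{Lim}(C_\beta)$ and suppose towards contradiction that $x\cap C_\beta$ is unbounded in $\delta$. Since $C_\beta$ is closed we have $\delta\in C_\beta$ and $C_\beta\cap\delta$ is a club in $\delta$; when $\mathrm{ot}(C_\beta\cap\delta)>\omega$ the set $\mathrm{Lim}(C_\beta)\cap\delta$ is cofinal in $\delta$, and extracting a cofinal sequence $\alpha_k\in x\cap\mathrm{Lim}(C_\beta)\cap\delta$ together with the preceding paragraph gives $C_\beta\cap\delta=\bigcup_k(C_\beta\cap\alpha_k)$ with each piece in $M$; a bookkeeping inside the chain ensuring that the sequence $\langle\alpha_k\rangle$ itself lands in $M$ then forces $\delta\in M$, a contradiction. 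The degenerate case $\mathrm{ot}(C_\beta\cap\delta)=\omega$ is handled by a parallel argument applied to a cofinal $\omega$-sequence of non-limit points of $C_\beta$ captured by $x$. For the cofinality clause, I would arrange the construction so that between the successive $\delta_n=\sup(M_n\cap\mu)$ one interposes a marker $\delta_n^*\in M_{n+1}\cap E^{\mu}_{\omega_1}$, while carefully controlling which ordinals of $(\delta_n,\delta_n^*)$ enter $M_{n+1}$, so that—combined with the just-proved boundedness of $x\cap C_\beta$—the immediate successor $\min(x\setminus\gamma)$ for large $\gamma\in C_\beta\cap\delta$ is forced to be one of the markers $\delta_n^*$, hence of cofinality $\omega_1$.

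The main obstacle is executing the chain construction \emph{uniformly} so that both clauses hold for every admissible $\beta$ simultaneously. This is precisely where the restriction $\nu\le\aleph_1$ is indispensable: it reduces the family of relevant threads $C_\beta\cap\delta$ to countably many, permitting a diagonal absorption along $n<\omega$ via the elementarity of the $M_n$'s, while also making the ``enumeration inside $M_n$'' step of the second paragraph available.
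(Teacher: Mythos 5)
Your argument never uses the non-triviality of $\overline{C}$ (clause (2) in the paper's definition of a $\square(\mu,{<}\nu)$-sequence), and the lemma is false without it. Take $C_\beta=\beta$ for every limit $\beta$ and $C_{\alpha+1}=\{\alpha\}$: clause (1) then holds with $\nu=2$, but for any countable $x$ closed under ordinal successor with $\delta=\sup x$ a limit and any limit $\beta>\delta$ we get $x\cap C_\beta=x$ unbounded in $\delta$, so $S_0$ avoids a club. Any correct proof must therefore exploit non-triviality to find a $\delta$ at which the club of closure points of the construction escapes \emph{all} threads $C_\beta\cap\delta$ cofinally often; in the paper this is exactly the role of Lemma~\ref{lem:nontrivial2} (Torres-P\'erez--Wu), a Fodor argument driven by non-triviality, and your proposal has no substitute for it. The cardinality bound $\nu\le\aleph_1$ alone, which is all you invoke, cannot suffice.

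The two places where your construction is supposed to do the work also do not go through as described. For the boundedness clause you ask for ``a bookkeeping ensuring that the sequence $\langle\alpha_k\rangle$ itself lands in $M$''; but $\langle\alpha_k\rangle$ is a cofinal $\omega$-sequence in $\delta=\sup(M\cap\mu)$, and no such sequence can be an element of $M$ (its supremum would then lie in $M$), so this step is unrealizable and the intended contradiction is never reached. For the cofinality clause you propose ``carefully controlling which ordinals of $(\delta_n,\delta_n^{*})$ enter $M_{n+1}$''; this is precisely the hard point, since $\mathrm{Sk}(M_n\cup\{\delta_n^{*}\})$ contains whatever the Skolem functions produce. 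The paper resolves both difficulties at once with Veli\v{c}kovi\'{c}'s open game $\Game(\kappa,\mu,\mathcal{A})$ (Lemma~\ref{lem:game}): Player I's winning strategy supplies ordinals $\gamma_n\in E^{\mu}_{\omega_1}$ together with the guarantee $M\cap[\alpha_n,\gamma_n)=\emptyset$, and combining this with the $\delta$ produced by Lemma~\ref{lem:nontrivial2} yields both clauses simultaneously for every admissible $\beta$ (this is carried out in the proof of Lemma~\ref{lem:wqkillstationary}). Without a mechanism of this kind, the ``diagonal absorption'' in your closing paragraph remains an unproved assertion rather than a proof.
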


In the proof of Lemma~\ref{lem:semistationarysubset}, Sakai--Veli\u{c}kovi\'{c} used a game originally introduced by Veli\u{c}kovi\'{c}~\cite{MR1174395}. Here, we consider a variation of this game. Let $\mathcal{A} = \langle \mathcal{H}_{\theta},\in,... \rangle$ be an arbitrary countable expansion. Define a game $\Game(\kappa,\mu,\mathcal{A})$ of length $\omega$ with the following rules.

\begin{center}
  \begin{tabular}{|c||ccc|ccc|c|ccc|c|}
   \hline
  Player I & $\alpha_0$ & & $\gamma_0$, $x_{0}$ & $\alpha_1$ & & $\gamma_1$, $x_1$ & $\cdots$ & $\alpha_n$ & & $\gamma_n$, $x_n$ & $\cdots$\\
   \hline
  Player II &  & $y_0$ &  &  &$y_1$ & & $\cdots$ &  & $y_n$&  & $\cdots$\\
   \hline
  \end{tabular}
\end{center}

On the $n$-th turn, Player I first chooses an ordinal $\alpha_n < \mu$. Then, Player II chooses a set of ordinals $y_n \in \mathcal{P}_{\kappa}\mu$. Finally, Player I selects $\gamma_n \in E^\mu_{\omega_1} \setminus ((\alpha_n + 1) \cup \sup y_{n})$ and $x_n \in \mathcal{P}_{\kappa}\mu$ such that $y_n \subseteq x_n$. 

Player I wins if 
\[
\mathrm{Sk}_{\mathcal{A}}(\{\gamma_n,x_n \mid n < \omega\}) \cap [\alpha_m,\gamma_m) = \emptyset
\]
for all $m < \omega$. Otherwise, Player II wins. Since $\Game(\kappa,\mu,\mathcal{A})$ is an open game, one of the players has a winning strategy. 

\begin{lem}\label{lem:game}
For every pair of regular cardinals $\aleph_1 \leq \kappa \leq \mu$ and any countable expansion $\mathcal{A} = \langle \mathcal{H}_{\theta},\in,...\rangle$, Player I has a winning strategy for $\Game(\kappa,\mu,\mathcal{A})$. 
\end{lem}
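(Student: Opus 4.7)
The plan is to invoke Gale--Stewart determinacy of the open game $\Game(\kappa,\mu,\mathcal{A})$ and then defeat any hypothetical winning strategy for Player~II by a reflection argument. The winning condition for Player~II is open --- her victory is witnessed at some finite stage by a Skolem term built from the played generators $\{\gamma_n,x_n\}$ landing in some interval $[\alpha_m,\gamma_m)$ --- so the game is determined, and it suffices to show Player~II has no winning strategy.

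Suppose, toward a contradiction, that $\tau$ is a winning strategy for Player~II, and fix an elementary submodel $N\prec\mathcal{A}$ of cardinality $<\kappa$ with $\tau,\kappa,\mu,\mathcal{A}\in N$. I would construct a play of the game inside $N$: maintain a continuous $\subseteq$-chain $(M_n)_{n<\omega}$ of elementary submodels with $M_n\subseteq N$, $|M_n|<\kappa$, and $M_n$ containing all moves through round $n-1$. At round $n$, let Player~I play $\alpha_n=\sup(M_n\cap\mu)$; after Player~II's response $y_n=\tau(\vec{p}_{<n},\alpha_n)\in N$, set $M'_n=\mathrm{Sk}_{\mathcal{A}}(M_n\cup\{y_n\})$, pick $\gamma_n\in N\cap E^\mu_{\omega_1}$ just above $\sup(M'_n\cap\mu)$, and choose $x_n\supseteq y_n$ of size $<\kappa$ padded with ordinals from $M_n\cap\alpha_n$ in the manner described below. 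Put $M_{n+1}=\mathrm{Sk}_{\mathcal{A}}(M'_n\cup\{\gamma_n,x_n\})$. Since $\tau\in N$ and the entire play lies in $N$, the Skolem hull $H=\mathrm{Sk}_{\mathcal{A}}(\{\gamma_n,x_n\mid n<\omega\})$ is contained in $N$.

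The critical step is choosing $x_n$ so that $H\cap[\alpha_m,\gamma_m)=\emptyset$ for every $m$. The guiding idea is that Player~I enumerates $x_n$ in increasing order and places the elements of $y_n$ at non-selectable positions within $x_n$, while ``filler'' ordinals from $M_n\cap\alpha_n$ occupy the low indices that the Skolem hull can reach. Concretely, I would inductively maintain a strictly increasing sequence of countable ordinals $\beta_n$ satisfying $\mathrm{Sk}_{\mathcal{A}}(\{\gamma_k,x_k\mid k<n\})\cap\omega_1\subseteq\beta_n$, and place the elements of $y_m$ inside $x_m$ at indices $\geq\beta_{m+1}$; then $H\cap\omega_1\subseteq\sup_n\beta_n<\omega_1$, so only indices below $\sup_n\beta_n$ in each $x_n$ are visible to $H$, and these correspond to filler elements $<\alpha_n\leq\alpha_m$. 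Combined with the choice $\gamma_n>\sup(M'_n\cap\mu)$, this yields $H\cap[\alpha_m,\gamma_m)=\emptyset$ for every $m$, contradicting the winning assumption on $\tau$.

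The main obstacle I foresee is the self-referential character of the padding: the index ceiling $H\cap\omega_1$ depends on the entire play, yet each $x_n$ must be committed during round $n$. The bookkeeping of the $\beta_n$ handles this by always leaving a growing ``safety margin'' in $\omega_1$, and the openness of Player~II's winning condition ensures that any finite-stage failure of Player~I already reflects within $N$ and can be precluded by enlarging $\beta_n$ appropriately at each round. Making this bookkeeping self-consistent, especially the interaction between choosing $\gamma_n$ close to $\sup(M'_n\cap\mu)$ and the requirement that $\alpha_m$ itself not lie in $H$, is where the real work of the argument is concentrated.
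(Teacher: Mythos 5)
Your overall skeleton (Gale--Stewart determinacy of the open game plus refuting a hypothetical winning strategy $\tau$ for Player II by elementary submodels) is the same as the paper's, but the core mechanism of your construction does not work. The fatal step is playing $\alpha_n=\sup(M_n\cap\mu)$ \emph{before} seeing $y_n$ and then trying to neutralize $y_n$ by ``hiding'' its elements at high enumeration indices inside $x_n$. A Skolem hull does not access the elements of a set only through their positions in its increasing enumeration: since $x_n$ and $\gamma_n$ are generators of $H=\mathrm{Sk}_{\mathcal{A}}(\{\gamma_k,x_k\mid k<\omega\})$, every term such as $\max(x_n)$, $\sup(x_n\cap\gamma_n)$, or $\min(x_n\setminus\delta)$ with $\delta\in H$ lies in $H$. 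So if Player II simply answers with $y_n=\{\eta\}$ for some $\eta\geq\alpha_n$ (e.g.\ $\eta=\alpha_n$), the rules force $\eta\in y_n\subseteq x_n$ and $\eta<\gamma_n$; with your padding taken from $M_n\cap\alpha_n$ we get $\sup(x_n\cap\gamma_n)=\eta\in H\cap[\alpha_n,\gamma_n)$, and Player II wins at that very round no matter how the indices are arranged. (Two further problems: your $\beta_n$ bookkeeping bounds $H\cap\omega_1$, whereas indices into $x_n$ range up to $\mathrm{ot}(x_n)<\kappa$, so for $\kappa>\omega_1$ it does not even control index-based access; and the positions of the elements of $y_n$ in the increasing enumeration of $x_n$ are determined by their ordinal values, not freely assignable.)

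The repair --- and this is what the paper does --- is to make Player I's later moves \emph{independent of the play}, so that the hull is known in advance and it is $\alpha_n$, not $x_n$, that does the dodging. Fix beforehand an $\in$-chain $\langle M_n\mid n<\omega\rangle$ with $\tau\in M_n\prec\mathcal{A}$, $M_n\cap\kappa\in\kappa$ and $|M_n|<\kappa$, and set $\gamma_n=\sup(M_n\cap\mu)$, $x_n=M_n\cap\mu$. The countable hull $M=\mathrm{Sk}_{\mathcal{A}}(\{\gamma_n,x_n\mid n<\omega\})$ is then computable before the game starts, and one plays $\alpha_n=\min\bigl(M_n\setminus\sup(M\cap\gamma_n)\bigr)$, which places the whole interval $[\alpha_n,\gamma_n)$ in a gap of $M$ (note $\sup(M\cap\gamma_n)<\gamma_n$ since $M$ is countable and $\mathrm{cf}(\gamma_n)=\omega_1$). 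Legality of this play against $\tau$ is automatic: $y_n=\tau(\dots)\in M_n$ together with $|y_n|<\kappa$ and $M_n\cap\kappa\in\kappa$ gives $y_n\subseteq M_n\cap\mu=x_n$ and $\sup y_n\leq\gamma_n$. It is perfectly fine for $x_n$ to meet $[\alpha_n,\gamma_n)$; what must avoid that interval is the countable hull $M$, and that is guaranteed by choosing $\alpha_n$ \emph{after} $M$ rather than before $y_n$.
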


\begin{proof}
Suppose otherwise. Since $\Game(\kappa,\mu,\mathcal{A})$ is an open game, Player II must have a winning strategy $\tau$. Let $\langle M_n \mid n < \omega \rangle$ be an $\in$-chain such that, for every $n < \omega$, 
\begin{itemize}
 \item $\tau \in M_n \prec  \mathcal{A}$,
 \item $M_{n} \cap \kappa < \kappa$, and 
 \item $|M_n| < \kappa$. 
\end{itemize}
For each $n < \omega$, define
\begin{itemize}
 \item $\gamma_n = \sup M_n \cap \mu$.  
 \item $x_{n} = M_n \cap \mu$. 
\end{itemize}
Let $M = \mathrm{Sk}_{\mathcal{A}}(\{\gamma_n,x_n \mid n < \omega\})$. Define 
$\alpha_n = \min (M_n \setminus (\sup (M \cap \gamma_n))) < \gamma_n$. Since $|M_n| < \kappa$, we have $x_n \in \mathcal{P}_{\kappa}\mu$. 

We claim that there exists a play in which Player I moves $\langle \alpha_n,\gamma_n,x_n \mid n < \omega \rangle$ and Player II plays according to $\tau$. We prove this by induction on $n$. Suppose Player I moves $\alpha_n$. Then Player II chooses $y_n = \tau(\alpha_0,\gamma_0,x_0,\cdots, \alpha_{n-1},\gamma_{n-1},x_{n-1},\alpha_n)$. By definition, $\alpha_0,\gamma_0,x_0,\cdots, \alpha_{n-1},\gamma_{n-1},x_{n-1},\alpha_n,\tau \in M_n$, so $y_n \in M_n$. Since $|y_{n}| < \kappa$ and $M_n \cap \kappa <\kappa$, we have $y_n \subseteq M_n\cap \mu = x_n$ and $\sup y_n < \gamma_n$. On the other hand, by the definition of $\alpha_n$, there is no element of $M$ between $\alpha_n$ and $\gamma_n$, which contradicts Player II's winning strategy. 
\end{proof}

\begin{lem}\label{lem:nontrivial}
If $\overline{C}$ is \emph{non-trivial}, then for every club $C \subseteq \mu$ and stationary $S \subseteq \mu$, there exists $\delta \in \mathrm{Lim}(C) \cap S$ such that $(C \cap \delta) \setminus C_{\beta}$ is unbounded for all $\beta \geq \delta$. 
\end{lem}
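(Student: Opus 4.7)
The plan is to prove the contrapositive: assume the conclusion fails for some club $C$ and stationary $S$, and from this produce a club $C^{\ast}$ witnessing that $\overline{C}$ is trivial, contradicting the hypothesis. The hinge of the argument is that the assumed failure attaches to each relevant $\delta$ a bound $\eta_{\delta}<\delta$ beyond which $C\cap\delta$ is swallowed by some $C_{\beta_{\delta}}$; Fodor's lemma then lets us replace these varying bounds by a single uniform bound, and truncating $C$ above that bound yields the desired triviality witness.

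In detail, I would first observe that $T=\mathrm{Lim}(C)\cap S$ is stationary, since $\mathrm{Lim}(C)$ is a club in the regular cardinal $\mu$. Suppose toward a contradiction that for every $\delta\in T$ there exists $\beta_{\delta}\geq\delta$ with $(C\cap\delta)\setminus C_{\beta_{\delta}}$ bounded in $\delta$; pick $\eta_{\delta}<\delta$ witnessing this, so that $(C\cap\delta)\setminus\eta_{\delta}\subseteq C_{\beta_{\delta}}$. The map $\delta\mapsto\eta_{\delta}$ is regressive on $T$, so by Fodor there exist a stationary $T'\subseteq T$ and a fixed ordinal $\eta^{\ast}<\mu$ with $\eta_{\delta}=\eta^{\ast}$ for all $\delta\in T'$. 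Define the club $C^{\ast}=C\setminus(\eta^{\ast}+1)$. Given any $\alpha<\mu$, use the unboundedness of $T'$ to pick $\delta\in T'$ with $\delta\geq\alpha$; then
\[
C^{\ast}\cap\alpha\subseteq C^{\ast}\cap\delta=(C\cap\delta)\setminus(\eta^{\ast}+1)\subseteq C_{\beta_{\delta}},
\]
with $\beta_{\delta}\geq\delta\geq\alpha$. Thus $C^{\ast}$ witnesses that $\overline{C}$ is trivial, contradicting the hypothesis.

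There is no serious obstacle: the argument is one application of Fodor plus a truncation, and the only point requiring a moment's care is that the pressing-down is carried out on $\mathrm{Lim}(C)\cap S$ (not just on $S$) so that the conclusion delivers $\delta\in\mathrm{Lim}(C)\cap S$ as required by the statement.
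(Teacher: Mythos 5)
Your proposal is correct and follows essentially the same route as the paper's proof: negate the conclusion, press down the bounds $\eta_{\delta}$ via Fodor's lemma on $\mathrm{Lim}(C)\cap S$, and truncate $C$ above the uniform bound to produce a club contradicting the non-triviality of $\overline{C}$. The only cosmetic difference is that you select $\delta\in T'$ with $\delta\geq\alpha$ where the paper takes $\min(T\setminus\alpha)$, which is the same step.
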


\begin{proof}
Suppose otherwise. That is, for every $\delta \in \mathrm{Lim}(C) \cap S$, the set $(C \cap \delta) \setminus C_{\beta(\delta)}$ is bounded in $\delta$ for some $\beta(\delta) \geq \delta$. Define $\eta_{\delta} < \delta$ as a bound for $(C \cap \delta) \setminus C_{\beta(\delta)}$. By Fodor's lemma, we can choose $\eta$ and a stationary subset $T \subseteq \mathrm{Lim}(C) \cap S$ such that $\eta_{\delta} = \eta$ for all $\delta \in T$. 

Let $D = C \setminus \eta$. The set $D$ is a club such that, for all $\alpha$, $D \cap \alpha \subseteq C_{\beta(\min(T \setminus \alpha))}.$
This contradicts the assumption that $\overline{C}$ is non-trivial.
\end{proof}
Torres-P\'erez--Wu proved the following lemma to establish Lemma~\ref{lem:semistationarysubset}. 

\begin{lem}[Torres-P\'erez--Wu~\cite{torresperezandwu}]\label{lem:nontrivial2}
 If $\overline{C}$ is a $\square(\mu,{<}\nu)$-sequence for some (possibly finite) cardinal $\nu \leq \aleph_1$, then for every club $C \subseteq \mu$ and stationary set $S \subseteq E^\mu_{\omega}$, there exists $\delta \in \mathrm{Lim}(C) \cap S$ such that $(C \cap \delta) \setminus (\bigcup_{\delta\in\mathrm{Lim}(C_{\beta})} C_{\beta})$ is unbounded.
\end{lem}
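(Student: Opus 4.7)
The plan is to argue by contradiction, following the strategy of Sakai--Veli\v{c}kovi\'c for the case $\nu=2$ and upgrading it via the game-theoretic Lemma~\ref{lem:game} to handle the countable family of traces that arises when $\nu = \aleph_1$.

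Suppose toward a contradiction that the conclusion fails. Then for every $\delta \in \mathrm{Lim}(C) \cap S$ there exists $\eta_{\delta} < \delta$ such that
\[
(C \cap \delta) \setminus \eta_{\delta} \subseteq \bigcup \bigl\{C_{\beta} : \beta \geq \delta,\ \delta \in \mathrm{Lim}(C_{\beta})\bigr\}.
\]
Applying Fodor's lemma to the regressive function $\delta \mapsto \eta_{\delta}$, I would fix a uniform bound $\eta$ on a stationary set $T \subseteq \mathrm{Lim}(C) \cap S$. Setting $D := C \setminus \eta$, still a club, we get for each $\delta \in T$ that $D \cap \delta$ is covered by $\bigcup \mathcal{F}_{\delta}$, where $\mathcal{F}_{\delta} = \{C_{\beta} \cap \delta : \beta \geq \delta,\ \delta \in \mathrm{Lim}(C_{\beta})\}$ is countable by the thinness condition $|\mathcal{F}_{\delta}| < \nu \leq \aleph_{1}$. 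This is the Fodor-reduction step that converts the pointwise hypothesis into a uniform covering statement.

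The core task is to extract from the countable covering a single ordinal $\beta^{*}(\delta) \geq \delta$ with $D \cap \delta \subseteq^{*} C_{\beta^{*}(\delta)}$, thereby producing a club witnessing triviality and contradicting non-triviality of $\overline{C}$. In the special case $\nu = 2$, $\mathcal{F}_{\delta}$ is a singleton, so $\beta^{*}(\delta)$ is furnished directly and $D$ itself contradicts non-triviality. For the general case $\nu \leq \aleph_{1}$, I would invoke Lemma~\ref{lem:game} with the countable expansion $\mathcal{A} = \langle \mathcal{H}_{\theta}, \in, \overline{C}, C, S, T, D, \eta \rangle$ to obtain a winning strategy $\sigma$ for Player~I in $\Game(\aleph_{1}, \mu, \mathcal{A})$. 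Playing against $\sigma$ with moves $y_{n}$ designed to encode representatives from the countable family $\mathcal{F}_{\delta}$ along a cofinal $\omega$-sequence, one would arrange that $\delta^{*} := \sup_{n} \gamma_{n}$ lies in $T$ (possible because $T$ is stationary), and use the gap condition $\mathrm{Sk}_{\mathcal{A}}(\{\gamma_{n}, x_{n}\}) \cap [\alpha_{m}, \gamma_{m}) = \emptyset$ to pigeonhole the countable family $\mathcal{F}_{\delta^{*}}$ onto a single trace, producing the required $\beta^{*}(\delta^{*})$.

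The main obstacle is precisely this last step: two subtleties interact badly. First, the game forces $\gamma_{n} \in E^{\mu}_{\omega_{1}}$ while we need $\delta^{*} \in T \subseteq E^{\mu}_{\omega}$; although $\delta^{*} = \sup_{n} \gamma_{n}$ automatically has cofinality $\omega$, steering $\delta^{*}$ into $T$ requires careful choice of auxiliary predicates in $\mathcal{A}$. Second, countably many clubs through an ordinal of cofinality $\omega$ can jointly cover a cofinal set without any one of them doing so on a cofinal subset, so naive pigeonhole fails; one must exploit the gap condition furnished by $\sigma$'s winning strategy to funnel the covering into a single witness. This delicate coordination — which is where the thinness hypothesis is genuinely indispensable — is the technical heart of the Torres-P\'erez--Wu refinement and the step I expect to require the most care.
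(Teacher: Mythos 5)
Your opening moves (negate the conclusion, extract $\eta_\delta$, apply Fodor to get a uniform $\eta$ and a stationary $T$, pass to $D=C\setminus\eta$) are fine and exactly parallel the paper's proof of the easier Lemma~\ref{lem:nontrivial}; note, however, that the paper gives \emph{no} proof of Lemma~\ref{lem:nontrivial2} itself --- it is quoted from Torres-P\'erez--Wu --- so everything beyond that reduction is on you. And that is precisely where the proposal stops being a proof. The entire content of the lemma for $2<\nu\le\aleph_1$ is the step you label ``the core task'': converting the covering $D\cap\delta\subseteq\bigcup\mathcal{F}_\delta$ by a countable family of traces into a single $\beta^{*}(\delta)$ with $D\cap\delta\subseteq^{*}C_{\beta^{*}(\delta)}$. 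You do not carry this out; you gesture at Lemma~\ref{lem:game} and a pigeonhole, and then concede in your own words that ``naive pigeonhole fails'' and that the ``delicate coordination'' is the step requiring the most care. That is a self-acknowledged gap sitting exactly where the lemma lives.

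Moreover, the route you sketch cannot succeed in the form stated. Extracting $\beta^{*}(\delta)$ is impossible by any argument local to a fixed $\delta$ of cofinality $\omega$: if $D\cap\delta=\{d_n\mid n<\omega\}$ increases to $\delta$, then $E_0=\{d_{2n}\mid n<\omega\}$ and $E_1=\{d_{2n+1}\mid n<\omega\}$ are both clubs in $\delta$ covering $D\cap\delta$, yet neither contains a tail of it --- so ``funnelling the countable cover onto a single trace'' is false even for a two-element family once you forget that the traces come from a single coherent, non-trivial sequence. A correct proof must therefore exploit the $<\nu$ coherence globally, across many $\delta\in T$ at once, and your proposal supplies no such mechanism. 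Invoking Lemma~\ref{lem:game} is also the wrong direction of traffic: in the paper that game is used \emph{downstream} of Lemma~\ref{lem:nontrivial2} (inside Lemma~\ref{lem:wqkillstationary}) to build models that \emph{avoid} prescribed intervals, which does not bear on the covering problem; and steering $\sup_n\gamma_n$ into $T$ requires the internal construction from the proof of Lemma~\ref{lem:game}, not merely its statement. As it stands the proposal reproduces the paper's proof of Lemma~\ref{lem:nontrivial} and defers the genuinely new content of Lemma~\ref{lem:nontrivial2}; to close the gap you should follow the actual Torres-P\'erez--Wu argument rather than attempt to rebuild it from the paper's auxiliary lemmas.
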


The following lemma is used in Section~\ref{sec:maintheorem}. 

\begin{lem}\label{lem:wqkillstationary}
 Suppose that $\kappa \leq \mu$ are regular cardinals and that $\overline{C}$ is a $\square(\mu,{<}\nu)$-sequence for some (possibly finite) cardinal $\nu \leq \aleph_1$. Let $S_1 \subseteq [\mathcal{H}_{\theta}]^{\omega}$ be the set of all countable $M \prec \mathcal{H}_{\theta}$ such that, for all $\beta \geq \delta = \sup (M \cap \mu)$ with $\delta \in \mathrm{Lim}(C_{\beta})$, the following hold:
\begin{itemize}
	\item $M \cap C_{\beta}$ is bounded in $\delta$.
	\item For all sufficiently large $\gamma \in C_{\beta} \cap \delta$, $\mathrm{cf}(\min ((M \cap \mu) \setminus \gamma)) = \omega_1$. 
	\item For each $\xi < \delta$, there exists $x \in \mathcal{P}_{\kappa}\mu \cap M$ such that $(x \cap C_{\beta} \cap \delta) \setminus \xi \not= \emptyset.$
\end{itemize}
Then $S_1$ is a stationary subset. 
\end{lem}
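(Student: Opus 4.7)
The plan is to adapt the argument for Lemma~\ref{lem:semistationarysubset} (Sakai--Veli\v{c}kovi\'{c} and Torres-P\'erez--Wu) to additionally guarantee the third clause of $S_1$. The new observation is that, since $\kappa \geq \aleph_2$, the witness $x \in \mathcal{P}_\kappa \mu$ for clause (3) may be chosen of size $\aleph_1$: then $x \in M$ does not entail $x \subseteq M$, so the specific elements of $x \cap C_\beta$ witnessing clause (3) need not belong to $M$ and hence cannot violate clause (1).

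Given any club $\mathcal{C} \subseteq [\mathcal{H}_\theta]^\omega$, I would fix a countable Skolem expansion $\mathcal{A} = \langle \mathcal{H}_\theta, \in, \overline{C}, \ldots\rangle$ such that $\mathrm{Sk}_\mathcal{A}(X) \in \mathcal{C}$ for every countable $X \subseteq \mathcal{H}_\theta$, and let $D \subseteq \mu$ be the club of $\delta < \mu$ closed under the $\mathcal{A}$-Skolem functions restricted to $\mu$. Applying Lemma~\ref{lem:nontrivial2}, I would select $\delta \in \mathrm{Lim}(D) \cap E^\mu_\omega$ such that
\[
T := (D \cap \delta) \setminus \bigcup\{C_\beta : \delta \in \mathrm{Lim}(C_\beta)\}
\]
is unbounded in $\delta$. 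By $\square(\mu,{<}\nu)$ with $\nu \leq \aleph_1$, the distinct traces $\{C_\beta \cap \delta : \delta \in \mathrm{Lim}(C_\beta)\}$ have cardinality at most $\aleph_0$; enumerate them as $\{T_i : i < \nu_\delta\}$ with $\nu_\delta \leq \aleph_0$.

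Next, I would play the game $\Game(\kappa, \mu, \mathcal{A})$ with Player I following the winning strategy from Lemma~\ref{lem:game}. Let $\langle \zeta_n : n<\omega\rangle$ be an increasing sequence in $T$ cofinal in $\delta$. At round $n$, after observing Player I's $\alpha_n$, Player II plays $y_n \in \mathcal{P}_\kappa \mu$ of size $\aleph_1$ (permissible since $\kappa \geq \aleph_2$) so that $y_n$ contains $\zeta_n$, for each $i < \nu_\delta$ an element $\eta_{n,i} \in T_i \cap (\max\{\alpha_n, \zeta_n\}, \delta)$, and an auxiliary padding $A_n$ of size $\aleph_1$ disjoint from $\bigcup_i T_i$. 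Setting $M = \mathrm{Sk}_\mathcal{A}(\{\gamma_n, x_n : n<\omega\})$, we have $M \in \mathcal{C}$, $\sup(M \cap \mu) = \delta$ (since $\delta \in D$ and $\gamma_n \to \delta$), and by Player I's winning condition $M \cap [\alpha_m, \gamma_m) = \emptyset$, so each $\eta_{n,i} \in [\alpha_n, \gamma_n)$ is excluded from $M$. Clauses (1) and (2) of $S_1$ follow as in Lemma~\ref{lem:semistationarysubset}, using that $\gamma_n \in E^\mu_{\omega_1}$ and that $A_n \cap \bigcup_i T_i = \emptyset$. For clause (3), given $\xi < \delta$ and $i < \nu_\delta$, pick $n$ with $\zeta_n > \xi$; then $x_n \in M \cap \mathcal{P}_\kappa \mu$ satisfies $(x_n \cap T_i) \setminus \xi \ni \eta_{n,i}$.

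The main obstacle is ensuring that the elements $\eta_{n,i}$, which must lie in $y_n \subseteq x_n \in M$ to witness clause (3), do not themselves enter $M$ and thereby violate clause (1). The game framework resolves this automatically: Player I's winning condition excludes $M$ from every gap $[\alpha_m, \gamma_m)$, into which Player II has deliberately placed each $\eta_{n,i}$. The $\square(\mu,{<}\nu)$ hypothesis is essential for keeping the trace family countable, so Player II can address all traces simultaneously in each round, while $\kappa \geq \aleph_2$ is essential in allowing $|y_n| = \aleph_1 < \kappa$, so that the $\eta_{n,i}$'s can be embedded in the uncountable $y_n$ without becoming Skolem-accessible from the generators of $M$.
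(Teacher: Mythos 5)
Your overall plan---put, into Player II's move $y_n$, a point of each trace $C_\beta\cap\delta$ above any prescribed bound, so that these points land in $[\alpha_n,\gamma_n)$ and hence lie in $x_n\in M$ but not in $M$---is exactly the idea of the paper's proof. But there is a genuine gap in the execution, located precisely at the step you treat as automatic: nothing in your argument guarantees that $\sup(M\cap\mu)=\delta$, nor even that Player I's moves $\alpha_n,\gamma_n,x_n$ stay below $\delta$. The winning strategy $\tau$ from Lemma~\ref{lem:game} is an abstract object obtained from open determinacy; the only way to control its outputs is the device used in the paper, namely to fix a continuous elementary chain $\langle M_\xi\mid\xi<\mu\rangle$ with $\tau\in M_0$ and $M_\delta\cap\mu=\delta$, and to arrange that every position of the play is an \emph{element} of some $M_{\delta_n}$, so that by elementarity $\tau$'s responses lie in $M_\delta$ and hence $M=\mathrm{Sk}_{\mathcal{A}}(\{\gamma_n,x_n\mid n<\omega\})\subseteq M_\delta$, giving $M\cap\mu\subseteq\delta$. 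A position is an element of $M_{\delta_n}$ because $y_n$ is a \emph{finite} subset of $M_{\delta_n}\cap\mu=\delta_n$. Your $y_n$ of size $\aleph_1$ (indeed, already a countably infinite $y_n$) need not belong to any model of the chain, so $\tau$ may answer with $\gamma_0\geq\delta$ or with an $x_0$ containing ordinals above $\delta$; then $\sup(M\cap\mu)\neq\delta$ and every clause of $S_1$ is being verified relative to the wrong ordinal. Worse, already $\alpha_1=\tau(\alpha_0,y_0,\gamma_0,x_0)$ may exceed $\delta$, at which point your prescribed move $\eta_{1,i}\in T_i\cap(\max\{\alpha_1,\zeta_1\},\delta)$ does not exist. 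Your justification ``$\sup(M\cap\mu)=\delta$ since $\delta\in D$ and $\gamma_n\to\delta$'' only addresses the lower bound.

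The repair is the paper's diagonalization, which makes your uncountable move unnecessary: since $\nu\leq\aleph_1$, the traces $\{C_\beta\cap\delta\mid\delta\in\mathrm{Lim}(C_\beta)\}$ can be enumerated (with repetitions) as $\{C_{\beta_i}\cap\delta\mid i<\omega\}$, and at round $n$ Player II plays the \emph{finite} set $y_n=\{\max(C_{\beta_i}\cap\delta_n)\mid i\leq n\}$, where $\delta_n\nearrow\delta$ is chosen in $(C\cap\delta)\setminus\bigcup_{\delta\in\mathrm{Lim}(C_\beta)}C_\beta$ via Lemma~\ref{lem:nontrivial2}. Clause (3) for a given $\xi$ and a given trace $C_{\beta_i}$ is then witnessed by $x_n$ for any $n\geq i$ with $\delta_n\geq\xi$; there is no need to hit all traces in a single round. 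This also removes your need for the padding sets $A_n$ (which play no role in your verification) and for the hypothesis $\kappa\geq\aleph_2$, which is not part of the lemma as stated.
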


\begin{proof}
 Let $\mathcal{A} = \langle \mathcal{H}_{\theta},\in,... \rangle$ be an arbitrary countable expansion. By Lemma~\ref{lem:game}, Player I has a winning strategy for $\Game(\kappa,\mu,\mathcal{A})$. Let $\langle M_\xi \mid \xi < \mu \rangle$ be an $\in$-increasing cofinal elementary chain of $\langle \mathcal{A},\tau\rangle$ such that $M_\xi \cap \mu < \mu$ for all $\xi < \mu$. 

Note that $C = \{\xi \mid M_\xi \cap \mu = \xi, \ \xi < \mu \}$ is a club. By Lemma~\ref{lem:nontrivial2}, there exists $\delta \in \mathrm{Lim}(C) \cap E_{\omega}^{\mu}$ such that, for all $\beta \geq \delta$, 
$(C \cap \delta) \setminus \bigcup_{\delta \in \mathrm{C}_{\beta}}C_{\beta}$ is unbounded in $\delta$. 

Since $\overline{C}$ is a $\square(\kappa,{<}\nu)$-sequence and $\nu \leq \aleph_1$, we can enumerate 
\[
\{\beta \mid \delta \in \mathrm{Lim}(C_{\beta})\} = \{\beta_{i} \mid i < \omega\}.
\]
The appearance of each $\beta_{i}$ may be repeated if $\nu \leq \aleph_0$. 

Pick a sequence $\{\delta_n \mid n < \omega \} \subseteq (C\cap \delta) \setminus \bigcup_{\delta \in \mathrm{Lim}(\mathrm{C}_{\beta})}C_{\beta}$ such that $\sup_{n} \delta_n = \delta$. For each $n$, let $y_n = \{\max(C_{\beta_i} \cap \delta_n) \mid i \leq n\}$.
Since $\overline{C}$ is a $\square(\kappa,{<}\mu)$-sequence, we have $y_{n} \in \mathcal{P}_{\kappa}\mu$. Consider $\langle y_{n} \mid n < \omega \rangle$ as Player II's moves. Let $\langle \alpha_n,\gamma_n,x_n \mid n < \omega \rangle$ be Player I's moves according to $\tau$. Define $M = \mathrm{Sk}_{\mathcal{A}}(\{\gamma_n,x_n \mid n < \omega\})$.
Since $y_{n} \subseteq \delta_n = M_{\delta_n} \cap \mu$ is finite, we obtain $y_{n} \in M_{\delta}$. Therefore, we can define $\alpha_{n+1}$ in $M_{n}$ using $\tau$, so that $\alpha_{n+1} < \delta_{n}$. By the rules of $\Game(\kappa,\mu,\mathcal{A})$, we have $\max (C_{\beta_{i}} \cap \delta_{n}) < \gamma_{n+1}$ for each $i \geq n$.

Lastly, we check that $M \in S_1$. For each $\beta \geq \delta$ with $\delta \in \mathrm{Lim}(C_{\beta})$, we have 
\[
C_\beta \cap \delta = C_{\beta_{i}} \cap \delta
\]
for some $i < \omega$. By the choice of $y_{i}$, for each $n \geq i$, 
\[
C_{\beta} \cap [\delta_{n},\delta_{n+1}) = C_{\beta_{i}} \cap [\delta_n ,\delta_{n+1}) \subseteq M \cap [\alpha_{n+1},\gamma_{n+1}).
\]
Since Player I wins, we have $M \cap [\alpha_{n+1},\gamma_{n+1}) = \emptyset$, which implies $C_{\beta} \cap [\delta_{n},\delta_{n+1}) = \emptyset$. In particular, $M \cap C_{\beta}$ is bounded by some ordinal $\xi < \delta$. 

For each $\zeta \geq \xi$, we can choose $n > i$ such that, letting $\gamma = \max(C_{\beta} \cap \delta_{n})$, we have
\begin{itemize}
 \item $\gamma = \max(C_{\beta_{i}} \cap \delta_n) \in x_{n}\in M\cap \mathcal{P}_{\kappa}\mu$.
 \item $\delta_{n} \geq \zeta$.
\end{itemize}
Then, $\min (M \setminus \gamma) = \gamma_{n}$, and its cofinality is $\omega_1$, as desired.
\end{proof}

For Section~\ref{sec:reflections}, we examine another variation of Lemma~\ref{lem:semistationarysubset}.

\begin{lem}\label{lem:killstationary}
 Suppose that $\aleph_2 \leq \kappa \leq \mu$ are regular cardinals and that $\overline{C}$ is nontrivial. Let $S_1 \subseteq [\mathcal{H}_{\theta}]^{\omega}$ be the set of all countable $M \prec \mathcal{H}_{\theta}$ such that: 
       \begin{itemize}
	\item $M \cap C_{\sup M \cap \mu}$ is bounded in $\sup M \cap \mu$. 
	\item For all sufficiently large $\gamma \in C_{\sup M \cap \mu}$, 
	$\mathrm{cf}(\min ((M \cap \mu) \setminus \gamma)) = \omega_1$.
	\item For each $\xi < \sup M \cap\mu$, there exists $a \in \mathcal{P}_{\kappa}\mu \cap M$ such that $(a \cap C_{\sup M \cap \mu}) \setminus \xi \neq \emptyset$.
       \end{itemize}
Then $S_1$ is a stationary subset. 
\end{lem}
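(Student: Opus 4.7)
The plan is to imitate the proof of Lemma~\ref{lem:wqkillstationary}, using the nontriviality of $\overline{C}$ via Lemma~\ref{lem:nontrivial} in place of the $\square(\mu,{<}\nu)$-hypothesis via Lemma~\ref{lem:nontrivial2}. This weakening is possible because the three bullets of $S_1$ now refer only to the single ordinal $\beta = \sup(M\cap\mu)$, so Player II need encode information about just the one club $C_\delta$ into its moves and no countable enumeration of $\beta$-indices is required.

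Concretely, fix any club $F\subseteq[\mathcal{H}_{\theta}]^{\omega}$ and a countable expansion $\mathcal{A}=\langle\mathcal{H}_{\theta},\in,\overline{C},\dots\rangle$ rich enough that every countable $N\prec\mathcal{A}$ belongs to $F$. By Lemma~\ref{lem:game}, Player I has a winning strategy $\tau$ for $\Game(\kappa,\mu,\mathcal{A})$. Take a continuous $\in$-increasing elementary chain $\langle M_{\xi}\mid\xi<\mu\rangle$ with $\tau\in M_{0}\prec\mathcal{A}$ and $|M_{\xi}|,M_{\xi}\cap\mu<\mu$, set $C=\{\xi<\mu\mid M_{\xi}\cap\mu=\xi\}$, and apply Lemma~\ref{lem:nontrivial} to $C$ and $E^{\mu}_{\omega}$ to obtain $\delta\in\mathrm{Lim}(C)\cap E^{\mu}_{\omega}$ with $(C\cap\delta)\setminus C_{\delta}$ unbounded in $\delta$. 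Choose a strictly increasing $\langle\delta_{n}\mid n<\omega\rangle\subseteq(C\cap\delta)\setminus C_{\delta}$ cofinal in $\delta$, thinned so that $\delta_{n-1}<\max(C_{\delta}\cap\delta_{n})$ for every $n\geq1$. Now play the game with Player II responding, after initially selecting $\delta_{0}$ above $\alpha_{0}$, by
\[
y_{n}=\bigl\{\max(C_{\delta}\cap\delta_{n}),\;\max(C_{\delta}\cap\delta_{n})+1\bigr\},
\]
a finite subset of $\delta_{n}=M_{\delta_{n}}\cap\mu$, and hence an element of $M_{\delta_{n}}$. A straightforward induction using $\tau\in M_{0}$ and $\delta_{n}\in C$ shows that at every stage all prior moves lie in $M_{\delta_{n}}$, yielding $\alpha_{n}<\delta_{n-1}$ and $\gamma_{n},x_{n}<\delta_{n}$. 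Set $M=\mathrm{Sk}_{\mathcal{A}}(\{\gamma_{n},x_{n}\mid n<\omega\})$; since $\sup x_{n}\in M_{\delta_{n}}$ we get $x_{n}\subseteq\delta_{n}$, whence $M\cap\mu\subseteq\delta$, and $\sup_{n}\gamma_{n}=\delta$ gives $\sup(M\cap\mu)=\delta$.

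Now verify the three bullets. The coding $\sup y_{n}=\max(C_{\delta}\cap\delta_{n})+1$ combined with $\gamma_{n}<\delta_{n}$ forces $\gamma_{n}\notin C_{\delta}$; thus for any $\eta\in M\cap C_{\delta}$ with $\eta\geq\delta_{0}$, taking $n\geq1$ with $\eta\in[\delta_{n-1},\delta_{n})$ yields $\alpha_{n}<\delta_{n-1}\leq\eta\leq\max(C_{\delta}\cap\delta_{n})<\gamma_{n}$, placing $\eta\in M\cap[\alpha_{n},\gamma_{n})=\emptyset$, a contradiction, which proves the first bullet. The same inequality chain shows $\min((M\cap\mu)\setminus\gamma)=\gamma_{n}\in E^{\mu}_{\omega_{1}}$ for every $\gamma\in C_{\delta}\cap[\delta_{0},\delta)$, proving the second. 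For the third, given $\xi<\delta$ choose $n$ with $\max(C_{\delta}\cap\delta_{n})\geq\xi$; then $a=x_{n}\in M\cap\mathcal{P}_{\kappa}\mu$ and $\max(C_{\delta}\cap\delta_{n})\in y_{n}\subseteq x_{n}$ witness the clause. Since $M\prec\mathcal{A}$ is countable, $M\in F\cap S_{1}$, completing the stationarity argument. The main obstacle is maintaining the two invariants $\alpha_{n}<\delta_{n-1}$ and $\gamma_{n}\notin C_{\delta}$ simultaneously: the former rests on the $M_{\delta_{n-1}}$-closure of the play so far, while the latter is engineered by the two-element encoding in $y_{n}$, which strictly pushes $\sup y_{n}$ past $\max(C_{\delta}\cap\delta_{n})$ without removing it from $x_{n}$.
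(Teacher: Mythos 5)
Your proof is correct and follows essentially the same route as the paper's: the paper likewise applies Lemma~\ref{lem:nontrivial} to the club $\{\xi < \mu \mid M_{\xi}\cap\mu=\xi\}$ to obtain $\delta$ and a cofinal sequence $\delta_{n}\in (C\cap\delta)\setminus C_{\delta}$, has Player II feed $\max(C_{\delta}\cap\delta_{n})$ into the game of Lemma~\ref{lem:game}, and reads membership in $S_{1}$ off Player I's win exactly as in Lemma~\ref{lem:wqkillstationary}. Your only deviation is padding $y_{n}$ to a two-element set so that $\gamma_{n}>\max(C_{\delta}\cap\delta_{n})$ holds strictly (the paper plays the singleton $\{\max(C_{\delta}\cap\delta_{n})\}$, which only yields $\gamma_{n}\geq\sup y_{n}$); this is a harmless, arguably helpful, tightening of the same argument rather than a different approach.
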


\begin{proof}
By Lemma~\ref{lem:nontrivial}, for every countable expansion $\mathcal{A} = \langle \mathcal{H}_{\theta},\in,...\rangle$, we can find $\delta \in E_{\omega}^{\mu}$ and an elementary substructure $M_n \prec \mathcal{A}$ such that: 
\begin{itemize}
 \item $\delta_n = M_n \cap \mu \in \mu \setminus C_{\delta}$.
 \item $M_n \in M_{n+1}$.
 \item $\sup_{n} \delta_{n} = \delta$. 
\end{itemize}
Define $\beta_{n} = \max (C_{\delta} \cap \delta_{n})$. As in the proof of Lemma~\ref{lem:wqkillstationary}, there exists a play of $\Game(\kappa,\mu,\mathcal{A})$ in which Player I wins and Player II chooses $\{\beta_{n}\}$ in each turn. This play also defines $M \prec \mathcal{A}$ such that $M \in S_{1}$, as desired.
\end{proof}

In Sections~\ref{sec:walksquare},~\ref{sec:maintheorem}, and \ref{sec:reflections}, We will show that $S_0$ is \emph{not} semistationary in the extension by $\mathrm{Nm}(\mu)$ or a specific $\mathrm{Nm}(\kappa,\lambda,F)$.

\subsection{Two-Cardinal Combinatorics}\label{subsec:combinatorics}

In this section, for cardinals $\kappa \leq \lambda$, we introduce a two-cardinal tree property $\mathrm{TP}(\kappa,\lambda)$ and a two-cardinal partition property $\mathcal{P}_{\kappa}\lambda \to [I_{\kappa\lambda}^{+}]^{n}$. 

First, we recall the tree property $\mathrm{TP}(\kappa,\lambda)$, originally introduced by Jech~\cite{jechstronglycompact}. 
A \emph{$\mathcal{P}_{\kappa}\lambda$-list} is a sequence $\overline{d} = \langle d_{x} \mid x \in \mathcal{P}_{\kappa}\lambda \rangle$, where $d_{x} \subseteq x$ for all $x \in \mathcal{P}_{\kappa}\lambda$. We say that $\overline{d}$ is \emph{thin} if there exists a club $C \subseteq \mathcal{P}_{\kappa}\lambda$ such that $|\{d_{x} \cap y \mid y \subseteq x\}| < \kappa$ for all $y \in C$. We denote by $\mathrm{Lev}_{y}(\overline{d})$ the set $\{d_{x} \cap y \mid y \subseteq x\}$. A \emph{branch} of $\overline{d}$ is a set $d \subseteq \lambda$ such that, for all $x \in \mathcal{P}_{\kappa}\lambda$, $d \cap x \in \mathrm{Lev}_{x}(\overline{d})$.
The property $\mathrm{TP}(\kappa,\lambda)$ asserts the \emph{nonexistence} of a thin $\mathcal{P}_{\kappa}\lambda$-list with \emph{no} branches. Note that $\mathrm{TP}(\kappa,\kappa)$ is equivalent to the \emph{nonexistence} of a $\kappa$-Aronszajn tree. These tree properties characterize the strong compactness of $\kappa$ via the following theorem, which we will use in Theorem~\ref{maintheorem2}. 

\begin{thm}[Jech~\cite{jechstronglycompact}]\label{jech}
For regular cardinals $\kappa \leq \lambda$, the following are equivalent:
\begin{enumerate}
 \item The logic $\mathcal{L}_{\kappa\kappa}$ satisfies the compactness theorem for any theory of size $\lambda$.
 \item $\kappa$ is inaccessible and $\mathrm{TP}(\kappa,\lambda)$ holds.
\end{enumerate}
\end{thm}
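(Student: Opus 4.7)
For (1) $\Rightarrow$ (2), inaccessibility of $\kappa$ follows from the classical fact that compactness of $\mathcal{L}_{\kappa\kappa}$ for theories of size $\kappa$ forces $\kappa$ to be weakly compact, hence in particular a strong limit. For $\mathrm{TP}(\kappa,\lambda)$, given a thin $\mathcal{P}_{\kappa}\lambda$-list $\overline{d}$ with thinness witnessed by a club $C$, the plan is to introduce a language with constants $\{c_\alpha \mid \alpha < \lambda\}$ and a new unary predicate $B$, and form the theory $T$ consisting, for each $y \in C$, of the axiom
\[
\bigvee_{a \in \mathrm{Lev}_y(\overline{d})} \bigwedge_{\alpha \in y} \bigl(B(c_\alpha) \leftrightarrow \alpha \in a\bigr),
\]
which is a legitimate $\mathcal{L}_{\kappa\kappa}$-sentence because thinness gives $|\mathrm{Lev}_y(\overline{d})| < \kappa$ and $|y| < \kappa$, so $T$ has total size $\lambda$. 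For ${<}\kappa$-consistency, any $T_0 \subseteq T$ with $|T_0| < \kappa$ mentions only ${<}\kappa$ ordinals; choosing $y^{\star} \in C$ containing them all and interpreting $B$ as $d_{y^{\star}}$ satisfies each axiom indexed by $y \subseteq y^\star$ with witness $a = d_{y^\star} \cap y \in \mathrm{Lev}_y(\overline{d})$. Compactness yields a model whose interpretation of $B$, read off the constants $c_\alpha$, gives a subset $b \subseteq \lambda$ that is a branch of $\overline{d}$.

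For (2) $\Rightarrow$ (1), starting with a ${<}\kappa$-consistent theory $T$ in $\mathcal{L}_{\kappa\kappa}$ over a language $L$ of size $\lambda$, first perform a Skolem expansion handling the ${<}\kappa$-ary existentials, so that for each $x \in \mathcal{P}_{\kappa}\lambda$ the sub-language $L_x$ indexed by $x$ has size ${<}\kappa$. By ${<}\kappa$-consistency, the subtheory $T_x$ has a model, and by downward L\"owenheim--Skolem for $\mathcal{L}_{\kappa\kappa}$ (valid because $\kappa$ is inaccessible) one may take it of cardinality ${<}\kappa$ with universe $\subseteq \kappa$. Fix once and for all a well-ordering of all such ${<}\kappa$-sized $L$-structures on ordinals ${<}\kappa$; let $N_x$ be the $<$-least model of $T_x$ with this property, and let $d_x \subseteq x$ encode the atomic diagram of $N_x$ in $L_x$ via a fixed bijection $\lambda \leftrightarrow \lambda \times \kappa$. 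Inaccessibility of $\kappa$ bounds the number of such structures using parameters from $y$ by some cardinal ${<}\kappa$, so $|\mathrm{Lev}_y(\overline{d})| < \kappa$ at every level, which yields thinness. The hypothesis $\mathrm{TP}(\kappa,\lambda)$ then produces a branch $d$, and by rigidity of the enumeration the diagrams $d \cap x$ cohere literally (not merely up to isomorphism) along any $\subseteq$-cofinal chain, so their union is the atomic diagram of a model $M \models T$.

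The principal obstacle is the rigidity required in (2) $\Rightarrow$ (1): a branch must produce a single honest model of $T$, not just a family of isomorphic pieces. The canonical-enumeration trick described above, combined with the Skolem expansion (which forces extensions of $x$ to extend the chosen model), is what guarantees compatibility between the $N_x$'s on overlaps, so that $d_{x'} \cap x = d_x \cap x$ implies literal equality of the encoded diagrams. Inaccessibility of $\kappa$ is used twice: to justify downward L\"owenheim--Skolem for $\mathcal{L}_{\kappa\kappa}$, keeping $|N_x| < \kappa$, and to bound $|\mathrm{Lev}_y(\overline{d})|$ below $\kappa$ so that $\overline{d}$ is thin. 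By comparison, the direction (1) $\Rightarrow$ (2) is cleaner, since thinness directly controls the $\mathcal{L}_{\kappa\kappa}$-complexity of the ``branch-existence'' axioms, and ${<}\kappa$-consistency is immediate from choosing $B = d_{y^\star}$.
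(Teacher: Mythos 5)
The paper does not actually prove this theorem: it is quoted from Jech's work and used as a black box (e.g.\ in the proof of Theorem~\ref{maintheorem2}), so there is no in-paper argument to compare against. Judging your proposal on its own terms: the direction (1) $\Rightarrow$ (2) is the standard argument and is essentially correct, modulo two routine points you should record --- that a branch condition verified only at levels $y \in C$ propagates to arbitrary $x \in \mathcal{P}_{\kappa}\lambda$ by choosing $y \in C$ with $x \subseteq y$ and restricting, and that ``size'' here counts symbols, so the possibly $\lambda^{<\kappa}$-many sentences indexed by $C$ are harmless.

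The direction (2) $\Rightarrow$ (1) has a genuine gap: the list you build is not thin. You let $d_x$ code the atomic diagram of a canonical model $N_x \models T_x$ of size ${<}\kappa$ with universe contained in $\kappa$. But for a fixed $y$, the sets $d_x \cap y$ for $x \supseteq y$ code $L_y$-reducts of structures whose universes are unbounded in $\kappa$ (as $x$ grows, $N_x$ must grow), and already a single constant symbol of $L_y$ can take $\kappa$-many distinct values across these reducts; hence $|\mathrm{Lev}_y(\overline{d})|$ can equal $\kappa$. Inaccessibility bounds the number of $L_y$-structures on a \emph{fixed} bounded subset of $\kappa$, not on all bounded subsets simultaneously, and the canonical-enumeration device does not collapse the levels, since the $<$-least model of $T_{x'}$ restricted to $L_x$ need not be the $<$-least model of $T_x$. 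The standard proof avoids this by making levels consist of truth-value assignments rather than models: Henkinize/Skolemize $T$, index the sentences of the expanded theory by $\lambda$, and let level $x$ consist of those $s:\Phi_x \to 2$ whose corresponding partial completion is ${<}\kappa$-consistent, so that $|\mathrm{Lev}_x(\overline{d})| \leq 2^{|x|} < \kappa$ by inaccessibility; a branch is then a complete, Henkin-closed, ${<}\kappa$-consistent theory from which a term model of $T$ is read off. Your reconstruction step would also need such a device in any case: agreement of atomic diagrams on sublanguages does not by itself transfer satisfaction of non-atomic (or even universal) sentences unless the substructure relations are literal, which your construction does not guarantee.
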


Lastly, we introduce a two-cardinal version of the square-bracket partition property. For cardinals $\kappa \leq \lambda$, the statement $\mathcal{P}_{\kappa}\lambda \not\to [I_{\kappa\lambda}^{+}]^{n}_{\lambda}$ asserts the existence of a function $c:[\mathcal{P}_{\kappa}\lambda]^{n} \to \lambda$
such that $c ``[X]^{n}_{\subset} = \lambda$ for all $X \in I_{\kappa\lambda}^{+}$. Here, $[X]^{n}_{\subset} = \{\{x_{0},...,x_{n-1}\} \subseteq X \mid x_{0} \subset \cdots \subset x_{n-1} \}$.

The following theorems are not used in this paper, but we introduce them to explain our motivations. Theorem~\ref{maintheorem3} aims to generalize Theorem~\ref{todorcevic} in the framework of Theorem~\ref{todo2}. Note that Theorem~\ref{todorcevic} includes Theorem~\ref{todo2} as a special case when $\kappa = \lambda$.

\begin{thm}[Todor\v{c}evi\'{c}~\cite{MR2355670}]\label{todorcevic}
If $\kappa = \nu^{+}$ for some regular $\nu$, then 
$\mathcal{P}_{\kappa}\lambda \not\to [I_{\kappa\lambda}^{+}]^{2}_{\lambda}$
for all regular $\lambda \geq \kappa$. 
\end{thm}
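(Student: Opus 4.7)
The plan is to adapt Todor\v{c}evi\'{c}'s classical square-bracket construction for $\mu = \nu^{+}$ with $\nu$ regular, which lives on pairs of ordinals, into a coloring on pairs from $\mathcal{P}_{\kappa}\lambda$ by projecting to suprema. First I would fix a $C$-sequence $\overline{C} = \langle C_{\alpha} \mid \alpha < \lambda \rangle$ on $\lambda$ and set up the associated walks $\rho_{0}, \rho_{2}, \lambda(\cdot,\cdot)$ as in Section~\ref{subsec:walks}. For $x \in \mathcal{P}_{\kappa}\lambda$, set $\alpha_{x} = \sup x$, which lies below $\lambda$ since $\lambda$ is regular and $|x|<\kappa\leq\lambda$. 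For $x \subsetneq y$ I would let $c(x,y)$ be a walk-invariant of $(\alpha_{x}, \alpha_{y})$---for example $\rho_{0}(\alpha_{x},\alpha_{y})$ read as an ordinal in $\lambda$ through a fixed coding, or an oscillation-type derivative of $\rho_{0}$ along the walk---with a canonical auxiliary choice using some $\gamma \in y \setminus x$ to break ties when $\alpha_{x} = \alpha_{y}$.

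To prove $c``[X]^{2}_{\subset} = \lambda$ for $X \in I_{\kappa\lambda}^{+}$ and a target color $\eta < \lambda$, first observe that $\{\alpha_{x} \mid x \in X\}$ is cofinal in $\lambda$: for any $\beta < \lambda$, unboundedness of $X$ in $\mathcal{P}_{\kappa}\lambda$ gives some $x \in X$ with $\{\beta\} \subseteq x$. Next, using $\kappa = \nu^{+}$ with $\nu$ regular, I would thin $X$ to a subfamily $Y$ by pressing-down arguments on the traces of the walks from the $\alpha_{x}$'s and on the lower-walk parameters $\overline{\lambda}(\cdot,\alpha_{x})$; the regularity of $\nu$ is what makes the relevant filters $\nu$-complete and enables Fodor-type reflection on the sequence of walk characteristics. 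Finally, applying the concatenation identities of Lemmas~\ref{walk2}, \ref{walk4}, and~\ref{walk5}, I would locate a pair $x \subsetneq y$ inside $Y$ whose walk-invariant equals $\eta$, mirroring Todor\v{c}evi\'{c}'s ``capturing'' step from the one-cardinal setting.

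The main obstacle is the transition from ordinals back to sets: once one has found suprema $\alpha < \beta$ in the projection of $X$ which \emph{would} realize $\eta$ through the one-cardinal coloring, one must still produce actual $x, y \in X$ with $\sup x = \alpha$, $\sup y = \beta$, and crucially $x \subsetneq y$. Cofinality of the projection alone is not enough, and one genuinely uses that $X$ is $I_{\kappa\lambda}^{+}$-positive---every bounded $z \in \mathcal{P}_{\kappa}\lambda$ is extended inside $X$---together with the $\nu^{+}$-hypothesis, which supplies the $\nu$-complete pressing-down filter needed to refine $X$ to a subfamily simultaneously coherent with respect to $\subseteq$, to the $C$-sequence, and to the auxiliary ``break-tie'' invariant $\gamma$. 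Once this coherence is arranged, the projection onto $\lambda$ faithfully transports Todor\v{c}evi\'{c}'s one-cardinal capturing argument to the two-cardinal setting and delivers the required pair.
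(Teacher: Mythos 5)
The paper does not prove this theorem; it is quoted from Todor\v{c}evi\'{c}~\cite{MR2355670} and explicitly flagged as ``not used in this paper.'' The only information the paper gives about the actual proof is Remark~\ref{rema:twocardinalwalks}: Todor\v{c}evi\'{c} walks along a two-cardinal $C$-sequence $\langle C_{x} \mid x \in \mathcal{P}_{\kappa}\lambda\rangle$ in which $C_{x} \subseteq x$ and the transitive collapses $\pi_{x}``C_{x\cap\alpha}$ form an \emph{ordinal} $C$-sequence on $\mathrm{ot}(x) < \kappa = \nu^{+}$. The combinatorial engine is therefore the coherent $C$-sequence structure available on $\nu^{+}$ for regular $\nu$, applied \emph{inside the sets} $x$, not any structure on $\lambda$. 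Your proposal inverts this: you walk on $\lambda$ and use $\kappa=\nu^{+}$ only for vague ``Fodor-type reflection,'' and this is where the argument breaks.

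Concretely, there are two gaps. First, a coloring that factors essentially through $(\sup x,\sup y)$ cannot witness the theorem for all regular $\lambda \geq \kappa$: the conclusion must hold, for instance, when $\lambda$ is weakly compact, where the one-cardinal relation $\lambda \to [\lambda]^{2}_{\lambda}$ holds and no walk-based coloring of pairs of ordinals below $\lambda$ is onto $\lambda$ on every cofinal set of ordinals. Since the theorem is a ZFC result for arbitrary regular $\lambda \geq \nu^{+}$, any proof whose strength lives on $\lambda$ rather than on the order types of the sets $x$ is structurally doomed; this is also why the paper's Section~\ref{sec:twocardinalwalks} must retreat to a $3$-dimensional relation under an extra non-reflection hypothesis when $\kappa$ is not assumed to be a successor. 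Second, the ``transition from ordinals back to sets'' that you flag in your last paragraph is not repaired by the pressing-down refinement you sketch: an unbounded $X \in I_{\kappa\lambda}^{+}$ can have all of its suprema lying in a prescribed thin set (e.g., $X = \{x \mid \sup x \text{ is a successor ordinal}\}$ is unbounded), can contain many pairs $x \subsetneq y$ with $\sup x = \sup y$ where the walk from $\sup y$ to $\sup x$ is degenerate, and offers no way to realize a \emph{given} pair $(\alpha,\beta)$ of suprema by a $\subseteq$-comparable pair from $X$, since extending $x$ inside $X$ gives no control over the supremum of the extension. The correct route, per Remark~\ref{rema:twocardinalwalks}, is to define the walk from $y$ to $x$ along clubs $C_{y\cap\beta} \subseteq y$ whose collapses cohere as a $C$-sequence on an ordinal below $\nu^{+}$, so that the capturing argument runs over $\mathrm{ot}(y)$ rather than over $\lambda$.
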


\begin{thm}[Todor\v{c}evi\'{c}~\cite{10.1007/BF02392561}]\label{todo2}
If there exists a stationary subset $S \subseteq \kappa$ such that $S \cap \alpha$ is \emph{non}-stationary for all $\alpha \in \mathrm{Lim}$, then $\kappa \not\to [\kappa]_{\kappa}^{2}$. In particular, if $\kappa = \nu^{+}$ for some regular $\nu$, then $\kappa \not\to [\kappa]^{2}_{\kappa}$.
\end{thm}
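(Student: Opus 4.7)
The plan is to adapt Todor\v{c}evi\'{c}'s walk-based proof of $\kappa \not\to [\kappa]^{2}_{\kappa}$ from non-reflecting stationary sets (Theorem~\ref{todo2}) to the two-cardinal setting, using the ``naive $C$-sequence'' framework foreshadowed in Section~\ref{sec:twocardinalwalks}. The jump from pairs to triples is forced by the fact that in the two-cardinal context we can only read walk data from the ordinals $\sup x$, $\sup y$, $\sup z$, and we need the walk between two of these ordinals to provide a stable reference point while the third supplies the target color.

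First I would use the non-reflection hypothesis to fix, for each $\alpha \in E^\lambda_{<\kappa}$, a cofinal subset $D_\alpha \subseteq \alpha \setminus S$ of order type $\mathrm{cf}(\alpha)$; setting $D_{\alpha+1} = \{\alpha\}$ and choosing arbitrary clubs at remaining limits yields a $C$-sequence $\overline{D}$ on $\lambda$ with the key property that no $S$-point ever appears as an interior node of a walk. Writing $\alpha_x = \sup x$, I would then define $c: [\mathcal{P}_\kappa\lambda]^{3}_{\subset} \to \lambda$ in the vicinity of
\[
c(x,y,z) = \min\bigl(x \setminus \overline{\lambda}(\alpha_y,\alpha_z)\bigr)
\]
when $\alpha_x < \alpha_y < \alpha_z$, $\alpha_x \in S$, and $\alpha_x > \overline{\lambda}(\alpha_y,\alpha_z)$, and $c(x,y,z) = 0$ otherwise. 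Lemma~\ref{walk2}(3) guarantees that the lower trace $\overline{\lambda}(\alpha_y,\alpha_z)$ is stable in the usable way: every $\gamma \in (\overline{\lambda}(\alpha_y,\alpha_z),\alpha_y)$ satisfies the factorization $\rho_0(\gamma,\alpha_z) = \rho_0(\alpha_y,\alpha_z){^{\frown}}\rho_0(\gamma,\alpha_y)$, so the lower trace encodes information retrievable from the pair $(y,z)$ alone, and in particular is not contaminated by the future choice of $x$.

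To prove the partition, fix $X \in I^{+}_{\kappa\lambda}$ and $\xi < \lambda$, and build the witnessing triple top-down. By unboundedness of $X$, first choose $z \in X$ with $\alpha_z$ very large and such that $\{\alpha_y : y \in X,\, y \subseteq z\}$ meets $S$ stationarily below $\alpha_z$, via a Fodor pressing-down argument on $\mathrm{cf}(\alpha_y)$. Next choose $y \subset z$ in $X$ with $\xi \in y$ and $\overline{\lambda}(\alpha_y,\alpha_z) < \xi$; driving $\overline{\lambda}(\alpha_y,\alpha_z)$ below a prescribed threshold is possible by varying $\alpha_y$ along the stationary trace and avoiding the finitely many obstructing values coming from a fixed walk initial segment. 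Finally, choose $x \subset y$ in $X$ with $\alpha_x \in S$, $\overline{\lambda}(\alpha_y,\alpha_z) < \alpha_x < \xi$, $\xi \in x$, and $x \cap (\overline{\lambda}(\alpha_y,\alpha_z),\xi) = \{\xi\}$; by construction $c(x,y,z) = \xi$.

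The main obstacle is the last step: simultaneously imposing $\alpha_x \in S$, $\xi \in x$, and the gap condition $x \cap (\overline{\lambda}(\alpha_y,\alpha_z),\xi) = \{\xi\}$ on a witness drawn from $X$, since an arbitrary element of $X$ below $y$ need not avoid the prescribed interval. I expect to resolve this by thinning $X$ at the outset to elements $w$ whose top segment $w \cap (\eta, \sup w)$ has a prescribed small shape for each relevant parameter $\eta$, then invoking the stationarity of $S$ in $E^\lambda_{<\kappa}$ together with a two-cardinal density argument of the kind developed in Section~\ref{sec:twocardinalwalks}. Throughout, the non-reflection of $S$ is precisely what prevents the walk from $\alpha_z$ to $\alpha_y$ from accidentally passing through $\alpha_x$, so that the coloring unambiguously returns the intended value $\xi$.
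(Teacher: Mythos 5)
There is a fundamental mismatch between your proposal and the statement you were asked to prove. Theorem~\ref{todo2} is the classical one-cardinal result: from a non-reflecting stationary $S \subseteq \kappa$ one builds a coloring $c\colon [\kappa]^{2} \to \kappa$ of \emph{pairs of ordinals} realizing every color on every $X \in [\kappa]^{\kappa}$. What you construct is a coloring of $\subseteq$-increasing \emph{triples} of elements of $\mathcal{P}_{\kappa}\lambda$ aimed at $\mathcal{P}_{\kappa}\lambda \not\to [I_{\kappa\lambda}^{+}]^{3}_{\lambda}$ --- that is Theorem~\ref{maintheorem3}, not Theorem~\ref{todo2}. You even remark that ``the jump from pairs to triples is forced,'' but the theorem in question is precisely the pairs statement about ordinals; nothing in it involves $\mathcal{P}_{\kappa}\lambda$ or the ideal $I_{\kappa\lambda}$. (For what it is worth, the paper does not prove Theorem~\ref{todo2} at all: it is cited from Todor\v{c}evi\'{c} as motivation, with the remark that it is the $\kappa = \lambda$ special case of Theorem~\ref{todorcevic}.) A proof of the actual statement stays entirely in the one-cardinal setting: fix a $C$-sequence with $C_{\alpha} \cap S = \emptyset$ at limits, partition $S$ into $\kappa$ stationary pieces, read off from the walk between $\alpha < \beta$ which piece the relevant trace point lies in, and use an elementary-submodel/pressing-down argument to show that for every $X \in [\kappa]^{\kappa}$, club-many $\delta \in S$ appear on the trace of a walk between two members of $X$; non-reflection guarantees $\lambda(\delta,\beta) < \delta$ so that $\delta$ is actually visited.

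Even if one charitably reads your proposal as an attempt at Theorem~\ref{maintheorem3}, it diverges from the paper's Section~\ref{sec:twocardinalwalks} proof in a way that leaves a real gap. The paper colors via $[xyz]$, the least common point of the walks from $z$ down to $\sup x$ and to $\sup y$, composed with a stationary partition $\langle A_{\xi} \mid \xi < \lambda\rangle$ of $S$, and catches a prescribed color by working \emph{bottom-up} inside a countable $M \prec \mathcal{A}$ with $\sup(M \cap \lambda) \in S$: it picks $x, y \in X \cap M$ by elementarity and only then $z \supseteq M \cap \lambda$. Your coloring $\min\bigl(x \setminus \overline{\lambda}(\alpha_y,\alpha_z)\bigr)$ instead tries to read the target ordinal directly off $x$ and builds the triple top-down, and the step you yourself flag as the ``main obstacle'' is where the argument fails: unboundedness of $X$ in $(\mathcal{P}_{\kappa}\lambda,\subseteq)$ only supplies \emph{supersets} of prescribed sets, so once $y$ is fixed there is no reason any $x \in X$ with $x \subset y$ exists at all, let alone one with $\xi \in x$, $\sup x \in S$, and $x$ avoiding the interval $(\overline{\lambda}(\alpha_y,\alpha_z),\xi)$; thinning $X$ cannot manufacture such elements. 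The first step is also broken: for any $z \in \mathcal{P}_{\kappa}\lambda$ one has $\mathrm{cf}(\sup z) < \kappa$ and $\{y \in X \mid y \subseteq z\}$ is not controlled by positivity of $X$, so ``$\{\alpha_y : y \in X,\ y \subseteq z\}$ meets $S$ stationarily below $\alpha_z$'' is not something a Fodor argument can deliver.
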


For a stationary subset $S \subseteq \lambda$ and regular cardinals $\aleph_1 \leq \kappa \leq \lambda$, the property $\mathrm{Refl}(S,{<}\kappa)$, introduced in ~\cite{MR1838355}, asserts that for every stationary subset $T \subseteq S$, there exists an ordinal $\alpha \in E_{{<}\kappa}^{\lambda}$ with cofinality $\geq \omega_1$ such that $T \cap \alpha$ is stationary in $\alpha$. 

We write $\mathrm{Refl}(S)$ to denote $\mathrm{Refl}(S,{<}\sup S)$. If $T\subseteq S$ witnesses $\lnot\mathrm{Refl}(S,{<}\kappa)$, then we call $T$ a \emph{non}-reflecting stationary subset. Non-reflecting stationary subsets play an important role in walk theory. In this paper, we use these in Sections~\ref{sec:reflections} and~\ref{sec:twocardinalwalks}.

\section{A filter associated with a thin list}\label{sec:filter}
In this section, we construct a $\kappa$-complete fine filter $F_{\overline{d}}$ over $\mathcal{P}_{\kappa}\lambda$ for a thin $\mathcal{P}_{\kappa}\mu$-list. Hayut used the following lemma to define a filter over $\lambda$. 

\begin{lem}[Fichtenholz--Kantorovic for $\kappa = \lambda = \omega$; Hausdorff for $\kappa =\aleph_0 \leq \lambda$; Hayut~\cite{MR3959249} for general $\aleph_0\leq\kappa\leq \lambda$]\label{lemma:hausdorff}
For infinite cardinals $\kappa \leq \lambda = \lambda^{<\kappa}$, there exists a family $\mathcal{X}\subseteq [\lambda]^{\lambda}$ such that $|\mathcal{X}| = 2^{\lambda}$ and, for all $x,y\in \mathcal{P}_{\kappa}\mathcal{X}$ with $x \cap y = \emptyset$, 
\[
|(\bigcap x) \setminus (\bigcup y)| = \lambda.
\]
\end{lem}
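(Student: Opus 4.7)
The plan is to carry out the classical Fichtenholz--Kantorovich--Hausdorff construction of an independent family, in the two-cardinal form due to Hayut. Since $\lambda = \lambda^{<\kappa}$, I would first identify $\lambda$ with the set of codes
\[
\Lambda = \{(s,\mathcal{F}) : s \in \mathcal{P}_{\kappa}\lambda,\ \mathcal{F} \subseteq \mathcal{P}(s),\ |\mathcal{F}| < \kappa\}.
\]
The hypothesis gives $|\mathcal{P}_{\kappa}\lambda| = \lambda$ and $2^{<\kappa} \leq \lambda^{<\kappa} = \lambda$, from which $|\Lambda| = \lambda$, so that it suffices to construct $\mathcal{X}$ inside $\Lambda$.

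For each $A \subseteq \lambda$ I would define
\[
X_A = \{(s,\mathcal{F}) \in \Lambda : A \cap s \in \mathcal{F}\},
\]
and set $\mathcal{X} = \{X_A : A \subseteq \lambda\}$. Injectivity of $A \mapsto X_A$ is immediate from the singleton codes $(\{\alpha\}, \{\{\alpha\}\})$, which lie in $X_A$ precisely when $\alpha \in A$, so $|\mathcal{X}| = 2^{\lambda}$. The containment $\mathcal{X} \subseteq [\lambda]^{\lambda}$ will then follow from the independence check.

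For the independence property, I would fix disjoint $x,y \in \mathcal{P}_{\kappa}\mathcal{X}$ and enumerate them as $x = \{X_{A_i} : i \in I\}$, $y = \{X_{B_j} : j \in J\}$, where the $A_i, B_j$ are pairwise distinct subsets of $\lambda$. Since $|I \cup J| < \kappa$, only $<\kappa$ many pairs from $\{A_i\} \cup \{B_j\}$ need to be separated, so one can pick a distinguishing ordinal for each pair and collect them into a single $s_0 \in \mathcal{P}_{\kappa}\lambda$. For any $s \in \mathcal{P}_{\kappa}\lambda$ with $s \supseteq s_0$, the traces $\{A_i \cap s\} \cup \{B_j \cap s\}$ remain pairwise distinct, so $\mathcal{F}_s := \{A_i \cap s : i \in I\}$ is a legitimate code-component that places $(s,\mathcal{F}_s)$ inside every $X_{A_i}$ and outside every $X_{B_j}$. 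Since $|\{s \in \mathcal{P}_{\kappa}\lambda : s \supseteq s_0\}| = \lambda$, this yields $\lambda$-many witnesses in $(\bigcap x) \setminus (\bigcup y)$.

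The one subtle point I expect is the verification $|\Lambda| = \lambda$; this rests entirely on $\lambda^{<\kappa} = \lambda$, via the consequence $2^{<\kappa} \leq \lambda$ (so that each level $\mathcal{P}(s)$ has size $\leq \lambda$ and each $\mathcal{P}_{\kappa}(\mathcal{P}(s))$ has size $\leq \lambda^{<\kappa} = \lambda$). Everything downstream is a matter of choosing the separating $s_0$ and observing that its extensions in $\mathcal{P}_{\kappa}\lambda$ provide the required $\lambda$-many witnesses.
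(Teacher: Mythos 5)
Your proposal is correct and is precisely the Fichtenholz--Kantorovich--Hausdorff independent-family construction that the paper invokes by citation (its own ``proof'' is just a pointer to Hayut's Lemma~2, which proceeds exactly this way: code $\lambda$ as pairs $(s,\mathcal{F})$ with $s\in\mathcal{P}_{\kappa}\lambda$ and $\mathcal{F}\subseteq\mathcal{P}(s)$ of size ${<}\kappa$, and set $X_A=\{(s,\mathcal{F})\mid A\cap s\in\mathcal{F}\}$). All the delicate points --- that $\lambda^{<\kappa}=\lambda$ gives $2^{<\kappa}\leq\lambda$ and hence $|\Lambda|=\lambda$, and that a single separating $s_0\in\mathcal{P}_{\kappa}\lambda$ suffices because $\kappa$ is infinite --- are handled correctly.
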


\begin{proof}
 See~\cite[Lemma~2]{MR3959249}. 
\end{proof}

The following lemma is a $\mathcal{P}_{\kappa}\lambda$ version of Lemma~\ref{lemma:hausdorff} and plays a central role. 

\begin{lem}\label{lemma:propvaluesets}
 For regular cardinals $\kappa \leq \lambda = \lambda^{<\kappa}$, there exists a family $\langle A_{\alpha} \mid \alpha < 2^{\lambda} \rangle$ of subsets of $\mathcal{P}_{\kappa}\lambda$ such that, for every $x,y \in \mathcal{P}_{\kappa}2^{\lambda}$ with $x \cap y = \emptyset$, 
 \[
 \bigcap_{\alpha \in x} A_{\alpha} \setminus \bigcup_{\alpha \in y} A_{\alpha} \text{ is unbounded in } \mathcal{P}_{\kappa}\lambda.
 \]
\end{lem}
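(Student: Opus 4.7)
The plan is to reduce the claim to Lemma~\ref{lemma:hausdorff} by pulling back an independent family of subsets of $\lambda$ along a suitably chosen surjection $\phi : \mathcal{P}_{\kappa}\lambda \to \lambda$. The only property we need of $\phi$ is that every fiber $\phi^{-1}(\{\xi\})$ is unbounded in $(\mathcal{P}_{\kappa}\lambda,\subseteq)$; given such a $\phi$, the pullbacks $A_{\alpha} = \phi^{-1}[X_{\alpha}]$ inherit the independence property automatically.

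First, apply Lemma~\ref{lemma:hausdorff} to obtain a family $\langle X_{\alpha} \mid \alpha < 2^{\lambda}\rangle$ of subsets of $\lambda$ such that $|\bigcap_{\alpha \in x} X_{\alpha} \setminus \bigcup_{\alpha \in y} X_{\alpha}| = \lambda$ for all disjoint $x,y \in \mathcal{P}_{\kappa} 2^{\lambda}$. Next, construct $\phi$. Since $\lambda^{<\kappa} = \lambda$, we have $|\mathcal{P}_{\kappa}\lambda \times \lambda| = \lambda$, so fix an enumeration $\{(u_{\delta},\eta_{\delta}) \mid \delta < \lambda\}$ of $\mathcal{P}_{\kappa}\lambda \times \lambda$. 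Recursively choose pairwise distinct $z_{\delta}^{*} \in \mathcal{P}_{\kappa}\lambda$ with $u_{\delta} \subseteq z_{\delta}^{*}$; this succeeds because the upward cone $\{z \in \mathcal{P}_{\kappa}\lambda \mid u_{\delta} \subseteq z\}$ has size $\lambda$, while fewer than $\lambda$ elements have been used by stage $\delta$. Set $\phi(z_{\delta}^{*}) = \eta_{\delta}$ and $\phi(z) = 0$ otherwise. Given any $z_{0} \in \mathcal{P}_{\kappa}\lambda$ and $\xi < \lambda$, choose $\delta$ with $(u_{\delta},\eta_{\delta}) = (z_{0},\xi)$ to see that $z_{\delta}^{*} \supseteq z_{0}$ lies in $\phi^{-1}(\{\xi\})$, so every fiber is unbounded.

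Finally, set $A_{\alpha} = \phi^{-1}[X_{\alpha}]$. A direct computation gives
\[
\bigcap_{\alpha \in x} A_{\alpha} \setminus \bigcup_{\alpha \in y} A_{\alpha} \;=\; \phi^{-1}\!\left[\bigcap_{\alpha \in x} X_{\alpha} \setminus \bigcup_{\alpha \in y} X_{\alpha}\right],
\]
so for any $z_{0} \in \mathcal{P}_{\kappa}\lambda$, picking any $\xi$ in the bracketed set (nonempty by the choice of $\langle X_{\alpha}\rangle$) and invoking the unboundedness of $\phi^{-1}(\{\xi\})$ yields $z \supseteq z_{0}$ in the left-hand side.

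I do not foresee a serious obstacle; the argument is essentially a one-step reduction to Lemma~\ref{lemma:hausdorff}. The only place that genuinely uses $\lambda = \lambda^{<\kappa}$ beyond the input to that lemma is the recursive construction of $\phi$, where it is needed both to enumerate the tasks $(u_{\delta},\eta_{\delta})$ in $\lambda$ steps and to ensure each upward cone retains cardinality $\lambda$ throughout the recursion.
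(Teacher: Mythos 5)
Your proof is correct, and it reaches the conclusion by a genuinely different transfer mechanism than the paper's, even though both arguments are one-step reductions to Lemma~\ref{lemma:hausdorff}. The paper sends an independent family $\mathcal{X}\subseteq[\lambda]^{\lambda}$ to subsets of $\mathcal{P}_{\kappa}\lambda$ via the map $X\mapsto g(X)=\{x\in\mathcal{P}_{\kappa}\lambda\mid \sup(x\cap X)=\sup x\}$; since $g$ does not commute with Boolean operations, the paper must verify unboundedness of $\bigcap_{X\in a}g(X)\setminus\bigcup_{Y\in b}g(Y)$ by hand, extending a given $x'$ by a set $x\subseteq(\bigcap a)\setminus(\bigcup b)$ with $\sup x\notin x$ and $\min x\geq\sup x'$. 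You instead build (using $\lambda^{<\kappa}=\lambda$ and a length-$\lambda$ recursion) a map $\phi:\mathcal{P}_{\kappa}\lambda\to\lambda$ all of whose fibers are cofinal, and take $A_{\alpha}=\phi^{-1}[X_{\alpha}]$; then preimages commute with intersections, unions, and differences, so the independence property transfers automatically and only the nonemptiness of $\bigcap_{\alpha\in x}X_{\alpha}\setminus\bigcup_{\alpha\in y}X_{\alpha}$ is needed. Your route is arguably cleaner and more modular (any fiber-cofinal map would do), at the cost of an arbitrary enumeration and recursion; the paper's $g$ is a uniformly definable construction from $\mathcal{X}$ requiring no choices beyond those in Lemma~\ref{lemma:hausdorff}. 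Since the lemma is only ever used as a black box downstream (in the definition of $B_x$ and $F_{\overline{d}}$), either construction serves equally well.
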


\begin{proof}
For each $X \in [\lambda]^{\lambda}$, define 
\[
g(X) = \{x \in \mathcal{P}_{\kappa}\lambda \mid \sup x \cap X = \sup x\}.
\]
If $|X \mathrel{\triangle} Y| = \lambda$, then for each $z \in \mathcal{P}_{\kappa}(X \setminus Y) \cup \mathcal{P}_{\kappa}(Y\setminus X)$ with $\sup z \not\in z$, we have $z \in g(X) \mathrel{\triangle} g(Y)$. Thus, if we take a family $\mathcal{X} \subseteq [\lambda]^{\lambda}$ such that $|X \mathrel{\triangle} Y| = \lambda$ for all $\{X,Y\} \in [\mathcal{X}]^2$, then we obtain $|\{g(X) \mid X \in \mathcal{X}\}| = |\mathcal{X}|$.

By Lemma~\ref{lemma:hausdorff}, fix a family $\mathcal{X} \subseteq [\lambda]^{\lambda}$ of size $2^{\lambda}$ such that, for all $a,b \in \mathcal{P}_{\kappa}\mathcal{X}$ with $a \cap b = \emptyset$, $|(\bigcap a) \setminus (\bigcup b)| = \lambda$.
For such a pair $\langle a, b\rangle$ and $x' \in \mathcal{P}_{\kappa}\lambda$, we can choose $x \in \mathcal{P}_{\kappa}((\bigcap a) \setminus (\bigcup b))$ such that $\sup x \not\in x$. Since $|(\bigcap a) \setminus (\bigcup b)| = \lambda$, we may assume that $\min x \geq \sup x'$. 

By this definition, $x \cup x' \in \bigcap_{X \in a} g(X) \setminus \bigcup_{Y \in b} g(Y)$. Hence, $\bigcap_{X \in a} g(X) \setminus \bigcup_{Y \in b} g(Y)$ is unbounded in $\mathcal{P}_{\kappa}\lambda$, as required.
\end{proof}

For regular cardinals $\kappa \leq \lambda = \lambda^{<\kappa} \leq \mu \leq 2^{\lambda}$ and a thin $\mathcal{P}_{\kappa}\mu$-list $\overline{d} = \langle d_x \mid x \in \mathcal{P}_{\kappa}\mu \rangle$, we define a $\kappa$-complete filter $F_{\overline{d}}$ over $\mathcal{P}_{\kappa}\lambda$. Let $C$ be a club such that, for all $x \in \mathcal{P}_{\kappa}\lambda$, there exists $c_x \in C$ with $x \subseteq c_x$ and $|\mathrm{Lev}_{c_x}(\overline{d})| < \kappa$.

For $x \in \mathcal{P}_{\kappa}\mu$, define $B_x$ by 
\[
B_x = \bigcup_{d \in \mathrm{Lev}_{c_x}(\overline{d})} \left( \bigcap_{\alpha \in d \cap x} A_{\alpha} \setminus \bigcup_{\alpha \in x \setminus d} A_{\alpha} \right).
\]
Here, $\langle A_{\alpha} \mid \alpha < 2^{\lambda} \rangle$ comes from Lemma~\ref{lemma:propvaluesets}. By Lemma~\ref{lemma:propvaluesets}, $B_x$ is unbounded in $\mathcal{P}_{\kappa}\lambda$. 

\begin{lem}\label{lemma:completeness}
The family 
\[
\{B_x \mid x \in \mathcal{P}_{\kappa}\mu\} \cup \{\{x \in \mathcal{P}_{\kappa}\lambda \mid \xi \in x\} \mid \xi < \lambda\}
\]
generates a $\kappa$-complete filter over $\mathcal{P}_{\kappa}\lambda$.
\end{lem}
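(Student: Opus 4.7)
The plan is to show that any intersection of fewer than $\kappa$ elements of the generating family is unbounded in $\mathcal{P}_\kappa\lambda$. This simultaneously yields properness, $\kappa$-completeness of the generated filter, and fineness (the sets $\{z : \xi \in z\}$ are already generators). Explicitly, given $\delta < \kappa$, sets $x_i \in \mathcal{P}_\kappa\mu$ for $i < \delta$, and ordinals $\xi_j < \lambda$ for $j < \delta$, the goal is to produce $z \in \mathcal{P}_\kappa\lambda$ with $\{\xi_j : j < \delta\} \subseteq z$ and $z \in B_{x_i}$ for every $i$.

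The key move is to synchronize all the witnesses through a single large element in the domain of $\overline{d}$. Form $y = \bigcup_{i < \delta} x_i \in \mathcal{P}_\kappa\mu$ (using regularity of $\kappa$) and pick $w \in \mathcal{P}_\kappa\mu$ containing $c_y$ and every $c_{x_i}$. Set $d^\ast = d_w$. Then $d^\ast \cap c_y \in \mathrm{Lev}_{c_y}(\overline{d})$ and $d^\ast \cap c_{x_i} \in \mathrm{Lev}_{c_{x_i}}(\overline{d})$ for each $i$, so these are legitimate witnesses in the defining unions of $B_y$ and of the $B_{x_i}$. Put $a = d^\ast \cap y$ and $b = y \setminus d^\ast$, disjoint subsets of $\mu \subseteq 2^\lambda$ in $\mathcal{P}_\kappa 2^\lambda$. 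Lemma~\ref{lemma:propvaluesets} gives that $\bigcap_{\alpha \in a} A_\alpha \setminus \bigcup_{\alpha \in b} A_\alpha$ is unbounded in $\mathcal{P}_\kappa\lambda$, so choose $z$ in this set containing $\{\xi_j : j < \delta\}$. Using $x_i \subseteq y$, one has $d^\ast \cap x_i \subseteq a$ and $x_i \setminus d^\ast \subseteq b$, hence $z \in \bigcap_{\alpha \in d^\ast \cap x_i} A_\alpha \setminus \bigcup_{\alpha \in x_i \setminus d^\ast} A_\alpha$, giving $z \in B_{x_i}$ with witness $d^\ast \cap c_{x_i}$.

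The main delicate point is this initial synchronization: one must fix a single $w$ in the domain of $\overline{d}$ dominating every $c_{x_i}$ (and $c_y$), because different witnesses $d \in \mathrm{Lev}_{c_{x_i}}(\overline{d})$ arising from distinct $w$'s would not a priori cohere, and without such coherence there is no single partition of $y$ to which Lemma~\ref{lemma:propvaluesets} can be applied. Once a common $w$ is chosen, the monotonicity $x_i \subseteq y$ makes the positive/negative conditions defining $B_y$ automatically imply those defining each $B_{x_i}$, and Lemma~\ref{lemma:propvaluesets} finishes the job.
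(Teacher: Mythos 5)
Your proof is correct and follows essentially the same route as the paper's: both arguments synchronize the ${<}\kappa$-many witnesses by fixing a single set (your $w$, the paper's $c \in C$) dominating every $c_{x_i}$, restricting the corresponding level of $\overline{d}$ down to each $c_{x_i}$, and applying Lemma~\ref{lemma:propvaluesets} to the resulting disjoint positive/negative pair to produce an unbounded set contained in $\bigcap_i B_{x_i}$. The only cosmetic differences are that you partition $\bigcup_i x_i$ rather than the dominating element itself and that you make explicit the (easy) step of meeting the fine generators $\{z \mid \xi \in z\}$, which the paper leaves to the unboundedness of its set $B'$.
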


\begin{proof}
For every $X \in [\mathcal{P}_{\kappa}\mu]^{<\kappa}$, we can fix $c \in C$ such that $\bigcup_{x \in X} c_x \subseteq c$. We take $d \in \mathrm{Lev}_{c}(\overline{d})$. Then the following hold:
\begin{itemize}
 \item $d \cap c_x \in \mathrm{Lev}_{c_x}(\overline{d})$.
 \item $|\bigcap_{\alpha \in d} A_{\alpha} \setminus \bigcup_{\alpha \in c \setminus d} A_{\alpha}| = \lambda$.
\end{itemize}
Let 
$B' = \bigcap_{\alpha \in d} A_{\alpha} \setminus \bigcup_{\alpha \in c \setminus d} A_{\alpha}$. It suffices to prove that $B' \subseteq \bigcap_{x \in X} B_x$. 

For each $x \in X$, we have 
\[
B' \subseteq \bigcap_{\alpha \in (d \cap c_x) \cap x} A_{\alpha} \setminus \bigcup_{\alpha \in x \setminus (d \cap c_x)} A_{\alpha}.
\]
By $d \cap c_x \in \mathrm{Lev}_{c_x}(\overline{d})$, we conclude that $B' \subseteq B_x$. 

Thus, the proof is completed.
\end{proof}

Let $F_{\overline{d}}$ be the $\kappa$-complete fine filter defined by Lemma~\ref{lemma:completeness}. The importance of $F_{\overline{d}}$ is as follows. 
\begin{lem}\label{lemma:branchext}
If there exists a $\kappa$-complete fine ultrafilter $U$ over $\mathcal{P}_{\kappa}\lambda$ such that $F_{\overline{d}} \subseteq U$, then $\overline{d}$ has a branch.
\end{lem}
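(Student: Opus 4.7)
The plan is to read off a branch directly from the ultrafilter $U$ by setting
\[
 b = \{\alpha < \mu \mid A_\alpha \in U\},
\]
and to verify that $b\cap x \in \mathrm{Lev}_x(\overline{d})$ for every $x \in \mathcal{P}_\kappa \mu$. The whole construction of $F_{\overline{d}}$ has been rigged exactly so that the indicator family $\langle A_\alpha \mid \alpha < 2^\lambda\rangle$ coming from Lemma~\ref{lemma:propvaluesets} separates subsets of $\mu$, so the $\subseteq \mu$ candidate $b$ is the natural thing to try.

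Fix $x \in \mathcal{P}_\kappa \mu$. The first step is to use the definition of $F_{\overline{d}}$: the set
\[
 B_x = \bigcup_{d \in \mathrm{Lev}_{c_x}(\overline{d})}\Bigl(\bigcap_{\alpha \in d \cap x} A_\alpha \setminus \bigcup_{\alpha \in x \setminus d} A_\alpha\Bigr)
\]
lies in $F_{\overline{d}} \subseteq U$. By thinness, $|\mathrm{Lev}_{c_x}(\overline{d})| < \kappa$, so $B_x$ is a union of fewer than $\kappa$ sets. Since $U$ is a $\kappa$-complete ultrafilter, one of the summands must itself lie in $U$; pick $d^\ast \in \mathrm{Lev}_{c_x}(\overline{d})$ witnessing this, i.e.\
\[
 \bigcap_{\alpha \in d^\ast \cap x} A_\alpha \setminus \bigcup_{\alpha \in x \setminus d^\ast} A_\alpha \in U.
\]

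From here, the membership $\alpha \in b$ is forced coordinatewise. For $\alpha \in d^\ast \cap x$ the set above is contained in $A_\alpha$, hence $A_\alpha \in U$ and $\alpha \in b$. For $\alpha \in x \setminus d^\ast$ the set above is contained in the complement of $A_\alpha$, so $A_\alpha \notin U$ and $\alpha \notin b$. Consequently $b \cap x = d^\ast \cap x$. By the definition of $\mathrm{Lev}_{c_x}(\overline{d})$, there is some $y \supseteq c_x \supseteq x$ with $d^\ast = d_y \cap c_x$, and therefore
\[
 b \cap x = d_y \cap c_x \cap x = d_y \cap x \in \mathrm{Lev}_x(\overline{d}).
\]
Since $x$ was arbitrary, $b$ is a branch of $\overline{d}$.

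There is no serious obstacle here: the key ideas are all built into the setup. The only two non-trivial inputs are (i) the $\kappa$-completeness plus ultrafilter property, which converts the $<\kappa$-union defining $B_x$ into a single term of $U$, and (ii) Lemma~\ref{lemma:propvaluesets}, which is what guarantees that the summands in $B_x$ are genuinely disjoint witnesses so that exactly one of them can be in $U$ and so that $b$ is well-defined as a subset of $\mu$. If a subtlety arises it will be in bookkeeping between $x$, $c_x$, and the $y$ extending $c_x$, but the inclusion $x \subseteq c_x$ makes the restriction $d_y \cap x$ unambiguous.
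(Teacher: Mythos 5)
Your proposal is correct and follows essentially the same argument as the paper: define the branch as $\{\alpha \mid A_\alpha \in U\}$, use $\kappa$-completeness of $U$ together with thinness to extract a single summand of $B_x$ lying in $U$, and read off $b \cap x$ coordinatewise. (One small aside: you do not actually need the summands of $B_x$ to be pairwise disjoint or for \emph{exactly} one to lie in $U$ --- any one summand in $U$ suffices, and $b$ is well-defined directly from $U$ regardless.)
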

\begin{proof}
Define $d \subseteq \mu$ by setting $\alpha \in d$ if and only if $A_{\alpha} \in U$. We claim that $d$ is a branch of $\overline{d}$. For every $x \in \mathcal{P}_{\kappa}\mu$, since $B_x \in U$ and $U$ is a $\kappa$-complete ultrafilter, there exists $e \in \mathrm{Lev}_{c_x}(\overline{d})$ such that 
\[
\bigcap_{\alpha \in e \cap a} A_{\alpha} \setminus \bigcup_{\alpha \in a \setminus e} A_{\alpha} \in U.
\]
Thus, we have:
\begin{itemize}
 \item For all $\alpha \in e \cap x$, $A_{\alpha} \in U$.
 \item For all $\alpha \in x \setminus e$, $A_{\alpha} \not\in U$.
\end{itemize}
By the definition of $d$, it follows that $d \cap x = e \cap x$. Since $e \in \mathrm{Lev}_{c_x}(\overline{d})$, there exists $y \in \mathcal{P}_{\kappa}\mu$ such that $x \subseteq c_a \subseteq y$ and $e = d_{y} \cap c_x$. Therefore, $d \cap x = e \cap x = d_{y} \cap x \in \mathrm{Lev}_x(\overline{d})$, as required.
\end{proof}

Note that $F_{\overline{d}}$ may simply be the bounded ideal. For example, if $\overline{d} = \langle \emptyset \mid a \in \mathcal{P}_{\kappa}\lambda\rangle$, then $B_a = \lambda$ for all $a \in \mathcal{P}_{\kappa}\lambda$, and thus, $F_{d} = I_{\kappa\lambda}^{\ast}$.

Finally, we conclude this section with Theorem~\ref{maintheorem2}. The proof from (1) to (3) requires an interpretation of formulas in \cite{MR3959249}. Here, we note that our proof emphasizes the combinatorial aspects of compactness.

\begin{proof}[Proof of Theorem~\ref{maintheorem2}]
 Note that $\mu^{<\kappa} = \mu$ holds if any of the conditions hold by~\cite{solovay}. 

(1) $\to$ (2) is straightforward. (3) $\leftrightarrow$ (4) follows from Lemma~\ref{jech}. For (3) $\to$ (1), we refer to~\cite{MR3959249}. The new implication is (2) $\to$ (4). For a $\mathcal{P}_{\kappa}\mu$-list $\overline{d}$, we consider the filter $F_{\overline{d}}$, which is generated by at most $\mu^{<\kappa} = \mu$ many sets. By Lemma~\ref{lemma:branchext} and (2), $\overline{d}$ has a branch, as desired. 
\end{proof}

\section{Walks along a weak square sequence}\label{sec:walksquare}

The first half of this section is devoted to constructing a thin $\mathcal{P}_{\kappa}\mu$-list using a $\square(\mu,{<}\kappa)$-sequence. The remainder focuses on the non-semiproperness of $\mathrm{Nm}(\mu)$. The observations obtained in this section will be used to prove Theorem~\ref{maintheorem}. We begin this section with the following theorem\footnote{The author was taught this theorem by Hiroshi Sakai and would like to thank him.}.

\begin{lem}\label{lem:wsqandtp}
 If $\square(\mu,{<}\kappa)$ holds, then $\mathrm{TP}(\kappa,\mu)$ fails for all regular $\kappa \leq \mu$. 
\end{lem}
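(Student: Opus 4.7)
The plan is to construct, directly from the given $\square(\mu,{<}\kappa)$-sequence $\overline{C}=\langle C_\alpha\mid\alpha<\mu\rangle$, a thin $\mathcal{P}_\kappa\mu$-list with no cofinal branch, which is exactly what witnesses $\lnot \mathrm{TP}(\kappa,\mu)$.

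The guiding picture is the tree $T$ whose $\alpha$-th level is
\[
\mathcal{L}(\alpha) = \{C_\beta \cap \alpha \mid \beta \ge \alpha,\ \alpha \in \mathrm{Lim}(C_\beta)\} \cup \{C_\alpha\},
\]
ordered by end-extension. The $\square(\mu,{<}\kappa)$ hypothesis gives $|\mathcal{L}(\alpha)|<\kappa$ for each $\alpha$, and the non-triviality clause is exactly the statement that $T$ has no cofinal branch: any such branch would amalgamate into a club $C\subseteq\mu$ with $C\cap\alpha\subseteq C_\beta$ for some $\beta\geq\alpha$ at every $\alpha$, contradicting non-triviality. So it remains to convert the tree $T$ into a $\mathcal{P}_\kappa\mu$-list in a branch-preserving way.

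For $x\in\mathcal{P}_\kappa\mu$ with $\delta_x=\sup x$, I would define $d_x\subseteq x$ by choosing a canonical $\beta_x\geq\delta_x$ from the walk of $\max x$: concretely, take $\beta_x$ to be the last step $\beta_i$ of the walk $\max x=\beta_0>\beta_1>\cdots>\delta_x$ for which $\delta_x\in\mathrm{Lim}(C_{\beta_i})$, defaulting to $\beta_x=\delta_x$ when no such step exists. Set $d_x=C_{\beta_x}\cap x$, so that $d_x$ is the trace on $x$ of a canonical element of $\mathcal{L}(\delta_x)$.

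Thinness should follow from Lemma~\ref{walk2}(3): on a club of $y\in\mathcal{P}_\kappa\mu$, whenever $y\subseteq x$ and $\overline{\lambda}(\delta_y,\max x)<\min y$, the walk from $\max x$ passes through the canonical element associated to $\delta_y$, so $d_x\cap y=C_{\beta'}\cap y$ for some $\beta'$ with $\delta_y\in\mathrm{Lim}(C_{\beta'})$. This yields a map from $\{d_x\cap y\mid y\subseteq x\}$ into $\mathcal{L}(\delta_y)$, forcing the $<\kappa$ cardinality bound. Conversely, a branch $d\subseteq\mu$ of $\overline{d}$ would, at each $\alpha$ on a club, realize $d\cap\alpha$ as $C_{\beta_z}\cap\alpha$ for some $z\supseteq\alpha$; a Fodor-plus-coherence extraction then produces a cofinal branch of $T$, contradicting the non-triviality of $\overline{C}$.

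The main obstacle will be the thinness verification: the choice of $\beta_x$ must be engineered so that the dependence on $x$ above $\sup y$ genuinely collapses upon intersection with $y$, landing in the small set $\mathcal{L}(\delta_y)$. The walk machinery of Section~\ref{subsec:walks}, in particular Lemmas~\ref{walk2} and~\ref{walk4}, supplies precisely the coherence needed, but the two-cardinal bookkeeping --- tracking which $x \supseteq y$ can produce which walk trace at $\delta_y$ --- needs careful attention. By comparison, the branch-to-trivialization direction is the conceptually cleaner half.
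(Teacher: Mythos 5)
Your overall strategy --- converting the coherence and non-triviality of $\overline{C}$ into a thin branchless $\mathcal{P}_{\kappa}\mu$-list --- is exactly the paper's, but the specific list you propose does not work, and the failure is in the half you call ``conceptually cleaner.'' (There is also a minor ill-definedness: you walk ``from $\max x$ to $\delta_x=\sup x$,'' but if $x$ has a maximum these coincide and the walk is trivial, and if not, $\max x$ does not exist.) The serious problem is that with $d_x=C_{\beta_x}\cap x$, a branch $d$ of $\overline{d}$ is only required to satisfy $d\cap y=C_{\beta_x}\cap y$ for \emph{some} $x\supseteq y$ at each $y$, i.e.\ to agree with some club of the sequence on a set of size ${<}\kappa$; nothing forces $d$ to contain or resemble a club. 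In fact $d=\emptyset$ is already a branch of your list: given $y$, put $x=y\cup\{\gamma+1\}$ for a successor $\gamma+1>\sup y$ with $\gamma\notin y$. Then $\sup x\in x$, no step of your walk has $\sup x$ as a limit point of its club, your default gives $\beta_x=\sup x$, and $d_x=C_{\gamma+1}\cap x=\{\gamma\}\cap x=\emptyset$, so $\emptyset\in\mathrm{Lev}_y(\overline{d})$ for every $y$. Consequently the ``Fodor-plus-coherence extraction'' of a cofinal branch of $T$ cannot get started: local agreement with varying $C_{\beta_x}$ on ${<}\kappa$-sized sets does not amalgamate into a single club. The thinness half has an analogous gap: the hypothesis $\overline{\lambda}(\delta_y,\cdot)<\min y$ depends on $x$ as well as $y$, so it cannot be arranged on a club of $y$'s uniformly in $x\supseteq y$; and since your $\beta_x$ is only constrained at $\delta_x$, the coherence clause of $\square(\mu,{<}\kappa)$ gives no bound on the traces $C_{\beta_x}\cap y$ when $\delta_y\notin\mathrm{Lim}(C_{\beta_x})$.

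The paper's construction is designed precisely to avoid these degeneracies: it sets $d_x=l``(\rho_2(-,\sup x)\upharpoonright x)\cap x$ for a bijection $l:\mu\times\omega\to\mu$, so that $d_x$ encodes the \emph{graph of a total function} on $x$ (the step-counting function of the walk to $\sup x$), not a single club trace. Every element of every level is then ``total'' on $x$, so a branch decodes to a total $f:\mu\to\omega$ with $f\upharpoonright x=\rho_2(-,\beta_x)\upharpoonright x$ for club-many $x$ --- the empty set and its relatives are excluded automatically --- and Lemma~\ref{walk4} converts a level set of $f$ above $\overline{\lambda}(\alpha,\beta_x)$ into a trace $C_{g(\alpha)}\cap\alpha$, after which Fodor produces a club trivializing $\overline{C}$. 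Thinness is proved by induction along $x$ using Lemma~\ref{walk5}, which injects $\mathrm{Lev}_{x\cap\alpha}(\overline{d})$ into the product of a lower level, the ${<}\kappa$-many traces at $\alpha$, and $\omega$. To salvage your version you would need to record enough walk data in $d_x$ to forbid degenerate branches, and at that point you have essentially rebuilt the $\rho_2$-based list.
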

\begin{proof}
 We follow Sakai's proof~\cite{sakainote}. Let $\overline{C} = \langle C_{\alpha} \mid \alpha < \mu \rangle$ be a $\square(\mu,{<}\kappa)$-sequence. Fix a bijection $l:\mu \times \omega \to \mu$. For all $x \in \mathcal{P}_{\kappa}\mu$, define $d_{x} = l `` (\rho_{2}(-,\sup x) \upharpoonright x) \cap x$. Here, $\rho_2$ is associated with $\overline{C}$. Note that the set $\{x \in \mathcal{P}_{\kappa} \mu\mid l ``x\times \omega = x\}$ is a club. 
\begin{clam}\label{claim:thin}
 $\langle d_{x} \mid x\in \mathcal{P}_{\kappa}\mu \rangle$ is a thin list. 
\end{clam}
\begin{proof}[Proof of Claim]
For a fixed $x \in \mathcal{P}_{\kappa}\mu$, we proceed by induction on $\alpha \in x \cup \{\sup x\}$ to show that $\mathrm{Lev}_{x \cap \alpha}(\overline{d})$ has size ${<}\kappa$ for each $\alpha$. The base case $\mathrm{Lev}_{x \cap \min x}(\overline{d}) = \emptyset$ is trivial. 

For the inductive step, assume that $|\mathrm{Lev}_{x \cap \gamma}(\overline{d})| < \kappa$ for all $\gamma \in x \cap \alpha$. For each $d \in \mathrm{Lev}_{x \cap \alpha}(\overline{d})$, there exists $\beta_{d}$ such that $d \subseteq l ``(\rho_2(-,\beta_{d})) \cap x \cap \alpha$. By Lemma~\ref{walk5}, let $\beta_0 > \cdots > \beta_{m}$ be a walk from $\beta_{d}$ to $\alpha$. Then one of the following holds:
\begin{enumerate}
 \item $\rho_2(\gamma,\beta_{d}) = \rho_2(\gamma,\alpha) + m$ for all $\gamma \in [\overline{\lambda}(\alpha,\beta_d),\alpha)$.
 \item $\rho_2(\gamma,\beta_{d}) = \rho_2(\gamma,\beta_{m-1}) + m - 1$ for all $\gamma \in [\overline{\lambda}(\alpha,\beta_d),\alpha)$. 
\end{enumerate}
If case (1) occurs, then $\alpha \in \mathrm{Lim}(\beta_{m-1})$. Since $(d\cap \overline{\lambda}(\alpha,\beta_{d}))\in \mathrm{Lev}_{x \cap \gamma}(\overline{d})$, we can define an injection from $\mathrm{Lev}_{x\cap \alpha}(\overline{d})$ to the set 
\[
 (\bigcup_{\gamma \in x \cap \alpha}\mathrm{Lev}_{x \cap \gamma}(\overline{d})) \times (\{\beta \mid \alpha \in \mathrm{Lim}(C_{\beta})\} \cup \{\alpha\}) \times \omega.
\] 
By the induction hypothesis and the properties of $\overline{C}$, the size of this set is $< \kappa$, and so $\mathrm{Lev}_{x\cap \alpha}(\overline{d})$ is also $< \kappa$, as desired. 
\end{proof}
Note that the following claim follows directly from the proof of Theorem~\ref{maintheorem}, but we present the proof here for better understanding.

\begin{clam}There is no branch of $\overline{d}$. 
\end{clam}
\begin{proof}[Proof of Claim]
 For contradiction, suppose that there exists a branch $\tilde{d}$. Define $f:\mu \to \omega$ by setting $f(\xi) = n \leftrightarrow l(\xi,n) \in \tilde{d}$. For all $x \in \mathcal{P}_{\kappa}\mu$ with $l ``(x \times \omega) = x$, there exists a $\beta_x$ such that $f\upharpoonright x = \rho_2(-,\beta_x)\upharpoonright x$. For any $\alpha \in E_{{<}\kappa}^\mu$, we fix a cofinal subset $b \in \mathcal{P}_{\kappa}\alpha$ of order type $\mathrm{cf}(\alpha)$. Let $\lambda_{x} = \min b \setminus \overline{\lambda}(\alpha,\beta_{x})$, and for a walk $\beta_0 > \cdots > \beta_{m}$ from $\beta$ to $\alpha$, if $\alpha \in \mathrm{Lim}(C_{\beta_{m-1}})$ then set $m_{x} = \rho_2(\beta_{m-1},\beta_{x}) - 1$, otherwise set $m_x = \rho_2(\alpha,\beta_{x})$. Additionally, let $g(x)$ denote the least $\beta$ such that $C_{\beta_{m-1}} \cap \alpha = C_{\beta} \cap \alpha$ if $\alpha \in \mathrm{Lim}(\beta_{m-1})$. Otherwise, set $g(x)  = \alpha$. 

By $\mathrm{cf}(\mathcal{P}_{\kappa}\alpha,\subseteq) > \kappa > |b| + |\{\beta \mid \alpha \in \mathrm{Lim}(C_\beta)\} \cup \{\alpha\}|$, there exists a cofinal subset $X \subseteq \mathcal{P}_{\kappa}\alpha$, along with $\lambda_\alpha$, $m_{\alpha}$, and $g(\alpha)$, such that 
\[
\forall x \in X(\lambda_\alpha = \lambda_x \land m_\alpha = m_{x} \land g(\alpha) = g(x)).
\]
For all $\gamma < \alpha$ and $x \in X$ with $\gamma \in x$, by Lemma~\ref{walk4}, we have
\[
\gamma \in (C_{g(\alpha)} \cap \alpha) \setminus (\lambda_{\alpha}+1) \leftrightarrow \rho_2(\gamma,\beta_{x}) = m_{\alpha} \land \gamma > \lambda_{\alpha} \leftrightarrow f(\gamma) = m_{\alpha} \land \gamma > \lambda_{\alpha}.
\]
Of course, the function $\alpha \mapsto \lambda_\alpha$ defines a regressive function on $E_{{<}\kappa}^{\mu}$. By Fodor's lemma, we obtain a stationary subset $S \subseteq E_{{<\kappa}}^{\mu}$ and values $\overline{\lambda}, m$ such that
\[
\forall \alpha \in S((C_{g(\alpha)} \cap \alpha) \setminus \overline{\lambda} = \{\gamma < \alpha \mid f(\gamma) = m \land \gamma > \overline{\lambda}\}).
\]
In particular, the set $D = \{\gamma < \mu \mid f(\gamma) = m \land \gamma > \overline{\lambda}\}$ forms a club in $\mu$. For all $\alpha< \beta$ in $T = S \cap \mathrm{Lim}(D)$, it follows that $C_{g(\alpha)} \cap \alpha = C_{g(\beta)}\cap \alpha$. Therefore, the set $C = \bigcup_{\alpha \in T} C_{g(\alpha)} \cap \alpha$ defines a club such that $C \cap \alpha \subseteq C_{g(\min T \setminus \alpha)}$ for all $\alpha$. This contradicts the assumption that $\overline{C}$ is nontrivial. 
\end{proof}
The proof is now complete. 
\end{proof}

\begin{coro}[Hayut~\cite{MR3959249}]
 For regular cardinals $\kappa \leq \lambda \leq \mu \leq 2^\lambda$, if every ${\kappa}$-complete ultrafilter over $\lambda$ generated by $\mu$-many sets can be extended to a $\kappa$-complete ultrafilter, then $\square(\mu,{<}\kappa)$ fails. 
\end{coro}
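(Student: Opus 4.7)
The proof will be a short contradiction argument combining the three main tools of Sections~\ref{sec:filter} and~\ref{sec:walksquare}. The plan is to assume $\square(\mu,{<}\kappa)$ and derive, via a thin $\mathcal{P}_{\kappa}\mu$-list with no branch, a $\kappa$-complete fine filter that admits no ultrafilter extension, contradicting the hypothesis.

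First, observe (as in the proof of Theorem~\ref{maintheorem2}) that Solovay's theorem applied to the hypothesis yields $\mu^{<\kappa} = \mu$ and, since $\kappa \leq \lambda \leq \mu$, also $\lambda^{<\kappa} = \lambda$; in particular the ambient hypothesis $\lambda = \lambda^{<\kappa}$ required for the construction of $F_{\overline{d}}$ is in force. Next, assuming $\square(\mu,{<}\kappa)$, Lemma~\ref{lem:wsqandtp} produces a thin $\mathcal{P}_{\kappa}\mu$-list $\overline{d} = \langle d_x \mid x \in \mathcal{P}_{\kappa}\mu\rangle$ that has no branch.

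I then feed $\overline{d}$ into the construction of Section~\ref{sec:filter} to obtain $F_{\overline{d}}$, a $\kappa$-complete fine filter over $\mathcal{P}_{\kappa}\lambda$. By Lemma~\ref{lemma:completeness}, $F_{\overline{d}}$ is generated by the family
\[
\{B_x \mid x \in \mathcal{P}_{\kappa}\mu\} \cup \{\{x \in \mathcal{P}_{\kappa}\lambda \mid \xi \in x\} \mid \xi < \lambda\},
\]
which has size at most $\mu^{<\kappa} + \lambda = \mu$. By the equivalence (1)~$\leftrightarrow$~(2) of Theorem~\ref{maintheorem2}, the hypothesis transfers from filters over $\lambda$ to $\kappa$-complete fine filters over $\mathcal{P}_{\kappa}\lambda$ that are generated by $\mu$-many sets; hence $F_{\overline{d}}$ extends to a $\kappa$-complete fine ultrafilter $U$ over $\mathcal{P}_{\kappa}\lambda$. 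Lemma~\ref{lemma:branchext} now produces a branch of $\overline{d}$, contradicting the choice of $\overline{d}$.

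There is no serious obstacle at this stage of the paper: all the combinatorial and large-cardinal work has been carried out in Lemma~\ref{lem:wsqandtp} (the thin-list construction from a weak square sequence) and in Section~\ref{sec:filter} (the filter $F_{\overline{d}}$ and its branch-extraction property). The only mild point to verify is the counting of generators and the use of the equivalence (1)~$\leftrightarrow$~(2) from Theorem~\ref{maintheorem2} to move between filters over $\lambda$ and fine filters over $\mathcal{P}_{\kappa}\lambda$; both are immediate once Solovay's cardinal-arithmetic conclusion is noted.
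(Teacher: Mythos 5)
Your argument is correct and is essentially the paper's own proof: the paper disposes of the corollary by citing Lemma~\ref{lem:wsqandtp} together with Theorem~\ref{maintheorem2}, and what you have written is precisely the unfolding of the chain (1)$\to$(2)$\to$(4) of that theorem (construct $F_{\overline{d}}$ from the branchless thin list, count its generators, extend to an ultrafilter, extract a branch via Lemma~\ref{lemma:branchext}) run against the no-branch conclusion of Lemma~\ref{lem:wsqandtp}. One small imprecision: $\lambda^{<\kappa}=\lambda$ does not follow from $\mu^{<\kappa}=\mu$ together with $\lambda\leq\mu$; you should instead apply Solovay's theorem directly to the hypothesis about filters over $\lambda$, which is exactly what the paper does.
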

\begin{proof}
 Note that $\lambda^{<\kappa} = \lambda$ holds by~\cite{solovay} and the given assumption. Lemma~\ref{lem:wsqandtp} and Theorem~\ref{maintheorem2} apply.
\end{proof}

A similar proof shows the following:
\begin{coro}
 For regular cardinals $\kappa \leq \lambda \leq \mu \leq 2^\lambda$, if every ${\kappa}$-complete ultrafilter over $\lambda$ generated by $\mu$-many sets can be extended to a $\delta$-complete ultrafilter, then $\square(\mu,{<}\delta)$ fails. 
\end{coro}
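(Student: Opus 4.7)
The plan is to run a direct parallel of the previous corollary, with the cardinal $\delta$ taking the role that $\kappa$ played at the ultrafilter stage. The very first step is to invoke Solovay's theorem~\cite{solovay}: the hypothesis is a form of $\delta$-strong-compactness shadow for $\lambda$, and so it yields the cardinal-arithmetic equation $\lambda^{<\delta} = \lambda$, which is what is needed to run the Hausdorff-style construction of Lemma~\ref{lemma:propvaluesets} with parameter $\delta$. Having this in hand, I would then assume toward a contradiction that $\square(\mu, {<}\delta)$ holds.

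Next, I apply Lemma~\ref{lem:wsqandtp} with $\delta$ in place of $\kappa$: from $\square(\mu,{<}\delta)$ I obtain a thin $\mathcal{P}_\delta \mu$-list $\overline{d}$ admitting no branch. I then rerun the machinery of Section~\ref{sec:filter} with $\delta$ replacing $\kappa$ throughout---the family $\{A_{\alpha}\}_{\alpha < 2^{\lambda}}$, a club $C \subseteq \mathcal{P}_\delta \lambda$ witnessing thinness, and the generating sets $B_x$ for $x \in \mathcal{P}_\delta \mu$---to produce a $\delta$-complete fine filter $F_{\overline{d}}$ over $\mathcal{P}_\delta \lambda$, generated by at most $\mu$-many sets. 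Using the bijection $\mathcal{P}_\delta \lambda \leftrightarrow \lambda$ provided by $\lambda^{<\delta} = \lambda$, this transfers to a filter on $\lambda$ with the same properties.

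In the case of interest $\delta \geq \kappa$, $F_{\overline{d}}$ is in particular $\kappa$-complete, so the hypothesis supplies a $\delta$-complete ultrafilter $U \supseteq F_{\overline{d}}$. Finally, I follow the argument of Lemma~\ref{lemma:branchext}, now leveraging the $\delta$-completeness of $U$ together with the thinness bound $|\mathrm{Lev}_{c_x}(\overline{d})| < \delta$ to select, for each $x \in \mathcal{P}_\delta \mu$, a unique $e \in \mathrm{Lev}_{c_x}(\overline{d})$ with $\bigcap_{\alpha \in e \cap x} A_{\alpha} \setminus \bigcup_{\alpha \in x \setminus e} A_{\alpha} \in U$. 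Setting $d = \{\alpha < \mu \mid A_{\alpha} \in U\}$ then produces a branch of $\overline{d}$, contradicting our choice.

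The main---indeed, essentially only---obstacle is to verify that every ingredient of Section~\ref{sec:filter} promotes uniformly from $\kappa$ to $\delta$: the Hausdorff family, the $\kappa$-completeness computation in Lemma~\ref{lemma:completeness}, and the branch extraction in Lemma~\ref{lemma:branchext} each depend only on their completeness parameter together with a matching thinness bound, so once $\lambda^{<\delta} = \lambda$ has been secured via Solovay the adaptation is mechanical.
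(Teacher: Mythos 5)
Your proposal matches the paper's (very terse) proof of this corollary: both take the thin $\mathcal{P}_{\delta}\mu$-list with no branch supplied by Lemma~\ref{lem:wsqandtp} applied to a $\square(\mu,{<}\delta)$-sequence, feed it into the Section~\ref{sec:filter} construction to obtain a $\kappa$-complete filter generated by $\mu$-many sets, and observe that any $\delta$-complete ultrafilter extending it would recover a branch, since $\delta$-completeness lets one isolate a single piece of each $B_x$ from the $<\delta$-sized level $\mathrm{Lev}_{c_x}(\overline{d})$. The caveats you make explicit --- restricting to $\delta\geq\kappa$ so that the resulting filter is $\kappa$-complete and hence covered by the hypothesis, and securing $\lambda^{<\delta}=\lambda$ so that Lemma~\ref{lemma:propvaluesets} can be run with parameter $\delta$ --- are precisely the assumptions the paper's one-sentence proof leaves implicit, and they are genuinely needed for the argument to go through.
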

\begin{proof}
 By Lemma~\ref{lem:wsqandtp} and Section~\ref{sec:filter}, we can define a $\kappa$-complete filter $F$ over $\mathcal{P}_{\kappa}\lambda$ using a $\mathcal{P}_{\delta}\mu$-list such that $F$ cannot be extended to a $\delta$-complete ultrafilter. 
\end{proof}

By Lemma~\ref{lem:formodel}, we can view these results as corollaries of Theorem~\ref{maintheorem}.  

In the remainder of this section, we demonstrate how the semiproperness of $\mathrm{Nm}(\mu)$ is destroyed from the perspective of the minimal walk method. Our proof investigates the stationarity of the set $\mathrm{Gal}_{\theta}(\mu,I_{\nu},S_0,\mu)$, and these observations provide insight into Theorem~\ref{maintheorem}. Note that Theorem~\ref{thm:squarefails} follows from~\cite{MR3284479} and Theorem~\ref{thm:locsp}. 
\begin{thm}\label{thm:squarefails}
 For a regular cardinal $\mu \geq \aleph_2$, if $\square(\mu,{<}\aleph_1)$ holds, then $\mathrm{Nm}(\mu)$ is \emph{not} locally semiproper. 
\end{thm}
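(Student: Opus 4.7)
The plan is to reduce failure of local semiproperness of $\mathrm{Nm}(\mu) = \mathrm{Nm}(\mu, I_\mu)$ to the stationarity of a Galvin set, via Lemma~\ref{lem:gal3}. Fix a $\square(\mu,{<}\aleph_1)$-sequence $\overline{C}$, and let $S_0 \subseteq [\mu]^\omega$ be the stationary set supplied by Lemma~\ref{lem:semistationarysubset} with $\nu = \aleph_1$. Since stationarity implies semistationarity, $S_0$ is a candidate semistationary set that $\mathrm{Nm}(\mu)$ might fail to preserve. By Lemma~\ref{lem:gal3}, to produce a stationary $T \subseteq S_0$ and a condition $p \in \mathrm{Nm}(\mu)$ with $p \force T$ non-semistationary, it suffices to show that $\mathrm{Gal}_\theta(\mu, I_\mu, S_0, \mu)$ is stationary in $[\mathcal{H}_\theta]^\omega$.

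I would produce stationarily many $M \in \mathrm{Gal}_\theta(\mu, I_\mu, S_0, \mu)$ by refining the construction underlying Lemma~\ref{lem:semistationarysubset}: play $\Game(\aleph_1, \mu, \mathcal{A})$ with Player I's winning strategy from Lemma~\ref{lem:game} while Player II feeds in traces of the form $\{\max(C_{\beta_i} \cap \delta_n) \mid i \leq n\}$ drawn from an enumeration $\langle \beta_i \mid i < \omega \rangle$ of the relevant level sets of $\overline{C}$, exactly as in the proof of Lemma~\ref{lem:wqkillstationary}. The resulting $M \prec \mathcal{H}_\theta$ satisfies $M \cap \mu \in S_0$. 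I would additionally arrange during the construction that $M$ contains a function $f : \mu \to \omega_1$ with $f^{-1}[M \cap \omega_1]$ bounded in $\mu$; then, since $\mathrm{cf}(\mu) \geq \aleph_2$ and $M \cap \omega_1 \in \omega_1$, the intersection in the definition of the Galvin set is a subset of $f^{-1}[M \cap \omega_1]$, hence bounded in $\mu$, and so lies in $I_\mu$. This places $M$ in $\mathrm{Gal}_\theta(\mu, I_\mu, S_0, \mu)$.

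The main obstacle is exhibiting the function $f \in M$, and this is precisely where minimal walks along $\overline{C}$ come in. Following the walk-theoretic ideas of Sakai--Veli\u{c}kovi\'{c}~\cite{MR3284479}, $f$ is defined from the walk code $\rho_0$ (or the weight function $\rho_1$) associated with $\overline{C}$, evaluated relative to an ordinal $\beta \ge \delta = \sup(M \cap \mu)$ with $\delta \in \mathrm{Lim}(C_\beta)$. The two defining clauses of $S_0$---the boundedness of $M \cap C_\beta$ in $\delta$, and the uncountable-cofinality condition on $\min((M \cap \mu) \setminus \gamma)$ for a tail of $\gamma \in C_\beta \cap \delta$---together keep $f^{-1}[M \cap \omega_1]$ bounded below $\delta$. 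Granting this, Lemma~\ref{lem:gal2} then produces $p \in \mathrm{Nm}(\mu)$ forcing a stationary subset of $S_0$ to be non-semistationary, so that $\mathrm{Nm}(\mu)$ is not locally semiproper.
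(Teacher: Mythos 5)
Your overall skeleton matches the paper's: reduce non-(local) semiproperness to the stationarity of $\mathrm{Gal}_{\theta}(\mu,I_{\mu},S_0,\mu)$ via Lemma~\ref{lem:gal3}, with $S_0$ the stationary set of Lemma~\ref{lem:semistationarysubset}. But the step where you verify membership in the Galvin set is broken. You propose to place in $M$ a \emph{single} function $f:\mu\to\omega_1$ with $f^{-1}[M\cap\omega_1]$ bounded in $\mu$. No such $f$ can exist: since $\mu$ is regular and $\geq\aleph_2$, the $\aleph_1$-many fibers of $f$ cannot all be bounded, so $\mathcal{H}_\theta\models$ ``some fiber of $f$ is unbounded''; by elementarity there is $\xi\in M\cap\omega_1$ with $f^{-1}(\xi)$ unbounded in $\mu$, whence $f^{-1}[M\cap\omega_1]\supseteq f^{-1}(\xi)$ is unbounded and hence not in $I_\mu$. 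The boundedness of the Galvin intersection can only come from genuinely intersecting over \emph{many} functions in $M$: this is exactly what the paper does, using the family $\{\rho_{\varphi}(\alpha,-)\mid \alpha\in M\cap\mu\}$ and showing (Lemma~\ref{lem:rhomain}) that for each $\beta\geq\delta=\sup(M\cap\mu)$ there is some $\alpha\in M\cap\mu$, depending on $\beta$, with $\rho_{\varphi}(\alpha,\beta)\geq M\cap\omega_1$, so that each $\beta\geq\delta$ is thrown out of the intersection by a different function.

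A secondary gap: you gesture at defining the witness from ``$\rho_0$ or $\rho_1$'' in the style of Sakai--Veli\u{c}kovi\'{c}, but the paper's proof hinges on a bespoke coloring $\rho_{\varphi}$, built from fixed cofinal $\omega_1$-sequences $\langle\varphi^{\alpha}_{\xi}\mid\xi<\omega_1\rangle$ in each $\alpha\in E^{\mu}_{\omega_1}$, precisely because (as the paper's remark after Lemma~\ref{lem:defctblord} notes) it is unclear whether Todor\v{c}evi\'{c}'s $\rho_{\omega_1}$ works for a non-special $\square(\mu,{<}\aleph_1)$-sequence. The two clauses of $S_0$ are then used to locate, for each $\beta\geq\delta$, an ordinal $\alpha=\min((M\cap\mu)\setminus\gamma)$ of cofinality $\omega_1$ with $\sup(M\cap\alpha)\leq\gamma\in C_{\beta_{m-1}}$, which forces $\varphi_{\alpha}(\sup(C_{\alpha}\cap\xi))\geq M\cap\omega_1$ and hence $\rho_{\varphi}(\alpha,\beta)\geq M\cap\omega_1$. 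Without this (or an equivalent) construction, the claim that the walk data ``keeps the preimage bounded'' is unsubstantiated, and as written the single-function version of it is false.
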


We fix a sequence of functions $\varphi = \langle \varphi_{\alpha}:\alpha \to \omega_1 \mid \alpha < \mu\rangle$, where each $\varphi_{\alpha}$ is a function from $\mu$ onto $\{0\}$ for each $\alpha < \mu$ with $\mathrm{cf}(\alpha) \not= \omega_1$. Let $\overline{C} = \langle C_\alpha \mid \alpha < \mu \rangle$ be a $\square(\mu,{<}\aleph_1)$-sequence. We introduce a coloring $\rho_{\varphi}:[\mu]^{2} \to \omega_1$. For $\alpha < \beta < \mu$, $\rho_{\varphi}(\alpha,\beta)$ is recursively defined by $\rho_{\varphi}(\alpha,\alpha) = 0$ and 
 \begin{align*}
  {\rho}_{\varphi}(\alpha,\beta) = \sup\{&\varphi_{\beta}(\sup C_{\beta} \cap \alpha), {\rho}_{\varphi}(\alpha,\min (C_{\beta} \setminus \alpha)), \\ 
  & {\rho}_{\varphi}(\xi,\alpha) \mid \xi \in C_{\beta}\cap [\Lambda(\alpha,\beta),\alpha)\}. 
 \end{align*}

Here, $\Lambda(\alpha,\beta) = \max\{\xi \in C_{\beta} \cap (\alpha+1) \mid \omega_1$ divides $\mathrm{ot}(C_{\beta} \cap \xi)\}$. Naturally, $C_{\beta}\cap [\Lambda(\alpha,\beta),\alpha)$ is countable since its order type is a remainder $r$ such that $\mathrm{ot}(C_{\beta}) = q \omega_1 + r$ for some quotient $q < \mu$. Thus, the range of $\rho_{\varphi}$ is $\omega_1$. Our definition of $\rho_\varphi$ is a modification of Todorčević's $\rho_{\omega_1}$~\cite[Section 7]{MR2355670}. If $\varphi_\alpha(\xi) = \mathrm{ot}(C_{\alpha} \cap \xi) \mod \omega_1$ for each $\alpha \in E_{\omega_1}^{\lambda}$, then $\rho_{\varphi}$ coincides with $\rho_{\omega_1}$. The following lemma is central.

\begin{lem}\label{lem:rho2}
For ordinals $\alpha < \delta < \beta < \mu$, if $\rho_{0}(\alpha,\beta) = \rho_0(\delta,\beta) {^{\frown}}\rho_0(\alpha,\delta)$, then $\rho_{\varphi}(\alpha,\beta) \geq \rho_{\varphi}(\alpha,\delta)$. 
\end{lem}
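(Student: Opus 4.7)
The plan is to proceed by induction on $k = \rho_2(\delta,\beta)$, exploiting the observation that the recursive definition of $\rho_\varphi$ always contains a term of the form $\rho_\varphi(\alpha, \min(C_\beta \setminus \alpha))$ in the supremum, and that the hypothesis says exactly that the walk from $\beta$ to $\alpha$ factors through $\delta$.

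First I would translate the hypothesis $\rho_0(\alpha,\beta) = \rho_0(\delta,\beta){^\frown}\rho_0(\alpha,\delta)$ using the characterization established just before Lemma~\ref{walk2}: letting $\beta = \beta_0 > \beta_1 > \cdots > \beta_n = \alpha$ be the walk from $\beta$ to $\alpha$, we have $\beta_k = \delta$ for $k = |\rho_0(\delta,\beta)| = \rho_2(\delta,\beta)$. Since $\alpha < \delta$, the walk cannot have length one, so $k \geq 1$; and since $\delta < \beta$, we have $k \geq 1$ strictly.

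For the base case $k = 1$, we get $\beta_1 = \delta$, that is, $\min(C_\beta \setminus \alpha) = \delta$. Reading off the defining recursion,
\[
\rho_\varphi(\alpha,\beta) \geq \rho_\varphi\bigl(\alpha, \min(C_\beta \setminus \alpha)\bigr) = \rho_\varphi(\alpha,\delta),
\]
as required. For the inductive step with $k \geq 2$, I would apply the induction hypothesis to the triple $(\alpha,\delta,\beta_1)$: the tail $\beta_1 > \beta_2 > \cdots > \beta_n = \alpha$ is the walk from $\beta_1$ to $\alpha$, and it still passes through $\delta$ at position $k-1$, so $\rho_0(\alpha,\beta_1) = \rho_0(\delta,\beta_1){^\frown}\rho_0(\alpha,\delta)$ and $\rho_2(\delta,\beta_1) = k - 1 < k$. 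The induction hypothesis yields $\rho_\varphi(\alpha,\beta_1) \geq \rho_\varphi(\alpha,\delta)$, and the defining recursion again gives $\rho_\varphi(\alpha,\beta) \geq \rho_\varphi(\alpha,\beta_1)$, completing the argument.

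I expect no real obstacle here: the only thing to check carefully is that the recursion genuinely contains the desired term, which it does because $\min(C_\beta \setminus \alpha) = \beta_1$ by the definition of walks. The monotonicity statement is essentially the assertion that $\rho_\varphi$, as a sup-style ordinal invariant of the walk, cannot decrease when we replace the target by a point visited later on the same walk — and this is exactly the content of the one-step bound $\rho_\varphi(\alpha,\beta) \geq \rho_\varphi(\alpha,\beta_1)$ iterated along the initial segment of the walk from $\beta$ down to $\delta$.
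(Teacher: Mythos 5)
Your argument is correct and is exactly the paper's intended proof: the paper merely remarks that the hypothesis means $\delta$ appears in the walk from $\beta$ to $\alpha$ and that the claim then "follows easily," and your induction spells out the easy part, namely iterating the one-step bound $\rho_{\varphi}(\alpha,\beta) \geq \rho_{\varphi}(\alpha,\min(C_{\beta}\setminus\alpha))$ down the walk from $\beta$ to $\delta$. No gaps.
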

\begin{proof}
The assumption states that $\delta$ appears in the walk from $\beta$ to $\alpha$. Thus, this lemma follows easily.
\end{proof}

From this point, we discuss a specific choice of $\varphi$. For each $\alpha \in E_{\omega_1}^\mu$, let $\langle \varphi_{\xi}^{\alpha} \mid \xi < \omega_1 \rangle$ be an increasing sequence of ordinals converging to $\alpha$. Define $\varphi_{\alpha}(\gamma)$ as the least $\xi< \omega_1$ such that $\gamma \leq \varphi_{\xi}^{\alpha}$. Then the following holds:

\begin{lem}\label{lem:rhomain}
Let $S_0$ be a stationary subset of $[\mu]^{\omega}$ from Lemma~\ref{lem:semistationarysubset}. There exists a club $C \subseteq [\mathcal{H}_{\theta}]^{\omega}$ such that, for every $M \in C$ with $M \cap \mu \in S_0$ and $\beta \geq \sup \mu \cap M_0$, there is an $\alpha \in M \cap \mu$ such that $\rho_{\varphi}(\alpha,\beta) \geq M \cap \omega_1$. 
\end{lem}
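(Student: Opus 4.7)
The plan is to take $C$ to be the club of countable elementary substructures $M \prec \langle \mathcal{H}_{\theta}, \in, \overline{C}, \varphi, \langle\varphi^\alpha_\xi\rangle\rangle$ containing all relevant parameters. Fix $M \in C$ with $M \cap \mu \in S_0$; write $\delta := \sup(M \cap \mu)$ and $\omega_1^M := M \cap \omega_1$ (a countable limit ordinal, by elementarity); and let $\beta \geq \delta$ be given. First I reduce to the case $\delta \in \mathrm{Lim}(C_\beta)$: if instead $\sup(C_\beta \cap \delta) < \delta$, set $\beta' := \min(C_\beta \cap [\delta, \beta]) \in [\delta, \beta)$; for every $\alpha \in M \cap \mu$ with $\alpha > \sup(C_\beta \cap \delta)$, the walk from $\beta$ to $\alpha$ factors through $\beta'$ by Lemma~\ref{walk2}(3), so Lemma~\ref{lem:rho2} gives $\rho_\varphi(\alpha, \beta) \geq \rho_\varphi(\alpha, \beta')$; iterating this finitely many times lands at some $\beta^* \in [\delta, \beta]$ with $\delta \in \mathrm{Lim}(C_{\beta^*})$.

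Assume therefore $\delta \in \mathrm{Lim}(C_\beta)$. By $S_0$, $M \cap C_\beta$ is bounded in $\delta$, and for sufficiently large $\gamma \in C_\beta \cap \delta$ the ordinal $\alpha_\gamma := \min((M \cap \mu) \setminus \gamma)$ lies in $E^\mu_{\omega_1}$. Pick such $\gamma$ with $\gamma > \sup(M \cap C_\beta)$ and set $\alpha := \alpha_\gamma \in M \cap E^\mu_{\omega_1}$. Then $\alpha \notin C_\beta$ (otherwise $\alpha \in M \cap C_\beta$ contradicts $\alpha > \sup(M \cap C_\beta)$), whence $\alpha \notin \mathrm{Lim}(C_\beta)$ by closure of $C_\beta$; moreover $M \cap \alpha = M \cap \gamma$, so $\sup(M \cap \alpha) \leq \gamma$.

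The key step: put $\gamma^* := \sup(C_\beta \cap \alpha)$, so that $\gamma \leq \gamma^* < \alpha$, $\gamma^* \in C_\beta$, and $\Lambda(\alpha, \beta) \leq \gamma^*$. Hence $\gamma^*$ qualifies as a $\xi$ in the third branch of the defining recursion for $\rho_\varphi(\alpha, \beta)$, giving
\[
\rho_\varphi(\alpha, \beta) \ \geq\ \rho_\varphi(\gamma^*, \alpha) \ \geq\ \varphi_\alpha\bigl(\sup(C_\alpha \cap \gamma^*)\bigr).
\]
By elementarity $M \cap C_\alpha$ is cofinal in $\sup(M \cap \alpha)$, and the closure of $C_\alpha$ then places $\sup(M \cap \alpha)$ in $C_\alpha$; combined with $\sup(M \cap \alpha) \leq \gamma \leq \gamma^*$ this yields $\sup(C_\alpha \cap \gamma^*) \geq \sup(M \cap \alpha)$. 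Since $\langle \varphi^\alpha_\xi\rangle \in M$ is strictly increasing and $\omega_1^M$ is a limit, $\sup(M \cap \alpha) = \sup\{\varphi^\alpha_\xi : \xi < \omega_1^M\}$ with the supremum not attained; hence $\sup(C_\alpha \cap \gamma^*) > \varphi^\alpha_\xi$ for every $\xi < \omega_1^M$, so $\varphi_\alpha(\sup(C_\alpha \cap \gamma^*)) \geq \omega_1^M$ and $\rho_\varphi(\alpha, \beta) \geq \omega_1^M$, as required.

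The delicate point---where I expect the crux of the proof to lie---is the recognition that $\gamma^*$ is the right witness to insert in the third branch of the $\rho_\varphi$-recursion. This diverts attention from searching for a large $\varphi$-value at some walk-step above $\alpha$ (which is hard because those steps typically fall outside $M$, leaving one with no control over their ladders) to the walk from the target $\alpha$ itself, whose ladder $\varphi^\alpha$ is in $M$ and is forced, by $\alpha \in E^\mu_{\omega_1}$ together with $\omega_1^M$ being a limit, to cross $\omega_1^M$ at $\sup(M \cap \alpha)$.
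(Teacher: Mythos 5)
Your proposal is correct and follows essentially the same route as the paper's proof: you reduce to a $\beta$ with $\delta\in\mathrm{Lim}(C_{\beta})$ by walking from $\beta$ toward $\delta$ and applying Lemma~\ref{lem:rho2} (the paper jumps directly to the penultimate step $\beta_{m-1}$ of that walk, or to $\delta$ itself), and your witness $\gamma^{*}=\max(C_{\beta}\cap\alpha)$ inserted into the third branch of the $\rho_{\varphi}$-recursion is exactly the paper's $\xi=\sup(C_{\beta}\cap\alpha)$, followed by the same ladder computation $\varphi_{\alpha}(\sup(C_{\alpha}\cap\gamma^{*}))\geq\varphi_{\alpha}(\sup(M\cap\alpha))\geq M\cap\omega_{1}$. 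The only cosmetic gap is that the final choice of $\gamma$ should also be taken above the finitely many suprema $\sup(C_{\beta_{i}}\cap\delta)$ accumulated in your reduction (the paper's $\overline{\lambda}(\delta,\tilde{\beta})$), which your setup already permits.
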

\begin{proof}
Consider the expansion $\mathcal{A} = \langle \mathcal{H}_{\theta} ,\in,\overline{C},\varphi, \mu\rangle$. Fix a countable structure $M \prec \mathcal{A}$ with $M \cap \mu \in S$. Letting $\delta = \sup M \cap \mu$, for all $\beta \geq \delta$ with $\delta \in \mathrm{Lim}(C_{\beta} \cup \{\sup C_{\beta}\})$, we have that $M \cap C_{\beta}$ is bounded in $\delta$ and, for all sufficiently large $\gamma \in C_{\beta}$, $\mathrm{cf}(\min (M \cap \mu) \setminus \gamma) = \omega_1$. 

For every $\tilde{\beta} > \delta$, we claim that there exists an $\alpha \in M \cap \mu$ such that $\rho_{\varphi}(\alpha,\tilde{\beta}) \geq M \cap \omega_1$. Let $\beta_{0} > \cdots > \beta_{m}$ be a walk from $\tilde{\beta}$ to $\delta$. Define $\beta$ by $\beta = \beta_{m-1}$ if $\delta \in \mathrm{Lim}(C_{\beta_{m-1}})$. Otherwise, let $\beta = \delta$. 

Since $C_{\beta} \cap \delta$ is unbounded in $\delta$, by the choice of $M$, we can select $\alpha,\gamma$ such that:
\begin{itemize}
 \item $\overline{\lambda}(\delta,\tilde{\beta}) < \delta < \sup (M \cap \alpha) \leq \gamma < \alpha = \min (M \cap \mu) \setminus \gamma$,
 \item $\mathrm{cf}(\alpha) = \omega_1$, and 
 \item $\gamma \in C_{\beta}$. 
\end{itemize}
Indeed, we can choose $\gamma \in C_{\beta}$ such that $\gamma \geq \sup (M \cap \gamma) > \mathrm{\lambda}(\delta,\tilde{\beta})$, and let $\alpha = \min (M \cap \mu) \setminus \gamma$ be of cofinality $\omega_1$. Note that there is no ordinal between $\alpha$ and $\sup (M \cap \alpha)$, so $\sup (M \cap \alpha) = \sup (M \cap \gamma)$, as required. 

By $\overline{\lambda}(\delta,\tilde{\beta}) < \alpha$ and the definition of $\beta$, we have $\rho_{0}(\alpha,\tilde{\beta}) = \rho_0(\beta,\tilde{\beta}){^{\frown}} \rho_0(\alpha,\beta)$. By Lemma~\ref{lem:rho2}, $\rho_{\varphi}(\alpha,\tilde{\beta}) \geq \rho_{\varphi}(\alpha,\beta)$. We now evaluate $\rho_{\varphi}(\alpha,\beta)$. Note that $\omega_1$ does not divide $\mathrm{ot}(C_{\beta} \cap \alpha)$. Indeed, since $\alpha \not\in C_{\beta}$, $C_{\beta} \cap \alpha$ has a maximal element $\xi = \sup (C_{\beta} \cap \alpha) \in C_{\beta} \cap \alpha$. Therefore, $\mathrm{ot}(C_{\beta} \cap \alpha) = \omega_1\cdot q + r' + 1$ for some countable ordinal $r'$. Then, $\xi \in C_{\alpha} \cap [\Lambda(\alpha,\beta),\alpha)$. By the definition of $\rho_{\varphi}(\alpha,\beta)$, it follows that $\rho_{\varphi}(\alpha,\beta) \geq \rho_{\varphi}(\xi,\alpha)$. 

Now, we note that $\alpha > \xi \geq \gamma \geq \sup (M \cap \alpha) = \sup (M \cap \gamma) = \sup (M \cap \xi)$. Since $\alpha \in M$, it follows that $\varphi_{\alpha}(\gamma') \in M \cap \omega_1$ for all $\gamma' \in M \cap \alpha$, and thus:
\[
\sup_{\gamma'\in M \cap \alpha} \varphi_{\alpha}(\gamma') = M \cap \omega_1.
\]
Hence, we have:
\[
\rho_{\varphi}(\xi,\alpha) \geq \varphi_{\alpha}(\sup C_{\alpha} \cap \xi) \geq \varphi_{\alpha}(\sup (M \cap \alpha))  \geq \sup_{\gamma' \in M \cap \alpha}\varphi_{\alpha}(\gamma') = M \cap \omega_1.
\]
Thus, in particular, we obtain:
\[
\rho_{\varphi}(\alpha,\tilde\beta) \geq \rho_{\varphi}(\alpha,\beta) \geq \rho_{\varphi}(\xi,\alpha) \geq M \cap \omega_1,
\]
as required.
\end{proof}

\begin{proof}[Proof of Theorem~\ref{thm:squarefails}]
Let $S_0$ be the stationary subset given by Lemma~\ref{lem:semistationarysubset}. Let $C$ be the club set from Lemma~\ref{lem:rhomain}. We claim that $\mathrm{Gal}_{\theta}(\mu,I_\mu,S_0,\mu) \cap C = S_{0}^{\uparrow \mathcal{H}_{\theta}} \cap C$. 

Let $M \in C$ be an elementary substructure such that $M \cap \mu \in S_0$. For each $\beta \geq \sup (M \cap \mu)$, by Lemma~\ref{lem:rhomain}, there exists an $\alpha \in M \cap \mu$ such that $\rho_\varphi(\alpha,\beta) \geq \sup (M \cap \omega_1)$. Note that $\rho_{\varphi}$ is definable from $\overline{C}$ and $\varphi$. If we set $f = \rho_{\varphi}(\alpha,-)$, then $f \in M$ for all $\alpha \in M \cap \mu$. Therefore, 
\[
\bigcap\{f^{-1}M \cap \omega_1 \mid f:\mu \to \omega_1 \in M\} \subseteq \delta.
\]
Thus, $M \in \mathrm{Gal}_{\theta}(\mu,I_\mu,S_0,\mu)$. By Lemma~\ref{lem:gal3}, $S_0$ is not preserved by $\mathrm{Nm}(\mu)$, as desired.
\end{proof}

\begin{rema}
The proof of Theorem~\ref{thm:squarefails} shows that, by the proof of Lemma~\ref{lem:gal3}, $\mathrm{Nm}(\mu) \force S_0$ is \emph{not} semistationary.
\end{rema}

Note that this proof includes Lemma~\ref{lem:defctblord}, which will be used in Theorem~\ref{maintheorem}. 

\begin{lem}\label{lem:defctblord}
There exists a club $C \subseteq [\mathcal{H}_{\theta}]^{\omega}$ such that, if $M \in C$ and $\alpha \in M \cap E_{\omega_1}^{\mu}$, then $\mathrm{Sk}(M \cup \{\zeta\})\cap \omega_1 > M \cap \omega_1$ for all $\zeta \in [\sup (M \cap \alpha),\alpha)$. 
\end{lem}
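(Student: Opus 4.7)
The plan is to let $C$ be the club of countable elementary substructures $M$ of the expansion $\mathcal{A} = \langle \mathcal{H}_{\theta}, \in, \varphi, \overline{C}, \mu \rangle$, where $\varphi = \langle \varphi_{\alpha} \mid \alpha < \mu \rangle$ is the sequence of functions fixed earlier in this section. The key observation is that for any $M \in C$ and any $\alpha \in M \cap E_{\omega_1}^{\mu}$, both $\varphi$ and $\alpha$ lie in $M$, so $\varphi_{\alpha}$ is definable in $M$ and belongs to $\mathrm{Sk}(M \cup \{\zeta\})$. Consequently, for any $\zeta < \alpha$, the ordinal $\varphi_{\alpha}(\zeta) < \omega_1$ is in $\mathrm{Sk}(M \cup \{\zeta\}) \cap \omega_1$, and the entire proof reduces to showing $\varphi_{\alpha}(\zeta) \geq M \cap \omega_1$ whenever $\zeta \in [\sup(M \cap \alpha), \alpha)$.

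To verify this bound, fix such a $\zeta$ and any $\eta \in M \cap \omega_1$. Since $\eta, \alpha, \varphi \in M$, the value $\varphi_{\eta}^{\alpha}$ lies in $M \cap \alpha$, so $\varphi_{\eta}^{\alpha} \leq \sup(M \cap \alpha) \leq \zeta$. Because $\alpha$ is a limit ordinal in $M$, elementarity forces $M \cap \alpha$ to have no maximum element, and since $M$ is countable while $\mathrm{cf}(\alpha) = \omega_1$ we also have $\sup(M \cap \alpha) < \alpha$. Together these force $\sup(M \cap \alpha) \notin M$, so the inequality $\varphi_{\eta}^{\alpha} < \sup(M \cap \alpha) \leq \zeta$ is in fact strict. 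By the definition of $\varphi_{\alpha}(\zeta)$ as the least $\xi < \omega_1$ with $\zeta \leq \varphi_{\xi}^{\alpha}$, this yields $\varphi_{\alpha}(\zeta) > \eta$. Since $\eta \in M \cap \omega_1$ was arbitrary, $\varphi_{\alpha}(\zeta) \geq M \cap \omega_1$, and since $\mathrm{Sk}(M \cup \{\zeta\}) \cap \omega_1$ is an initial ordinal containing $\varphi_{\alpha}(\zeta)$, it must strictly exceed it, so $\mathrm{Sk}(M \cup \{\zeta\}) \cap \omega_1 > M \cap \omega_1$ as required.

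The only nontrivial step I anticipate is the edge case $\zeta = \sup(M \cap \alpha)$: here one needs $\varphi_{\eta}^{\alpha} < \zeta$ to be a strict inequality rather than merely $\leq$, so it is essential to confirm that $\sup(M \cap \alpha) \notin M$. This is precisely where the interaction between the countability of $M$, the cofinality hypothesis $\mathrm{cf}(\alpha) = \omega_1$, and the elementarity-based ``no maximum'' fact plays the decisive role; once this is dispatched, the remainder of the argument is a direct computation exploiting that the sequence $\langle \varphi_{\xi}^{\alpha} \mid \xi < \omega_1 \rangle$ converges to $\alpha$.
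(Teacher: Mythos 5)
Your proof is correct and takes essentially the same route as the paper's: the paper does not give a standalone proof of this lemma but notes it is contained in the proof of Lemma~\ref{lem:rhomain}, where the identical computation $\varphi_{\alpha}(\zeta) \geq \varphi_{\alpha}(\sup(M\cap\alpha)) \geq \sup_{\gamma' \in M\cap\alpha}\varphi_{\alpha}(\gamma') = M\cap\omega_1$ appears, using the same expansion by $\varphi$ and the same elementarity argument. Your explicit treatment of the boundary case $\zeta = \sup(M\cap\alpha)$ via $\sup(M\cap\alpha)\notin M$ is a welcome clarification (only the phrase ``initial ordinal'' should read ``ordinal'').
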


\begin{rema}
We briefly discuss the necessity of the modification. If we assume that $\overline{C}$ is a special $\square(\mu)$-sequence, meaning that $\mathrm{ot}(C_{\alpha}) = \omega_1$ for stationary many $\alpha$, then we can use $\rho_{\omega_1}$ instead of $\rho_\varphi$ to prove Theorem~\ref{thm:squarefails}. Todorčević showed that $\rho_{\omega_1}$ satisfies useful properties such as subadditivity without assuming speciality. However, in order to construct an Aronszajn-like tree, he still required some form of speciality in the $C$-sequence. It remains unclear whether $\rho_{\omega_1}$ works when $\overline{C}$ is not special. For more details, we refer to~\cite[Section 7.2]{MR2355670}.
\end{rema}

\section{Theorems~\ref{maintheorem} and~\ref{maintheorem:model}}\label{sec:maintheorem}
This section is devoted to Theorems~\ref{maintheorem} and~\ref{maintheorem:model}. First, we prove Theorem~\ref{maintheorem}. After that, we discuss its applications, including Theorem~\ref{maintheorem:model}. 

\begin{proof}[Proof of Theorem~\ref{maintheorem}]
Let $\overline{d}$ be the thin $\mathcal{P}_{\kappa}\mu$-list defined in Section~\ref{sec:walksquare}. The components used to define $\overline{d}$ include a $\square(\mu,{<}\aleph_1)$-sequence $\overline{C} = \langle C_{\alpha} \mid \alpha < \mu \rangle$\footnote{Note that a $\square(\mu,{<}\aleph_1)$-sequence is also a $\square(\mu,{<}\kappa)$-sequence since $\aleph_2 \leq \kappa$.} and a bijection $l:\mu\times \omega \to \mu$. 

From Section~\ref{sec:filter}, we obtain a $\kappa$-complete fine filter $F_{\overline{d}}$ over $\lambda$ using a family $\overline{A} = \langle A_{\alpha} \mid \alpha < \mu \rangle$ from Lemma~\ref{lemma:propvaluesets}. Let $S_0$ and $S_1$ be the stationary subsets given by Lemmas~\ref{lem:semistationarysubset} and~\ref{lem:wqkillstationary}. Additionally, let $E$ be the club set given by Lemma~\ref{lem:defctblord}. 

We recall the definitions of $S_0$ and $S_1$:
\begin{itemize}
    \item $S_0 \subseteq [\mu]^{\omega}$ consists of all countable $x \subseteq \mu$ such that for every $\beta \geq \delta = \sup (x \cap \mu)$ with $\delta \in \mathrm{Lim}(C_{\beta})$, 
    \begin{itemize}
        \item $x \cap C_{\beta}$ is bounded in $\delta$.
        \item For all sufficiently large $\gamma \in C_{\beta} \cap \delta$, $\mathrm{cf}(\min ((x \cap \mu) \setminus \gamma)) = \omega_1$. 
    \end{itemize}
    \item $S_1 \subseteq [\mathcal{H}_{\theta}]^{\omega}$ consists of all countable $M \prec \mathcal{H}_{\theta}$ such that for every $\beta \geq \delta = \sup (M \cap \mu)$ with $\delta \in \mathrm{Lim}(C_{\beta})$,
    \begin{itemize}
        \item $M \cap C_{\beta}$ is bounded in $\delta$.
        \item For all sufficiently large $\gamma \in C_{\beta} \cap \delta$, $\mathrm{cf}(\min ((M \cap \mu) \setminus \gamma)) = \omega_1$. 
        \item For each $\xi < \delta$, there exists an $x \in \mathcal{P}_{\kappa}\mu \cap M$ such that $(x \cap C_{\beta} \cap \delta) \setminus \xi \neq \emptyset$. 
    \end{itemize}
\end{itemize}
By Lemma~\ref{lem:wqkillstationary}, both $S_0$ and $S_1$ are stationary. 

By Lemma~\ref{lem:gal3}, it suffices to show that $\mathrm{Gal}_{\theta}(\mathcal{P}_{\kappa}\lambda,F_{\overline{d}}, S_0, \mathcal{P}_{\kappa}\lambda)$ is stationary. Consider the expansion $\mathcal{A} = \langle \mathcal{H}_{\theta}, \in, \overline{C}, E, \kappa, \lambda, \mu, F_{\overline{d}}, \overline{A}, l \rangle$. Let $D$ be the set of all countable elementary substructures of $\mathcal{A}$. We aim to show that 
\[
 D \cap S_{1} \subseteq \mathrm{Gal}_{\theta}(\mathcal{P}_{\kappa}\lambda,F_{\overline{d}}, S_0, \mathcal{P}_{\kappa}\lambda). 
\]

Fix $M \in D \cap S_1$. Then $M \cap \mu \in S_0$. Let $\delta = \sup (M \cap \mu)$. There exists an $x \in \mathcal{P}_{\kappa}\mu$ such that $l ``(x\times \omega) = x$ and $\bigcup(\mathcal{P}_{\kappa}\mu \cap M) \subseteq x$. Since $B_{x} \in F_{\overline{d}}$, we need to show that $B_{x} \cap \bigcap \{f^{-1}M \cap \omega_1 \mid f:\mathcal{P}_{\kappa}\lambda \to \omega_1 \in M\} = \emptyset$. 

For all $\tilde{x} \in B_{x}$, since $\bigcap \{f^{-1}M \cap \omega_1 \mid f:\mathcal{P}_{\kappa}\lambda \to \omega_1 \in M\} = \{\tilde{x} \in \mathcal{P}_{\kappa}\lambda \mid \mathrm{Sk}(M \cup \{\tilde{x}\}) \cap \omega_1 = M \cap \omega_1\}$, it suffices to show that $\mathrm{Sk}(M \cup \{\tilde{x}\}) \cap \omega_1 > M \cap \omega_1$. Let $N = \mathrm{Sk}(M \cup \{\tilde{x}\})$.

Since $\tilde{x} \in B_{x}$, there exists a $d \in \mathrm{Lev}_{x}(\overline{d})$ such that $\tilde{x} \in \bigcap_{\alpha \in d \cap x}A_{\alpha} \setminus \bigcup_{\alpha \in x \setminus d}A_{\alpha}$. We note that $l ``(x\times \omega) = x$. Define a partial function $f:\mu \to \omega$ by 
\begin{center}
 $f(\xi) = n \leftrightarrow \tilde{x} \in A_{l(\xi,n)}$ and there is no $m$ such that $n\neq m \land \tilde{x} \in A_{l(\xi,m)}$.
\end{center} 

Since $f$ is defined from $\tilde{x}$, $l$, and $\overline{A}$, we conclude that $f \in N$. By the choice of $f$ and $x$, we have $x \subseteq \mathrm{dom}(f)$ and $f \upharpoonright x = \rho_{2}(-,\tilde{\beta}) \upharpoonright x$ for some $\tilde{\beta} \geq \sup x$. Let $\beta_{0} > \cdots > \beta_{m}$ be a walk from $\tilde{\beta}$ to $\delta$. If $\delta \in \mathrm{Lim}(\beta_{m-1})$, we set $\beta = \beta_{m-1}$. Otherwise, we set $\beta = \delta$. By Lemma~\ref{walk4}, letting $n = \rho_2(\delta,\tilde{\beta})$ or $\rho_2(\delta,\tilde{\beta})-1$ and $\overline{\lambda} = \overline{\lambda}(\delta,\tilde{\beta}) < \delta$, we have:
\[
  (C_{\beta} \cap \delta)\setminus (\overline{\lambda} + 1) = \{\gamma < \delta \mid \rho_2(\gamma,\tilde{\beta}) = n\}\setminus (\overline{\lambda} + 1).
\]
Since $M \cap \mu \in S_0$, we know that $C_{\beta} \cap M \cap \mu$ is bounded in $\delta$. On the other hand, we can show that cofinally many elements of $C_{\beta} \cap \delta$ are newly appearing, as follows. 

\begin{clam}
 $C_{\beta} \cap N \cap \mu$ is cofinal in $\delta$. 
\end{clam}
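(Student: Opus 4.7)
The plan is to extract cofinal elements of $C_\beta$ inside $N$ by combining the third bullet in the definition of $S_1$ with Lemma~\ref{walk4} and the fact that $f \in N$. The key point is that, although the walk-parameters $n$, $\overline{\lambda}$, $\tilde{\beta}$ are external to $N$, the function $f$ and every $x' \in \mathcal{P}_\kappa\mu \cap M$ are available in $N$, and $f$ encodes membership in $C_\beta$ on each such $x'$.

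Given $\xi_0 < \delta$, I would first choose $\xi \in M \cap \mu$ with $\xi > \max(\xi_0, \overline{\lambda})$; this is possible because $M \cap \mu$ is cofinal in $\delta$ and $\overline{\lambda} < \delta$. Using $M \in S_1$, pick $x' \in \mathcal{P}_\kappa\mu \cap M$ such that $(x' \cap C_\beta \cap \delta) \setminus \xi \neq \emptyset$. Set $Z := \{\gamma' \in x' : f(\gamma') = n\}$. Then $Z \in N$ because $x' \in M \subseteq N$, $f \in N$, and $n \in \omega \subseteq N$. Since $x' \subseteq \bigcup(\mathcal{P}_\kappa\mu \cap M) \subseteq x$, the identity $f \upharpoonright x = \rho_2(-,\tilde{\beta}) \upharpoonright x$ combined with Lemma~\ref{walk4} yields, for $\gamma' \in x'$ with $\gamma' > \overline{\lambda}$, the equivalence $\gamma' \in Z \iff \gamma' \in C_\beta$. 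Consequently $Z \setminus \xi = (x' \cap C_\beta \cap \delta) \setminus \xi$, which is nonempty; so $\gamma := \min(Z \setminus \xi)$ is well-defined, lies in $N$, satisfies $\xi \leq \gamma < \delta$ (since $\gamma \in x'$ and $\sup x' < \delta$), and lies in $C_\beta$. As $\gamma \geq \xi > \xi_0$, this produces the desired element of $C_\beta \cap N \cap \mu$ above $\xi_0$.

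The main obstacle is that $n$ and $\overline{\lambda}$ are not known inside $N$. My way around this is that $Z$ is in $N$ for every fixed $n \in \omega$, so the correct value of $n$ only needs to be used externally; the bound $\overline{\lambda} < \delta$ is handled by the cofinality of $M \cap \mu$. A related subtlety is that the chosen $x$ need not sit in $N$, which is why I localize the argument to each $x' \in \mathcal{P}_\kappa\mu \cap M$ rather than attempt to use the set $\{\gamma \in x : f(\gamma) = n\}$ directly; the third bullet of $S_1$ is precisely what makes this localization possible.
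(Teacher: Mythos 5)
Your proposal is correct and follows essentially the same route as the paper's own proof: both use the third bullet in the definition of $S_1$ to produce a set $x' \in \mathcal{P}_{\kappa}\mu \cap M$ meeting $C_{\beta}$ above the given bound, and both use $f \in N$ together with Lemma~\ref{walk4} to recognize $(x' \cap C_{\beta})$ above $\overline{\lambda}$ inside $N$ as $\{\gamma' \in x' \mid f(\gamma') = n\}$. Your version is in fact slightly tidier in explicitly requiring the threshold $\xi$ to exceed $\overline{\lambda}$ (which the paper leaves implicit) and in extracting the witness as $\min(Z \setminus \xi)$ rather than appealing to elementarity.
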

\begin{proof}[Proof of claim]
Observe that $\sup (N \cap \mu) = \sup (M \cap \mu)$. Thus, it suffices to show that for all $\zeta \in M \cap \mu$, we can find $\gamma \in C_{\delta} \cap N \cap \mu$ greater than $\zeta$. We may assume that $\zeta > \sup (C_{\delta} \cap M)$. By the definition of $S_1$, there exist $y$ and $\gamma \in C_{\delta}$ satisfying:
\begin{itemize}
 \item $\gamma \in y \in M \cap \mathcal{P}_{\kappa}\mu$. 
 \item $\gamma > \max\{\overline{\lambda}, \zeta\}$. 
\end{itemize}
 Of course, $\rho_2(\gamma,\tilde{\beta}) = n$. In $N$, the set $(C_{\tilde{\beta}} \cap y) \setminus \zeta$ can be defined as 
\[
\{\gamma \in (y\setminus \zeta) \mid f(\gamma) = n\}.
\]
 Note that for all $\xi \in M \cap (y \setminus \zeta)$, we have $\rho_2(\xi,\tilde{\beta}) = f(\xi) > n$, since $\xi \not \in C_{\beta}$. Clearly, $(C_{\beta} \cap y) \setminus \zeta$ is nonempty due to the existence of $\gamma$. By elementarity, we can choose an element from $(C_{\beta} \cap y) \setminus \zeta$ within $N$, as required. 
\end{proof}

By the claim and Lemma~\ref{lem:semistationarysubset}, we can choose $\gamma \in N \cap C_{\beta}$ such that the cofinality of $\alpha = \min (M \setminus \gamma)$ is $\omega_1$ and $\gamma \not\in M$. Clearly, $\gamma \in [\sup M\cap \alpha,\alpha)$. By Lemma~\ref{lem:defctblord}, we obtain:

\[
N \cap \omega_1 \geq \mathrm{Sk}(M \cup \{\gamma\}) \cap \omega_1 > M \cap \omega_1. 
\]
Thus, $M \in \mathrm{Gal}_{\theta}(\mathcal{P}_{\kappa}\lambda,F_{\overline{d}},S_{0},\mathcal{P}_{\kappa}\lambda)$. Since $M$ was chosen arbitrarily from $D \cap S_1$, we conclude that $\mathrm{Gal}_{\theta}(\mathcal{P}_{\kappa}\lambda,F_{\overline{d}},S_{0},\mathcal{P}_{\kappa}\lambda)$ contains a stationary subset $D \cap S_1$. By Lemma~\ref{lem:gal3}, $S_0$ contains a stationary subset $T$ such that $\mathrm{Nm}(\kappa,\lambda,F_{\overline{d}})$ forces that $T$ is \emph{non}-semistationary, as desired. 
\end{proof}

\begin{coro}
 For regular cardinals $\aleph_2 \leq \kappa \leq \lambda = \lambda^{<\kappa} < \mu \leq 2^{\lambda}$, if $\mathrm{Nm}(\kappa,\lambda,F)$ is semiproper for every $\kappa$-complete fine filter $F$ over $\mathcal{P}_{\kappa}\lambda$ generated by $\mu^{<\kappa}$-many sets, then $\square(\nu)$ fails for all regular $\nu \in [\lambda,\mu]$. 
\end{coro}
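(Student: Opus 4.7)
The plan is to adapt the proof of Theorem~\ref{maintheorem} by running it with a hypothetical $\square(\nu)$-sequence for some regular $\nu \in [\lambda,\mu]$ in place of the $\square(\mu,{<}\aleph_1)$-sequence, and then to count the generators of the resulting filter to check that it satisfies the weakened semiproperness hypothesis of the corollary.

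Arguing by contradiction, I would assume $\square(\nu)$ holds for some regular $\nu \in [\lambda,\mu]$. Since $\square(\nu) = \square(\nu,{<}2)$ trivially implies $\square(\nu,{<}\aleph_1)$, I fix a $\square(\nu,{<}\aleph_1)$-sequence $\overline{C}'$. Applying the construction from Lemma~\ref{lem:wsqandtp} to $\overline{C}'$ gives a thin $\mathcal{P}_{\kappa}\nu$-list $\overline{d}'$, and the construction of Section~\ref{sec:filter} (available because $\lambda = \lambda^{<\kappa}$, so that Lemma~\ref{lemma:propvaluesets} applies) produces a $\kappa$-complete fine filter $F_{\overline{d}'}$ over $\mathcal{P}_{\kappa}\lambda$ generated by the family
\[
\{B_x \mid x \in \mathcal{P}_{\kappa}\nu\} \cup \{\{x \in \mathcal{P}_{\kappa}\lambda \mid \xi \in x\} \mid \xi < \lambda\}.
\]
The size of this generating family is at most $\nu^{<\kappa} + \lambda \leq \mu^{<\kappa}$, using $\nu \leq \mu$ and $\lambda = \lambda^{<\kappa} \leq \mu^{<\kappa}$. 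By the hypothesis of the corollary, $\mathrm{Nm}(\kappa,\lambda,F_{\overline{d}'})$ is therefore semiproper.

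On the other hand, the argument of Theorem~\ref{maintheorem} goes through verbatim with $\nu$, $\overline{C}'$, $\overline{d}'$, and $F_{\overline{d}'}$ playing the roles of $\mu$, $\overline{C}$, $\overline{d}$, and $F_{\overline{d}}$: letting $S_0 \subseteq [\nu]^{\omega}$ be the stationary set produced by Lemma~\ref{lem:semistationarysubset} from $\overline{C}'$, the same chain of arguments yields that $\mathrm{Gal}_{\theta}(\mathcal{P}_{\kappa}\lambda,F_{\overline{d}'},S_0,\mathcal{P}_{\kappa}\lambda)$ is stationary, and Lemma~\ref{lem:gal3} then contradicts the semiproperness established above. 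The only (mild) obstacle is the generator count $\nu^{<\kappa} \leq \mu^{<\kappa}$; all the minimal-walk and stationarity lemmas cited above are already stated for an arbitrary regular target cardinal equipped with a suitable $\square$-type sequence, so no further combinatorial work is needed beyond this monotonicity estimate.
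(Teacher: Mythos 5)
Your proposal is correct and matches the paper's intent: the corollary is stated without a separate proof precisely because it is obtained by rerunning the proof of Theorem~\ref{maintheorem} with a $\square(\nu,{<}\aleph_1)$-sequence (which any $\square(\nu)$-sequence is) for a regular $\nu\in[\lambda,\mu]$, and observing that the resulting filter $F_{\overline{d}}$ has at most $\nu^{<\kappa}+\lambda\leq\mu^{<\kappa}$ generators. Your generator count and the checks that the relevant lemmas apply with $\nu$ in place of $\mu$ are exactly the points that need verifying, and they are handled correctly.
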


As an application of Theorem~\ref{maintheorem}, we analyze the consistency strength.

\begin{coro}
Suppose that $\aleph_2 \leq \kappa \leq \lambda = \lambda^{<\kappa} \leq \mu \leq 2^{\lambda}$ are regular cardinals and $\mathrm{Nm}(\kappa,\lambda,F)$ is semiproper for every $\kappa$-complete fine filter $F$ over $\mathcal{P}_{\kappa}\lambda$. Then the following holds:
\begin{enumerate}
 \item If $\aleph_3 \leq \kappa$ and $\forall \nu < \kappa(\nu^{\aleph_0} < \kappa)$, then there is an inner model with a proper class of strong cardinals and a proper class of Woodin cardinals. 
 \item If $2^{\aleph_0} \leq \kappa$, then there is an inner model with infinitely many Woodin cardinals. 
\end{enumerate}
\end{coro}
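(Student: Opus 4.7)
The plan is to reduce the corollary to inner-model-theoretic input by combining Theorem~\ref{maintheorem} with Theorems~\ref{sqinnermodel1} and~\ref{sqinnermodel2} applied at the cardinal $\lambda$. Since the hypothesis of the corollary is literally the hypothesis of Theorem~\ref{maintheorem}, the first step is just to invoke it.

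First, I would apply Theorem~\ref{maintheorem} to conclude that $\square(\nu,{<}\aleph_1)$ fails for every regular $\nu \in [\lambda,2^{\lambda}]$. By Cantor's theorem $\lambda < 2^{\lambda}$, so $\lambda^{+}$ lies in this interval and is regular; hence both $\square(\lambda,{<}\aleph_1)$ and $\square(\lambda^{+},{<}\aleph_1)$ fail. Since any $\square(\nu)$-sequence is a fortiori a $\square(\nu,{<}\aleph_1)$-sequence, $\square(\lambda)$ and $\square(\lambda^{+})$ both fail. Moreover, by the definition of $\square_{\lambda}$ recalled in the footnote to Theorem~\ref{sqinnermodel1}, every $\square_{\lambda}$-sequence is in particular a $\square(\lambda^{+})$-sequence, so $\square_{\lambda}$ also fails.

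For Part (2), I would verify the hypothesis of Theorem~\ref{sqinnermodel1} at $\tilde{\kappa}=\lambda$: from $2^{\aleph_0}\leq \kappa$, $\aleph_2\leq \kappa$, and $\kappa\leq \lambda$ one immediately gets $\lambda\geq 2^{\aleph_0}+\aleph_2$. Combined with the failure of both $\square(\lambda)$ and $\square_{\lambda}$ established above, Steel's theorem produces an inner model with infinitely many Woodin cardinals. For Part (1), I would apply Theorem~\ref{sqinnermodel2} at $\tilde{\kappa}=\lambda$: the inequality $\lambda\geq\kappa\geq\aleph_3$ is immediate, and the needed cardinal-arithmetic condition follows from $\lambda=\lambda^{<\kappa}$ together with $\aleph_0<\kappa$, via the countable-closure calculation carried out below.

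The main (mild) obstacle is matching the cardinal-arithmetic premises. The corollary gives $\nu^{\aleph_0} < \kappa$ for all $\nu<\kappa$; a standard regularity argument then yields $\kappa^{\aleph_0}=\kappa$, and combined with $\lambda^{<\kappa}=\lambda$ one obtains $\lambda^{\aleph_0}=\lambda$. If the form of Theorem~\ref{sqinnermodel2} one wishes to invoke asks instead for $\nu^{\aleph_0}<\lambda$ for every $\nu<\lambda$, the verification splits into the cases $\nu<\kappa$ (direct from the hypothesis) and $\kappa\leq \nu<\lambda$ (bounding $\nu^{\aleph_0}\leq \lambda^{\aleph_0}=\lambda$ and arguing that strictness is preserved under $\lambda=\lambda^{<\kappa}$). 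Either way, the structural core of the proof is the one-line consequence ``Theorem~\ref{maintheorem}~$\Rightarrow$~failure of $\square(\lambda)$ and $\square_{\lambda}$, and then the inner-model dichotomy of Steel and Jensen--Schimmerling--Schindler--Steel applies.''
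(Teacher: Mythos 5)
Your proposal follows the same route as the paper's proof: invoke Theorem~\ref{maintheorem} to obtain the failure of two consecutive instances of $\square$, observe that the failure of $\square(\nu^{+})$ entails the failure of $\square_{\nu}$, and then apply Theorems~\ref{sqinnermodel1} and~\ref{sqinnermodel2}. The only structural difference is the cardinal at which the inner-model theorems are applied: the paper's proof asserts that $\square(\kappa)$ and $\square(\kappa^{+})$ fail and works at $\kappa$, whereas you work at $\lambda$. Your choice is the one that Theorem~\ref{maintheorem} actually licenses, since that theorem only yields failures of $\square(\nu,{<}\aleph_1)$ for regular $\nu\in[\lambda,2^{\lambda}]$ and $\kappa$ need not lie in that interval; the paper's phrasing is literally justified only when $\kappa=\lambda$. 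For part (2) your verification is complete: $\lambda\geq\kappa\geq 2^{\aleph_0}+\aleph_2$, and both $\square(\lambda)$ and $\square_{\lambda}$ fail, so Theorem~\ref{sqinnermodel1} applies.

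The one place where your argument does not close is the cardinal-arithmetic transfer in part (1). If Theorem~\ref{sqinnermodel2} is read (as its statement and the corollary's hypothesis $\forall\nu<\kappa(\nu^{\aleph_0}<\kappa)$ suggest) as requiring $\nu^{\aleph_0}<\tilde\kappa$ for all $\nu<\tilde\kappa$ at the cardinal $\tilde\kappa$ where it is applied, then your claim that ``strictness is preserved'' for $\kappa\leq\nu<\lambda$ is not justified: the hypotheses give only $\nu^{\aleph_0}\leq\lambda^{\aleph_0}=\lambda$, and equality can occur --- take $\nu$ singular of cofinality $\omega$ with $\kappa\leq\nu<\lambda=\nu^{\aleph_0}$, which is consistent with $\lambda^{<\kappa}=\lambda$. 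So applying Theorem~\ref{sqinnermodel2} at $\lambda$ is legitimate only if its cardinal-arithmetic hypothesis is the weaker $\lambda^{\aleph_0}=\lambda$ (which does follow from $\lambda^{<\kappa}=\lambda$), while applying it at $\kappa$, as the paper does, makes the arithmetic immediate but leaves the failure of $\square(\kappa)$ and $\square(\kappa^{+})$ unjustified unless $\kappa=\lambda$. You should pin down the exact hypothesis of the Jensen--Schimmerling--Schindler--Steel theorem and verify it at whichever cardinal you use, rather than asserting that strictness transfers.
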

\begin{proof}
 By Theorem~\ref{maintheorem}, both $\square(\kappa)$ and $\square(\kappa^{+})$ fail. Theorems~\ref{sqinnermodel2} and~\ref{sqinnermodel1} establish (1) and (2), respectively. 
\end{proof}

\begin{coro}\label{coro:consistencystrength}
 Suppose that $2^{\aleph_1} = \aleph_2$ and $\mathrm{Nm}(\aleph_2,F)$ is semiproper for all $\aleph_2$-complete fine filters $F$ over $\aleph_2$ generated by $\aleph_3$ sets. Then both $\square(\aleph_2)$ and $\square(\aleph_3)$ fail. In particular, there is an inner model with infinitely many Woodin cardinals. 
\end{coro}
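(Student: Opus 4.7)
The plan is to invoke the immediately preceding corollary at the parameter values $\kappa = \lambda = \aleph_2$ and $\mu = \aleph_3$, and then to feed its conclusion into Steel's inner-model theorem (Theorem~\ref{sqinnermodel1}).

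First I verify that the cardinal-arithmetic hypotheses of the preceding corollary are satisfied. Since $2^{\aleph_1} = \aleph_2$, we have
\[
\lambda^{<\kappa} = \aleph_2^{\aleph_1} = (2^{\aleph_1})^{\aleph_1} = 2^{\aleph_1} = \aleph_2 = \lambda,
\]
and likewise $\mu^{<\kappa} = \aleph_3^{\aleph_1} = \aleph_3 \cdot 2^{\aleph_1} = \aleph_3$. Hence the ``generated by $\aleph_3$ sets'' clause in our hypothesis coincides with the ``generated by $\mu^{<\kappa}$-many sets'' clause of the preceding corollary, and the semiproperness assumption on $\mathrm{Nm}(\aleph_2, F)$ supplies (using $|\mathcal{P}_{\aleph_2}\aleph_2| = \aleph_2$ to identify the two types of filter bases) the semiproperness hypothesis required there. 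Applying the preceding corollary at $(\kappa,\lambda,\mu) = (\aleph_2,\aleph_2,\aleph_3)$ yields $\lnot\square(\nu)$ for every regular $\nu \in [\aleph_2, \aleph_3]$; in particular, both $\square(\aleph_2)$ and $\square(\aleph_3)$ fail.

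For the inner-model conclusion I invoke Theorem~\ref{sqinnermodel1} at $\kappa = \aleph_2$. Its cardinal-arithmetic side condition $\aleph_2 \geq 2^{\aleph_0} + \aleph_2$ is immediate from $2^{\aleph_0} \leq 2^{\aleph_1} = \aleph_2$. Since every $\square_{\aleph_2}$-sequence is by definition a $\square(\aleph_3)$-sequence (with the additional order-type bound), failure of $\square(\aleph_3)$ entails failure of $\square_{\aleph_2}$; combined with the already established failure of $\square(\aleph_2)$ this meets Steel's hypothesis, yielding an inner model with infinitely many Woodin cardinals.

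There is no genuine obstacle here: the corollary is a bookkeeping consequence of the preceding corollary together with Steel's theorem. The only point requiring a moment's care is the equivalence between $\aleph_2$-complete fine filters on $\aleph_2$ and on $\mathcal{P}_{\aleph_2}\aleph_2$ (generated by $\aleph_3$ sets) under the arithmetic $|\mathcal{P}_{\aleph_2}\aleph_2| = \aleph_2$, which preserves the relevant Namba forcing up to isomorphism; everything else is a routine computation and direct application of the cited theorems.
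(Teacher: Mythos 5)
Your proposal is correct and follows essentially the same route as the paper: verify the cardinal arithmetic from $2^{\aleph_1}=\aleph_2$, apply Theorem~\ref{maintheorem} (packaged as the preceding corollary, with the harmless bijective transfer between fine filters on $\aleph_2$ and on $\mathcal{P}_{\aleph_2}\aleph_2$) to kill $\square(\aleph_2)$ and $\square(\aleph_3)$, and feed this into an inner-model theorem. One point in your favour: you invoke Theorem~\ref{sqinnermodel1} at $\kappa=\aleph_2$, which is the citation whose hypotheses are actually satisfied here and whose conclusion matches the statement, whereas the paper's own proof cites Theorem~\ref{sqinnermodel2}, which requires $\kappa\ge\aleph_3$ and gives a different conclusion --- apparently a slip in the paper.
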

\begin{proof}
 By Theorem~\ref{thm:todch}, we have $2^{\aleph_0} \leq \aleph_2$. Since $2^{\aleph_1} = \aleph_2$, we note that $\aleph_{i}^{<\aleph_2} = \aleph_{i}$ for $i \in \{2,3\}$. Theorems~\ref{maintheorem} and~\ref{sqinnermodel2} complete the proof. 
\end{proof}

We conclude this section with a proof of Theorem~\ref{maintheorem:model}.

\begin{proof}[Proof of Theorem~\ref{maintheorem:model}]
 By the assumption, we note that $\lambda^{<\kappa} = \lambda$ holds by~\cite{solovay}. 
 Let $U$ be a $\kappa$-complete fine ultrafilter over $\mathcal{P}_{\kappa}\lambda$. Then the standard Lévy collapse $\mathrm{Coll}(\mu,{<}\kappa)$ forces, letting $\dot{F}$ be a $\mathrm{Coll}(\mu,{<}\kappa)$-name for the filter generated by $U$, that $\mathcal{B}(\dot{F}^{+},\subseteq) \simeq \mathcal{B}(\mathrm{Coll}(\mu,{<}\kappa'))$ for some $\kappa' > \kappa$ by \cite[Theorem 7.14]{MR2768692}. Since $\mathrm{Coll}(\mu,{<}\kappa)$ is $\mu$-closed and $\mu \geq \aleph_1$, $\langle \dot{F}^{+},\subseteq \rangle$ is forced to have an $\aleph_1$-closed dense subset. By (1) of Lemma~\ref{lem:formodel}, $\mathrm{Coll}(\mu,{<}\kappa)$ forces that ${\mathrm{Nm}}(\kappa,\lambda,\dot{F})$ is semiproper. Moreover, $\mathrm{Nm}(\kappa,\lambda)$ is forced to be semiproper. 

Let $\dot{Q}$ be a $\mathrm{Coll}(\mu,{<}\kappa)$-name for a $\lambda+1$-strategically closed poset that adds a $\square(\lambda^{+})$-sequence~\cite[Definition 6.2]{MR1838355}. By (2) of Lemma~\ref{lem:formodel}, $\mathrm{Coll}(\mu,{<}\kappa)\ast \dot{Q} \force \mathrm{Nm}(\kappa,\lambda,\dot{F})$ is semiproper. On the other hand, since the existence of a $\square(\lambda^{+})$-sequence is forced, Theorem~\ref{maintheorem} shows the existence of a fine $\kappa$-complete filter $\dot{F}'$ over $\mathcal{P}_{\kappa}\lambda$ such that $\mathrm{Nm}(\kappa,\lambda,\dot{F}')$ is \emph{not} semiproper. 
 
 In particular, $P = \mathrm{Coll}(\mu,{<}\kappa)\ast \dot{Q}$ is a required poset.
\end{proof}

Note that any stationary subset of $S_0 \subseteq [\mu]^{\omega}$ does not reflect to any $a \in \mathcal{P}_{\kappa}\mu$ by Theorems~\ref{thm:locsp} and~\ref{thm:squarefails}. However, semistationarity of any subset of $S_0$ may be preserved by Namba forcings, while preservability by $\mathrm{Nm}(\kappa,\lambda,F)$ and reflectability of subsets of $[\lambda]^{\omega}$ are equivalent, as seen in Theorem~\ref{thm:locsp}. 

\begin{prop}
In the extension by $P$ given in Theorem~\ref{maintheorem:model}, let $S_0$ be a stationary set from Lemma~\ref{lem:semistationarysubset}. The following holds:
 \begin{enumerate}
  \item For every $\kappa$-complete fine filter $F'$ over $\mathcal{P}_\kappa\lambda$, if $F'$ is generated by $<\lambda$-many sets, then $\mathrm{Nm}(\lambda,F') \force T$ is semistationary for all stationary subsets $T \subseteq S_0$. In particular, $\mathrm{Nm}(\kappa,\lambda) \force T$ is semistationary. 
  \item There exist a $\kappa$-complete fine filter $F$ over $\mathcal{P}_{\kappa}\lambda$ and a stationary subset $T \subseteq S_0$ such that $\mathrm{Nm}(\lambda,F) \force T$ is \emph{non}-semistationary. 
 \end{enumerate}
\end{prop}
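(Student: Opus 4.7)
The plan handles (1) and (2) with quite different strategies.

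For (2), I would invoke Theorem~\ref{maintheorem} inside $V[P]$ with its ``$\mu$'' instantiated to $\lambda^{+}$: in $V[P]$ we have $\kappa=\mu^{+}$, $\lambda^{<\kappa}=\lambda$, and $\square(\lambda^{+},<\aleph_{1})$ holds via the $\square(\lambda^{+})$-sequence added by $\dot{Q}$, so the hypotheses of Theorem~\ref{maintheorem} are met. Unpacking its proof produces the thin $\mathcal{P}_{\kappa}\lambda^{+}$-list $\overline{d}$ of Section~\ref{sec:walksquare}, the associated $\kappa$-complete fine filter $F:=F_{\overline{d}}$ of Section~\ref{sec:filter}, and, via the inclusion $D\cap S_{1}\subseteq \mathrm{Gal}_{\theta}(\mathcal{P}_{\kappa}\lambda,F_{\overline{d}},S_{0},\mathcal{P}_{\kappa}\lambda)$ together with Lemma~\ref{lem:gal3}, a stationary $T\subseteq S_{0}$ with $\mathrm{Nm}(\mathcal{P}_{\kappa}\lambda,F_{\overline{d}})\Vdash T$ non-semistationary, as required. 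Note that $F_{\overline{d}}$ is generated by the $\lambda^{+}$ many sets $B_{x}$ with $x\in\mathcal{P}_{\kappa}\lambda^{+}$, so it is indeed outside the scope of (1).

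For (1), the ``In particular'' clause about $\mathrm{Nm}(\kappa,\lambda)$ is immediate from the semiproperness of $\mathrm{Nm}(\kappa,\lambda)$ in $V[P]$ already recorded in the proof of Theorem~\ref{maintheorem:model}. For the main claim I would prove the stronger statement that $\mathrm{Nm}(\mathcal{P}_{\kappa}\lambda,F')$ itself is semiproper in $V[P]$, from which preservation of the semistationarity of every $T\subseteq S_{0}$ is automatic. By Lemma~\ref{lem:formodel}(3) it suffices to show that, for each $A\in F'^{+}$, some $\kappa$-complete fine ultrafilter $U$ on $\mathcal{P}_{\kappa}\lambda$ extends $F'\cup\{A\}$. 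Since $\dot{Q}$ is $\lambda+1$-strategically closed and $\lambda^{<\kappa}=\lambda$ in $V[\mathrm{Coll}(\mu,<\kappa)]$, no new subset of $\mathcal{P}_{\kappa}\lambda$ of size $\leq\lambda$ is added by $\dot{Q}$; in particular the enumeration of the $<\lambda$ generators of $F'$ together with $A$ already lives in $V[\mathrm{Coll}(\mu,<\kappa)]$, and by Lemma~\ref{lem:formodel}(2) it is enough to produce such a $U$ there. In $V[\mathrm{Coll}(\mu,<\kappa)]$ I would use a ground-model $\lambda$-strong-compactness embedding $j\colon V\to M$ arising from a $V$-fine $\kappa$-complete ultrafilter on $\mathcal{P}_{\kappa}\lambda$ and run a generic lifting, exploiting the $\mu$-closure and $\kappa$-chain condition of $\mathrm{Coll}(\mu,<\kappa)$: the $<\lambda$ many generators together with $A$ can be incorporated as side conditions for the quotient $j(\mathrm{Coll}(\mu,<\kappa))/G$, and the fine ultrafilter derived from the lifted embedding $j^{*}$ is the required $U$.

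The main obstacle is precisely this lifting step, and in particular verifying that $<\lambda$ many side conditions can be absorbed without destroying fineness or $\kappa$-completeness of the derived ultrafilter. This is also the conceptual core of the dichotomy between (1) and (2): the same machinery breaks down for $F_{\overline{d}}$ of (2), whose $\lambda^{+}$ many generators $B_{x}$ exceed what any quotient-collapse master condition can encode, so the ultrafilter extension genuinely fails and semiproperness is lost—exactly in accord with Theorem~\ref{maintheorem}.
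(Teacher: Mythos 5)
The paper states this proposition without an explicit proof, so I can only measure your proposal against the machinery the author has set up. Your treatment of (2) is correct and is clearly the intended argument: in $V[P]$ the $\square(\lambda^{+})$-sequence added by $\dot{Q}$ lets you run the proof of Theorem~\ref{maintheorem} with its ``$\mu$'' instantiated to $\lambda^{+}$ (the hypotheses $\aleph_2\leq\kappa$ and $\lambda^{<\kappa}=\lambda$ survive the forcing), and Lemma~\ref{lem:gal3} then yields the filter $F_{\overline{d}}$ together with a stationary $T\subseteq S_0$ forced non-semistationary.

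Your argument for (1), however, has a fatal flaw. You reduce to producing, in $V[\mathrm{Coll}(\mu,{<}\kappa)]$, a $\kappa$-complete fine ultrafilter $U$ over $\mathcal{P}_{\kappa}\lambda$ extending $F'\cup\{A\}$, so that Lemma~\ref{lem:formodel}(3) applies. But in that model $\kappa=\mu^{+}$ is a successor cardinal, and a $\kappa$-complete fine ultrafilter over $\mathcal{P}_{\kappa}\lambda$ projects, via $x\mapsto\mathrm{ot}(x\cap\kappa)$ and using fineness plus $\kappa$-completeness, to a nonprincipal $\kappa$-complete ultrafilter on $\kappa$; it would make $\kappa$ measurable. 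Hence no such $U$ exists in $V[\mathrm{Coll}(\mu,{<}\kappa)]$ or in $V[P]$, and the generic lifting you describe cannot be carried out inside $V[G]$: if a generic for $j(\mathrm{Coll}(\mu,{<}\kappa))/G$ over $M$ lay in $V[G]$, the derived ultrafilter would again make $\mu^{+}$ measurable there. The lifting produces at best a generic ultrafilter living in a further extension, which is useless for Lemma~\ref{lem:formodel}(3). The number of generators of $F'$ plays no role in rescuing this; the obstruction is absolute, so the ``main obstacle'' you flag is not a delicate side-condition issue but an outright impossibility, and your explanation of the dichotomy between (1) and (2) is therefore also off. A proof of (1) has to take a different route in which the hypothesis ``$<\lambda$ many generators'' does real work --- for instance, arguing directly with the sets $\mathrm{Gal}_{\theta}(\mathcal{P}_{\kappa}\lambda,F',T,A)$ that for stationarily many $M$ with $M\cap\lambda^{+}\in T$ no generator of $F'$ can avoid $\bigcap\{f^{-1}M\cap\omega_1\mid f\in M\}$ (in part (2) this is achieved precisely because there are $\lambda^{+}$ many generators $B_{x}$, one for each $x\in\mathcal{P}_{\kappa}\lambda^{+}$ covering $M\cap\lambda^{+}$), or by exhibiting an $\aleph_1$-closed dense subset of $\langle F'^{+},\subseteq\rangle$ and invoking Lemma~\ref{lem:formodel}(1). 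As written, part (1) of your proposal does not stand.
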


Lastly, we point out that $\aleph_1$ in Theorem~\ref{maintheorem} cannot be improved to $\aleph_3$. 

\begin{prop}
 Suppose that $\kappa$ is supercompact and $\nu \geq \kappa$ is regular. For a singular $\mu$ with $\mathrm{cf}(\mu) = \nu$, there is a generic extension such that:
 \begin{enumerate}
  \item For every $\nu$-complete fine filter $F$ over $\mathcal{P}_{\nu}\lambda$, $\mathrm{Nm}(\nu,\lambda,F)$ is semiproper for all $\lambda \geq \nu^{+}$. Indeed, $(\dagger)$ holds. 
  \item $\square(\mu,{<}\nu)$ holds.
 \end{enumerate}
\end{prop}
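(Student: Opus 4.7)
The plan is a two-step forcing: first force $(\dagger)$ via a supercompact-based semiproper iteration, then add a $\square(\mu,{<}\nu)$-sequence by a sufficiently closed poset and argue that $(\dagger)$ survives. The point is that Theorem~\ref{maintheorem} only rules out $\square(\mu',{<}\aleph_1)$ at \emph{regular} $\mu'$, and the present $\mu$ is singular, so no formal conflict is implied.

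First I would take $P_1$ to be the Foreman--Magidor--Shelah revised countable support iteration of length $\kappa$ of $\omega_1$-stationary preserving forcings, guided by a Laver function on $\kappa$. Standardly, $P_1$ has size $\kappa$, is $\kappa$-cc, collapses $\kappa$ to $\aleph_2$, and in $V[G_1]$ Martin's Maximum holds; in particular $(\dagger)$ holds by the Shelah equivalence recalled in the introduction. Since $P_1$ is $\kappa$-cc and $\nu\geq\kappa$, all cofinalities $\geq\kappa$ are preserved, so in $V[G_1]$ the cardinal $\nu$ is still regular and $\mu$ is still singular of cofinality $\nu$.

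Next, in $V[G_1]$, I would let $\dot{P}_2$ be the standard poset for adding a $\square(\mu,{<}\nu)$-sequence by initial segments: conditions are partial coherent sequences $\langle C_\alpha\mid\alpha\leq\beta\rangle$ with $\beta<\mu$ satisfying the thinness requirement, ordered by end-extension. This poset is $\mu$-closed (hence $\nu^{+}$-closed), and by a standard density argument, building an enumeration of potential threads and defeating each one in turn, the generic produces a non-trivial $\square(\mu,{<}\nu)$-sequence, giving~(2).

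To obtain (1), I would verify that $\dot{P}_2$ preserves $(\dagger)$. Since $\dot{P}_2$ is $\nu^{+}$-closed with $\nu\geq\kappa=\aleph_2$, it adds no new sequences of length $\leq\aleph_1$; consequently $[\lambda]^{\omega}$ and the semistationarity of subsets of $[\lambda]^{\omega}$ are absolute between $V[G_1]$ and $V[G_1][G_2]$, and $\mathcal{P}_{\aleph_2}\lambda$ is unchanged. By Shelah's equivalence $(\dagger)\iff\mathrm{SSR}$, and since $\mathrm{SSR}$ is a statement purely about semistationary sets of countable objects and reflecting sets in $\mathcal{P}_{\aleph_2}\lambda$, $\mathrm{SSR}$ passes from $V[G_1]$ to $V[G_1][G_2]$, so $(\dagger)$ holds in the final model. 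The main obstacle, and the one step requiring care, is this preservation: $(\dagger)$ is a schema over \emph{all} posets of the final extension, but the reduction to $\mathrm{SSR}$ uses only countable data, all of which lives in $V[G_1]$ by the $\nu^{+}$-closure, so the absoluteness argument goes through cleanly.
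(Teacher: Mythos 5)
There is a genuine gap at the step you yourself flag as ``the one requiring care'': the preservation of $(\dagger)$ by the square-adding forcing $\dot P_2$. Your argument is that $\dot P_2$ adds no new $\omega$-sequences, so $[\lambda]^{\omega}$ and semistationarity are absolute and $\mathrm{SSR}$ passes up. But $\mathrm{SSR}$ quantifies over \emph{all} semistationary subsets of $[\lambda]^{\omega}$ in the final model, and $\dot P_2$ adds new subsets of $\mu$ (the square sequence itself), hence new subsets of $[\lambda]^{\omega}$ for $\lambda \geq \mu$; absoluteness of the countable objects says nothing about reflection for these new sets. Worse, the argument proves too much: the standard poset adding $\square(\lambda)$ for \emph{regular} $\lambda$ by initial segments is likewise $\sigma$-closed and adds no new $\omega$-sequences, so your reasoning would show that $(\dagger)$ survives adding $\square(\lambda)$ --- contradicting the fact (Theorem~\ref{thm:squarefails} together with Theorem~\ref{namba:shelah}, originally due to Sakai--Veli\v{c}kovi\'{c}) that $(\dagger)$ implies $\lnot\square(\lambda)$ for every regular $\lambda \geq \aleph_2$. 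So no distributivity-based absoluteness argument can work here; a correct preservation proof would have to exploit the width ${<}\nu$ and the singularity of $\mu$ in an essential way, and none is given. A secondary issue: for singular $\mu$ your $\dot P_2$ is only strategically closed rather than $\mu$-closed, and ``defeating each thread in turn'' by density does not obviously produce a non-trivial sequence; the paper has to invoke a nontrivial construction of Cummings--Foreman--Magidor for exactly this point.

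The paper sidesteps the whole difficulty by forcing in the opposite order: first Laver-prepare $\kappa$ to be indestructibly supercompact, then add the square sequence with a $\nu$-directed closed poset (which preserves the supercompactness of $\kappa$ since $\nu \geq \kappa$), and only then force $(\dagger)$ with $\mathrm{Coll}(\aleph_1,{<}\kappa)$. In that order the preservation burden falls on the square sequence rather than on $(\dagger)$, and it is discharged easily: the collapse is $\kappa$-c.c., so every club of the final model contains a ground-model club, which gives non-triviality, while the width condition is untouched because $\nu \geq \kappa$. If you want to salvage your plan, reverse the two steps as the paper does.
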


\begin{proof}
 First, by~\cite{indlaver}, we may assume that $\kappa$ is indestructible. That is, for every $\kappa$-directed closed poset $P$, we have $P\force \kappa$ remains supercompact. By \cite[Theorem 16]{MR1838355}, we have a poset $P$ with the following properties:
\begin{itemize}
 \item $P$ is $\nu$-directed closed.
 \item $P$ is $\alpha$-strategically closed for every $\alpha < \mu$.
 \item $P \force \square(\mu^{+},{<}\nu^{+})$. 
\end{itemize}
By the first item and the preparation, $P$ forces that $\kappa$ remains supercompact. Let us now analyze the extension by $P$. Let $\overline{C}$ be a $\square(\mu^{+},{<}\nu^{+})$-sequence. Since $\mathrm{Coll}(\aleph_1,{<}\kappa)$ forces $(\dagger)$, it suffices to show that $\mathrm{Coll}(\aleph_1,{<}\kappa)$ forces that $\overline{C}$ is a $\square(\mu^{+},{<}\nu^{+})$-sequence. 

By $\nu \geq \kappa$, condition (1) of the definition of $\square(\mu^{+},{<}\nu^{+})$ is satisfied. For condition (2), fix $p \in \mathrm{Coll}(\aleph_1,{<}\kappa)$ and a name $\dot{C}$ for a club in $\mu^{+}$. Since $\mathrm{Coll}(\aleph_1,{<}\kappa)$ has the $\mu^{+}$-c.c., a standard argument ensures that we can find a club $D \subseteq \mu^{+}$ such that $p \force D \subseteq \dot{C}$. Since $\overline{C}$ is non-trivial, there exists an $\alpha < \mu^{+}$ such that $D \cap \alpha \not\subseteq C_\beta$ for all $\beta \geq \alpha$, implying that $p \force \dot{C} \cap \alpha \not\subseteq C_{\beta}$ for all $\beta \geq \alpha$. 

Thus, a generic extension by $\mathrm{Coll}(\aleph_1,{<}\kappa)$ provides the desired model. 
\end{proof}

In particular, $\square(\mu,{<}\aleph_3)$ does not necessarily follow from the semiproperness of Namba forcings over $\mathcal{P}_{\aleph_3}\mu$. It remains an open question\footnote{This question is observed in~\cite{torresperezandwu} and~\cite{MR3959249}} whether $(\dagger)$ implies $\square(\mu,{<}\aleph_2)$. 

\section{Stationary reflections and $\mathrm{SCH}$}\label{sec:reflections}
This section is devoted to two theorems and related observations. The first theorem is an analogue of Theorem~\ref{maintheorem} in the context of stationary reflection principles. The second part of the section is a study of the Singular Cardinal Hypothesis ($\mathrm{SCH}$).

\begin{thm}\label{thm:additional1}
 For regular cardinals $\aleph_2 \leq \kappa \leq \lambda = \lambda^{<\kappa} < \mu \leq 2^{\lambda}$, if $\mathrm{Nm}(\lambda,F)$ is semiproper for every $\kappa$-complete fine filter $F$ over $\mathcal{P}_{\kappa}\lambda$ generated by $\mu^{<\kappa}$-many sets and $\kappa$ is inaccessible, then $\mathrm{Refl}(E_{\omega}^{\mu})$ holds.
\end{thm}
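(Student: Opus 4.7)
The plan is to prove the contrapositive and adapt the proof of Theorem~\ref{maintheorem}. Assume $T \subseteq E^{\mu}_{\omega}$ is stationary and non-reflecting; I will produce a $\kappa$-complete fine filter $F$ over $\mathcal{P}_{\kappa}\lambda$, generated by $\mu^{<\kappa}$ sets, such that $\mathrm{Nm}(\kappa,\lambda,F)$ is not semiproper. The filter will be $F_{\overline{d}}$ from Section~\ref{sec:filter}, applied to a thin $\mathcal{P}_{\kappa}\mu$-list $\overline{d}$ built from a $C$-sequence associated with $T$ in place of a $\square(\mu,{<}\aleph_1)$-sequence.

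First I would define $\overline{C}$: for $\mathrm{cf}(\alpha) \geq \omega_{1}$, let $C_{\alpha} \subseteq \alpha \setminus T$ be a club (available by non-reflection of $T$), and for $\alpha \in E^{\mu}_{\omega}$, let $C_{\alpha}$ be any $\omega$-cofinal sequence in $\alpha$. Then $C_{\beta} \cap T = \emptyset$ whenever $\mathrm{cf}(\beta) \geq \omega_{1}$, and $\overline{C}$ is non-trivial: if a putative witness $(C, \langle \beta_{\alpha}\rangle)$ existed, pick $\alpha \in \mathrm{Lim}(C) \cap E^{\mu}_{\omega_{1}}$ above some $\xi \in C \cap T$; then $\xi \in C \cap \alpha \subseteq C_{\beta_{\alpha}}$ rules out $\mathrm{cf}(\beta_{\alpha}) \geq \omega_{1}$, while $|C \cap \alpha| = \omega_{1}$ against $|C_{\beta_{\alpha}}| = \omega$ rules out $\mathrm{cf}(\beta_{\alpha}) = \omega$. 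With a bijection $l : \mu \times \omega \to \mu$, set $d_{x} = l``(\rho_{2}(-,\sup x) \upharpoonright x) \cap x$ as in Lemma~\ref{lem:wsqandtp}. Thinness does not need a $\square$-hypothesis here: for $l$-closed $y \in \mathcal{P}_{\kappa}\mu$, each member of $\mathrm{Lev}_{y}(\overline{d})$ is coded by a function $y \to \omega$, so $|\mathrm{Lev}_{y}(\overline{d})| \leq 2^{|y|} < \kappa$ by inaccessibility of $\kappa$. The construction of Section~\ref{sec:filter} then delivers $F_{\overline{d}}$, $\kappa$-complete, fine, generated by $\mu^{<\kappa}$ sets.

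The core argument would then transcribe that of Theorem~\ref{maintheorem}: let $S_{0}^{T} = \{x \in [\mu]^{\omega} : \sup x \in T\}$, stationary, and let $S_{1}^{T} \subseteq [\mathcal{H}_{\theta}]^{\omega}$ be the $S_{1}$ of Lemma~\ref{lem:killstationary} with the additional clause $\sup(M \cap \mu) \in T$. Its stationarity follows by running the proof of that lemma with $S = T$ in Lemma~\ref{lem:nontrivial}, which is legitimate since $\overline{C}$ is non-trivial as above. For $M \in D \cap S_{1}^{T}$ (with $D$ the club of countable elementary substructures of a suitable expansion) and $\tilde{x} \in B_{x}$ with $x \supseteq \bigcup(\mathcal{P}_{\kappa}\mu \cap M)$ $l$-closed, one decodes $f \upharpoonright x = \rho_{2}(-,\tilde{\beta}) \upharpoonright x$ inside $N = \mathrm{Sk}(M \cup \{\tilde{x}\})$ for some $\tilde{\beta} \geq \sup x$; Lemma~\ref{walk4} characterizes $C_{\delta} \setminus (\overline{\lambda}+1)$ via the $\rho_{2}$-values, the $S_{1}^{T}$-clause lets one pick $\gamma \in C_{\delta} \cap N \setminus M$ above $\sup(M \cap C_{\delta})$ with $\alpha = \min(M \setminus \gamma) \in E^{\mu}_{\omega_{1}}$, and Lemma~\ref{lem:defctblord} yields $\mathrm{Sk}(M \cup \{\gamma\}) \cap \omega_{1} > M \cap \omega_{1}$; this places $M$ in $\mathrm{Gal}_{\theta}(\mathcal{P}_{\kappa}\lambda, F_{\overline{d}}, S_{0}^{T}, \mathcal{P}_{\kappa}\lambda)$, and Lemma~\ref{lem:gal3} delivers the non-semiproperness.

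The technical point I expect to be the main obstacle is the walk dichotomy for $\delta \in T$. Since $\beta_{m} = \delta = \min(C_{\beta_{m-1}} \setminus \delta)$ forces $\delta \in C_{\beta_{m-1}}$ and $C_{\beta} \cap T = \emptyset$ whenever $\mathrm{cf}(\beta) \geq \omega_{1}$, the second-to-last walk step must have $\mathrm{cf}(\beta_{m-1}) = \omega$; then $\mathrm{Lim}(C_{\beta_{m-1}}) \cap \beta_{m-1} = \emptyset$, so $\delta \notin \mathrm{Lim}(C_{\beta_{m-1}})$, and Lemma~\ref{walk4} applies in case~(1) with $\beta = \delta$. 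Without this, for $\tilde{\beta}$ strictly above $\delta$, the data decoded from $\tilde{x}$ could describe an $\omega$-sequence cofinal in some $\tilde{\delta} > \delta$ whose intersection with $\delta$ is finite—too sparse to locate a fresh $\gamma < \delta$ in $N$. The walk-based formulation (rather than a naive ladder encoding $d_{x} = l``\{(C_{\sup x}(n), n) : n < \omega\}$), combined with the $C_{\beta} \cap T = \emptyset$ property of $\overline{C}$, precisely rules this out and allows Theorem~\ref{maintheorem}'s argument to transcribe verbatim.
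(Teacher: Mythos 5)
Your proposal is correct and follows essentially the same route as the paper: build a non-trivial $C$-sequence avoiding the non-reflecting stationary set, code the walk function $\rho_2$ into a thin $\mathcal{P}_{\kappa}\mu$-list (thin by inaccessibility of $\kappa$), pass to the filter $F_{\overline{d}}$ of Section~\ref{sec:filter}, and show that the Galvin set over the stationary family of models $M$ with $\sup(M\cap\mu)\in T$ is stationary via Lemmas~\ref{walk4} and~\ref{lem:defctblord} and conclude by Lemma~\ref{lem:gal3}. Your one deviation --- taking $\sup x$ rather than $\min(S\setminus\sup x)$ as the walk origin --- is harmless for exactly the reason you isolate, which is the content of the paper's Claim~\ref{addthm:claim2}: a point of $T$ cannot be a limit point of any $C_{\beta_i}$ along the walk, so $\lambda(\delta,\tilde{\beta})<\delta$ and case~(1) of Lemma~\ref{walk4} always applies.
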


\begin{proof}

We show the contraposition. Suppose that there exists a stationary subset $S \subseteq E_{\omega}^{\mu}$ such that $S \cap \delta$ is \emph{non}-stationary for all $\delta \in \mathrm{Lim} \cap \mu$. For every $\delta \in \mathrm{Lim} \cap \mu$, let $C_{\delta}$ be a club such that $C_{\delta} \cap S = \emptyset$. For a successor ordinal $\alpha+1$, define $C_{\alpha+1}= \{\alpha\}$. We consider a $C$-sequence $\overline{C} = \langle C_{\alpha} \mid \alpha < \mu \rangle$. 

\begin{clam}\label{addthm:claim1}
 $\overline{C}$ is non-trivial. 
\end{clam}
\begin{proof}[Proof of Claim]
 For every club $C$, we fix an $\alpha \in \mathrm{Lim}(C) \cap S$ such that $\sup C \cap \alpha \cap S =\alpha$. For every $\beta \geq \alpha$, since $|C_{\beta} \cap S| \leq 1$, we can deduce $C \cap \alpha \not\subseteq C_{\beta}$, as desired. 
\end{proof}
 
\begin{clam}\label{addthm:claim2}
 For every $\alpha < \beta < \mu$, if $\alpha \in S$, then $\lambda(\alpha,\beta) = \overline{\lambda}(\alpha,\beta)$. 
\end{clam}
\begin{proof}[Proof of Claim]
 Let $\beta_0 > \cdots > \beta_{n}$ be a walk from $\beta$ to $\alpha$. Since $\alpha \in S$, we have $\alpha\in C_{\beta_{i}}$ for all $i < n$. In particular, $\sup C_{\beta_{i}} \cap \alpha < \alpha$ for all $i< n$. Therefore, $\lambda(\alpha,\beta) < \alpha$, and thus, $\lambda(\alpha,\beta) = \overline{\lambda}(\alpha,\beta)$. 
\end{proof}

Fix a bijection $l:\mu \times \omega \to \mu$. Define a $\mathcal{P}_{\kappa}\mu$-list $\overline{d} = \langle d_{x} \mid x \in \mathcal{P}_{\kappa}\mu \rangle$ by 
\[
d_{x} = (l `` \rho_2(-,\min (S \setminus \sup x))\upharpoonright x)\cap x.
\]
Since $\kappa$ is inaccessible\footnote{This is the only point where $\kappa$ needs to be inaccessible.}, $\overline{d}$ is thin. 

Let $\overline{A} = \langle A_{\alpha}\mid \alpha < \mu\rangle$ be a family of subsets of $\mathcal{P}_{\kappa}\lambda$ obtained from Lemma~\ref{lemma:propvaluesets}. Consider the filter $F_{\overline{d}}$ over $\lambda$. Since $\overline{d}$ is thin, $F_{\overline{d}}$ is $\kappa$-complete. 

Let $S_0$ be the set of all $x \in [\mu]^\omega$ with the following properties:
 \begin{itemize}
  \item $x \cap C_{\sup x \cap \mu}$ is bounded in $\sup x\cap \mu$. 
  \item For all sufficiently large $\gamma \in C_{x \cap \mu}$, $\mathrm{cf}(\min x \setminus \mu) = \omega_1$. 
 \end{itemize}

Let $S_1 \subseteq [\mathcal{H}_{\theta}]^{\omega}$ be the set of all countable $M \prec \mathcal{H}_{\theta}$ such that 
       \begin{itemize}
	\item $M \cap C_{\sup M \cap \mu}$ is bounded in $\sup M\cap \mu$. 
	\item For all sufficiently large $\gamma \in C_{\sup M \cap \mu}$, $\mathrm{cf}(\min ((M \cap \mu) \setminus \gamma)) = \omega_1$. 
	\item For each $\xi < \sup M \cap\mu$, there exists an $x \in \mathcal{P}_{\kappa}\mu \cap M$ such that $(x \cap C_{\sup M \cap \mu}) \setminus \xi \neq \emptyset$. 
       \end{itemize}

By Lemma~\ref{lem:killstationary}, both $S_0$ and $S_1$ are stationary.

Let $\mathcal{A} = \langle \mathcal{H}_{\theta} ,\in,\overline{C}, E, S,\kappa,\lambda, \mu, F_{\overline{d}}, \overline{A},l\rangle$. Let $D$ be the set of all elementary substructures of $\mathcal{A}$. We aim to show that 
\[
 S_1 \cap D \subseteq \mathrm{Gal}_{\theta}(\mathcal{P}_{\kappa}\lambda,F_{\overline{d}},S_0,\mathcal{P}_{\kappa}\lambda).
\]

Take an $M \in D \cap S_1$. Then $M \cap \mu \in S_0$. Let $\delta = \sup M \cap \mu$. Note that there exists an $x \in \mathcal{P}_{\kappa}\mu$ such that $l ``(x\times \omega) = x$ and $\bigcup(\mathcal{P}_{\kappa}\mu \cap M)\subseteq x$. Since $B_{x} \in F_{\overline{d}}$, we must verify that 
\[
B_{x} \cap \bigcap \{f^{-1}M \cap \omega_1 \mid f:\mathcal{P}_{\kappa}\lambda \to \omega_1 \in M\} = \emptyset.
\]

For all $\tilde{x} \in B_{x}$, let $N = \mathrm{Sk}(M \cup \{\tilde{x}\})$. It suffices to show that $N \cap \omega_1 > M \cap \omega_1$. The proof of Theorem~\ref{maintheorem} establishes that $C_{\delta} \cap N \cap \mu$ is cofinal in $\delta$ once again. 

By the definition of $S_1$, we can choose $\gamma \in N \cap C_{\delta}$ such that the cofinality of $\alpha = \min (M \setminus \gamma)$ is $\omega_1$ and $\gamma \not\in M$. Clearly, $\gamma \in [\sup M\cap \alpha,\alpha)$. By Lemma~\ref{lem:defctblord}, we obtain
\[
N \cap \omega_1 \geq \mathrm{Sk}(M \cup \{\gamma\}) \cap \omega_1 > M \cap \omega_1.
\]
Thus, $M \in \mathrm{Gal}_{\theta}(\mathcal{P}_{\kappa}\lambda,F_{\overline{d}}, S_0,\mathcal{P}_{\kappa}\lambda)$, as required.

Therefore, $\mathrm{Gal}_{\theta}(\mathcal{P}_{\kappa}\lambda,F_{\overline{d}},S_0,\mathcal{P}_{\kappa}\lambda)$ is stationary. By Lemma~\ref{lem:gal3}, there exists a stationary subset $T \subseteq S_0$ such that $\mathrm{Nm}(\kappa,\lambda,F_{\overline{d}})$ forces that $T$ is \emph{non}-semistationary, completing the proof.\end{proof}

It seems that the assumption that $\kappa$ is inaccessible is not needed. Indeed, if $\mu = \lambda$, then we do not need this assumption (see Theorem~\ref{thm:twowalks2}). However, for $\mu > \lambda$, our proof relies on the existence of a $\mathcal{P}_{\kappa}\mu$-thin tree to define a filter. It is known that the existence of a \emph{non}-reflecting stationary subset is compatible with the tree property. Therefore, $\overline{d}$ may \emph{not} be a thin list.

\begin{prop}\label{prop:pfaandstt}
 If a supercompact cardinal exists, then there is a generic extension in which $\mathrm{TP}(\aleph_2,\lambda)$ holds for all $\lambda \geq \aleph_2$, and there exists a \emph{non}-reflecting stationary subset $S \subseteq E_{\omega}^{\aleph_2}$. In this model, for every $C$-sequence that avoids $S$, a $\mathcal{P}_{\aleph_2}\mu$-list $\overline{d}$, defined as in Theorem~\ref{thm:additional1}, is \emph{not} thin.
\end{prop}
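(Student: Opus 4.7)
The plan has two stages: constructing the model, then establishing non-thinness by contradiction.

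\emph{The model.} Starting from a supercompact $\kappa$, I would apply Laver's preparation so that $\kappa$ becomes indestructible under $\kappa$-directed closed forcing, then perform a Cummings--Foreman-style iteration collapsing $\kappa$ to $\aleph_2$ and forcing $\mathrm{TP}(\aleph_2,\lambda)$ for all $\lambda \geq \aleph_2$. Over this extension I would iterate the standard $\sigma$-closed and $\aleph_2$-strategically closed poset that adjoins a non-reflecting stationary subset $S \subseteq E_{\omega}^{\aleph_2}$; because this final step adds no new countable subsets and is sufficiently distributive on the relevant $\mathcal{P}_{\aleph_2}\mu$-lists, it preserves $\mathrm{TP}(\aleph_2,\lambda)$ for all $\lambda \geq \aleph_2$. (Any equivalent construction from the literature that yields both $\mathrm{TP}(\aleph_2,\lambda)$ and a non-reflecting stationary subset of $E_\omega^{\aleph_2}$ would do.)

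\emph{Non-thinness.} Suppose, for contradiction, that $\overline{d}$ is thin. Then $\mathrm{TP}(\aleph_2,\mu)$ produces a branch $\tilde{d}$, and defining $f : \mu \to \omega$ by $f(\xi) = n \iff l(\xi,n) \in \tilde{d}$, the branch condition yields, for each $x \in \mathcal{P}_{\aleph_2}\mu$ closed under $l$ in some club $C$, a $\beta_x \in S$ with $\beta_x \geq \sup x$ and $f \upharpoonright x = \rho_2(-,\beta_x)\upharpoonright x$. I would then reenact the second Claim of Lemma~\ref{lem:wsqandtp}: since $\overline{C}$ avoids $S$, the walk $\beta_x = \beta_0 > \beta_1 > \cdots > \beta_m = \alpha$ from $\beta_x$ to any $\alpha < \beta_x$ leaves $S$ after a single step, rigidly constraining the walk parameters $\overline{\lambda}(\alpha,\beta_x)$, $m_x$, $g(x)$. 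For each $\alpha \in E_{<\aleph_2}^{\mu}\setminus S$, pigeonholing cofinal $x \subseteq \alpha$ yields common values $\overline{\lambda}_\alpha,m_\alpha,g(\alpha)$; Fodor on $\alpha \mapsto \overline{\lambda}_\alpha$ then produces a stationary $T$ and fixed $\overline{\lambda}_\ast,m_\ast$ such that $D := \{\gamma < \mu \mid f(\gamma) = m_\ast \land \gamma > \overline{\lambda}_\ast\}$ is a club and $C^\ast := \bigcup_{\alpha \in T}(C_{g(\alpha)}\cap \alpha)$ satisfies $C^\ast \cap \alpha \subseteq C_{g(\min(T \setminus \alpha))}$, contradicting the non-triviality of $\overline{C}$ from Claim~\ref{addthm:claim1}.

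\emph{Principal obstacle.} The pigeonhole step in Lemma~\ref{lem:wsqandtp}'s no-branch claim relied on the $\square(\mu,{<}\kappa)$ bound $|\{\beta : \alpha \in \mathrm{Lim}(C_\beta)\}| < \kappa$, which is unavailable in Theorem~\ref{thm:additional1}'s setting because $\overline{C}$ is an arbitrary $C$-sequence avoiding $S$. The needed leverage must instead come from $\beta_x \in S$ coupled with $\overline{C}\cap S = \emptyset$, which confines the penultimate walk node $\beta_{m-1}$ to a structurally rigid family determined by $\min(S \setminus \alpha)$ rather than a free parameter. Translating this rigidity into a workable Fodor pigeonhole at $\aleph_2$, given that $S$ is non-reflecting and concentrated on ordinals of cofinality $\omega$ rather than $\omega_1$, is where the bulk of the technical work lies.
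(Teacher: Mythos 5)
Your overall strategy for the non-thinness half (branch $\Rightarrow$ trivializing club $\Rightarrow$ contradiction with non-triviality) is the same as the paper's, but the proposal stops short of a proof at exactly the point you flag, and the setup you sketch for the Fodor step is aimed at the wrong place. You run the pigeonhole over $\alpha \in E_{<\aleph_2}^{\mu}\setminus S$; for such $\alpha$ nothing controls the penultimate walk node, case (2) of Lemma~\ref{walk4} can occur, and the parameter $g(x)$ genuinely ranges over up to $\mu$-many traces $C_{\beta}\cap\alpha$ with $\alpha\in\mathrm{Lim}(C_{\beta})$, so the pigeonhole fails — this is the obstacle you correctly identify but then leave as ``where the bulk of the technical work lies.'' The resolution, which is all that ``as in Lemma~\ref{lem:wsqandtp}'' amounts to here, is to run the argument over $\alpha \in S$ itself. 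Since every $\beta_x$ lies in $S$ by the definition of $\overline{d}$ in Theorem~\ref{thm:additional1}, and since $\overline{C}$ avoids $S$, Claim~\ref{addthm:claim2} gives $\lambda(\alpha,\beta_x)=\overline{\lambda}(\alpha,\beta_x)<\alpha$ for every $\alpha\in S$: no $C_{\beta_{m-1}}$ accumulates at $\alpha$, so case (2) of Lemma~\ref{walk4} never occurs, $g(x)=\alpha$ identically, and the trace recovered from $f$ is $C_{\alpha}$ itself. The only pigeonholing left is over $b\times\omega$, a countable set since $\mathrm{cf}(\alpha)=\omega$, and it succeeds because $\mathrm{cf}(\mathcal{P}_{\aleph_2}\alpha,\subseteq)>\aleph_0$. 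Fodor applied to $\alpha\mapsto\lambda_{\alpha}$ on the stationary set $S$ then yields the club $D$ and the trivialization $D\cap\alpha\subseteq C_{\min(T\setminus\alpha)}$, contradicting the non-triviality of $\overline{C}$ from Claim~\ref{addthm:claim1}. Without this redirection your argument does not close.

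A secondary issue concerns the model. You force $\mathrm{TP}(\aleph_2,\lambda)$ first and then add the non-reflecting stationary set, asserting preservation of the tree property because the last step ``adds no new countable subsets.'' That is not sufficient: the forcing adds new subsets of $\aleph_2$ (the set $S$ itself), hence new thin $\mathcal{P}_{\aleph_2}\lambda$-lists, and distributivity says nothing about whether those acquire branches. The paper sidesteps this entirely by citing Beaudoin's model of $\mathrm{PFA}$ together with a non-reflecting stationary subset of $E_{\omega}^{\aleph_2}$ (where the iteration is restricted to posets preserving $S$), combined with the fact that $\mathrm{PFA}$ implies $\mathrm{TP}(\aleph_2,\lambda)$ for all $\lambda\geq\aleph_2$. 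Your parenthetical deferral to ``any equivalent construction from the literature'' effectively falls back on such a citation, but the specific construction you sketch is not justified as written.
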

\begin{proof}
 By~\cite[Theorem 4.8]{consistencypfa}, we know that $\mathrm{PFA}$ implies $\mathrm{TP}(\aleph_2,\lambda)$ holds for all $\lambda \geq \aleph_2$. In~\cite{beaudoin1991proper}, a generic extension was constructed in which $\mathrm{PFA}$ holds but a \emph{non}-reflecting stationary subset $S \subseteq E_{\omega}^{\aleph_2}$ exists. This model satisfies the required conditions.

For a $C$-sequence $\overline{C} = \langle C_\alpha \mid \alpha < \mu \rangle$ that avoids $S$, let $\overline{d}$ be a $\mathcal{P}_{\aleph_2}\mu$-list defined as in Theorem~\ref{thm:additional1}. If $\overline{d}$ is thin, then it must have a branch. However, as in Lemma~\ref{lem:wsqandtp}, we can define a club $C$ such that for all $\alpha < \mu$, there is some $\beta \geq \alpha$ such that $C \cap \alpha \subseteq C_\beta$. This contradicts the assumption, completing the proof.
\end{proof}

We should remark that there are no implications between the semiproperness of $\mathrm{Nm}(\lambda,F_{\overline{d}})$ and the existence of a branch of $\overline{d}$. 

It is straightforward to construct a model where $\mathrm{Nm}(\lambda, F_{\overline{d}})$ is \emph{not} semiproper, but $\overline{d}$ has a branch. Indeed, a thin $\mathcal{P}_{\kappa}\lambda$-list $\overline{d}$ with a branch always exists. On the other hand, obtaining the semiproperness of $\mathrm{Nm}(\lambda, F_{\overline{d}})$ requires the failure of $\square(\lambda)$. We also verify the following proposition.

\begin{prop}
It is consistent that there exists a thin $\mathcal{P}_{\kappa}\lambda$-list $\overline{d}$ such that $\overline{d}$ has \emph{no} branch but $\mathrm{Nm}(\lambda, F_{\overline{d}})$ is semiproper.
\end{prop}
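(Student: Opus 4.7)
The plan is to invoke the principle $(\dagger)$. Under $(\dagger)$, every $\omega_1$-stationary preserving poset is semiproper; by Lemma~\ref{lem:namba1}(4), each $\mathrm{Nm}(\kappa,\lambda,F)$ with an $\aleph_2$-complete fine filter $F$ is $\omega_1$-stationary preserving. Consequently, under $(\dagger)$, $\mathrm{Nm}(\kappa,\lambda,F_{\overline{d}})$ is semiproper for \emph{every} thin list $\overline{d}$, since $F_{\overline{d}}$ is $\kappa$-complete and fine. It therefore suffices to construct a model of $(\dagger)$ that simultaneously contains a thin $\mathcal{P}_\kappa\lambda$-list without branches.

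For concreteness I would take $\kappa=\lambda=\aleph_2$, so that a thin $\mathcal{P}_{\aleph_2}\aleph_2$-list without branches corresponds, up to a routine encoding, to an $\aleph_2$-Aronszajn tree. Starting from a supercompact cardinal $\kappa_0$ in the ground model (with Laver's indestructibility preparation), I would first force an $\aleph_2$-Aronszajn tree $T$ via a $\kappa_0$-directed-closed poset, thereby preserving the supercompactness of $\kappa_0$. I would then carry out the standard revised countable support iteration of semiproper forcings of length $\kappa_0$ producing $(\dagger)$ (in the style of Shelah's construction), with the bookkeeping chosen so that each iterand preserves the Aronszajn property of $T$. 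In the final model, $\kappa_0$ has been collapsed to $\aleph_2$, $(\dagger)$ holds, and $T$ is still an $\aleph_2$-Aronszajn tree; encoding $T$ as a thin $\mathcal{P}_{\aleph_2}\aleph_2$-list $\overline{d}$ without branches then yields the required witness, since $\mathrm{Nm}(\aleph_2,\aleph_2,F_{\overline{d}})$ is semiproper by the first paragraph.

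The main technical obstacle is ensuring that the semiproper iteration forcing $(\dagger)$ does not add a cofinal branch through $T$. This can be arranged by choosing $T$ to be sufficiently generic (e.g.\ specialized, or added by an appropriate side-condition forcing) so that the Aronszajn property is robust under the semiproper iterands, or by carefully restricting the iteration to branch-preserving posets while still capturing enough instances to obtain $(\dagger)$. Alternatively, one may bypass the explicit iteration by appealing to existing consistency results combining $\mathrm{SSR}$ (equivalent to $(\dagger)$ by Shelah~\cite{MR1623206}) with the failure of $\mathrm{TP}(\aleph_2,\aleph_2)$.
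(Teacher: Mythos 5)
Your first paragraph is exactly right and is the same reduction the paper uses: under $(\dagger)$ every $\mathrm{Nm}(\kappa,\lambda,F_{\overline{d}})$ is semiproper because it is $\omega_1$-stationary preserving, so the problem reduces to exhibiting a model of $(\dagger)$ in which some thin list has no branch, e.g.\ one where $\mathrm{TP}(\aleph_2,\aleph_2)$ fails. The gap is in your construction of that model, and it is a genuine one. First, a $\kappa_0$-directed-closed poset adds no new subsets of $\aleph_2$ (since $\aleph_2<\kappa_0$), so it cannot ``force an $\aleph_2$-Aronszajn tree''; that step is vacuous. Second, and more seriously, the iteration producing $(\dagger)$ collapses $\kappa_0$ to become the \emph{new} $\aleph_2$, so a tree of height the ground-model $\aleph_2$ has height an ordinal of cardinality $\aleph_1$ in the final model and is irrelevant; what you would actually need is a $\kappa_0$-Aronszajn tree surviving as an $\aleph_2$-Aronszajn tree of the extension, and the branch-preservation problem you flag for the RCS iteration is left unresolved rather than solved.

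All of this machinery is unnecessary, which is the point of the paper's one-line argument: the standard model of $(\dagger)$, namely the L\'evy collapse $\mathrm{Coll}(\aleph_1,{<}\kappa)$ of a strongly compact $\kappa$, also satisfies $\mathrm{CH}$, and $\mathrm{CH}$ already implies (by Specker's classical construction) the existence of a special $\aleph_2$-Aronszajn tree, hence the failure of $\mathrm{TP}(\aleph_2,\aleph_2)$ and the existence of a thin $\mathcal{P}_{\aleph_2}\aleph_2$-list with no branch. No tree has to be forced or preserved; the witness comes for free from the cardinal arithmetic of the model. Your closing sentence about appealing to existing consistency results combining $\mathrm{SSR}$ with the failure of $\mathrm{TP}(\aleph_2,\aleph_2)$ points in the right direction, but as written the proposal does not supply such a result, so the argument is incomplete without the $\mathrm{CH}$ observation.
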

\begin{proof}
Every $\mathrm{Nm}(\lambda, F_{\overline{d}})$ is $\omega_1$-stationary preserving. It is known that if a strongly compact cardinal $\kappa$ is collapsed to $\aleph_2$ by the standard L\'evy collapse $\mathrm{Coll}(\aleph_1, {<}\kappa)$, then $(\dagger)$ and $\mathrm{CH}$ hold in the extension~\cite[Section 1 of Ch. III]{MR1623206}. 

Consider this extension. Note that $\mathrm{CH}$ implies the existence of an $\aleph_2$-Aronszajn tree, and thus $\mathrm{TP}(\aleph_2, \aleph_2)$ fails. Let $\overline{d}$ be a thin $\mathcal{P}_{\aleph_2}\aleph_2$-list with \emph{no} branches. By $(\dagger)$, $\mathrm{Nm}(\aleph_2, F_{\overline{d}})$ must be semiproper, completing the proof.
\end{proof}

We conclude this section with an observation about singular cardinal combinatorics. The \emph{Singular Cardinal Hypothesis (SCH)} asserts that for every strong limit singular cardinal $\mu$, $2^{\mu} = \mu^{+}$. 

\begin{thm}[Sakai~\cite{schwoscale}]\label{thm:s2defi}
 For a strong limit singular cardinal $\mu$ of cofinality $\omega$, if $2^{\mu} > \mu^{+}$, then there exists a stationary subset $S_2\subseteq [\mu^{+}]^{\omega}$ such that for every $x \in \mathcal{P}_{\mu^{+}}\mu^{+}$, $S_2 \cap [x]^{\omega}$ is \emph{not} semistationary. 
\end{thm}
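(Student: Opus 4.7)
The plan is to exploit the failure of SCH via Shelah's pcf theory. Since $\mu$ is strong limit singular of cofinality $\omega$ with $2^{\mu} > \mu^{+}$, one has $\mu^{\omega} = 2^{\mu} > \mu^{+}$ and hence $\mathrm{pp}(\mu) > \mu^{+}$. Shelah's theorems then yield an increasing sequence $\langle \mu_{n} \mid n < \omega\rangle$ of regular cardinals cofinal in $\mu$ with $\mu_{0} > \omega_{1}$, a scale $\overline{f} = \langle f_{\alpha} \mid \alpha < \mu^{+}\rangle$ in $\prod_{n}\mu_{n}/\mathrm{fin}$, and a stationary ``bad'' set $S^{*} \subseteq E^{\mu^{+}}_{\omega}$ at which $\overline{f}$ admits no exact upper bound. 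The precise failure of SCH is exerted through this badness.

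First I would attach to each $x \in [\mu^{+}]^{\omega}$ with $\sup x \in E^{\mu^{+}}_{\omega}$ its characteristic function $\chi_{x} \in \prod_{n} \mu_{n}$ given by $\chi_{x}(n) = \sup\{f_{\alpha}(n) \mid \alpha \in x\}$, which is pointwise bounded in $\mu$ on a club. Then I would let $S_{2}$ consist of those $x$ with $\sup x \in S^{*}$ whose characteristic function witnesses the pcf-badness, for instance $\chi_{x} \not\leq^{*} f_{\beta}$ for any $\beta < \mu^{+}$ above $\sup x$. Stationarity of $S_{2}$ is verified by considering countable $M \prec \mathcal{H}_{\theta}$ containing $\overline{f}$ and the relevant parameters with $\sup(M \cap \mu^{+}) \in S^{*}$: cofinality of $M \cap \mu^{+}$ in $\delta := \sup(M \cap \mu^{+})$, combined with the non-existence of an exact upper bound at $\delta$, forces $M \cap \mu^{+} \in S_{2}$.

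For non-semistationarity of $S_{2} \cap [x]^{\omega}$ with $x \in \mathcal{P}_{\mu^{+}}\mu^{+}$, pick $\beta < \mu^{+}$ above every ordinal in $x$. Since $\overline{f}$ is a scale, $f_{\alpha} <^{*} f_{\beta}$ for all $\alpha \in x$, hence $\chi_{y} \leq^{*} f_{\beta}$ for every $y \in [x]^{\omega}$, which already rules out $y \in S_{2}$. To upgrade this to non-semistationarity I would exhibit a club $C \subseteq [\mu^{+}]^{\omega}$ such that every $w \in C$ of the form $w \supseteq y$ with $w \cap \omega_{1} = y \cap \omega_{1}$ (for some $y \in [x]^{\omega}$) still satisfies $\chi_{w} \leq^{*} f_{\beta}$ on the ``$x$-part'' of $w$, and combine this with the scale's controlled behavior on $w \setminus x$ to conclude $w \notin S_{2}$.

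The main obstacle is precisely this last step. Plain closure under cofinal extensions would be routine, but semi-closure allows $w \supseteq y$ of arbitrary size constrained only by $w \cap \omega_{1} = y \cap \omega_{1}$, so $\sup w$ may dwarf $\sup x$ and a priori free $\chi_{w}$ from domination by $f_{\beta}$. Circumventing this requires a scale whose characteristic computation is essentially controlled by $w \cap \omega_{1}$ --- a ``wo-scale'' in the sense of the cited paper --- so that adding ordinals beyond $x$ without enlarging the $\omega_{1}$-intersection cannot alter $\chi$ enough to place $w$ into $S_{2}$.
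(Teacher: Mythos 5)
The paper does not actually prove Theorem~\ref{thm:s2defi}; it is quoted from Sakai's note, whose whole point (as the citation suggests) is to obtain such a set \emph{without} using better scales or bad sets of a scale. So I can only judge your proposal on its own terms, and it has two genuine gaps. The first is the domination step, which is false and self-refuting. From $f_{\alpha} <^{*} f_{\beta}$ for all $\alpha \in y$ you cannot conclude $\chi_{y} \leq^{*} f_{\beta}$: the finite exceptional sets $\{n \mid f_{\alpha}(n) > f_{\beta}(n)\}$ vary with $\alpha$, and their union over the infinitely many $\alpha \in y$ can be cofinite, so the pointwise supremum need not be eventually dominated by $f_{\beta}$. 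Whether it is, for $y = M \cap \mu^{+}$, is exactly the question of goodness of $\sup y$ --- this is the heart of the matter, not a routine step. Worse, if the inference were valid you could apply it with $x = y$ itself (every countable $y$ lies in $\mathcal{P}_{\mu^{+}}\mu^{+}$), concluding $\chi_{y} \leq^{*} f_{\beta}$ for some $\beta > \sup y$ for \emph{every} countable $y$; your $S_{2}$ would then be empty, contradicting the stationarity argued in your previous paragraph. Relatedly, the pcf input you invoke --- a stationary $S^{*} \subseteq E^{\mu^{+}}_{\omega}$ of points admitting no exact upper bound, as a consequence of $2^{\mu} > \mu^{+}$ --- is not a standard theorem; what the failure of $\mathrm{SCH}$ yields via Shelah is a \emph{better} scale, i.e.\ \emph{more} goodness at points of uncountable cofinality (cf.\ the Magidor--Cummings discussion in Section~\ref{sec:reflections}), and Sakai's cited argument is designed to avoid even that.

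The second gap is that your non-semistationarity argument aims at the wrong target. Non-semistationarity of $S_{2}\cap[x]^{\omega}$ means that $(S_{2}\cap[x]^{\omega})^{\mathbf{cl}} = \{w \mid \exists y \in S_{2}\cap[x]^{\omega}\,(y \subseteq w \wedge y\cap\omega_{1} = w\cap\omega_{1})\}$ is non-stationary; showing $w \notin S_{2}$ for $w$ in this closure establishes nothing, since the closure is in general far larger than $S_{2}$ and need not meet it. What is required is a club of $w$ containing \emph{no} $y \in S_{2}\cap[x]^{\omega}$ with $y\cap\omega_{1} = w\cap\omega_{1}$, and for that the definition of $S_{2}$ must couple $y\cap\omega_{1}$ to the rest of $y$, because the only data preserved in passing from $y$ to $w$ are $y \subseteq w$ and the common $\omega_{1}$-trace. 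The standard device --- used elsewhere in this very paper, in Lemma~\ref{lem:shelah} --- is a stationary partition $\langle A_{\xi} \mid \xi<\omega_{1}\rangle$ of a stationary subset of $E^{\mu^{+}}_{\omega}$ together with the requirement $\sup y \in A_{y\cap\omega_{1}}$, so that $w$ ``knows'' via $w\cap\omega_{1}$ which ordinals are eligible as $\sup y$ and the scale can then be used to exclude them on a club. Your $S_{2}$ has no such coupling, and the concluding appeal to a ``wo-scale'' (a misreading of the citation key, which abbreviates ``without scales'') does not supply the missing mechanism.
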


From Theorem~\ref{thm:s2defi}, we can derive the following:

\begin{thm}\label{thm:additional2}
 For a strong limit singular cardinal $\mu$ of cofinality $\omega$, if $2^\mu > \mu^{+}$, then $\mathrm{Nm}(\mu^{+})$ is \emph{not} locally semiproper. 
\end{thm}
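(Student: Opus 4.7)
The plan is to derive the non-local-semiproperness of $\mathrm{Nm}(\mu^{+})$ from Sakai's Theorem~\ref{thm:s2defi} by a reflection-dichotomy argument parallel to the $(1) \to (2)$ direction of Theorem~\ref{thm:locsp}. Suppose for contradiction that $\mathrm{Nm}(\mu^{+})$ is locally semiproper. Let $S_{2} \subseteq [\mu^{+}]^{\omega}$ be Sakai's stationary set from Theorem~\ref{thm:s2defi}; by the assumed local semiproperness, $\mathrm{Nm}(\mu^{+})$ forces $S_{2}$ to remain semistationary.

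I would then invoke a localization property for $\mathrm{Nm}(\mu^{+})$ analogous to the $(1) \to (2)$ implication in Theorem~\ref{thm:locsp}, adapted to the one-cardinal Namba forcing: for any ground-model $S \subseteq [\mu^{+}]^{\omega}$, if $\mathrm{Nm}(\mu^{+}) \force S$ is semistationary, then there is $\alpha < \mu^{+}$ such that $S \cap [\alpha]^{\omega}$ is already semistationary in $V$. The key observation underlying this property is that any witness $y \in S$ producing $M \cap \mu^{+} \in S^{\mathbf{cl}}$ in $V[G]$ is a ground-model countable subset of $\mu^{+}$, so $\sup y < \mu^{+}$ in $V$; the generic cofinal $\omega$-sequence in $\mu^{+}$ together with an elementarity-and-density argument on $\mathrm{Nm}(\mu^{+})$-conditions (mirroring the technique of~\cite{tsukuura}) then produces such a bound $\alpha$. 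Applying this localization to $S_{2}$ gives $\alpha < \mu^{+}$ with $S_{2} \cap [\alpha]^{\omega}$ semistationary. Since $\alpha \in \mathcal{P}_{\mu^{+}}\mu^{+}$, this contradicts Theorem~\ref{thm:s2defi}, completing the argument. Alternatively, one may prove this via Lemma~\ref{lem:gal3} by showing directly that $\mathrm{Gal}_{\theta}(\mu^{+}, I_{\mu^{+}}^{\ast}, S_{2}, \mu^{+})$ is stationary: a suitably chosen $M \prec \mathcal{H}_{\theta}$ with $M \cap \mu^{+} \in S_{2}$ must satisfy that $\{\alpha < \mu^{+} \mid \mathrm{Sk}(M \cup \{\alpha\}) \cap \omega_{1} = M \cap \omega_{1}\}$ is bounded, using Sakai's non-reflection clubs associated with ordinals above $\sup(M \cap \mu^{+})$.

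The principal obstacle is the interplay between $V$- and $V[G]$-semistationarity: $\mathrm{Nm}(\mu^{+})$ may add new countable subsets of ordinals below $\mu^{+}$, so $V$-clubs need not remain clubs in $V[G]$, and semi-closures can enlarge. Overcoming this requires exploiting that the semi-closure witness $y$ is necessarily a ground-model countable object and carefully restricting to conditions $q \leq p$ in $\mathrm{Nm}(\mu^{+})$ that decide $y$ and the relevant portion of the name for $M$, so that the $V[G]$-reflection descends into genuine $V$-semistationarity of $S \cap [\alpha]^{\omega}$. Once the localization is in hand, the contradiction with Sakai's theorem is immediate because $\alpha$, regarded as the set of ordinals below it, lies in $\mathcal{P}_{\mu^{+}}\mu^{+}$.
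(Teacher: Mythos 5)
Your proposal is correct and follows essentially the same route as the paper: the paper's proof is exactly ``take Sakai's $S_2$ and apply the $(1)\to(2)$ direction of Theorem~\ref{thm:locsp} to contradict the non-reflection of $S_2$.'' The only remark is that the localization property you propose to re-derive is already available verbatim as Theorem~\ref{thm:locsp} with $\kappa=\lambda=\mu^{+}$ --- note the paper's explicit caveat that $(1)\to(2)$ holds even when $\nu^{\omega}\geq\kappa$ for some $\nu<\kappa$, which is the relevant case here since $\mu^{\omega}=2^{\mu}>\mu^{+}$ --- so no separate adaptation or density argument is needed.
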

\begin{proof}
 If $2^{\mu} > \mu^{+}$, let $S_2$ be a stationary subset of $[\mu]^{\omega}$ from Theorem~\ref{thm:s2defi}. By Theorem~\ref{thm:locsp}, the semistationarity of $S_2$ is not preserved by $\mathrm{Nm}(\mu^{+})$. 
\end{proof}

The reason for introducing this theorem is to demonstrate that many Namba forcings below $\mu$ can still be semiproper, even if $2^{\mu} > \mu^{+}$. 

\begin{prop}\label{prop:schfailmodel}Suppose that $\kappa < \mu$ are strongly compact cardinals. Then there exists a poset that forces:
\begin{enumerate}
 \item $\mu$ is a strong limit singular cardinal of cofinality $\omega$ and $2^{\mu} = \mu^{++}$. In particular, $\mathrm{Nm}(\mu^{+})$ is \emph{not} semiproper.
 \item For every $\mu < \mu$, $\mathrm{Nm}(\aleph_2,\mu)$ is semiproper.
 \item There is a semiproper poset that adds a countable cofinal sequence to $\mu$. 
\end{enumerate}
\end{prop}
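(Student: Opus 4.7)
The plan is to construct the poset $P$ as a three-stage iteration that exploits the two strongly compact cardinals for distinct roles: $\kappa$ is collapsed to $\aleph_2$ to ensure semiproperness of the Namba forcings below $\mu$, while $\mu$ is first used to blow up its powerset and then to singularize itself by a semiproper Prikry-type forcing. As a preliminary step I would invoke a Laver-style indestructibility preparation so that $\kappa$ (and, after a further preparation, $\mu$) remain strongly compact under all subsequent $\mu$-directed-closed forcings.

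Stage~1 is the Lévy collapse $\mathrm{Coll}(\aleph_1,{<}\kappa)$, which forces $\kappa=\aleph_2$ and $(\dagger)$ (Foreman--Magidor--Shelah; see~\cite[Ch.~III, \S 1]{MR1623206}), so every $\mathrm{Nm}(\aleph_2,\nu)$ becomes semiproper. Stage~2 is the Cohen-type forcing $\mathrm{Add}(\mu^+,\mu^{++})$ used to arrange $2^{\mu}=\mu^{++}$; it is $\mu^+$-closed and hence $(\nu{+}1)$-strategically closed for every $\nu<\mu$, so Lemma~\ref{lem:formodel}(2) preserves the semiproperness of each $\mathrm{Nm}(\aleph_2,\nu)$ with $\nu<\mu$, and by indestructibility $\mu$ remains strongly compact. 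Stage~3 is the supercompact (Magidor-style) Prikry forcing $P_{\mathrm{Pr}}$ at $\mu$: it adds a countable cofinal sequence to $\mu$, is $\mu$-distributive (hence preserves cardinals, preserves $2^{\mu}=\mu^{++}$, and adds no bounded subsets of $\mu$), and is semiproper by Shelah's analysis of supercompact Prikry.

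In the final extension $V^P$, $\mu$ is strong limit singular of cofinality $\omega$ with $2^{\mu}=\mu^{++}$, so Theorem~\ref{thm:additional2} yields (1), including the non-semiproperness of $\mathrm{Nm}(\mu^+)$. For (2), since Stage~3 adds no bounded subsets of $\mu$, each $\mathrm{Nm}(\aleph_2,\nu)^{V^P}$ with $\nu<\mu$ agrees with the corresponding poset computed after Stage~2, where it is semiproper; the semiproperness of Stage~3 together with Theorem~\ref{thm:locsp} then transfers local semiproperness of each $\mathrm{Nm}(\aleph_2,\nu)$ to $V^P$. For (3), the forcing $P_{\mathrm{Pr}}$ from Stage~3 is the desired semiproper poset (over the intermediate submodel) that adds a cofinal $\omega$-sequence to $\mu$, and this example persists in $V^P$ as the required witness. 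The principal obstacle will be (i) verifying that the supercompact Prikry at $\mu$ is semiproper in the $(\dagger)$-plus-$2^{\mu}=\mu^{++}$ intermediate model, and (ii) showing that its semiproperness preserves the local semiproperness of each $\mathrm{Nm}(\aleph_2,\nu)^{V^P}$ for $\nu<\mu$; both rest on Shelah's machinery for supercompact Prikry together with Theorem~\ref{thm:locsp}.
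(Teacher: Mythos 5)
Your overall skeleton (prepare a model of $(\dagger)$ in which $\mu$ is still highly compact and $2^{\mu}=\mu^{++}$, then singularize $\mu$ by a Prikry-type forcing) is the same as the paper's, but there are two genuine problems. The more serious one concerns item~(3). The statement ``there is a semiproper poset that adds a countable cofinal sequence to $\mu$'' is one of the things the poset $P$ must \emph{force}, so the witness has to live in the final model $V^{P}$, where $\mu$ is already singular of cofinality $\omega$; it must be a poset there that adds a \emph{new} cofinal $\omega$-sequence. Your proposed witness, the Prikry forcing of Stage~3, is a poset over the intermediate model, and ``persisting into $V^{P}$'' does not make it such a witness. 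This is exactly the part where the paper does real work: in the final model it fixes $\langle \mu_i \mid i<\omega\rangle$ cofinal in $\mu$, forms the tagged-tree Namba forcing $\mathrm{Nm}(\mu_i \mid i<\omega)$ on $\bigcup_n \prod_{i<n}\mu_i$, and proves it semiproper by assembling, via Lemma~\ref{lem:semiproperchar}, Player II's winning strategies $\tau_t$ for the Galvin games $\Game_{\mathrm{Gal}}(I_{\mu_{|t|}},\mathrm{Suc}_p(t))$ (available because each $\mathrm{Nm}(\mu_i)$ is semiproper there) and invoking Shelah's fusion theorem \cite[Ch.~X, Theorem~4.12]{MR1623206}. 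Your proposal omits this construction entirely, and the unargued claim that (supercompact) Prikry forcing is itself semiproper --- which you correctly flag as your ``principal obstacle'' --- would not repair the omission even if true, since that forcing is not a poset of $V^{P}$.

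The second problem is Stage~2: $\mathrm{Add}(\mu^{+},\mu^{++})$ is $\mu^{+}$-closed and adds no new subsets of $\mu$, so it arranges $2^{\mu^{+}}=\mu^{++}$ while leaving $2^{\mu}$ untouched; since Prikry forcing also preserves $2^{\mu}$, your final model would satisfy $2^{\mu}=\mu^{+}$ and conclusion~(1) would fail. To blow up $2^{\mu}$ you would need something like $\mathrm{Add}(\mu,\mu^{++})$ together with a preparation keeping $\mu$ measurable afterwards (this is where the second compact cardinal and the indestructibility preparation are actually needed, and why the paper compresses this step into ``by standard arguments''). Your treatment of item~(2), by contrast, is essentially the paper's: the Prikry step preserves $V_{\mu}$, so the semiproperness of each $\mathrm{Nm}(\aleph_2,\nu)$ for $\nu<\mu$, secured by $(\dagger)$ after the L\'evy collapse and preserved through the strategically closed Stage~2 by Lemma~\ref{lem:formodel}, survives to the final model.
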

\begin{proof}
 By standard arguments, we can obtain a model in which $(\dagger)$ holds and $\mu$ is a measurable cardinal such that $2^{\mu} = \mu^{++}$. Let $P$ be a Prikry forcing over $\mu$. $P$ forces:
\begin{enumerate}
 \item $\mu$ is a strong limit singular cardinal of cofinality $\omega$ and $2^{\mu} = \mu^{++}$. 
 \item ${V}_\mu = V^{V}_{\mu}$. 
\end{enumerate}
For details, we refer to~\cite{Gitik} or~\cite{prikry}. 

Now consider the extension $W$ by $P$. By (2), for every $\mu < \mu$, $\mathrm{Nm}(\aleph_2,\mu)$ is semiproper. 

In particular, $\mathrm{Nm}(\nu)$ is semiproper for all $\nu < \mu$. Fix a sequence $\langle \mu_i \mid i < \omega \rangle$ with $\sup_i \mu_i = \mu$. Let $\mathrm{Nm} = \mathrm{Nm}(\mu_i \mid i < \omega)$ be the set of all trees $p \subseteq \bigcup_{n<\omega}\prod_{i<n}\mu_i$ with the following properties:
\begin{enumerate}
 \item $p$ has a trunk $\mathrm{tr}(p) \in p$. 
 \item For every $t \in p$, $\mathrm{Suc}_{p}(t) = \{\xi < \mu_{|t|} \mid t{^{\frown}}\langle \xi\rangle \in p\}$ is unbounded in $\mu_{|t|}$. 
\end{enumerate}
$\mathrm{Nm}$ is ordered by inclusion. The union $\bigcup \{\mathrm{tr}(p) \mid p \in \dot{G}\}$ is forced to be a new element in $\prod_{i}\mu_i$, where $\dot{G}$ is a $\mathrm{Nm}$-name for the $(W,\mathrm{Nm})$-generic filter. 

For each $t \in p$ with $t \supseteq \mathrm{tr}(p)$, by Lemma~\ref{lem:semiproperchar}, Player II has a winning strategy $\tau_t$ for $\Game_{\mathrm{Gal}}(I_{\mu_{|t|}},\mathrm{Suc}_{p}(t))$. Then, for each countable $M \prec \langle \mathcal{H}_{\theta}, \in, p,\langle\tau_t \mid t \in p\rangle, \langle\mu_i\mid i < \omega\rangle\rangle$, by \cite[Ch.X Theorem 4.12]{MR1623206}, we can find a $q \leq p$ that forces $M \cap \omega_1 = M[\dot{G}] \cap \omega_1$, as required. 
\end{proof}

The forcing $\mathrm{Nm}$, defined above, is an instance of a Namba forcing associated with filter-tagged trees, introduced by Shelah~\cite[Ch.X]{MR1623206}. In the model of Proposition~\ref{prop:schfailmodel}, for every filter-tagged tree $T \subseteq \mathrm{V}_{\mu}$, we can verify that the associated Namba forcing is semiproper. Thus, the semiproperness of many Namba forcings that add a cofinal countable sequence to $\mu$ does not imply $2^{\mu} = \mu^{+}$ for any singular $\mu$ with $\mathrm{cf}(\mu) = \omega$. On the other hand, the following theorem is known.

\begin{thm}[Magidor--Cummings~\cite{MR2811288}]
  For a sequence of regular cardinals $\langle \mu_i \mid i < \omega \rangle$, if $\mathrm{FA}_{\aleph_1}(\mathrm{Nm}(\mu_{i} \mid i <\omega))$ holds, then there is \emph{no} better scale over $\langle \mu_i \mid i < \omega\rangle$. 
\end{thm}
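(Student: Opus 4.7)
Write $\mu=\sup_i\mu_i$ and $\mathrm{Nm}:=\mathrm{Nm}(\mu_i\mid i<\omega)$. The plan is to suppose for contradiction that $\vec{f}=\langle f_\alpha\mid\alpha<\mu^{+}\rangle$ is a better scale and to use $\mathrm{FA}_{\aleph_1}(\mathrm{Nm})$ to manufacture, inside the ground model, a function $g\in\prod_i\mu_i$ that witnesses a bad point of $\vec{f}$. First I would fix $\alpha\in E_{\omega_1}^{\mu^{+}}$ and, using the better property at $\alpha$, an enumeration $\{\beta_\xi\mid\xi<\omega_1\}$ of a club $C\subseteq\alpha$ together with an integer $i_0$ such that $f_{\beta_\xi}(i)<f_{\beta_\eta}(i)$ whenever $\xi<\eta<\omega_1$ and $i\geq i_0$.

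For each $\xi<\omega_1$, consider the set
\[
D_\xi=\{\,p\in\mathrm{Nm}\mid\forall t\in p\,(|t|\geq i_0\to t(|t|-1)>f_{\beta_\xi}(|t|-1))\,\},
\]
which is dense because one may trim each $\mathrm{Suc}_p(t)$ with $|t|\geq i_0$ to $\mathrm{Suc}_p(t)\setminus(f_{\beta_\xi}(|t|)+1)$, retaining unboundedness in $\mu_{|t|}$. Together with the standard dense sets forcing $g:=\bigcup\{\mathrm{tr}(p)\mid p\in G\}$ to be a total function in $\prod_i\mu_i$, this family has size $\aleph_1$, so $\mathrm{FA}_{\aleph_1}(\mathrm{Nm})$ produces a filter $G$ and hence a ground-model $g\in\prod_i\mu_i$ such that, for every $\xi<\omega_1$, some $N_\xi<\omega$ satisfies $g(i)>f_{\beta_\xi}(i)$ on $[\max(i_0,N_\xi),\omega)$. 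Pigeonhole then yields a single $N\geq i_0$ and an uncountable $X\subseteq\omega_1$, cofinal in $\omega_1$, for which $g(i)>f_{\beta_\xi}(i)$ whenever $\xi\in X$ and $i\geq N$.

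The hard part will be converting this uniform domination into an honest violation of the better property. The intended move is to pass to the diagonal sup $h\in\prod_i\mu_i$ defined by $h(i)=\sup\{f_{\beta_\xi}(i)\mid\xi\in X\}$ for $i\geq N$ and $h(i)=0$ otherwise, which is a well-defined element of $\prod_i\mu_i$ under the standard scale-theoretic assumption $\mathrm{cf}(\mu_i)>\omega_1$ for sufficiently large $i$. Since $h$ lies in the ground model, $h<^{*}f_\gamma$ for some $\gamma<\mu^{+}$; the pointwise domination of each $f_{\beta_\xi}$ ($\xi\in X$) forces $\gamma\geq\alpha$, while the exact-upper-bound property characteristic of \emph{better} scales should pin $h=^{*}f_\alpha$ via the witnessing club $C$. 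Combining these with another appeal to the betterness witness at the new ordinal $\gamma$ (or to an auxiliary club below $\alpha$ on which the $f_{\beta_\xi}$'s must remain strictly increasing) furnishes the contradiction. The delicate point — and the combinatorial heart of the Magidor--Cummings argument — is precisely this identification $h=^{*}f_\alpha$ and the ensuing bookkeeping; outside of this step the proof is a routine application of $\mathrm{FA}_{\aleph_1}(\mathrm{Nm})$ to the natural dense sets described above.
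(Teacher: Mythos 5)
First, a point of reference: the paper does not prove this theorem at all --- it is quoted from Cummings--Magidor with a citation and used only as context in Section~5, so there is no in-paper proof to compare your argument against. Judged on its own merits, your proposal has a genuine gap, and it is exactly at the step you yourself flag as ``the combinatorial heart.''

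The fatal problem is that the object you extract from $\mathrm{FA}_{\aleph_1}(\mathrm{Nm})$ carries no information. What you obtain is a ground-model $g\in\prod_i\mu_i$, an uncountable $X\subseteq\omega_1$ and a single $N$ with $g(i)>f_{\beta_\xi}(i)$ for all $\xi\in X$ and $i\geq N$. But such a $g$ exists trivially in $V$ without any forcing axiom: since your witnessing family is already assumed increasing coordinatewise from $i_0$ on (which is the definition of a \emph{good} point, not a \emph{better} point --- betterness gives an $i_0$ depending on the upper ordinal, and you need a Fodor argument to uniformize it on a stationary subset), the function $g(i)=\sup_{\xi<\omega_1}f_{\beta_\xi}(i)+1$ already does the job, provided $\mu_i>\omega_1$. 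So the forcing axiom is doing no work, and no contradiction with the existence of a better scale can follow from this data. The proposed rescue, ``the exact-upper-bound property should pin $h=^{*}f_\alpha$,'' is false (the exact upper bound of $\langle f_\beta\mid\beta<\alpha\rangle$ need not agree mod finite with $f_\alpha$, which is merely \emph{some} $<^{*}$-bound) and would not yield a contradiction even if true, since $g>^{*}f_\alpha$ is unremarkable for a ground-model $g$. There is also a small repairable slip: your $D_\xi$ is not dense as written, because a condition whose trunk already violates $t(m)>f_{\beta_\xi}(m)$ at some $m\geq i_0-1$ has no extension in $D_\xi$; one must only impose the inequality above the trunk and record the trunk length as $N_\xi$.

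What is actually needed is to extract from the filter something that \emph{cannot} exist in $V$ in the presence of a better scale. The Cummings--Magidor argument runs in the other direction: one shows that if $\vec f$ is a better scale, then $\mathrm{Nm}(\mu_i\mid i<\omega)$ forces some $\gamma\in E^{\mu^{+}}_{\omega_1}$ to acquire cofinality $\omega$ (the generic $g$ eventually dominates every ground-model function, and the betterness club at $\gamma$ lets one read off from the witnessing integers $N(\beta)=\min\{N\mid\forall i\geq N\,(g(i)>f_\beta(i))\}$ a cofinal $\omega$-sequence in $\gamma$); one then applies $\mathrm{FA}_{\aleph_1}$ to the $\aleph_1$ many dense sets that decide this collapsing function and push it past each element of a fixed cofinal $\omega_1$-sequence in $\gamma$, producing a countable cofinal subset of $\gamma$ in $V$ --- the contradiction. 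Your dense sets never couple $g$ to an $\omega_1$-sequence of ordinals chosen \emph{in response to} the conditions, which is where the betterness hypothesis must be used.
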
 

For the definition of a better scale, we refer to~\cite{MR1838355}. We note that if $\mathrm{SCH}$ fails, then there exists an increasing sequence of regular cardinals $\langle \mu_i \mid i <\omega \rangle$ that carries a better scale~\cite[Ch.II Claim 1.3]{PCF}.

\section{Two-cardinal walks and its applications}\label{sec:twocardinalwalks}

In this section, we introduce two-cardinal walks for Theorem~\ref{maintheorem3}. In this context, a $C$-sequence $\langle C_{x} \mid x \in \mathcal{P}_{\kappa}\lambda\rangle$ is a sequence satisfying the following properties:
\begin{itemize}
    \item $C_{x}$ is a club in $\sup x$ such that $|C_x \cap [\xi_{i},\xi_{i+1})|\leq 1$ if $\max x$ does not exist. Here, $x = \{\xi_{i} \mid i < \mathrm{ot}(x)\}$ is an increasing enumeration.
    \item $C_{x} = \{\max x\}$ if $\sup x \in x$.
\end{itemize}

For $\alpha \in x \in \mathcal{P}_{\kappa}\lambda$, a \emph{walk} from $x$ to $\alpha$ is a sequence $\beta_0,\dots,\beta_{n}$ such that:
\begin{itemize}
    \item $\beta_0 = \sup x$.
    \item $\beta_{i+1}^{p} = \min (C_{x \cap \beta_{i}}\setminus \alpha)$. 
    \item $\beta_{i+1} = \min (x \setminus \beta_{i+1}^{p})$.
    \item $\beta_{n} = \alpha$. 
\end{itemize}
Here, $\sup x = \sup\{\alpha +1 \mid \alpha \in x\}$. We call $\beta_{i+1}^{p}$ a \emph{pre-next step} of $\beta_{i}$. Before proving Theorem~\ref{maintheorem3}, we establish several propositions about these two-cardinal walks.

\begin{prop}
$\beta_{i} = \alpha$ if and only if $\beta_{i}^{p} = \alpha$. 
\end{prop}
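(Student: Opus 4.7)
The plan is to unpack the definitions directly; no machinery is needed. The two observations to combine are that $\beta_i^p \geq \alpha$ by construction (it is defined as $\min(C_{x \cap \beta_{i-1}} \setminus \alpha)$), and that $\beta_i \geq \beta_i^p$ (since $\beta_i = \min(x \setminus \beta_i^p)$).

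For the direction $\beta_i^p = \alpha \Rightarrow \beta_i = \alpha$: assuming $\beta_i^p = \alpha$ and using $\alpha \in x$ (which holds since the walk is from $x$ to $\alpha$), we get $\beta_i = \min(x \setminus \alpha) = \alpha$ immediately. For the converse $\beta_i = \alpha \Rightarrow \beta_i^p = \alpha$: from $\beta_i = \alpha$ and the inequality $\beta_i^p \leq \beta_i$ we have $\beta_i^p \leq \alpha$, while $\beta_i^p \geq \alpha$ holds by definition, so $\beta_i^p = \alpha$.

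There is no real obstacle; the only thing to verify is that the initial case $i = 0$ is handled by convention (the statement concerns indices $i \geq 1$ for which $\beta_i^p$ is defined, matching the inductive definition of the walk). The proposition is therefore just a sanity check that the auxiliary ``pre-next step'' reaches $\alpha$ at exactly the same stage as the walk itself, which will be used implicitly when arguing about the length of two-cardinal walks in the sequel.
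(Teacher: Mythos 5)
Your proof is correct and takes essentially the same approach as the paper: both directions follow from the sandwich $\alpha \leq \beta_i^{p} \leq \beta_i$ (by the definitions of $\beta_i^p$ as a minimum over $C_{x\cap\beta_{i-1}}\setminus\alpha$ and $\beta_i$ as $\min(x\setminus\beta_i^p)$) together with the fact that $\alpha \in x$. You in fact state the inequalities in the correct order, whereas the paper's proof writes the chain reversed as $\beta_i \leq \beta_i^p \leq \alpha$, which appears to be a typo rather than a different argument.
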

\begin{proof}
By definition, we have $\beta_{i} \leq \beta_{i}^{p} \leq \alpha$. Therefore, if $\beta_{i} = \alpha$, then $\beta_{i}^{p} = \alpha$. Conversely, if $\beta_{i}^{p} = \alpha$, then $\beta_{i} = \min x \setminus \alpha = \alpha$, since $\alpha \in x$. 
\end{proof}

A pre-next step determines a true next step in $x$. Conversely, by the definition of the $C$-sequence, we can compute the pre-next step from the true next step. 

\begin{prop}
$\beta_{i+1}^{p} = \max (\beta_{i+1} \cap C_{\beta_{i} \cap x})$. 
\end{prop}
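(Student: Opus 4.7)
The plan is to verify the identity by a direct unpacking of the two definitions, with one appeal to the thinness clause in the definition of the $C$-sequence. Throughout, I will abbreviate $y = x \cap \beta_i$, so that by definition $\beta_{i+1}^p = \min(C_y \setminus \alpha)$ and $\beta_{i+1} = \min(x \setminus \beta_{i+1}^p)$.

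First I would establish the containment $\beta_{i+1}^p \in \beta_{i+1} \cap C_y$ in the generic case $\beta_{i+1}^p \notin x$: membership in $C_y$ is immediate from the definition of $\beta_{i+1}^p$, and $\beta_{i+1}^p < \beta_{i+1}$ follows from $\beta_{i+1} = \min(x \setminus \beta_{i+1}^p)$ combined with $\beta_{i+1}^p \notin x$.

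For maximality, let $\eta = \sup(x \cap \beta_{i+1})$. Since $\alpha \in x \cap \beta_{i+1}$, this $\eta$ is a well-defined element of $x$, and in fact $\eta \in y$ because $\beta_{i+1} \leq \beta_i$ --- the latter holds since $\beta_{i+1}^p \in C_y \subseteq \sup y \leq \beta_i$ and $\beta_i \in x$. The minimality clause in the definition of $\beta_{i+1}$ forces $\eta < \beta_{i+1}^p$, so $\eta$ and $\beta_{i+1}$ are consecutive elements of $y$ in its natural enumeration. The thinness hypothesis $|C_y \cap [\eta,\beta_{i+1})| \leq 1$ then pins $\beta_{i+1}^p$ as the unique element of $C_y$ strictly between $\eta$ and $\beta_{i+1}$; since every element of $C_y$ below $\eta$ is \emph{a fortiori} below $\beta_{i+1}^p$, we obtain $\max(\beta_{i+1} \cap C_y) = \beta_{i+1}^p$, as desired.

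The only subtlety is the degenerate case $\beta_{i+1}^p \in x$, in which $\beta_{i+1}^p = \beta_{i+1}$ automatically; here the formula must be read with the convention that $\max$ is attained at the endpoint $\beta_{i+1}$, and no further argument is required. I expect no real obstacle: the whole proof should fit in a few lines once the thinness clause is invoked.
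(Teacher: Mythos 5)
Your argument is the same one the paper intends---its entire proof is the single sentence that the clause $|C_x\cap[\xi_i,\xi_{i+1})|\le 1$ ensures the identity---and you have fleshed it out correctly in its main line: in the non-degenerate case $\beta_{i+1}^p\notin x$ you get $\beta_{i+1}^p\in C_{x\cap\beta_i}\cap\beta_{i+1}$ for free, and maximality is exactly a matter of trapping $\beta_{i+1}^p$ in a single ``gap'' of $x\cap\beta_i$ and invoking thinness. Your explicit treatment of the degenerate case $\beta_{i+1}^p\in x$ (where the displayed formula only holds under the convention that the endpoint is admitted) is a point the paper silently elides, and flagging it is a genuine improvement.

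There is, however, one step that is asserted rather than proved and that can fail as written: the claim that $\eta=\sup(x\cap\beta_{i+1})$ is ``a well-defined element of $x$.'' Nonemptiness of $x\cap\beta_{i+1}$ gives you the supremum, but $x$ is an arbitrary member of $\mathcal{P}_{\kappa}\lambda$ and need not contain its limit points, so when $x\cap\beta_{i+1}$ has no maximum you have $\eta\notin x$, the pair $(\eta,\beta_{i+1})$ is not a pair of consecutive elements of $y=x\cap\beta_i$, and the thinness clause---which, as stated, only bounds $C_y$ on intervals $[\xi_j,\xi_{j+1})$ between \emph{consecutive enumerated} elements of $y$---says nothing about the interval $[\eta,\beta_{i+1})$. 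Indeed, with the definition taken literally one can build a club $C_y$ with two points in such a limit gap, and then $\max(\beta_{i+1}\cap C_y)>\beta_{i+1}^p$. So either one must read the thinness condition as also covering the intervals $[\sup_{j<j_0}\xi_j,\xi_{j_0})$ at limit $j_0$, or one must use the additional structure present in the paper's actual construction (there $C_x\subseteq\mathrm{Lim}(x)$, so a limit gap of $x$ can contain at most its left endpoint as an element of $C_x$ and the problem disappears). Your proof should either restrict to the successor case with a remark, or make explicit which of these readings it is using; as it stands the sentence ``$\eta$ and $\beta_{i+1}$ are consecutive elements of $y$'' is the one genuine gap.
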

\begin{proof}
The condition $|C_x \cap [\xi_{i},\xi_{i+1})| \leq 1$ ensures this property. 
\end{proof}

\begin{lem}
For all $\alpha \in x \in \mathcal{P}_{\kappa}\lambda$, there exists a walk from $x$ to $\alpha$. 
\end{lem}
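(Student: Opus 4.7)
The plan is to build the walk recursively and show by ordinal descent that the process terminates in finitely many steps. The central point is that each step strictly decreases $\beta_i$ as long as we have not yet reached $\alpha$, and strictly decreasing sequences of ordinals are finite.

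First, I would argue inductively that whenever $\beta_0, \ldots, \beta_i$ are defined with $\beta_i > \alpha$, both $\beta_{i+1}^p$ and $\beta_{i+1}$ exist and satisfy $\beta_{i+1}^p < \sup(x \cap \beta_i)$. Since $\alpha \in x$ and $\alpha < \beta_i$, we have $\alpha \in x \cap \beta_i$, so $\alpha < \sup(x \cap \beta_i)$. If $\max(x \cap \beta_i)$ exists and equals $m$, then by the second clause of the definition $C_{x \cap \beta_i} = \{m\}$ with $m \geq \alpha$, so $\beta_{i+1}^p = m$; otherwise $\sup(x \cap \beta_i)$ is a limit ordinal and $C_{x \cap \beta_i}$ is unbounded in it, whence $C_{x \cap \beta_i} \setminus \alpha \neq \emptyset$. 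In either case $\beta_{i+1}^p < \sup(x \cap \beta_i) \leq \beta_i$. The set $x \setminus \beta_{i+1}^p$ is also nonempty: for $i \geq 1$ one has $\beta_i \in x$ with $\beta_i > \beta_{i+1}^p$, while for $i = 0$ the unboundedness (or maximality) of $x$ in $\sup x$ furnishes such an element.

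Second, I would verify strict descent $\beta_{i+1} < \beta_i$. We have already seen $\beta_{i+1} \leq \beta_i$, so it suffices to exclude equality. If $\beta_{i+1} = \beta_i$, then no element of $x$ lies in $[\beta_{i+1}^p, \beta_i)$, so $x \cap \beta_i \subseteq \beta_{i+1}^p$ and hence $\sup(x \cap \beta_i) \leq \beta_{i+1}^p$, contradicting $\beta_{i+1}^p < \sup(x \cap \beta_i)$.

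Finally, since the ordinals are well-founded, the strictly decreasing sequence $\beta_0 > \beta_1 > \cdots$ must be finite. Letting $n$ be the least index with $\beta_n \leq \alpha$ and combining this with $\beta_n \geq \beta_n^p \geq \alpha$ (as $\beta_n^p \in C_{x \cap \beta_{n-1}} \setminus \alpha$), we obtain $\beta_n = \alpha$, which completes the walk. The only real point of care is the boundary case $i = 0$, where $\beta_0 = \sup x$ need not lie in $x$; the well-definedness step above already handles this uniformly, so no separate treatment is needed.
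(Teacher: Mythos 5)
Your proposal is correct and follows essentially the same route as the paper: in both arguments one splits on whether $x \cap \beta_i$ has a maximal element, shows that each step satisfies $\alpha \leq \beta_{i+1} < \beta_i$, and concludes by well-foundedness of the ordinals that the walk terminates at $\alpha$. Your separation of well-definedness from strict descent, and the contradiction argument for $\beta_{i+1} \neq \beta_i$, are only cosmetic variations on the paper's direct computation.
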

\begin{proof}
Since $\alpha \in x$, if $\beta_{0} = \alpha = \max x$, then $\beta_0$ is a walk from $x$ to $\alpha$ with $0$ steps.

By induction, we show that if $\beta_{i} \neq \alpha$, then $\alpha \leq \beta_{i+1} < \beta_{i}$. Suppose that $\beta_i \neq \alpha$, then by the induction hypothesis, $\beta_i > \alpha$. Therefore, $\alpha \in x \cap \beta_{i}$. 

If $x \cap \beta_{i}$ has a maximal element, then $C_{x \cap \beta_{i}} = \{\max x \cap \beta_{i}\}$, so $\alpha \leq \beta_{i+1} = \max x \cap \beta_{i} < \beta_{i}$. 

If $x \cap \beta_{i}$ has no maximal element, then $C_{x \cap \beta_i}$ is a club in $\sup x \cap \beta_{i} \leq \beta_{i}$. Since $\alpha \in x \cap \beta_{i}$ and $x \cap \beta_{i}$ has no maximal element, $\alpha < \sup \beta_{i} \cap x$. So, $\beta_{i+1} = \min (C_{x \cap \beta_{i}}\setminus \alpha) < \sup x \cap \beta_{i} \leq \beta_{i}$, as desired. 

Thus, iteratively taking the next step $\beta_{i + 1}$ generates a descending sequence of ordinals until $\beta_{i + 1} = \alpha$, establishing the existence of a walk from $x$ to $\alpha$. 
\end{proof}

We define $\rho_2$ and $\rho_0$ similarly. $\rho_2(\alpha,x)$ is the number of steps in a walk from $x$ to $\alpha$, and $\rho_0(\alpha,x)$ is $\langle \mathrm{ot}(C_{x \cap \beta_{i}} \cap \alpha) \mid i < \rho_{2}(\alpha,x)\rangle$. Additionally, $\lambda(\alpha,x)$ is defined as $\max\{\sup(C_{\beta_{i}\cap x} \cap \alpha) \mid i < \rho_2(\alpha,x)\}$. The following lemma is analogous to (3) of Lemma~\ref{walk2} and is used in the proof of Theorem~\ref{maintheorem3}. 

\begin{lem}\label{lem:twowalk2}
 For $\lambda(\beta,x) < \alpha < \beta$ and $\alpha,\beta \in x$, $\rho(\alpha,x) = \rho(\beta,x){^{\frown}}\rho(\alpha,x\cap \beta)$. 
\end{lem}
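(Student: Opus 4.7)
The plan is to imitate the proof of (3) of Lemma~\ref{walk2}. Let $\beta_0,\ldots,\beta_m$ and $\beta_0',\ldots,\beta_n'$ denote the walks from $x$ to $\beta$ and from $x$ to $\alpha$ respectively (so $m = \rho_2(\beta,x)$ and $n = \rho_2(\alpha,x)$), and let $\delta_0,\ldots,\delta_k$ denote the walk from $x \cap \beta$ to $\alpha$. I would establish: (i) $\beta_i' = \beta_i$ for all $i \le m$, so the walk toward $\alpha$ passes through $\beta$; (ii) $\beta_{m+j}' = \delta_j$ for $1 \le j \le k$, so after reaching $\beta$ the walk continues as the walk from $x \cap \beta$ to $\alpha$; and (iii) the corresponding $\rho_0$-entries concatenate as claimed. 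Here I interpret $\rho$ in the statement as the full code $\rho_0$, in line with (3) of Lemma~\ref{walk2}.

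For (i), I would argue by induction. The base case $\beta_0' = \sup x = \beta_0$ is trivial. Assuming $\beta_i' = \beta_i =: \gamma$ with $i < m$, the defining property of $\lambda(\beta,x)$ gives $\sup(C_{x \cap \gamma} \cap \beta) \le \lambda(\beta,x) < \alpha$, so $C_{x \cap \gamma}$ contains no element in the interval $[\alpha,\beta)$. Hence
\[
\min(C_{x \cap \gamma} \setminus \alpha) = \min(C_{x \cap \gamma} \setminus \beta),
\]
so the pre-next steps in the two walks at stage $i$ agree, and the same true next step in $x$ follows. In particular $\beta_m' = \beta_m = \beta$.

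For (ii), the key observation is that for any $\gamma \le \beta$ one has $x \cap \gamma = (x \cap \beta) \cap \gamma$, so the associated $C$-sets coincide. Moreover $(x \cap \beta) \cap \sup(x \cap \beta) = x \cap \beta$, so the first $C$-set used by the walk from $x \cap \beta$ is $C_{x \cap \beta}$, which is exactly the $C$-set invoked at stage $m$ of the walk from $x$ to $\alpha$ (since $\beta_m' = \beta$). A short check shows each pre-next step $\beta_{m+j+1}'{}^{\!p}$ is bounded above by $\sup(x \cap \beta)$, hence lies in $x \cap \beta$ once we look at the true next step; the induction then produces $\beta_{m+j}' = \delta_j$ throughout.

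For (iii), the code entries of $\rho_0(\alpha,x)$ at stages $i < m$ are $\mathrm{ot}(C_{x \cap \beta_i} \cap \alpha)$, while those of $\rho_0(\beta,x)$ are $\mathrm{ot}(C_{x \cap \beta_i} \cap \beta)$; since $C_{x \cap \beta_i} \cap [\alpha,\beta) = \emptyset$ by the argument in (i), these sets are equal. The entries of $\rho_0(\alpha,x)$ at stages $i \ge m$ are the entries of $\rho_0(\alpha, x \cap \beta)$ by (ii). Concatenating gives the desired identity. The main obstacle, such as it is, is purely bookkeeping: tracking the distinction between pre-next and true next steps in the two-cardinal setting and verifying the matching of $C$-sets at the transition point $\beta$. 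Otherwise the argument is a direct translation of the classical one-cardinal proof.
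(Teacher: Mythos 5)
Your proposal is correct and follows exactly the route the paper intends: the paper's proof of Lemma~\ref{lem:twowalk2} is just the remark that the argument for (3) of Lemma~\ref{walk2} carries over, and your steps (i)--(iii) are a faithful and accurate working-out of that analogy, including the one genuinely new point in the two-cardinal setting (matching pre-next and true next steps across the transition at $\beta$, using that $x\cap\gamma=(x\cap\beta)\cap\gamma$ for $\gamma\le\beta$ and that $(x\cap\beta)\cap\sup(x\cap\beta)=x\cap\beta$). Your reading of $\rho$ as the full code $\rho_{0}$ is also the intended one.
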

\begin{proof}
A similar proof to (3) of Lemma~\ref{walk2} applies here. 
\end{proof}

For a stationary subset $S \subseteq \lambda$, we say that a $C$-sequence \emph{avoids} $S$ if $S \cap C_{x} = \emptyset$ for all $x \in \mathcal{P}_{\kappa}\lambda$ with $\sup x\not\in x$. 

For $x, y, z \in \mathcal{P}_{\kappa}\lambda$ with $x \cup \{\sup(x)\} \subseteq y$ and $y \cup \{\sup(y)\} \subseteq z$, let $\beta_0> \cdots > \beta_{n}$ and $\beta'_0> \cdots > \beta'_m$ be walks from $z$ to $x$ and $y$, respectively. Define 
\[
[xyz] = \min \{\beta_{0},...,\beta_{n}\} \cap \{\beta_{0}',...,\beta_{m}'\}.
\]
For any other triple $\langle x,y,z\rangle$, define $[xyz] = 0$.

\begin{lem}\label{weaksquarebracket}
 Suppose that $\overline{C}$ avoids a stationary subset $S \subseteq \lambda$. For any $X \in I_{\kappa\lambda}^{+}$, there exists a club $C \subseteq \lambda$ such that 
 \[
 C \cap S \subseteq \{[xyz] \mid \{x,y,z\} \in [X]^{3}_{\subset}\}.
 \]
\end{lem}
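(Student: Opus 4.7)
The plan is to realize each $\alpha \in C \cap S$ as $[u_0u_1u_2]$ for a good triple $u_0 \subsetneq u_1 \subsetneq u_2$ in $X$ with $\sup u_1 = \alpha$ and $\lambda(\alpha, u_2) < \sup u_0 < \alpha$, so that Lemma~\ref{lem:twowalk2} forces the two walks from $u_2$ to their targets to share precisely the walk from $u_2$ to $\alpha$, giving $[u_0u_1u_2] = \alpha$.

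A key preliminary is that the avoidance hypothesis forces $\lambda(\alpha, z) < \alpha$ whenever $\alpha \in S$ and $\alpha \in z \in \mathcal{P}_{\kappa}\lambda$. Indeed, for each $i < \rho_2(\alpha, z)$ the walk step $\beta_i$ strictly exceeds $\alpha$, so $\alpha \in z \cap \beta_i$ and $C_{z \cap \beta_i}$ is a club in some ordinal above $\alpha$; avoidance gives $\alpha \notin C_{z \cap \beta_i}$, and closedness of $C_{z \cap \beta_i}$ forbids $\alpha$ from being a limit point, whence $\sup(C_{z \cap \beta_i} \cap \alpha) < \alpha$. Taking the maximum over $i$ yields $\lambda(\alpha, z) < \alpha$.

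Next, I would let $C \subseteq \lambda$ be a club of ordinals $\alpha$ for which $X \cap \mathcal{P}_{\kappa}\alpha$ has enough positive structure to run the elementary-substructure argument below; noting that every walk step other than the initial $\sup u_2$ lies in $u_2$, and that no $u_2 \in \mathcal{P}_{\kappa}\lambda$ can be cofinal in an ordinal of cofinality $\geq \kappa$, one may further restrict $C$ so that $C \cap S$ meets only ordinals of cofinality $<\kappa$. Fix $\alpha \in C \cap S$ and build an $\in$-chain $M_0 \in M_1 \in M_2 \prec \langle \mathcal{H}_{\theta}, \in, X, S, \overline{C}, \alpha \rangle$ with $|M_i| < \kappa$ and $u_i := M_i \cap \lambda \in X$, working outside-in: first pick $M_2$ with $\alpha \in M_2$ and $u_2 \in X$, and compute $\lambda(\alpha, u_2)$; then, inside $M_2$, pick $M_1$ from a length-$\mathrm{cf}(\alpha)$ cofinal sequence in $\alpha$ so that $u_1 \in X$, $u_1 \subseteq \alpha$, and $\sup u_1 = \alpha$ (the set $\{u \subseteq \alpha : \sup u = \alpha\}$ is a club in $\mathcal{P}_{\kappa}\alpha$ when $\mathrm{cf}(\alpha) < \kappa$); finally, inside $M_1$, pick $M_0$ with $u_0 \in X$ and $\sup u_0 > \lambda(\alpha, u_2)$. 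The $\in$-nesting forces $\sup u_0 \in u_1$ and $\sup u_1 = \alpha \in u_2$, so $(u_0, u_1, u_2)$ is a good triple.

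To conclude, Lemma~\ref{lem:twowalk2} applied with $\lambda(\alpha, u_2) < \sup u_0 < \alpha$ and $\sup u_0, \alpha \in u_2$ yields $\rho_0(\sup u_0, u_2) = \rho_0(\alpha, u_2){^{\frown}}\rho_0(\sup u_0, u_2 \cap \alpha)$; so the walk from $u_2$ to $\sup u_0$ factors through $\alpha$ and coincides with the walk to $\sup u_1 = \alpha$ on the initial segment terminating at $\alpha$, while its continuation lies weakly below $\alpha$ and is disjoint from the walk ending at $\alpha$. Hence the walk-intersection is exactly the walk from $u_2$ to $\alpha$, whose minimum element is $\alpha$, and $[u_0u_1u_2] = \alpha$. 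The main obstacle is the simultaneous realization of the four constraints on the chain — $u_i \in X$, the $\in$-nesting, $\sup u_1 = \alpha$, and $\sup u_0 > \lambda(\alpha, u_2)$ — which requires enough positivity of $X$ at every level of the construction together with sufficient control over Skolem hulls to keep $M_1 \cap \lambda$ bounded by $\alpha$.
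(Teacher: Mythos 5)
Your preliminary observations are sound and match the paper: the computation showing $\lambda(\alpha,z)<\alpha$ for $\alpha\in S$, $\alpha\in z$, is exactly the paper's use of the avoidance hypothesis, and your outermost set $u_2$ (an element of $X$ covering $M\cap\lambda$ together with $\alpha$) is the paper's $z$. The gap is in the inner two sets. You require $u_1\in X$ with $u_1\subseteq\alpha$ and $\sup u_1=\alpha$, and $u_0\in X$ with $\sup u_0$ landing in the window $(\lambda(\alpha,u_2),\alpha)$. But $I_{\kappa\lambda}$ is the \emph{bounded} ideal, so $X\in I_{\kappa\lambda}^{+}$ means only that $X$ is cofinal in $(\mathcal{P}_{\kappa}\lambda,\subseteq)$; this gives no control whatsoever over $X$ below a fixed $\alpha<\lambda$. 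For instance, $X=\{u\in\mathcal{P}_{\kappa}\lambda\mid u \text{ has a maximum element}\}$ is unbounded, yet every $u\in X$ has $\sup u$ (in the paper's convention $\sup u=\sup\{\xi+1\mid\xi\in u\}$) a successor ordinal, so no $u\in X$ satisfies $\sup u=\alpha$ for any limit $\alpha$; similarly $X=\{u\mid \sup u>\alpha_0\}$ is unbounded but has no member contained in $\alpha_0$. Your remark that $\{u\subseteq\alpha\mid\sup u=\alpha\}$ is a club in $\mathcal{P}_{\kappa}\alpha$ is true but does not help, because $X\cap\mathcal{P}_{\kappa}\alpha$ need not be positive in $\mathcal{P}_{\kappa}\alpha$ --- it can be empty. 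You flag this yourself as ``the main obstacle,'' but it is not a technical wrinkle to be absorbed into the choice of the club $C$: the required local positivity of $X$ simply fails for legitimate inputs, so the construction of $u_1$ (and $u_0$) cannot be carried out.

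The paper's proof circumvents exactly this point by never asking for a member of $X$ with sup \emph{equal} to the target ordinal. It fixes $M\prec\langle\mathcal{H}_\theta,\in,\overline{C},S,X,\kappa,\lambda\rangle$ with $|M|<\kappa$, $M\cap\kappa\in\kappa$, and $\delta=\sup(M\cap\lambda)\in S$, and takes \emph{both} small sets $x,y$ from $X\cap M$; by elementarity and the (global) unboundedness of $X$, such $x,y$ exist with $\sup x,\sup y$ arbitrarily large below $\delta$, but always strictly below $\delta$. Since neither walk from $z$ terminates at $\delta$, the divergence must be engineered differently: one chooses $x,y$ so that $\lambda(\delta,z)<\sup x<\sup y$ and $C_{z\cap\delta}\cap[\sup x,\sup y)\neq\emptyset$, which makes the pre-next steps of $\delta$ toward $\sup x$ and toward $\sup y$ distinct and confines the two tails to the disjoint intervals $[\sup x,\sup y)$ and $[\sup y,\delta)$, so the walk-sets agree exactly down to $\delta$ and $[xyz]=\delta$. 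If you want to salvage your write-up, replace the condition $\sup u_1=\alpha$ by this two-sets-from-$X\cap M$ device; the rest of your argument (the use of Lemma~\ref{lem:twowalk2} and the disjointness of the tails) then goes through essentially as in the paper.
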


\begin{proof}
 Consider the structure $\mathcal{A} = \langle \mathcal{H}_{\theta},\in,\overline{C}, S, X,\kappa,\lambda\rangle$. Let $M \prec \mathcal{A}$ be a countable elementary substructure satisfying:
\begin{itemize}
    \item $\delta = \sup (M \cap \lambda) \in S$.
    \item $M \cap \kappa \in \kappa$ and $|M| < \kappa$.
\end{itemize}
It suffices to find $x, y, z \in X$ such that $[xyz] = \delta$.

Since $\bigcup (\mathcal{P}_{\kappa}\lambda \cap M) = M \cap \lambda$, we can choose $z \in X$ such that $(M \cap \lambda) \cup \{\delta\} \subseteq z$. By assumption, $\delta \not\in C_{x}$ for all $x \in \mathcal{P}_{\kappa}\lambda$ with $\sup x \not\in x$. Note that if $\sup x \in x$, then $C_{x} = \{\max(x)\}$. Thus, $C_x \cap \delta$ is bounded in $\delta$ unless $\sup x = \delta$. This implies $\lambda(\delta,z) < \delta$.

By elementarity of $M$, we can choose $x, y \in X \cap M$ such that:
\begin{enumerate}
    \item $x \cup \{\sup(x)\} \subseteq y$.
    \item $C_{z \cap \delta} \cap [\sup(x),\sup(y)) \neq \emptyset$.
    \item $\lambda(\delta,z) < \sup(x) < \sup(y)$.
\end{enumerate}

From (1) and the choice of $z$, we obtain:
\[
[xyz] = \min \{\beta_i \mid i < \rho_2(\sup x, z)\} \cap \{\beta_j \mid j < \rho_2(\sup y, z)\}.
\]

By (3) and Lemma~\ref{lem:twowalk2}, we have:
\[
\delta \in \{\beta_i \mid i < \rho_2(\sup x, z)\} \cap \{\beta_j \mid j < \rho_2(\sup y, z)\}.
\]

Let $\beta, \gamma$ be the next steps of $\delta$ towards $x$ and $y$, respectively. By (2), we have $\beta \in C_{z \cap \delta} \setminus \sup(x)$ and $\beta \neq \gamma$. Consequently, $[xyz] = \delta$, as desired.
\end{proof}

\begin{proof}[Proof of Theorem~\ref{maintheorem3}]
 Let $S$ be a stationary subset from the assumption. 

 First, we begin by defining a $C$-sequence. By assumption, for each $\alpha \in E_{{<\kappa}}^{\lambda}$, we can choose a club $C_{\alpha}$ such that $S \cap C_\alpha = \emptyset$. For each $x \in \mathcal{P}_{\kappa}\lambda$, define:
 \[
 C_x = \begin{cases}
  C_{\sup x} \cap \mathrm{Lim}(x) & \text{if } \sup x \not\in x, \\
  \{\sup x\} & \text{otherwise}.
 \end{cases}
 \]
 It is easy to see that $\overline{C} = \langle C_x \mid x \in \mathcal{P}_{\kappa}\lambda \rangle$ is a $C$-sequence that avoids $S$.

 Next, we define a coloring on $[\mathcal{P}_{\kappa}\lambda]^{3}_{\subset}$. Let $\langle A_{\xi} \mid \xi < \lambda \rangle$ be a stationary partition of $S$. Define a function $c : [\mathcal{P}_{\kappa}\lambda]^{3} \to \lambda$ witnessing $\mathcal{P}_{\kappa}\lambda \not\to [I_{\kappa\lambda}^{+}]^{3}_{\lambda}$ by setting 
 \[
 c(x,y,z) = \xi \iff [xyz] \in A_\xi.
 \]
 For every $X\in I_{\kappa\lambda}^{+}$, let $C$ be the club given by Lemma~\ref{weaksquarebracket}. For each $\xi$, since $A_\xi \cap C \subseteq \{[xyz] \mid \{x,y,z\} \in [X]^{3}_{\subset}\}$, it follows that $\xi \in c ``[X]^{3}$, as desired. 
\end{proof}

A similar proof establishes the following result:

\begin{thm}\label{thm:twowalks2}
For regular cardinals $\kappa \leq \lambda$, if $\mathrm{Refl}(E_{\omega}^{\lambda},{<}\kappa)$ fails, then $\mathrm{Nm}(\kappa,\lambda)$ is \emph{not} locally semiproper.
\end{thm}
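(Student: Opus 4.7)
The plan is to adapt the proof of Theorem~\ref{thm:additional1} to the two-cardinal walks of Section~\ref{sec:twocardinalwalks}, where the generic $\tilde{x}\in\mathcal{P}_\kappa\lambda$ already carries the walk data directly; this removes the thin-list coding and the associated filter $F_{\overline{d}}$, and hence the inaccessibility hypothesis on $\kappa$ that Theorem~\ref{thm:additional1} imposes. From $\neg\mathrm{Refl}(E_\omega^\lambda,{<}\kappa)$ I first fix a stationary $S\subseteq E_\omega^\lambda$ with $S\cap\alpha$ non-stationary for every $\alpha\in E_{[\omega_1,\kappa)}^\lambda$, pick a club $D_\alpha\subseteq\alpha$ disjoint from $S$ for every limit $\alpha<\lambda$ (via non-reflection when $\mathrm{cf}(\alpha)\geq\omega_1$, and via cofinal sequences of successor ordinals when $\mathrm{cf}(\alpha)=\omega$), and assemble from $\{D_\alpha\}$ a two-cardinal $C$-sequence $\overline{C}=\langle C_x\mid x\in\mathcal{P}_\kappa\lambda\rangle$ avoiding $S$ exactly as in the proof of Theorem~\ref{maintheorem3}.

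Next, I adapt Lemma~\ref{lem:killstationary} to the two-cardinal $\overline{C}$ to produce a stationary $S_0\subseteq[\lambda]^\omega$ of countable $x$ with $\sup x\in S$ carrying the walk-theoretic witnesses: for every $\tilde{x}\in\mathcal{P}_\kappa\lambda$ containing $x$ with $\sup\tilde{x}>\sup x$ and the natural near-$\sup x$ walk step $\beta$, the set $x\cap C_{\tilde{x}\cap\beta}$ is bounded in $\sup x$ and $\mathrm{cf}(\min(x\setminus\gamma))=\omega_1$ for all sufficiently large $\gamma\in C_{\tilde{x}\cap\beta}\cap\sup x$; a Skolem companion $S_1\subseteq[\mathcal{H}_\theta]^\omega$ is defined analogously, with stationarity supplied by the game $\Game(\kappa,\lambda,\mathcal{A})$ of Lemma~\ref{lem:game}. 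The core claim is then $S_1\cap D\subseteq\mathrm{Gal}_\theta(\mathcal{P}_\kappa\lambda,I_{\kappa\lambda}^\ast,S_0,\mathcal{P}_\kappa\lambda)$, where $D$ is the club of countable elementary substructures of $\mathcal{A}=\langle\mathcal{H}_\theta,\in,\overline{C},S,\kappa,\lambda,\ldots\rangle$; once established, Lemma~\ref{lem:gal3} converts the stationarity of $\mathrm{Gal}_\theta$ into a stationary $T\subseteq S_0$ that $\mathrm{Nm}(\kappa,\lambda)$ renders non-semistationary, refuting local semiproperness.

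To verify the inclusion, I fix $M\in S_1\cap D$ with $\delta=\sup(M\cap\lambda)\in S$ and take $y_M=M\cap\lambda\in\mathcal{P}_\kappa\lambda$. For each $\tilde{x}\not\subseteq y_M$, I walk in $\tilde{x}$ to a suitable $\alpha\in\tilde{x}\cap M$ furnished by the $S_1$-conditions: writing $\beta_{i^\ast}$ for the last walk step with $\beta_{i^\ast}>\delta$, its pre-next step $\beta_{i^\ast+1}^p\in C_{\tilde{x}\cap\beta_{i^\ast}}$ lies strictly below $\delta$ (since $\overline{C}$ avoids $S$), is definable from $\tilde{x}$ in $N=\mathrm{Sk}(M\cup\{\tilde{x}\})$, and by the $S_1$-condition falls in $[\sup(M\cap\alpha^\ast),\alpha^\ast)$ for some $\alpha^\ast\in M\cap E_{\omega_1}^\lambda$; Lemma~\ref{lem:defctblord} then gives $N\cap\omega_1>M\cap\omega_1$, so $\tilde{x}$ exits $\bigcap\{f^{-1}M\cap\omega_1\mid f\in M\}$ and that intersection is bounded by $y_M$. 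The hard part will be the degenerate configurations where this walk fails to cross $\delta$---namely $\tilde{x}$ with $\sup\tilde{x}\leq\delta$ or $\tilde{x}\cap M=\emptyset$---which I expect to handle either by enlarging $y_M$ inside $\mathcal{P}_\kappa\lambda$ to exclude them, or by applying Lemma~\ref{lem:defctblord} directly to some $\xi\in\tilde{x}\setminus M$ whose $M$-successor has cofinality $\omega_1$, using the density of $M\cap E_{\omega_1}^\lambda$ in $\delta$ built into $S_1$.
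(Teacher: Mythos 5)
Your proposal is a genuinely different route from the paper's, and as written it has gaps in exactly the places you treat as routine. The paper does \emph{not} transplant the $S_0/S_1$/Lemma~\ref{lem:defctblord} machinery of Theorems~\ref{maintheorem} and~\ref{thm:additional1}. Instead it fixes a stationary partition $\langle A_{\xi}\mid\xi<\omega_1\rangle$ of the non-reflecting set $S$, invokes Shelah's Lemma~\ref{lem:shelah} to get the stationary set $S_3=\{x\in[\lambda]^{\omega}\mid\sup x\in A_{x\cap\omega_1}\}$, and uses the functions $f_{\alpha,i}$ that output the $A$-color of the $i$-th step of the walk from $\tilde{x}$ to $\alpha$. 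For $M$ with $\delta=\sup(M\cap\lambda)\in A_{M\cap\omega_1}$ and any $\tilde{x}\supseteq\bigcup(M\cap\mathcal{P}_{\kappa}\lambda)\cup\{\delta\}$, the avoidance of $S$ gives $\lambda(\delta,\tilde{x})<\delta$, so by Lemma~\ref{lem:twowalk2} the ordinal $\delta$ \emph{itself} appears as the $\rho_2(\delta,\tilde{x})$-th step of the walk to any $\alpha\in M\setminus\lambda(\delta,\tilde{x})$; hence $f_{\alpha,i}(\tilde{x})=M\cap\omega_1$ and $\tilde{x}$ leaves the Galvin intersection. This needs no game, no analogue of Lemma~\ref{lem:killstationary}, no Lemma~\ref{lem:defctblord}, and — crucially — no information about the walk \emph{below} $\delta$. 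What the Shelah-partition route buys is precisely that the witness ordinal is $\delta$ itself rather than a later walk step sitting in a cofinality-$\omega_1$ gap of $M$.

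By contrast, your plan hinges on the step \emph{after} $\delta$: you need the pre-next step $\min(C_{\tilde{x}\cap\delta}\setminus\alpha)$ to exist in $[\alpha,\delta)$ and to land outside $M$ in a gap below a point of $M\cap E_{\omega_1}^{\lambda}$. Two things are left unverified. First, the stationarity of your adapted $S_0,S_1$: the conditions quantify over all $\lambda^{<\kappa}$-many $\tilde{x}$, and you must observe that every relevant club $C_{\tilde{x}\cap\beta}$ is a subset of the single club $C_{\delta}$, so that Lemma~\ref{lem:killstationary} (run so that $\delta$ lands in $S$, which Lemma~\ref{lem:nontrivial} permits) already suffices; without this reduction the game argument does not obviously control uncountably many traces. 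Second, and more seriously, $C_{\tilde{x}\cap\delta}=C_{\delta}\cap\mathrm{Lim}(\tilde{x}\cap\delta)$ must be unbounded in $\delta$ for the pre-next step to exist below $\delta$; since $\mathrm{cf}(\delta)=\omega$, the intersection of $C_{\delta}$ with $\mathrm{Lim}(\tilde{x}\cap\delta)$ is not automatically cofinal, and your argument collapses exactly at its main step if it is not. (The paper's proof is immune to this because it never walks past $\delta$.) Finally, the ``degenerate configurations'' you flag as the hard part are a non-issue: membership in $\mathrm{Gal}_{\theta}(\mathcal{P}_{\kappa}\lambda,I_{\kappa\lambda},\cdot,\cdot)$ only requires the intersection $\bigcap\{f^{-1}M\cap\omega_1\mid f\in M\}$ to be non-cofinal, so it suffices to handle the single cone of $\tilde{x}\supseteq\bigcup(M\cap\mathcal{P}_{\kappa}\lambda)\cup\{\delta\}$, exactly as in the proofs of Theorems~\ref{maintheorem} and~\ref{maintheorem3}. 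I would recommend either supplying the two missing verifications above or switching to the stationary-partition argument, which is both shorter and avoids the $\omega$-cofinality obstruction entirely.
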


We need the following lemma:

\begin{lem}[Shelah]\label{lem:shelah}
 If $S \subseteq E^{\lambda}_{\omega}$ is a stationary subset and $\langle A_{\xi} \mid \xi < \omega_1 \rangle$ is a stationary partition of $S$, then the set $S_3 = \{ x \in [\lambda]^{\omega} \mid \sup(x) \in A_{x \cap \omega_1} \}$ is stationary.
\end{lem}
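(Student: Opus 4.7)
The plan is to show $S_3$ meets every club $C \subseteq [\lambda]^\omega$. Fix such a $C$, generated by a closure function $f:[\lambda]^{<\omega}\to\lambda$, along with a fixed well-ordering of $\mathcal{H}_\theta$ and, canonically defined from it, a cofinal $\omega$-sequence $b_\alpha\subseteq\alpha\setminus\omega_1$ for each $\alpha\in E_\omega^\lambda$. For each $\alpha\in S$ that is moreover closed under $f$ (a club condition on $\lambda$) and each $\eta<\omega_1$, set $x(\alpha,\eta):=\mathrm{cl}_f(\eta\cup b_\alpha)$; this is a countable $f$-closed subset of $\lambda$ with $\sup x(\alpha,\eta)=\alpha$, so $x(\alpha,\eta)\in C$. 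The map $h_\alpha(\eta):=x(\alpha,\eta)\cap\omega_1$ is monotone in $\eta$ and continuous at countable limits, since $\mathrm{cl}_f$ commutes with countable directed unions; hence $h_\alpha$ is a normal function on $\omega_1$ and its fixed-point set $D_\alpha:=\{\eta<\omega_1 : h_\alpha(\eta)=\eta\}$ is a club in $\omega_1$. For $\eta\in D_\alpha$ one has $x(\alpha,\eta)\cap\omega_1=\eta$, so $x(\alpha,\eta)\in S_3\cap C$ precisely when $\alpha\in A_\eta$. The task therefore reduces to producing an $f$-closed $\alpha\in S$ with $\tilde\xi(\alpha)\in D_\alpha$, where $\tilde\xi(\alpha)$ denotes the unique $\xi$ with $\alpha\in A_\xi$.

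Suppose for contradiction that $\tilde\xi(\alpha)\notin D_\alpha$, equivalently $h_\alpha(\tilde\xi(\alpha))>\tilde\xi(\alpha)$, for every $f$-closed $\alpha\in S$. I then apply Fodor's lemma in $\lambda$ twice. First, the regressive map $\tilde\xi:S^*\to\omega_1$, where $S^*$ is the stationary set of $f$-closed elements of $S$, is constant on a stationary subset $T\subseteq A_{\xi_0}$ for some $\xi_0<\omega_1$. Second, the regressive map $\alpha\mapsto h_\alpha(\xi_0)$ on $T$ is constant with value $\nu_0$ on a stationary subset $T'\subseteq T$, where $\xi_0<\nu_0<\omega_1$.

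The main obstacle is converting the stabilized configuration on $T'$ into a contradiction. The strategy is to invoke elementarity in an expansion $\mathcal{A}=\langle\mathcal{H}_\theta,\in,f,S,\bar A,\langle b_\alpha\rangle_\alpha\rangle$: the statement ``for stationarily many $\alpha\in A_{\xi_0}$ the $\omega_1$-part of $\mathrm{cl}_f(\xi_0\cup b_\alpha)$ equals $\nu_0$'' reflects into suitable countable elementary submodels $M\prec\mathcal{A}$, and by exploiting the stationarity of $A_{\xi_0}$, the canonical definability of the $b_\alpha$'s, and the closure properties of $M$, one forces $\nu_0=M\cap\omega_1$ for enough $M$'s to obtain the required contradiction with $\nu_0>\xi_0$. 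The technical crux is this delicate interplay among Fodor's lemma in $\lambda$, the normality of the hull-closure operation $h_\alpha$, and the reflection of the partition structure to countable elementary submodels.
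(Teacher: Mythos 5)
The paper gives no proof of this lemma---it simply cites Sakai [MR2387938]---so the only question is whether your argument stands on its own. It does not. Your setup is fine: the sets $x(\alpha,\eta)=\mathrm{cl}_f(\eta\cup b_\alpha)$, the fact that the fixed-point set $D_\alpha=\{\eta : x(\alpha,\eta)\cap\omega_1=\eta\}$ is a club in $\omega_1$, and the reduction to finding an $f$-closed $\alpha\in S$ with $\tilde\xi(\alpha)\in D_\alpha$ are all correct (modulo the cosmetic point that $x(\alpha,\eta)\cap\omega_1$ is a set rather than an ordinal, and that $h_\alpha$ is only weakly increasing, so ``normal function'' is an overstatement---the fixed-point set is still a club). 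But this reduction is where the lemma's entire content lives, and your final paragraph replaces the argument with a description of a hoped-for argument (``the strategy is to invoke elementarity\dots one forces $\nu_0=M\cap\omega_1$ for enough $M$'s''). That is not a proof, and I do not believe it can be turned into one along the lines you indicate.

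Concretely: the configuration you reach after the two Fodor applications---a single $\xi_0$, a single $\nu_0>\xi_0$, and a stationary $T'\subseteq A_{\xi_0}$ with $h_\alpha(\xi_0)=\nu_0$ for all $\alpha\in T'$---is not by itself contradictory. Nothing prevents $\mathrm{cl}_f$ from systematically introducing countable ordinals in $[\xi_0,\nu_0)$ when applied to $\xi_0\cup b_\alpha$ for \emph{every} $\alpha$ (for instance, $f$ could send every ordinal above $\omega_1$ to a fixed countable ordinal $\ge\xi_0$). The two Fodor applications discard exactly the global data any correct proof must use, namely the behaviour at the other indices $\xi$ and the stationarity of the other pieces $A_\xi$. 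Indeed, the abstract statement your reduction targets---``given a club $D_\alpha\subseteq\omega_1$ for each $\alpha$ and a stationary partition $\langle A_\xi\rangle$, there is $\alpha$ with $\tilde\xi(\alpha)\in D_\alpha$''---is simply false for arbitrary clubs (take $D_\alpha=\omega_1\setminus(\tilde\xi(\alpha)+1)$), so any proof must exploit the specific provenance of your $D_\alpha$'s, which your sketch never does. Finally, the finishing move you gesture at (``force $\nu_0=M\cap\omega_1$'') runs headlong into the obstruction that for a countable $M\prec\mathcal{H}_\theta$ the set $\bigcap\{g^{-1}(M\cap\omega_1) : g\in M\}$ may be bounded---this is exactly the failure of strong Chang's conjecture that the sets $\mathrm{Gal}_\theta$ and Lemma~\ref{lem:defctblord} in this paper are designed to handle---so there is no ZFC reason that adding a point of $T'$ to a submodel preserves its $\omega_1$-trace. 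You should either supply Shelah's actual argument or cite \cite{MR2387938} as the paper does.
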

\begin{proof}
  See~\cite{MR2387938}. 
\end{proof}

\begin{proof}[Proof of Theorem~\ref{thm:twowalks2}]
Let $S \subseteq E_{\omega}^{\lambda}$ be a stationary subset such that $S \cap \alpha$ is non-stationary for all $\alpha \in E_{<\kappa}^{\lambda}$. By assumption, we have a $C$-sequence $\overline{C} = \langle C_{x} \mid x \in \mathcal{P}_{\kappa}\lambda\rangle$.

Let $\overline{A} = \langle A_{\xi} \mid \xi < \omega_1 \rangle$ be a stationary partition of $S$. Then we have a stationary subset $S_3 \subseteq [\mathcal{H}_{\theta}]^{\omega}$ from Lemma~\ref{lem:shelah}. 

Consider the structure $\mathcal{A} = \langle \mathcal{H}_{\theta},\in,\kappa,\lambda,S,\overline{A},\overline{C}\rangle$. Let $D$ be the set of countable elementary substructures $M\prec \mathcal{A}$. We show that:
\[
\mathrm{Gal}_{\theta}(\mathcal{P}_{\kappa}\lambda,I_{\kappa\lambda},S_3,\mathcal{P}_{\kappa}\lambda) \cap D = S_3^{\uparrow \mathcal{H}_{\theta}} \cap D.
\]

For $\alpha < \lambda$ and $x \in \mathcal{P}_{\kappa}\lambda$ with $\alpha \in x$, let $\beta^{x,\alpha}_0 > \cdots > \beta^{x,\alpha}_{n}$ be a walk from $x$ to $\alpha$. Define a function $f_{\alpha,i}:\mathcal{P}_{\kappa}\lambda \to \omega_1$
by $f_{\alpha,i}(x) = \xi \iff \beta_{i}^{x,\alpha} \in A_{\xi}$
if $\beta_{i}^{x,\alpha}$ exists and belongs to $S$. Otherwise, set $f_{\alpha,i}(x) = 0$. Note that $f_{\alpha,n} \in M$ for all $\alpha \in M \cap \lambda$ and $n < \omega$. Fix $M \in D$ with $M \cap \lambda \in S_3$. 

For any $x \in \mathcal{P}_{\kappa}\lambda$ with $\bigcup (M \cap \mathcal{P}_{\kappa}\lambda) \cup \{\delta\} \subseteq x$, since $\delta \in S$, we have $\lambda(\delta,x) < \delta$. Choose an $\alpha \in (M \cap \lambda) \setminus \lambda(\delta,x)$. Since $\alpha \in x$, we can define a walk from $x$ to $\alpha$. By Lemma~\ref{lem:twowalk2}, there exists some $i$ such that $\beta_{i}^{x,\alpha} = \delta$. Indeed, $i = \rho_2(\delta, x)$. By the definition of $f_{\alpha,i}$, we obtain $f_{\alpha,i}(x) = M \cap \omega_1$. Consequently, 
\[
\mathrm{Sk}(M \cup \{x\}) \cap \omega_1 > M \cap \omega_1,
\]
as desired.
\end{proof}

We conclude this section with the following remark:

\begin{rema}\label{rema:twocardinalwalks}
 Our $C$-sequence is a sequence of club sets; thus, we consider it a ``naive'' $C$-sequence. In \cite[Section 10.4]{MR2355670}, Todor\v{c}evi\'{c} introduced two-cardinal walks to prove Theorem~\ref{todorcevic}. His $C$-sequence over $\mathcal{P}_{\kappa}\lambda$ was defined as a ``copy'' of $C$-sequences below $\kappa$. That is, for every $x \in \mathcal{P}_{\kappa}\lambda$, it was required that 
 \[
 \langle \pi_{x} ``C_{x \cap \alpha} \mid \pi_{x}(\alpha) \in \pi_x ``x \rangle
 \]
 is an ordinal $C$-sequence over $\mathrm{ot}(x)$. Here, $\pi_x$ is the transitive collapse of $x$. 

This definition works well in the case of $\kappa = \nu^{+}$ for some regular $\nu$. However, if $\kappa$ is a limit cardinal, the existence of such a $C$-sequence remains open. Our approach simply uses club sets.

One potential drawback is that our $\rho_0$ function has $\kappa$-many possible values, while Todorčević’s approach restricts it to strictly fewer than $\kappa$. Whether $3$ can be improved to $2$ remains an open question.
\end{rema}

\section{observations and questions}\label{sec:ques}

We conclude this paper with several questions. Questions~\ref{que2} and~\ref{que3} arise since many proofs in this paper required the existence of a thin $\mathcal{P}_{\kappa}\lambda$-list. The connection between tree properties and compactness principles has been extensively studied, yet many open problems remain. 

For instance, to solve Question~\ref{que2}, we aim to extract a suitable combinatorial structure distinct from the tree property (see Proposition~\ref{prop:pfaandstt}) from anti-compactness principles to define a filter. Additionally, it remains an open and widely studied problem whether $\mathrm{ITP}(\aleph_2)$, a strengthening of the tree property $\mathrm{TP}(\aleph_2)$, implies $\mathrm{SCH}$. The stationary subset $S_2$ introduced by Sakai in Lemma~\ref{thm:additional2} was originally formulated to address this question.

\begin{ques}\label{que2}
 Can we remove the assumption that ``$\kappa$ is inaccessible'' from Theorem~\ref{thm:additional1}?
\end{ques}

\begin{ques}\label{que3}
 Does the failure of $\mathrm{SCH}$ at $\mu$ affect the semiproperness of forcings below $\mu$? For example, if $2^{\mu} > \mu^{+}$ for some strong limit singular $\mu$ of cofinality $\omega$, is there a forcing that is $\omega_1$-stationary preserving but \emph{not} semiproper and adds a new countable cofinal subset of $\mu$?
\end{ques}

It appears that extracting combinatorial structures from anti-compactness principles is necessary to resolve the following question. The walks developed in Section~\ref{sec:twocardinalwalks} have not yet been sufficient to answer this problem, as discussed in Remark~\ref{rema:twocardinalwalks}. 

\begin{ques}
 Suppose there exists a stationary subset $S \subseteq E_{<\kappa}^{\lambda}$ such that $S \cap \alpha$ is \emph{non}-stationary for all $\alpha \in E_{<\kappa}^{\lambda}$. Does it follow that $\mathcal{P}_{\kappa}\lambda \not\to [I_{\kappa\lambda}^+]_{\lambda}^{2}$?
\end{ques}

Lastly, we conclude this paper with Figure~\ref{fig:separatedhierarchy}, which illustrates the hierarchy of semiproper Namba forcings. We assume that $\aleph_2 \leq \kappa \leq \lambda = \lambda^{<\kappa}$ and that for all $\mu \in [\aleph_3,\kappa) \cap \mathrm{Reg}$, we have $\mu^{\omega} < \kappa$\footnote{A specific case of interest is $\kappa = \aleph_2$.}. In this hierarchy, $F$ denotes a $\kappa$-complete fine filter. For each implication in the diagram, see~\cite{tsukuura}. 

It is well known that $(\dagger)$-principles have several consequences, one of which is the failure of square sequences. In this paper, we have examined these consequences from the perspective of Namba forcings. By counting how many Namba forcings remain semiproper, we can assess the extent of compactness present in a given model of set theory. Thus, we view the class of Namba forcings as a ``microscope'' for measuring the degree of partial strong compactness of $\kappa$.

The principle $\mathrm{SSR}([\lambda]^{\omega},{<}\kappa)$ asserts that for every semistationary subset $S \subseteq [\lambda]^{\omega}$, there exists an $a \in \mathcal{P}_{\kappa}\lambda$ such that $S \cap [a]^{\omega}$ remains semistationary. Of course, $\mathrm{SSR}$ is $\forall \lambda \geq \aleph_2(\mathrm{SSR}([\lambda]^{\omega},{<}\aleph_2))$.
We observe a significant distinction between $\mathrm{SSR}([2^{\lambda}]^{\omega},{<}\kappa)$ and $\mathrm{SSR}([\lambda]^{\omega},{<}\kappa)$ when analyzed through the lens of Namba forcings, as depicted in Figure~\ref{fig:separatedhierarchy}. Compare it with Figure~\ref{fig:hierarchy}. Note that in Figure~\ref{fig:separatedhierarchy}, every implication denoted by $\to$, except the one between ``$\mathrm{Nm}(\kappa,2^{\lambda})$ is semiproper'' and ``All $\mathrm{Nm}(\kappa,\lambda,F)$ are semiproper,'' is known to be \emph{not} reversible~\cite{tsukuura}. Therefore, we propose the following question:

\begin{ques}
 Is it consistent that $\mathrm{Nm}(\kappa,2^{\lambda})$ is \emph{not} semiproper but there is \emph{no} $\kappa$-complete fine filter $F$ over $\mathcal{P}_{\kappa}\lambda$ such that $\mathrm{Nm}(\kappa,\lambda,F)$ is \emph{non}-semiproper?
\end{ques}

\begin{center}
\begin{figure}[bthp]
\centering
\begin{tikzpicture}
 \node (dagger') at (0,2.5) {$(\dagger)$}; 
 \node (dagger) at (0,2) {$\vdots$}; 
 \node (ssr') at (-4,2.5) {$\mathrm{SSR}$}; 
 \node (ssr) at (-4,2) {$\vdots$}; 

 \draw[<->] (ssr') to (dagger');

\node (nm10) at (0,1) {$\mathrm{Nm}(\kappa,2^{\lambda})$ is sp.};
\node (nm30) at (0,-3) {$\mathrm{Nm}(\kappa,{\lambda})$ is loc. sp.};

\node (nm1) at (0,0) {$\mathrm{Nm}(\kappa, 2^{\lambda})$ is loc. sp.};
\node (nm2) at (0,-1) {All $\mathrm{Nm}(\kappa,\lambda, F)$ are sp.};
\node (nm3) at (0,-2) {$\mathrm{Nm}(\kappa, \lambda)$ is sp.};

\node (nm1') at (3.5,0) {$\mathrm{Nm}(2^{\lambda})$ is loc. sp.};
\node (nm1'0) at (3.5,1) {$\mathrm{Nm}(2^{\lambda})$ is sp.};
\node (nm3') at (3.5,-2) {$\mathrm{Nm}(\lambda)$ is sp.};
\node (nm3'0) at (3.5,-3) {$\mathrm{Nm}(\lambda)$ is loc. sp.};

 \node (nm-) at (3.5,-5) {$\mathrm{Nm}(\kappa)$ is sp.};
 \node (nm'-) at (3.5,-6) {$\mathrm{Nm}(\kappa)$ is loc. sp.};
 
 \node (ssr1) at (-4,0) {$\mathrm{SSR}([2^{\lambda}]^{\omega},{<}\kappa)$};
 \node (ssr2) at (-4,-3) {$\mathrm{SSR}([{\lambda}]^{\omega},{<}\kappa)$};
 \node (ssr-) at (-4,-6) {$\mathrm{SSR}([\kappa]^{\omega},{<}\kappa)$};

 \node (nm0) at (0,-5) {$\mathrm{Nm}(\kappa,\kappa)$ is sp.}; 
 \node (nm01) at (0,-6) {$\mathrm{Nm}(\kappa,\kappa)$ is loc. sp.}; 
 \node (nmdots) at (0,-4) {$\vdots$}; 

 \draw[<->] (nm0) to (nm-); 
 \draw[<->] (nm01) to (nm'-); 
 \draw[->] (nm-) to (nm'-);
 \draw[->] (nm30) to (nm3'0);

\node (sq1) at (6.5,0) {$\lnot \square(2^{\lambda})$};
\node (sq2) at (6.5,-3) {$\lnot \square(\lambda)$};
\node (sq-) at (6.5,-6) {$\lnot \square(\kappa)$};

 \draw[->] (dagger) to (nm10); 
 \draw[->] (nm30) to (nmdots); 
 \draw[->] (nm0) to (nm01); 
 \draw[->] (nmdots) to (nm0); 
 \draw[->] (nm1) to (nm1'); 
 \draw[->] (nm3) to (nm3'); 
 \draw[->] (nm1'0) to (nm1');
 \draw[->] (nm10) to (nm1);

 \draw[->] (nm3) to (nm30);

 \draw[->] (nm1') to node {\small $\times$} (nm3');

 \draw[->] (nm10) to (nm1'0);
 \draw[->] (nm3') to (nm3'0);

\draw[->] (nm1) to (nm2);
\draw[->,transform canvas={xshift=-10pt}] (nm2) to (nm3);

\draw[->] (nm1') to (sq1);
\draw[->] (nm3'0) to (sq2);
\draw[->] (nm'-) to (sq-);

\draw[->,dotted] (nm2) to (sq1);

 \draw[->,dotted,transform canvas={xshift=10pt}] (nm3) to node {\small $\times$} (nm2);

 \draw[<->] (ssr1) to (nm1);
 \draw[<->] (ssr2) to (nm30);
 \draw[<->] (ssr-) to (nm01);

 \node (ssrdots) at (-4,-4.5) {$\vdots$};

 \draw[->] (ssr) to (ssr1); 
 \draw[->] (ssr1) to (ssr2); 
 \draw[->] (ssr2) to (ssrdots); 
 \draw[->] (ssrdots) to (ssr-); 

\end{tikzpicture}
\caption{The dissection of $(\dagger)$-principle focusing on $\kappa$. Dotted arrows were proved in this paper.}
\label{fig:separatedhierarchy}
\end{figure}
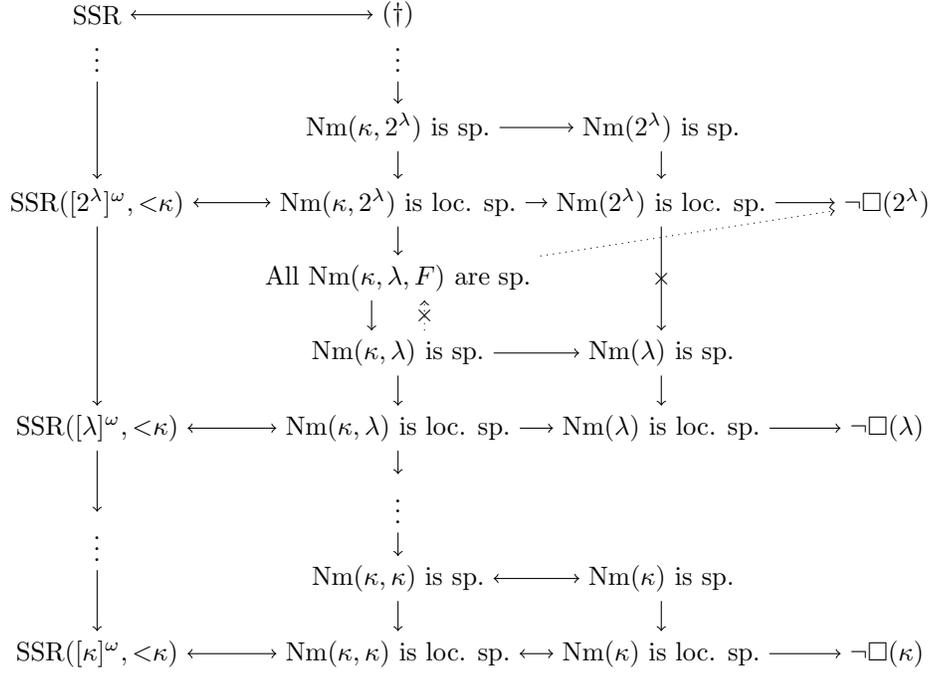
\end{center}

\newpage
   \bibliographystyle{plain}
   \bibliography{ref}

\end{document}